\numberwithin{equation}{section}
\begin{document}
	\theoremstyle{plain}
	\newtheorem{thm}{Theorem}[section]
	\newtheorem{lem}[thm]{Lemma}
	\newtheorem{cor}[thm]{Corollary}
	\newtheorem{cor*}[thm]{Corollary*}
	\newtheorem{prop}[thm]{Proposition}
	\newtheorem{prop*}[thm]{Proposition*}
	\newtheorem{conj}[thm]{Conjecture}
	\theoremstyle{definition}
	\newtheorem{construction}{Construction}
	\newtheorem{notations}[thm]{Notations}
	\newtheorem{question}[thm]{Question}
	\newtheorem{prob}[thm]{Problem}
	\newtheorem{rmk}[thm]{Remark}
	\newtheorem{remarks}[thm]{Remarks}
	\newtheorem{defn}[thm]{Definition}
	\newtheorem{claim}[thm]{Claim}
	\newtheorem{assumption}[thm]{Assumption}
	\newtheorem{assumptions}[thm]{Assumptions}
	\newtheorem{properties}[thm]{Properties}
	\newtheorem{exmp}[thm]{Example}
	\newtheorem{comments}[thm]{Comments}
	\newtheorem{blank}[thm]{}
	\newtheorem{observation}[thm]{Observation}
	\newtheorem{defn-thm}[thm]{Definition-Theorem}
	\newtheorem*{Setting}{Setting}

	\newcommand{\sA}{\mathscr{A}}	\newcommand{\sB}{\mathscr{B}}	\newcommand{\sC}{\mathscr{C}}	\newcommand{\sD}{\mathscr{D}}	\newcommand{\sE}{\mathscr{E}}	\newcommand{\sF}{\mathscr{F}}	\newcommand{\sG}{\mathscr{G}}	\newcommand{\sH}{\mathscr{H}}	\newcommand{\sI}{\mathscr{I}}	\newcommand{\sJ}{\mathscr{J}}	\newcommand{\sK}{\mathscr{K}}	\newcommand{\sL}{\mathscr{L}}	\newcommand{\sM}{\mathscr{M}}	\newcommand{\sN}{\mathscr{N}}	\newcommand{\sO}{\mathscr{O}}	\newcommand{\sP}{\mathscr{P}}
	\newcommand{\sQ}{\mathscr{Q}}	\newcommand{\sR}{\mathscr{R}}	\newcommand{\sS}{\mathscr{S}}	\newcommand{\sT}{\mathscr{T}}	\newcommand{\sU}{\mathscr{U}}	\newcommand{\sV}{\mathscr{V}}	\newcommand{\sW}{\mathscr{W}}	\newcommand{\sX}{\mathscr{X}}	\newcommand{\sY}{\mathscr{Y}}	\newcommand{\sZ}{\mathscr{Z}}	\newcommand{\bZ}{\mathbb{Z}}	\newcommand{\bN}{\mathbb{N}}	\newcommand{\bQ}{\mathbb{Q}}	\newcommand{\bC}{\mathbb{C}}
	\newcommand{\bR}{\mathbb{R}}	\newcommand{\bH}{\mathbb{H}}	\newcommand{\bD}{\mathbb{D}}	\newcommand{\bE}{\mathbb{E}}	\newcommand{\bP}{\mathbb{P}}	\newcommand{\bV}{\mathbb{V}}	\newcommand{\cV}{\mathcal{V}}	\newcommand{\cF}{\mathcal{F}}	\newcommand{\bfM}{\mathbf{M}}	\newcommand{\bfN}{\mathbf{N}}	\newcommand{\bfX}{\mathbf{X}}	\newcommand{\bfY}{\mathbf{Y}}	\newcommand{\spec}{\textrm{Spec}}	\newcommand{\dbar}{\bar{\partial}}	\newcommand{\ddbar}{\partial\bar{\partial}}	\newcommand{\redref}{{\color{red}ref}}
	
	\title[Arakelov Type Inequalities] {Arakelov Type Inequalities and Deformation Boundedness of polarized varieties}
	
	\author[Junchao Shentu]{Junchao Shentu}
	\email{stjc@ustc.edu.cn}
	\address{School of Mathematical Sciences,
		University of Science and Technology of China, Hefei, 230026, China}

	\begin{abstract}
		We give two kinds of generalizations of Arakelov type inequalities for higher dimensional families. These results give higher dimensional generalizations (in both fibers and bases) of the weakly boundedness in Par\v{s}in-Arakelov's reformulation of the geometric Shafarevich conjecture. As a consequence, we deduce the following results.
		\begin{itemize}
			\item {\bf{Hyperbolicity}:} We give an alternative proof (using the theory of degeneration of Hodge structure) to the hyperbolicity (in Par\v{s}in-Arakelov's reformulation, i.e. Viehweg's hyperbolicity conjecture) of the base of a family with maximal variation whose general fibers admit good minimal models. This has been proved by Popa-Schnell Hodge theoretically.
			\item {\bf{Boundedness}:} We show the deformation boundedness of admissible families of lc stable minimal models (introduced by Birkar) with an arbitrary Kodaira dimension.
		\end{itemize}
	\end{abstract}
	
	\maketitle
	
	\section{Introduction}
Throughout this paper  every variety is defined over the complex number field $\bC$.
	Let $f:Y\rightarrow X$ be an algebraic fiber space from a smooth projective variety to a smooth projective curve. Par\v{s}in-Arakelov's reformulation \cite{Parsin1968,Arakelov1971}, of the geometric Shafarevich conjecture is the following package of statements (say, the \textit{Par\v{s}in-Arakelov-Shafarevich package}). 
	\begin{description}
		\item[Hyperbolicity](=Viehweg's hyperbolicity conjecture) Let $D\subset X$ be a divisor such that $f$ is smooth over $X\backslash D$. Assume that $f$ is non-isotrivial. Then $\omega_X(D)$ is big.
		\item[Weakly boundedness](=Arakelov type inequality) $\deg f_\ast(\omega_{Y/X}^{\otimes m})$ is bounded in terms of $g(B)$, $\# D$, $m$ and the Hilbert function for the general fibers.
		\item[Boundedness] There exist only finitely many deformation types of smooth families over $X\backslash D$ when the fibers form a bounded moduli.
		\item[Rigidity] There exists no non-trivial deformation of non-isotrivial smooth families over $X\backslash D$ (under some conditions).
	\end{description}
	When $f$ is a family of curves with genus larger than one, the Par\v{s}in-Arakelov-Shafarevich package is proved by Par\v{s}in \cite{Parsin1968} in the case when $D=\emptyset$ and by Arakelov \cite{Arakelov1971} in general. The arithmetic analogue of  Shafarevich's conjecture for curve fiberations over a number field has been confirmed by Faltings \cite{Faltings19832}. It, combined with Par\v{s}in's trick, implies Mordell's conjecture. 
	
	For families of canonically polarized manifolds of higher dimensions, the Par\v{s}in-Arakelov-Shafarevich package is proved by Bedulev-Viehweg \cite{Viehweg2000} (hyperbolicity and weakly boundedness) and Kov\'acs-Lieblich \cite{Kovacs2011} (boundedness). Liu-Todorov-Yau-Zuo \cite{LTYZ2005} and Viehweg-Zuo \cite{VZ2003_2} obtain the deformation boundedness of polarized Calabi-Yau manifolds. The rigidity part is generally false for higher dimensional families. Faltings \cite{Faltings1983} constructs non-rigid families of principally polarized abelian varieties and Liu-Todorov-Yau-Zuo \cite{LTYZ2005} construct non-rigid families of polarized Calabi-Yau manifolds. Besides, there are some sufficient conditions on families (\cite{VZ2003_2,Kovacs2005}) for the rigidity. Readers may see \cite{Kovacs2009} for a survey on this subject. 
	
    In addition to the rigidity, the other parts of the Par\v{s}in-Arakelov-Shafarevich package are expected to hold for algebraic fiber spaces over higher dimensional bases. The hyperbolicity over higher dimensional base is proved by Popa-Schnell \cite{PS17} for maximal variational smooth families whose general fibers admit good minimal models. The hyperbolicity problem for log smooth family of general type is studied by Wei-Wu \cite{WeiWu2023}. On the other hand, Kov\'acs-Lieblich \cite{Kovacs2011} shows that some Arakelov type inequalities imply the  deformation boundedness of families over higher dimensional bases when the relevant coarse moduli space admits a nice compactification. 
	
	The purpose of the present paper is to give two generalizations of the weakly boundedness (i.e. Arakelov type inequalities) over higher dimensional bases. One (Theorem \ref{thm_main_Arakelov_inequality}) is a generalization of the classical Arakelov inequality regarding the degree of the pushforward of pluri-canonical sheaf. The other (Theorem \ref{thm_main_numbound_moduli}) is a uniform numerical bound of the Koll\'ar type polarizations of the moduli space of lc stable minimal models which is recently introduced by Birkar \cite{Birkar2022}. As an application of Theorem \ref{thm_main_numbound_moduli}, we generalize the boundedness part of the Par\v{s}in-Arakelov-Shafarevich package to admissible families of lc stable minimal models of an arbitrary Kodaira dimension (see \S \ref{section_defbounded}). Both Arakelov type inequalities follow from a meta Arakelov type inequality (Theorem \ref{thm_abs_Arakelov}).
	\subsection{The first Arakelov type inequality}
	The first purpose of the present paper is to generalize Arakelov's inequality \cite{Arakelov1971} to families over higher dimensional bases. The main theorem is
	\begin{thm}\label{thm_main_Arakelov_inequality}
		Let $f:Y\to X$ be a proper surjective morphism from a complex manifold $Y$ to a smooth projective variety $X$ with relative dimension $n$. Let $D_f\subset X$ be an effective divisor such that $f$ is a K\"ahler submersion over $X\backslash D_f$. Then the following hold. 
		\begin{enumerate}
			\item Assume that there is a strictly semistable reduction in codimension one (\S \ref{section_ssred}) $\widetilde{f}:\widetilde{Y}\to\widetilde{X}$ of $f$ such that $\det \widetilde{f}_\ast(\omega_{\widetilde{Y}/\widetilde{X}}^{\otimes k})$\footnote{$\det(\sF)$ denotes the reflexive hull of $\wedge^{{\rm rk}(\sF)}\sF$ for a torsion free sheaf $\sF$.} is a big line bundle for some $k\geq1$. Then $\omega_{X}(D_f)$ is a big line bundle. 
			\item Let $W\subset f_\ast(\omega_{Y/X}^{\otimes k})^{\otimes r}$ be a coherent subsheaf for some $k,r\geq 1$. Assume that $\omega_{X}(D_f)$ is pseudo-effective. Then the following Arakelov type inequalities hold.
			\begin{align}\label{align_main_Arakelov_ineq_H}
				\frac{c_1(W)A_1A_2\cdots A_{d-1}}{{\rm rank}(W)}\leq rk\left(\frac{n}{2}c_1(\omega_X(D_f))+c_1(\sO_X(R_f))\right)A_1A_2\cdots A_{d-1}
			\end{align}
			for any semiample effective divisors $A_1,\dots, A_{d-1}$ $(d:=\dim X)$ on $X$, and
			\begin{align}\label{align_main_Arakelov_ineq_alpha}
				\mu_\alpha(W)\leq rk\left(n\mu_\alpha(\omega_X(D_f))+\mu_\alpha(\sO_X(R_f))\right)+\frac{\mu_\alpha(\sO_X(D_f))}{{\rm rank}W}
			\end{align}
			for every movable class $\alpha\in N_1(X)$\footnote{$\mu_\alpha(F):=\frac{c_1(F)\cdot\alpha}{{\rm rank}F}$ for every torsion free coherent sheaf $F$.}. Here $R_f\subset D_f$ is the ramified divisor of $f$ (c.f. \S \ref{section_Arakelov_inequality}).
		\end{enumerate}
	\end{thm}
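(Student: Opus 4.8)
The plan is to derive both parts of Theorem~\ref{thm_main_Arakelov_inequality} from the meta Arakelov inequality, Theorem~\ref{thm_abs_Arakelov}, applied to a polarized variation of Hodge structure built from $f$. Three ingredients are needed: a reduction of $f$ to a strictly semistable family over a covering of the base; the Viehweg--Zuo cyclic covering construction, which exhibits $f_\ast(\omega_{Y/X}^{\otimes k})$ together with its tensor and exterior powers as twisted Hodge subsheaves of a geometric variation; and a descent of the resulting estimate back to $X$. First I would reduce to the semistable case. For part~(1) this is precisely the hypothesis: a strictly semistable reduction in codimension one $\widetilde f\colon\widetilde Y\to\widetilde X$, sitting over a generically finite $\psi\colon\widetilde X\to X$, is given. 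For part~(2) I would replace $(X,D_f)$ by a log resolution, take a Kawamata-type cover $\psi\colon\widetilde X\to X$ ramified along (the total transform of) $D_f$ with a uniform ramification index, and a strictly semistable model $\widetilde f$ of the base change, all constructed away from a closed subset of codimension $\ge 2$. Under the (reflexive) pullback along $\psi$ the $\alpha$-slope and the $A_1\cdots A_{d-1}$-degree of $W$, and likewise of $\omega_X(D_f)$, $\sO_X(R_f)$ and $\sO_X(D_f)$, scale by $\deg\psi$; the Hurwitz formula gives $\omega_{\widetilde X}(D_{\widetilde f})\le\psi^\ast\omega_X(D_f)$ with $D_{\widetilde f}\subset\psi^{-1}(D_f)$ up to codimension~$2$; and the ramified part $R_f$ of $f$ enters the bookkeeping through the non-reduced fibres of $\widetilde f$ over $\psi^{-1}(R_f)$, which is what produces the term $\sO_X(R_f)$ on the right-hand sides.

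Next I would construct the Hodge-theoretic model and invoke Theorem~\ref{thm_abs_Arakelov}. After replacing $k$ by a multiple, pick a general member $B$ of a suitable relative pluricanonical linear system on $\widetilde Y$, form the cyclic cover of $\widetilde Y$ branched along $B$, resolve, and obtain $h\colon Z\to\widetilde X$, still a K\"ahler submersion over $\widetilde X\setminus D_{\widetilde f}$ (up to codimension~$2$) and with unipotent local monodromies along $D_{\widetilde f}$. The Deligne extension of the polarized variation $R^{\bullet}h_\ast\bC$ then contains, after an explicit line-bundle twist, $\widetilde f_\ast(\omega_{\widetilde Y/\widetilde X}^{\otimes k})$, and, by taking exterior and tensor powers of the same construction, also $\det\widetilde f_\ast(\omega_{\widetilde Y/\widetilde X}^{\otimes k})$ and $\widetilde f_\ast(\omega_{\widetilde Y/\widetilde X}^{\otimes k})^{\otimes r}$. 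Theorem~\ref{thm_abs_Arakelov}, applied to the Hodge subsheaf corresponding to the pullback of $W$, then bounds its slope by $k$ times a combination of the $\mu_\alpha$ (resp.\ the $A_1\cdots A_{d-1}$-degree) of $\omega_{\widetilde X}(D_{\widetilde f})$ and of the twisting bundle, with leading coefficient $\tfrac n2$ when one tests against a complete-intersection curve and $n$ together with a boundary correction when one tests against a general movable class --- the discrepancy reflecting the Hodge-norm and polarization sharpening that is available only in the former case, and the boundary correction being exactly the term $\mu_\alpha(\sO_X(D_f))/\mathrm{rank}\,W$ in \eqref{align_main_Arakelov_ineq_alpha}. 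The pseudo-effectivity of $\omega_X(D_f)$ is used precisely to ensure that $\mu_\alpha(\omega_X(D_f))$ and the companion intersection numbers are $\ge 0$, so that the estimate is meaningful rather than vacuous. For a subsheaf $W$ of an $r$-th tensor power one first bounds the top Harder--Narasimhan slope of $\widetilde f_\ast(\omega_{\widetilde Y/\widetilde X}^{\otimes k})$ and then uses that, in characteristic zero, (semi)stability --- hence the top-slope bound --- is preserved under tensor products, which produces the factor $r$.

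Finally I would descend. Dividing the estimates of the previous step by $\deg\psi$ and feeding in $\omega_{\widetilde X}(D_{\widetilde f})\le\psi^\ast\omega_X(D_f)$ together with the scalings of $\psi^\ast\sO_X(R_f)$ and $\psi^\ast\sO_X(D_f)$ yields \eqref{align_main_Arakelov_ineq_H} and \eqref{align_main_Arakelov_ineq_alpha}. For part~(1), Theorem~\ref{thm_abs_Arakelov} in its bigness form asserts that a big subsheaf of the relevant Hodge bundle forces $\omega_{\widetilde X}(D_{\widetilde f})$ to be big; since $\det\widetilde f_\ast(\omega_{\widetilde Y/\widetilde X}^{\otimes k})$ is big by assumption, $\omega_{\widetilde X}(D_{\widetilde f})$ is big, hence so is $\psi^\ast\omega_X(D_f)$ (it dominates a big bundle by an effective divisor), and therefore $\omega_X(D_f)$ is big because bigness descends along the generically finite $\psi$.

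I expect the main obstacle to lie in the fact that strictly semistable reduction is available only in codimension one: the Viehweg--Zuo inclusion, the reflexive determinant, the comparison of log-canonical classes and the Harder--Narasimhan filtration must all be set up over an open subset of $\widetilde X$ whose complement has codimension $\ge 2$ and then extended, which restricts the whole argument to quantities insensitive to codimension-$2$ modifications; and the precise coefficients of $R_f$ and $D_f$ have to be carried through both $\psi$ and the cyclic cover, in particular keeping the full discriminant $D_f$ separate from its ramified part $R_f$. Once the variation of Hodge structure is in place, Theorem~\ref{thm_abs_Arakelov} is used as a black box, and the remaining steps are formal.
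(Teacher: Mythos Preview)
Your overall architecture is right---reduce to a (strictly) semistable situation, build a variation of Hodge structure via a cyclic cover, invoke Theorem~\ref{thm_abs_Arakelov}, and descend---but there is a genuine gap at the point where you apply the meta inequality.

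Theorem~\ref{thm_abs_Arakelov} is stated in the setting of \S\ref{section_setting}: a \emph{rank-one} torsion-free sheaf $L$ together with a map $s_L:L^{\otimes k}\to f_\ast(\omega_{Y/X}^{\otimes k})$. The cyclic-cover construction of \S\ref{section_VZ_sheaf} then yields an inclusion of that rank-one $L$ (more precisely $\pi_\ast(\widetilde L)$) into the top Hodge piece of the prolongation, and the conclusions of Theorem~\ref{thm_abs_Arakelov} are bounds on $c_1(L)$. It does \emph{not} produce an inclusion of the whole sheaf $\widetilde f_\ast(\omega_{\widetilde Y/\widetilde X}^{\otimes k})$, of its determinant, or of its tensor powers into a Deligne extension; your sentence asserting this is incorrect, and consequently ``Theorem~\ref{thm_abs_Arakelov} applied to the Hodge subsheaf corresponding to the pullback of $W$'' does not make sense when ${\rm rank}(W)>1$. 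Your proposed workaround via Harder--Narasimhan and tensor-product stability is circular: bounding $\mu_{\max}$ of $\widetilde f_\ast(\omega^{\otimes k})$ means bounding $\mu(W_1)$ for the maximal destabilizing subsheaf $W_1$, which again has rank $>1$ in general, and you are back to the same problem.

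The paper closes this gap with Viehweg's fiber-product trick (Proposition~\ref{prop_mild_pushforward}). One takes $L=\det W\otimes\sO_X(-klrR_f)\otimes I_Z$ (rank one) and, using strictly semistable reduction and Lemma~\ref{lem_functorial_pushforward_pluricanonical} to handle $R_f$, builds a nonzero map $L^{\otimes k}\to f^{(klr)}_\ast(\omega_{Y^{(klr)}/X}^{\otimes k})$ into the pushforward for the $klr$-fold fiber-product family $Y^{(klr)}\to X$, whose relative dimension is $klrn$. Theorem~\ref{thm_abs_Arakelov} applied to \emph{this} $L$ and \emph{this} family then gives $c_1(\det W)-klr\,c_1(\sO_X(R_f))\le \frac{klrn}{2}c_1(\omega_X(D_f))\cdot(-)$, and dividing by $l={\rm rank}(W)$ produces \eqref{align_main_Arakelov_ineq_H} and \eqref{align_main_Arakelov_ineq_alpha}. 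The same trick is used for part~(1): Theorem~\ref{thm_abs_Arakelov}(1) needs $L^{\vee\vee}(-D_f)$ big, not merely $L$ big, so one chooses an ample $A$ with $A(-D_f)$ ample, uses bigness of $\det\widetilde f_\ast(\omega^{\otimes k})$ to embed $\sigma^\ast(A^{\otimes k}\otimes M)$ into a high tensor power of the pushforward, and again passes to a fiber product to land in a genuine pluricanonical pushforward (see Theorem~\ref{thm_VHconj_proof}).

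A secondary difference: rather than proving the inequalities on the cover $\widetilde X$ and dividing by $\deg\psi$, the paper uses Viehweg's inclusion $\widetilde f^{(klr)}_\ast(\omega^{\otimes k})\subset\sigma^\ast f^{(klr)}_\ast(\omega^{\otimes k})$ together with the global generation of $(\sigma_\ast\sO_{\widetilde X})^\vee\otimes M$ to produce the map $L^{\otimes k}\to f^{(klr)}_\ast(\omega^{\otimes k})$ \emph{on $X$ itself}, so that Theorem~\ref{thm_abs_Arakelov} is applied on $X$ and no numerical descent is needed.
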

    When $Y$ is a smooth projective variety, the assumption in Claim (1) is valid when the general fibers of $f$ are of general type (Koll\'ar \cite{Kollar1987}) or admit good minimal models (Kawamata \cite{Kawamata1985}). One of the consequences of Theorem \ref{thm_main_Arakelov_inequality}-(1) is an alternative proof of Popa-Schnell \cite{PS17}'s result on Viehweg's hyperbolic conjecture for families of projective manifolds which admit good minimal models.
    The main difficulty of loc.~cit.~is to construct the Viehweg-Zuo sheaf using Hodge modules. In the present paper we give an alternative construction of the Viehweg-Zuo sheaf using the analytic prolongations (in the sense of Simpson \cite{Simpson1990} and Mochizuki \cite{Mochizuki20072}) of Viehweg-Zuo's original constructions. 

	When $\dim X=1$, the inequalities (\ref{align_main_Arakelov_ineq_H}) and (\ref{align_main_Arakelov_ineq_alpha}) are effective versions of Viehweg-Zuo's \cite{VZ2001} Arakelov type inequality. 
	When $r=1$, $\dim X=1$ and $f$ is a strictly semistable family, the inequality (\ref{align_main_Arakelov_ineq_H}) is obtained by Viehweg-Zuo \cite{VZ2006} and M\"{o}ller-Viehweg-Zuo \cite{MVZ2006}. Our proof of (\ref{align_main_Arakelov_ineq_H}) is deeply influenced by their works. (\ref{align_main_Arakelov_ineq_H}) is optimal in the sense that the equality holds for special Shimura families (Viehweg-Zuo \cite{VZ2004}, M\"{o}ller-Viehweg-Zuo \cite{MVZ2006} see also \S \ref{section_exp_elliptic_family}). When $r=1$ and $f$ is a non-isotrivial semistable family of general type projective manifolds over a curve, it is proved by Lu-Yang-Zuo \cite{LYZ2022} that (\ref{align_main_Arakelov_ineq_H}) must be strict for those $k$ such that the $k$-th pluricanonical linear systems of the general fibers give rise to  birational maps.

	\subsection{Deformation boundedness of families of stable minimal models}
    Birkar \cite{Birkar2022} recently  introduces the moduli space of stable minimal models as a solution to the problem of constructing a compact moduli of birational equivalence classes of varieties of an arbitrary Kodaira dimension.
	The second main result of the present paper is a series of numerical inequalities of the Koll\'ar type polarizations of the moduli space of lc stable minimal models (Theorem \ref{thm_main_numbound_moduli}). 
	Let us first briefly review the main constructions in \cite{Birkar2022}. 
	Let
	$$d\in\bN,c\in\bQ^{\geq0},\Gamma\subset\bQ^{>0} \textrm{ a finite set, and }\sigma\in\bQ[t].$$
	A $(d,\Phi_c,\Gamma,\sigma)$-stable minimal model $(X,B),A$ consists of a reduced connected projective scheme $X$ which is of finite type over ${\rm Spec}(\bC)$ and $\bQ$-divisors $A\geq0$, $B\geq 0$ such that the following hold.
	\begin{itemize}
		\item $\dim X=d$, $(X,B)$ is a slc projective pair. $K_X+B$ is semi-ample,
		\item the coefficients of $A$ and $B$ are in $c\bZ^{\geq0}$,
		\item $(X,B+tA)$ is slc and $K_X+B+tA$ is ample for some $t>0$,
		\item ${\rm vol}(K_X+B+tA)=\sigma(t)$ for $0\leq t\ll 1$,
		\item ${\rm vol}(A|_F)\in\Gamma$ where $F$ is any general fiber of the fibration $f:X\to Z$ determined by $K_X+B$.
	\end{itemize}
	A $(d,\Phi_c,\Gamma,\sigma)$-stable minimal model $(X,B),A$ is called lc (resp. klt) when $(X,B)$ is a lc pair (resp. a klt pair).
	In \cite{Birkar2022}, Birkar shows that the families of $(d,\Phi_c,\Gamma,\sigma)$-stable minimal models (see \cite{Birkar2022} or \S \ref{section_moduli} for precise definitions) over reduced bases form a proper Deligne-Mumford stack $\sM_{\rm slc}(d,\Phi_c,\Gamma,\sigma)$ which admits a projective good coarse moduli space $M_{\rm slc}(d,\Phi_c,\Gamma,\sigma)$.
	
	$M_{\rm slc}(d,\Phi_c,\Gamma,\sigma)$ admits a set of polarizations (i.e. ample $\bQ$-line bundles) of Koll\'ar type (see \S \ref{section_polarization_moduli} for details). Let $a\in\bQ^{>0}$ be sufficiently small and let $r\in\bZ^{>0}$ be sufficiently large. Then the assignment
	$$f:(X,A)\to S\in\sM_{\rm slc}(d,\Phi_c,\Gamma,\sigma)(S)\mapsto f_\ast(r(K_{X/S}+aA))$$
    determines a locally free coherent sheaf on the stack $\sM_{\rm slc}(d,\Phi_c,\Gamma,\sigma)$, which is denoted by $\Lambda_{a,r}$. Let $\lambda_{a,r}:=\det(\Lambda_{a,r})$. Since $\sM_{\rm slc}(d,\Phi_c,\Gamma,\sigma)$ is Deligne-Mumford, some power $\lambda_{a,r}^{\otimes k}$ descends to a line bundle on $M_{\rm slc}(d,\Phi_c,\Gamma,\sigma)$. Therefore we regard $\lambda_{a,r}$ as a $\bQ$-line bundle on $M_{\rm slc}(d,\Phi_c,\Gamma,\sigma)$. These $\lambda_{a,r}$ are ample for $0<a\ll1$ and $r\gg0$ (Koll\'ar \cite{Kollar1990}, Kov\'acs-Patakfalvi \cite{Kovacs2017} and Fujino \cite{Fujino2018}). 
	
	In the present paper we investigate the deformation boundedness of the families of $(d,\Phi_c,\Gamma,\sigma)$-lc stable minimal models. However, the boundedness usually does not hold for the family of stable minimal models, according to the following examples. 
	\begin{exmp}[the presence of the degenerating fiber]\label{exmp_1}
		Let us consider a Lefschetz pencil $f:X\to\bP^1$ with $S$ the set of its critical values. We assume that the general fibers of $f$ are canonically polarized $d$-folds with $v$ their volumes so that they are $(d,\Phi_0,\{1\},v)$-stable minimal models ($\#(S)\geq3$ due to \cite{VZ2001}). Then $f$ is a family of $(d,\Phi_0,\{1\},v)$-stable minimal models. Let $\tau:\bP^1\to\bP^1$ be a morphism and denote $f_{\tau}:X\times_{\bP^1}\bP^1\to\bP^1$ to be the base change of $f$ via $\tau$. Since the number of degenerating fibers $\#(\tau^{-1}(S))$ can be arbitrarily large, the set of families $\{f_\tau|\tau:\bP^1\to\bP^1\}$ can not live in a bounded family of families of $(d,\Phi_0,0,v)$-stable minimal models over $\bP^1$.
	\end{exmp}
    \begin{exmp}[the presence of the degenerating polarization]\label{exmp_2}
    	Let $E$ be an elliptic curve and $x_0\in E(\bC)$. Denote $X=E\times E$ and let $f:E\times E\to E$ be the projection to the first component. Denote $A=\frac{1}{2}(E\times\{x_0\}\cup\Delta_E)$ where $\Delta_E\subset E\times E$ is the diagonal. Then $f:(X,A)\to E$ is a family of $(1,\Phi_{\frac{1}{2}},\{1\},t)$-stable minimal models. The underlying family of elliptic curves is trivial but the family of polarization is non-isotrivial (it degenerates at $x_0$). By taking the base change family $f_n$ via morphism $\times n:E\to E$ for various $n\geq 1$, we obtain families $\{f_n\}$ that can not live in a bounded family because the number of the degenerating loci $\{\frac{1}{n}x_0,\dots,\frac{(n-1)}{n}x_0\}$ could be arbitrarily large.
    \end{exmp}
	We will see that the presences of the degenerating fibers and the  degenerating polarizations are the only two obstructions of the deformation boundedness of families of lc stable minimal models. 
	\subsubsection{The second Arakelov inequality}
	A family $f:(X,B),A\to S$ of $(d,\Phi_c,\Gamma,\sigma)$-lc stable minimal model is called {\bf admissible} if it admits a log smooth birational model (Definition \ref{defn_admissible}) and the coefficients of $B$ lie in $(0,1)$. 
	The second Arakelov type inequality of the present paper is the following.
	\begin{thm}[Uniform numerical bound of the polarization, =Theorem \ref{thm_numbound_polarization}]\label{thm_main_numbound_moduli}
		Let $f^o:(X^o,B^o),A^o\to S^o$ be an admissible family of $(d,\Phi_c,\Gamma,\sigma)$-lc stable minimal models over a smooth quasi-projective variety $S^o$. 
		Let $S$ be a smooth projective variety containing $S^o$ as a Zariski open subset. Assume that $D:=S\backslash S^o$ is a divisor and the morphism $\xi^o:S^o\to M_{\rm lc}(d,\Phi_c,\Gamma,\sigma)$ induced from the family $f^o$ extends to a morphism $\xi:S\to M_{\rm slc}(d,\Phi_c,\Gamma,\sigma)$\footnote{We do not require $\xi$ to have moduli interpretation.}. Let $0<a\ll1$ and $1\ll r\in\bZ$. Assume that $K_{S}+D$ is pseudo-effective. Then the following inequalities hold.
		\begin{align*}
		c_1(\xi^\ast\lambda_{a,r})A_1A_2\cdots A_{\dim S-1}\leq \frac{rd{\rm rank}(\Lambda_{a,r})}{2}(K_S+D)A_1A_2\cdots A_{\dim S-1}
		\end{align*}
		for any semiample effective divisors $A_1,\dots, A_{\dim S-1}$ on $S$, and
		\begin{align*}
		c_1(\xi^\ast\lambda_{a,r})\cdot\alpha\leq rd{\rm rank}(\Lambda_{a,r})\left(K_S+D\right)\cdot\alpha+D\cdot\alpha
		\end{align*}
		for every movable class $\alpha\in N_1(S)$. If in particular  $\dim S=1$, then
		\begin{align}
		\deg(\xi^\ast\lambda_{a,r})\leq \frac{rd{\rm rank}(\Lambda_{a,r})}{2}\deg(K_S+D).
		\end{align}
	\end{thm}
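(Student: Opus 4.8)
The plan is to deduce the statement from the meta Arakelov type inequality Theorem~\ref{thm_abs_Arakelov} (equivalently, from the second inequality of Theorem~\ref{thm_main_Arakelov_inequality}(2)) applied to a genuine family, after removing the fact that $\xi$ need not be modular over $D$. We may assume $(S,D)$ is log smooth (the reduction to this case, keeping $K_S+D$ pseudo-effective, is among the technical points noted below). Since $\sM_{\rm slc}(d,\Phi_c,\Gamma,\sigma)$ is a proper Deligne--Mumford stack, semistable reduction over a higher dimensional base provides a finite surjective morphism $\pi\colon T\to S$ with $T$ smooth, \'etale over $S^o$ and ramified only over $D$, such that $D_T:=\pi^{-1}(D)_{\rm red}$ is simple normal crossing and $\xi\circ\pi\colon T\to M_{\rm slc}(d,\Phi_c,\Gamma,\sigma)$ lifts to a morphism to the stack, i.e.\ is represented by a family $g\colon(Z,B_Z),A_Z\to T$ of $(d,\Phi_c,\Gamma,\sigma)$-lc stable minimal models which over $T^o:=\pi^{-1}(S^o)$ is the pullback of $f^o$. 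Because $\pi$ ramifies exactly along $D$ one has $K_T+D_T=\pi^\ast(K_S+D)$, hence $K_T+D_T$ is pseudo-effective; and by the construction of the Koll\'ar type polarization and its base change compatibility (Koll\'ar~\cite{Kollar1990}, Kov\'acs--Patakfalvi~\cite{Kovacs2017}, Fujino~\cite{Fujino2018}) one has, as $\bQ$-line bundles on $T$,
\begin{align*}
\pi^\ast\xi^\ast\lambda_{a,r}=\det g_\ast\sO_Z\big(r(K_{Z/T}+B_Z+aA_Z)\big).
\end{align*}
Since $\pi^\ast A_i$ are semiample, $\pi^\ast\alpha$ is movable, all intersection numbers scale by $\deg\pi$, and $K_T+D_T$ pulls back from $K_S+D$, the projection formula reduces the asserted inequalities on $S$ to the corresponding inequalities on $T$.

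I would then put $g$ in geometric form. Using admissibility of $f^o$, replace $g$ by a log smooth birational model of $(Z,B_Z+A_Z)\to T$ (after a further blow-up of $T$ along $D_T$ if needed, again harmless), so that for $0<a\ll1$ the divisor $K_{Z/T}+B_Z+aA_Z$ is $g$-ample and big on $Z$, $g$ is a K\"ahler submersion over $T^o$, and the discriminant of $g$ lies in $D_T$. Then $W:=g_\ast\sO_Z(r(K_{Z/T}+B_Z+aA_Z))$ is the pushforward of a pluri-log-canonical sheaf of a log smooth family of relative dimension $d$, with $\det W=\pi^\ast\xi^\ast\lambda_{a,r}$ and ${\rm rank}(W)={\rm rank}(\Lambda_{a,r})$. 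Applying Theorem~\ref{thm_abs_Arakelov} --- the logarithmic version of Theorem~\ref{thm_main_Arakelov_inequality}(2), in which the relative dimension $d$ and the twisting index $r$ combine into the coefficient $rd/2$ --- to the polarized variation of Hodge structure attached to $g$ over $T^o$ (through the analytic prolongation of the Viehweg--Zuo construction) and its canonical prolongation over $T$, and invoking the pseudo-effectivity of $K_T+D_T$, one obtains on $T$
\begin{align*}
c_1\!\big(\pi^\ast\xi^\ast\lambda_{a,r}\big)\,\pi^\ast A_1\cdots\pi^\ast A_{\dim S-1}\leq\frac{rd\,{\rm rank}(\Lambda_{a,r})}{2}\,(K_T+D_T)\,\pi^\ast A_1\cdots\pi^\ast A_{\dim S-1}
\end{align*}
for all semiample $A_1,\dots,A_{\dim S-1}$ on $S$, together with the analogous inequality against a movable class whose residual boundary term is at most $D_T\cdot\pi^\ast\alpha\leq(\deg\pi)\,D\cdot\alpha$. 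One can instead bypass the logarithmic setting by an Esnault--Viehweg cyclic covering $Z'\to Z$ along a general member of a large multiple of $K_{Z/T}+B_Z+aA_Z$, which realizes $W$, up to the usual round-down corrections, inside $g'_\ast(\omega_{Z'/T}^{\otimes k})^{\otimes r'}$ for a family $g'\colon Z'\to T$ of relative dimension $d$ smooth over $T^o$; then Theorem~\ref{thm_main_Arakelov_inequality}(2) applies directly, the ramification of $Z'\to Z$ being supported over $D_T$ and recombining with $c_1(\omega_T(D_T))$ so as to reproduce exactly the coefficient $rd/2$ of $K_T+D_T$.

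Dividing the inequalities on $T$ by $\deg\pi$ and using $K_T+D_T=\pi^\ast(K_S+D)$, together with the fact that $\pi^\ast\xi^\ast\lambda_{a,r}$, $\pi^\ast A_i$ and $\pi^\ast\alpha$ are the pullbacks of the objects on $S$, returns the two displayed inequalities of the theorem over $S$. The case $\dim S=1$ is the first inequality with an empty product of $A_i$'s, i.e.\ $\deg(\xi^\ast\lambda_{a,r})\leq\frac{rd\,{\rm rank}(\Lambda_{a,r})}{2}\deg(K_S+D)$.

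The main obstacle is the second step, namely forcing the numerology to close up. One must know that Birkar's Koll\'ar type polarization $\lambda_{a,r}$, once descended from the stack to the projective coarse moduli space and pulled back along the \emph{a priori} non-modular extension $\xi$, is still computed by the honest determinant $\det g_\ast\sO_Z(r(K_{Z/T}+B_Z+aA_Z))$ of the family over $T$ --- in particular that its behaviour over the boundary $D$ is the one prescribed by the canonical (Deligne) prolongation of the underlying Hodge bundle --- and that inserting the ample \emph{polarizing divisor} $A$, rather than a polarizing line bundle, into $K+B+aA$ is precisely what makes the Arakelov bound come out with the optimal constant $rd/2$ and with no leftover ramification term. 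This is where the admissibility hypothesis is essential: it supplies the log smooth birational model on which the degeneration-of-Hodge-structure input of Theorem~\ref{thm_abs_Arakelov} is available. The remaining points --- relative ampleness and bigness of $K_{Z/T}+B_Z+aA_Z$ for $0<a\ll1$ under Birkar's axioms \cite{Birkar2022}, the reduction to a log smooth base $(S,D)$ while preserving pseudo-effectivity of $K_S+D$, the cyclic covering keeping the discriminant inside $D_T$ up to codimension $\geq2$, and the bookkeeping in the movable class statement --- are routine.
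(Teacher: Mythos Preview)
Your overall architecture---pass to a cover so that the moduli map becomes modular, use admissibility to get a log smooth birational model, absorb $B+aA$ into the canonical bundle by a branched cover, and then feed the result into the meta Arakelov inequality---matches the paper. Two substantive differences and one genuine gap deserve comment.

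\textbf{Where the paper differs.} The paper does \emph{not} prove the inequality on the cover $T$ and descend by Hurwitz. Instead it builds the auxiliary family over $S$ itself: the relative Kawamata covering (Lemma~\ref{lem_rel_Kawamata_covering}) is performed on the log smooth model $(X'^o,B'^o+aA'^o)\to S^o$, yielding a smooth family $Y^o\to S^o$; one compactifies to $g:Y\to S$. The generically finite $\sigma:\widetilde{S}\to S$ is used only to produce the section $s_L$ (via the compactified stable family on $\widetilde{S}$, the fiber-product trick $Y^{(klr)}$, Viehweg's inclusion \cite[Lemma 3.2]{Viehweg1983}, and the trace map $\sigma_\ast\sigma^\ast\to{\rm id}$), after which Theorem~\ref{thm_abs_Arakelov} is applied directly on $S$ to $L=\xi^\ast(\lambda_{a,r})^{\otimes k}\otimes I_Z$. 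This sidesteps the issue that $\sigma$ may need blowups (so $K_{\widetilde S}+D_{\widetilde S}$ is only $\sigma^\ast(K_S+D)$ up to exceptional divisors) and, as the paper explicitly notes, may even ramify over $S^o$.

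\textbf{The gap.} Your route (b) is not correct as written. An Esnault--Viehweg cyclic cover $Z'\to Z$ branched along a general member $H\in|m(K_{Z/T}+B_Z+aA_Z)|$ does not realize $W=g_\ast\sO_Z(r(K_{Z/T}+B_Z+aA_Z))$ inside $g'_\ast(\omega_{Z'/T}^{\otimes r})$: the canonical bundle formula gives $K_{Z'}=\pi^\ast(K_Z+(m-1)(K_{Z/T}+B_Z+aA_Z))$, which mixes $K_Z$ and the boundary rather than absorbing the boundary. The construction that works is the \emph{relative Kawamata covering} of Lemma~\ref{lem_rel_Kawamata_covering}, which branches along the components of $B'+aA'$ themselves (with suitable auxiliary hyperplanes to keep the cover smooth) and yields $\varrho^\ast(K_{X'}+B'+aA')=K_{Y'}-F$ with $F\geq0$; this is precisely what produces the inclusion $W\subset g_\ast(\omega_{Y/T}^{\otimes r})$ and, crucially, keeps $g$ smooth over $T^o$ so that $R_g=0$. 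Your remark that ``the ramification of $Z'\to Z$ being supported over $D_T$'' is false for the cyclic cover along $H$: the branch locus $H$ meets every fibre of $g$.

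\textbf{A second, smaller point.} Theorem~\ref{thm_abs_Arakelov} takes a rank-one torsion-free sheaf $L$ with a map $L^{\otimes k}\to f_\ast\omega^{\otimes k}$; it does not apply to $W$ directly. To bound $c_1(\det W)$ you must either invoke Theorem~\ref{thm_main_Arakelov_inequality}(2) with $W\subset g_\ast(\omega_{Y/T}^{\otimes r})$ and $R_g=0$ (giving $\mu(W)\leq \tfrac{rd}{2}(K_T+D_T)\cdot(\,\cdots)$, hence $c_1(\det W)\leq \tfrac{rd\,{\rm rank}(\Lambda_{a,r})}{2}(K_T+D_T)\cdot(\,\cdots)$), or---as the paper does---pass to the fiber product $Y^{(klr)}$ so that $(\det W)^{\otimes kr}$ embeds in a single pluricanonical pushforward and then apply Theorem~\ref{thm_abs_Arakelov}. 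With the Kawamata covering in place of the cyclic one and this correction, your descent-from-$T$ argument would go through.
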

	Readers may see Theorem \ref{thm_numbound_polarization} for the precise bounds of $a$ and $r$ so that the theorem is valid. Examples \ref{exmp_1} and \ref{exmp_2} show that the condition "admissible" is necessary for the inequalities.
	\subsubsection{Deformation boundedness of admissible families of  $(d,\Phi_c,\Gamma,\sigma)$-lc stable minimal models}\label{section_defbounded}
	 Theorem \ref{thm_main_numbound_moduli}, combined with the work of Kov\'acs-Lieblich \cite{Kovacs2011}, leads to the deformation boundedness of admissible families of $(d,\Phi_c,\Gamma,\sigma)$-lc stable minimal models. The $(d,\Phi_c,\Gamma,\sigma)$-lc stable minimal models $(X,B),A$ such that the coefficients of $B$ lie in $(0,1)$ form an open substack $\sM_{\rm lc,(0,1)}(d,\Phi_c,\Gamma,\sigma)\subset \sM_{\rm slc}(d,\Phi_c,\Gamma,\sigma)$ which admits a quasi-projective coarse moduli space $M_{\rm lc,(0,1)}(d,\Phi_c,\Gamma,\sigma)$.
	\begin{thm}\label{main_thm_bounded_stable_family}
		Let $S$ be an algebraic variety such that $S_{\rm sing}$ is compact. 
		Then there is a scheme of finite type ${\bf M}$ and a morphism $S\times {\bf M}\to M_{\rm lc,(0,1)}(d,\Phi_c,\Gamma,\sigma)$ that contains all maps $S\to M_{\rm lc,(0,1)}(d,\Phi_c,\Gamma,\sigma)$ which is induced from an admissible family of $(d,\Phi_c,\Gamma,\sigma)$-stable minimal models over $S$.
	\end{thm}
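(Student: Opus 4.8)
The plan is to feed the uniform numerical bound of Theorem \ref{thm_main_numbound_moduli} into the deformation boundedness machinery of Kov\'acs--Lieblich \cite{Kovacs2011}. Write $M:=M_{\rm lc,(0,1)}(d,\Phi_c,\Gamma,\sigma)$ and $\overline M:=M_{\rm slc}(d,\Phi_c,\Gamma,\sigma)$. I will use the following standard inputs (Birkar \cite{Birkar2022}, Koll\'ar \cite{Kollar1990}, et al.): for fixed $(d,\Phi_c,\Gamma,\sigma)$ the $(d,\Phi_c,\Gamma,\sigma)$-lc stable minimal models with $B$-coefficients in $(0,1)$ form a bounded family and an open substack $\sM_{\rm lc,(0,1)}(d,\Phi_c,\Gamma,\sigma)$ of the proper Deligne--Mumford stack $\sM_{\rm slc}(d,\Phi_c,\Gamma,\sigma)$; its coarse space $M$ is quasi-projective and sits inside the projective $\overline M$, which carries the ample $\bQ$-line bundle $\lambda_{a,r}$ for $0<a\ll 1$, $r\gg 0$. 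Thus the relevant moduli functor is bounded, locally closed and separated, with a projectively compactifiable coarse space; by \cite{Kovacs2011} --- this is the step where compactness of $S_{\rm sing}$ enters, through an induction over the strata of $S_{\rm red}$, the proper strata being handled directly via the coarse moduli space --- it then suffices to establish a weak boundedness statement: a bound, uniform over all admissible families, on the degree of the induced moduli map over curves.

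Concretely one must bound $\deg(\xi^\ast\lambda_{a,r})$ in terms of $g(C)$ and $\#\Delta$ only, for every admissible family over a punctured smooth projective curve $C\setminus\Delta$ with induced map $\xi\colon C\to\overline M$. When $K_C+\Delta$ is pseudo-effective, i.e.\ $2g(C)-2+\#\Delta\geq 0$, this is exactly the $\dim S=1$ case of Theorem \ref{thm_main_numbound_moduli}, giving $\deg(\xi^\ast\lambda_{a,r})\leq \frac{rd\,{\rm rank}(\Lambda_{a,r})}{2}\,(2g(C)-2+\#\Delta)$. The only remaining cases are $(g(C),\#\Delta)\in\{(0,0),(0,1)\}$; there I would argue that an admissible family over $\bP^1$ or $\mathbb{A}^1$ is isotrivial, so $\xi$ is constant and the degree is $0$. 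This should be classical in nature: $\xi^\ast\lambda_{a,r}$ is nef, being the pull-back of an ample class, hence of non-negative degree, whereas a non-isotrivial admissible family over such a base would contradict the Arakelov inequality over curves --- applied to a log-resolution of the family, in the spirit of Viehweg--Zuo \cite{VZ2001} --- whose right-hand side is a positive multiple of $2g(C)-2+\#\Delta<0$. Granting weak boundedness, \cite{Kovacs2011} produces the finite-type scheme ${\bf M}$ together with the morphism $S\times{\bf M}\to M$ containing all moduli maps of admissible families over $S$.

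For orientation, here is how weak boundedness bootstraps to boundedness over a higher-dimensional base --- a route on which the full form of Theorem \ref{thm_main_numbound_moduli}, not merely its curve case, can be used directly. Reducing (via the stratification) to $S$ smooth quasi-projective, compactify $S\subset\overline S$ with $D:=\overline S\setminus S$ simple normal crossings and extend the moduli map to $\overline\xi\colon\overline S\dashrightarrow\overline M$, a morphism off a closed subset of codimension $\geq 2$, which is enough for what follows. If $K_{\overline S}+D$ is pseudo-effective, the first inequality of Theorem \ref{thm_main_numbound_moduli} bounds $c_1(\overline\xi^\ast\lambda_{a,r})\cdot H^{\dim S-1}$ by the fixed number $\frac{rd\,{\rm rank}(\Lambda_{a,r})}{2}(K_{\overline S}+D)\cdot H^{\dim S-1}$ for a chosen ample divisor $H$ on $\overline S$; since $\overline\xi^\ast\lambda_{a,r}$ is nef, the Khovanskii--Teissier log-concavity inequalities applied to $\overline\xi^\ast\lambda_{a,r}$ and $H$ bound every mixed number $(\overline\xi^\ast\lambda_{a,r})^j\cdot H^{\dim S-j}$, hence the Hilbert polynomial of the closure of the graph of $\overline\xi$ in $\overline S\times\overline M$, hence the component of the Hilbert scheme of $\overline S\times\overline M$ containing it. If $K_{\overline S}+D$ is not pseudo-effective one runs a $(K_{\overline S}+D)$-MMP to a Mori fibre space over a base of smaller dimension, shows by induction on $\dim S$ that $\overline\xi$ contracts the fibres, and reaches the curve case as the base of the induction.

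The main obstacle is exactly this bridge from Theorem \ref{thm_main_numbound_moduli} to deformation boundedness. The theorem bounds a single intersection number and only under a pseudo-effectivity hypothesis, whereas what is needed is: (i) control of all mixed intersection numbers of $\overline\xi^\ast\lambda_{a,r}$ with an ample class --- provided by nefness plus Khovanskii--Teissier; (ii) a treatment of the non-pseudo-effective bases --- provided by the MMP reduction together with classical isotriviality over rational bases with at most one degeneration point; and (iii) the descent to an arbitrary base $S$ with compact singular locus --- the d\'evissage of \cite{Kovacs2011}. One must also carry the adjective ``admissible'' through the whole argument; it is stable under the operations used (general hyperplane sections, finite base changes), and its necessity is exactly what Examples \ref{exmp_1} and \ref{exmp_2} illustrate. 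The subtle point is making the induction on $\dim S$ in (ii) run compatibly with the d\'evissage in (iii); beyond that, once the weak boundedness of Theorem \ref{thm_main_numbound_moduli} is in hand, the remainder is the formal content of \cite{Kovacs2011}.
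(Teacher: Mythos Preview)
Your overall strategy --- establish weak boundedness over curves, then invoke the Kov\'acs--Lieblich machinery --- matches the paper's exactly. The paper's proof (given as Theorem~\ref{thm_bounded_stable_family}) is much terser: it proves the curve bound and cites \cite[Corollary 2.23]{Kovacs2009} for the reduction from general $S$ to curves. Your extended sketch of a higher-dimensional bootstrap via Khovanskii--Teissier inequalities and an MMP reduction is interesting but unnecessary here; the curve case alone suffices as input to the cited criterion.

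The one substantive difference is the treatment of the cases $(g,\#\Delta)\in\{(0,0),(0,1)\}$, where $K_C+\Delta$ fails to be pseudo-effective and Theorem~\ref{thm_main_numbound_moduli} does not apply as stated. You propose to argue isotriviality directly, appealing to an Arakelov inequality ``in the spirit of \cite{VZ2001}''. This can be made to work but is underdeveloped as written: the Arakelov inequalities at hand carry the pseudo-effectivity hypothesis, so you would need a separate hyperbolicity-type argument tailored to admissible families of stable minimal models (e.g.\ passing through the log smooth model and Kawamata covering as in the proof of Theorem~\ref{thm_numbound_polarization}). The paper instead uses a much simpler trick: choose two extra points $p_1,p_2\in C\setminus\Delta$, set $D'=\Delta\cup\{p_1,p_2\}$, and apply Theorem~\ref{thm_main_numbound_moduli} with $S^o=C\setminus D'$. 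The restricted family is still admissible, the extension $\xi:C\to\overline M$ is unchanged, and now $K_C+D'$ is pseudo-effective in every case, yielding the uniform bound $\deg(\xi^\ast\lambda_{a,r})\leq \frac{rd\,{\rm rank}(\Lambda_{a,r})}{2}(2g+\#\Delta)$. This sidesteps the isotriviality question entirely.
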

	We would like to remark that the morphism $S\times {\bf M}\to M_{\rm lc,(0,1)}(d,\Phi_c,\Gamma,\sigma)$ may not come from a family. Even if $S\times {\bf M}\to M_{\rm lc,(0,1)}(d,\Phi_c,\Gamma,\sigma)$ comes from a family $F:(\sX,A)\to S\times {\bf M}$ of $(d,\Phi_c,\Gamma,\sigma)$-stable minimal models, there may exist $p\in {\bf M}(\bC)$ such that $(\sX_p,A_p)\to S\times\{p\}$ is not admissible. 
	For admissible families of log smooth stable minimal models things are much better.
	A family $f:(X,B),A\to S$ of $(d,\Phi_c,\Gamma,\sigma)$-klt stable minimal models is called {\bf log smooth} if $X\to S$ is smooth and $A+B$ is an $f$-relative simple normal crossing $\bQ$-divisor.
	The groupoids of log smooth families of $(d,\Phi_c,\Gamma,\sigma)$-klt stable minimal models forms an open substack (denoted by $\sM_{\rm sm}(d,\Phi_c,\Gamma,\sigma)$) of $\sM_{\rm slc}(d,\Phi_c,\Gamma,\sigma)$. 
	\begin{thm}\label{main_thm_bounded_stable_family_strong}
		Let $S$ be an algebraic variety such that $S_{\rm sing}$ is compact. 
		Then there is a  scheme of finite type ${\bf M}$ and an admissible log smooth family $F\in\sM_{\rm sm}(d,\Phi_c,\Gamma,\sigma)(S\times {\bf M})$ of klt stable minimal models which contains all admissible log smooth families of $(d,\Phi_c,\Gamma,\sigma)$-klt stable minimal models over $S$.
	\end{thm}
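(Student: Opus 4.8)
The plan is to derive this strengthening from Theorem~\ref{main_thm_bounded_stable_family} by promoting the bounded morphism to the coarse moduli space produced there into an honest bounded family. Two features of the log smooth setting make this possible. First, the conditions that carve out an admissible log smooth family --- smoothness of $X\to S$, the simple normal crossing condition on $A+B$ relative to $S$, and the membership of the coefficients of $B$ in $(0,1)$ --- are open (the last is even locally constant on flat families of pairs), so they are automatically inherited by a bounding family; this is precisely what fails for general admissible families and is why Theorem~\ref{main_thm_bounded_stable_family} only yields a morphism to the coarse space. Second, $\sM_{\rm sm}(d,\Phi_c,\Gamma,\sigma)$ is a \emph{moduli stack}, so a morphism from a scheme $T$ into it is literally a log smooth family over $T$, and the coarse moduli morphism of $\sM_{\rm sm}$ onto its coarse space is proper and quasi-finite (Keel--Mori); hence lifting a bounded family of maps into the coarse space to a bounded family of maps into $\sM_{\rm sm}$ costs only finitely much.

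In detail, let $\sM_{\rm sm}^{\circ}\subset\sM_{\rm sm}(d,\Phi_c,\Gamma,\sigma)$ be the (open) substack of admissible log smooth families, with coarse space $M_{\rm sm}^{\circ}$. Since an admissible log smooth family is in particular an admissible family, Theorem~\ref{main_thm_bounded_stable_family} supplies a finite type scheme ${\bf M}_0$ and a morphism $\Xi_0\colon S\times{\bf M}_0\to M_{\rm lc,(0,1)}(d,\Phi_c,\Gamma,\sigma)$ through which every moduli map $\xi_f\colon S\to M_{\rm lc,(0,1)}(d,\Phi_c,\Gamma,\sigma)$ attached to such a family factors. After shrinking ${\bf M}_0$ --- using that $S_{\rm sing}$ is proper, together with the openness of log smoothness and the local constancy of the $B$-coefficients --- I may assume $\Xi_0$ takes values in $M_{\rm sm}^{\circ}$. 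I would then form the fibre product $\sT:=(S\times{\bf M}_0)\times_{M_{\rm sm}^{\circ}}\sM_{\rm sm}^{\circ}$, which by base change is a Deligne--Mumford stack, proper and quasi-finite over $S\times{\bf M}_0$. For $p\in{\bf M}_0$, a section of $\sT_p:=\sT\times_{S\times{\bf M}_0}(S\times\{p\})\to S$ is exactly a lift of $\Xi_0|_{S\times\{p\}}$ to $\sM_{\rm sm}^{\circ}$, that is, an admissible log smooth family over $S$ whose moduli map is $\Xi_0|_{S\times\{p\}}$. As $p$ varies over ${\bf M}_0$, these sections are parametrised by a Deligne--Mumford stack of finite type over ${\bf M}_0$; taking ${\bf M}$ to be a quasi-compact atlas of it, the tautological section assembles into a morphism $S\times{\bf M}\to\sM_{\rm sm}^{\circ}$, i.e.\ into an admissible log smooth family $F$ over $S\times{\bf M}$.

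It remains to check that $F$ contains all admissible log smooth families over $S$. Given such a family $f$, Theorem~\ref{main_thm_bounded_stable_family} provides $p\in{\bf M}_0(\bC)$ with $\xi_f=\Xi_0|_{S\times\{p\}}$, and $\xi_f$ automatically takes values in $M_{\rm sm}^{\circ}$; thus $f$ is a section of $\sT_p\to S$, hence a $\bC$-point of the section stack, and since the atlas ${\bf M}$ surjects onto that stack there is $p'\in{\bf M}(\bC)$ with $F|_{S\times\{p'\}}\cong f$.

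The step I expect to be the main obstacle is the finite-typeness over ${\bf M}_0$ of the stack of sections of $\sT\to S\times{\bf M}_0$: one must show that lifting a bounded family of maps into the coarse space $M_{\rm sm}^{\circ}$ to a bounded family of maps into the Deligne--Mumford stack $\sM_{\rm sm}^{\circ}$ introduces only boundedly many new parameters, which amounts to bounding the gerby and torsorial ambiguity --- torsors under the finite automorphism groups of the objects --- over the possibly non-proper base $S$. This is exactly where the hypothesis that $S_{\rm sing}$ is compact is used in an essential way, and where one imports and adapts the boundedness machinery of Kov\'acs--Lieblich~\cite{Kovacs2011}; the Arakelov-type input (Theorem~\ref{thm_main_numbound_moduli}) has already been spent inside Theorem~\ref{main_thm_bounded_stable_family}. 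A secondary, more routine, difficulty is the bookkeeping showing that the open and locally constant conditions defining admissible log smooth families interact well with the fibre-product and section-stack constructions, so that the $F$ obtained genuinely lies in $\sM_{\rm sm}(d,\Phi_c,\Gamma,\sigma)(S\times{\bf M})$.
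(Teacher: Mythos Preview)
Your route differs from the paper's. The paper does \emph{not} pass through Theorem~\ref{main_thm_bounded_stable_family} at all: it reproves the curve-case degree bound
\[
\deg(\xi^\ast\lambda_{a,r})\leq \tfrac{rd\,{\rm rank}(\Lambda_{a,r})}{2}\bigl(2g+\deg D\bigr)
\]
exactly as in the proof of Theorem~\ref{main_thm_bounded_stable_family} (again adding two auxiliary points so that $K_C+D$ is pseudo-effective), and then invokes \cite[Theorem~1.7]{Kovacs2011} directly. That theorem of Kov\'acs--Lieblich takes as input precisely such a uniform curve-degree bound on a polarization of a compactified Deligne--Mumford moduli stack and outputs an honest bounding family over $S\times{\bf M}$; the passage from ``bounded morphisms to the coarse space'' to ``bounded family'' is already packaged inside their result.

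Your proposal instead spends the Arakelov input through Theorem~\ref{main_thm_bounded_stable_family} and then attempts to perform that upgrade by hand: form the fibre product with the stack over the coarse space and bound sections of the resulting proper quasi-finite gerbe. You correctly flag that the finite-typeness of this section stack over a possibly non-proper $S$ is the crux, and that one must control torsors under the finite inertia; but this is exactly the technical heart of \cite{Kovacs2011}, so you end up re-deriving what the paper simply cites. A smaller wrinkle: your ``shrink ${\bf M}_0$ so that $\Xi_0$ lands in $M_{\rm sm}^\circ$'' is not literally a shrinking of ${\bf M}_0$ when $S$ is non-proper, since the locus of $p$ with $\Xi_0(S\times\{p\})\subset M_{\rm sm}^\circ$ need not be constructible; one should instead restrict to the open preimage $\Xi_0^{-1}(M_{\rm sm}^\circ)\subset S\times{\bf M}_0$ and note that the slices $S\times\{p\}$ coming from log smooth families lie entirely inside it. In short, your strategy is not wrong, but it is a detour that recreates the content of \cite[Theorem~1.7]{Kovacs2011} rather than invoking it.
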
 
	\begin{exmp}[Log smooth families of projective pairs of log general type]
		Let $f:(X,B)\to S$ be a log smooth family of projective klt pairs of log general type. Assume that the fibers $(X_s,B_s)$ have dimension $d$, volume ${\rm vol}(K_{X_s}+B_s)=v$ and the coefficients of $B$ lie in $c\bZ^{\geq0}$ for some $c\in \bQ^{\geq0}$. Then the relative lc model $(X^{\rm can},B^{\rm can}),0\to S$ (c.f. \cite{BCHM2010}) is an admissible family of $(d,\Phi_c,\{1\},v)$-lc stable minimal models (see \cite[Page 721]{WeiWu2023}). Hence $f$ determines a morphism $S\to\sM_{\rm lc}(d,\Phi_c,\{1\},v)$. By Theorem \ref{main_thm_bounded_stable_family} we have the following claim.
		\begin{cor}
			Fix $d\in\bN,c\in\bQ^{\geq0}$ and $v\in\bQ^{>0}$.
			Let $S$ be an algebraic variety such that $S_{\rm sing}$ is compact. 
			Then there is a scheme of finite type  ${\bf M}$ and a morphism $S\times {\bf M}\to M_{\rm lc,(0,1)}(d,\Phi_c,\{1\},v)$ that contains all the map $S\to M_{\rm lc,(0,1)}(d,\Phi_c,\{1\},v)$ which is induced from a log smooth family of projective pairs of general type.
		\end{cor}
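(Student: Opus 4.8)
The plan is to deduce the statement directly from Theorem \ref{main_thm_bounded_stable_family} by replacing a log smooth family of pairs of log general type by its relative lc model, exactly as indicated in the discussion preceding the corollary.

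First I would set up the reduction. Let $f:(X,B)\to S$ be a log smooth family of projective klt pairs of log general type whose fibres have dimension $d$, volume ${\rm vol}(K_{X_s}+B_s)=v$, and coefficients of $B$ in $c\bZ^{\geq0}$ (these numerical data are what make the associated moduli map land in $M_{\rm lc,(0,1)}(d,\Phi_c,\{1\},v)$). Form the relative lc model $(X^{\rm can},B^{\rm can}),0\to S$ by \cite{BCHM2010}. As recalled in the Example above, citing \cite[Page 721]{WeiWu2023}, this is an \emph{admissible} family of $(d,\Phi_c,\{1\},v)$-lc stable minimal models: the fibrewise lc model of $(X_s,B_s)$ has dimension $d$; the polarizing divisor is $A=0$, so ${\rm vol}(A|_F)=1\in\{1\}$; the volume function is the constant $\sigma\equiv v$; the coefficients of $B^{\rm can}$ still lie in $(0,1)$ since the lc model only contracts (sub)divisors and does not alter the coefficients of the remaining boundary components; and the original log smooth family $(X,B)\to S$ serves as a log smooth birational model in the sense of Definition \ref{defn_admissible}. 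Hence $f$ induces a morphism $\xi_f:S\to\sM_{\rm lc}(d,\Phi_c,\{1\},v)$ which, since admissibility forces the boundary coefficients into $(0,1)$, factors through the open substack $\sM_{\rm lc,(0,1)}(d,\Phi_c,\{1\},v)$ and therefore determines a map $S\to M_{\rm lc,(0,1)}(d,\Phi_c,\{1\},v)$ to the coarse moduli space.

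Next I would invoke Theorem \ref{main_thm_bounded_stable_family} with $\Gamma=\{1\}$ and $\sigma=v$. Since $S_{\rm sing}$ is compact by hypothesis, that theorem produces a finite type scheme ${\bf M}$ together with a morphism $S\times{\bf M}\to M_{\rm lc,(0,1)}(d,\Phi_c,\{1\},v)$ through which every map $S\to M_{\rm lc,(0,1)}(d,\Phi_c,\{1\},v)$ induced by an admissible family of $(d,\Phi_c,\{1\},v)$-stable minimal models over $S$ factors, via the inclusion of some slice $S\times\{p\}$ with $p\in{\bf M}(\bC)$. By the previous paragraph every log smooth family of projective pairs of general type over $S$ with the prescribed numerical data gives rise to exactly such a map, so the same ${\bf M}$ and morphism contain all of them, which is the assertion of the corollary.

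The only point that requires genuine input beyond Theorem \ref{main_thm_bounded_stable_family} is the verification that the relative lc model is well defined, compatible with base change, and admissible with the stated invariants — in particular the deformation invariance of ${\rm vol}(K_{X_s}+B_s)$ along the family. All of this is supplied by \cite{BCHM2010} and the discussion quoted from \cite[Page 721]{WeiWu2023}, so I do not anticipate a serious obstacle; once Theorem \ref{main_thm_bounded_stable_family} is in hand the corollary follows immediately.
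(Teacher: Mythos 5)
Your proposal is correct and follows essentially the same route as the paper: pass to the relative lc model, verify it is an admissible family of $(d,\Phi_c,\{1\},v)$-lc stable minimal models (citing \cite[Page 721]{WeiWu2023}), and invoke Theorem~\ref{main_thm_bounded_stable_family}. One small correction to your reasoning: ${\rm vol}(A|_F)=1$ is not a consequence of $A=0$ (if $F$ were positive-dimensional, $A=0$ would give volume $0$); it holds because for a pair of log general type the Iitaka fibration determined by $K_X+B$ is generically finite, so the general fiber $F$ is a point.
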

	    This generalizes the deformation boundedness of families of general type surfaces by Bedulev-Viehweg \cite{Viehweg2000} to arbitrary dimensions.
	\end{exmp}
    \begin{exmp}[Families of Calabi-Yau varieties]
    	A lc stable minimal model $(X,B),A$ is a stable Calabi-Yau pair if $K_X+B\sim_{\bQ}0$. Theorem \ref{main_thm_bounded_stable_family} implies the deformational boundedness of  $(d,\Phi_c,\Gamma,\sigma)$-lc stable Calabi-Yau pairs. Theorem \ref{main_thm_bounded_stable_family_strong} ensures that there is a family of finite type that parameterizes all log smooth $(d,\Phi_c,\Gamma,\sigma)$-klt stable Calabi-Yau pairs.
    \end{exmp}
    \begin{exmp}[Families of stable Fano pairs]
    	A lc stable minimal model $(X,B),A$ is a stable Fano pair if $(X,A+B),A$ is a stable Calabi-Yau pair. Theorem \ref{main_thm_bounded_stable_family} implies the deformational boundedness of $(d,\Phi_c,\Gamma,\sigma)$-lc stable Fano pairs.
    \end{exmp}
    \begin{exmp}[Families of marked curves]
    	Let $S$ be a smooth variety and let $(X,D)\to S$ be a family of smooth curves of genus $g$ with $m\geq0$ distinct marked points such that $2g-2+m>0$, i.e. $D$ is a smooth divisor with $m$ connected components such that each component is mapped isomorphically onto $S$. Then $(X,D),0\to S$ is a log smooth family of $(1,\Phi_1,\{m\},mt+2g-2)$-lc stable minimal models. Theorem \ref{main_thm_bounded_stable_family_strong} ensures that there is a family of finite type  $(\sX,\sD),0\to{\bf M}\times S$ that parameterizes all families $(X,D)\to S$ of smooth curves of genus $g$ with $m$ distinct marked points.
    \end{exmp}
	The present paper is organized as follows. Section 2 contains the preliminary results on the theory of degeneration and prolongations of a variation of Hodge structure. The main result of Section 2 is a comparison between the pushforward of the dualizing sheaf and the analytic prolongation of the variation of Hodge structure (Proposition \ref{prop_prolongation0_vs_geo}). In Section 3 we introduce the analytic prolongation of Viehweg-Zuo's Higgs sheaves and prove two meta Arakelov type inequalities. Theorem \ref{thm_main_Arakelov_inequality} is proved in Section 4.  In Section 5 we illustrate by an example how the Arakelov bound effects the geometry of the family. We investigate the deformation boundedness for family of admissible lc stable minimal models in Section 6.
	
	{\bf Notations:}
	\begin{itemize}
		\item All the complex spaces are assumed to be separated, reduced, paracompact, countable at infinity and of pure dimension.
		\item Let $X$ be a complex space and $Z\subset X$ a closed analytic subset containing the singular loci $X_{\rm sing}$. A desingularization (resp. functorial desingularization) of the pair $(X,Z)$ is a projective morphism $\pi:\widetilde{X}\to X$ such that $\widetilde{X}$ is smooth, $\pi$ is biholomorphic over $X\backslash Z$, $\pi^{-1}(Z)$ and the exceptional loci ${\rm Ex}(\pi)$ are simple normal crossing divisors on $\widetilde{X}$ (resp. which is functorial in the sense of W\l odarczyk \cite{Wlodarczyk2009}). Notice a functorial desingularization $\pi$ is biholomorphic over the largest open subset $U\subset X_{\rm reg}$ where $U\cap Z\subset U$ is a simple normal crossing divisor.
		\item Let $f:Y\to X$ be a proper holomorphic morphism from a complex space to a connected complex manifold. Let $X'\to X$ be a holomorphic morphism between complex manifolds. The main component of $X'\times_X Y$ is the union of irreducible components of $X'\times_X Y$ which is mapped onto $X'$.
		\item The co-support of a coherent ideal sheaf $I\subset\sO_X$ on a complex space is defined to be ${\rm supp}(\sO_X/I)$.
	\end{itemize}
	\section{Analytic Prolongation of variation of Hodge structure}
	\subsection{Norm estimate for the Hodge metric}
	Let $\bV=(\cV,\nabla,\cF^\bullet,Q)$ be an $\bR$-polarized variation of Hodge structure over $(\Delta^\ast)^n\times \Delta^m$ where $(\cV,\nabla)$ is a flat connection, $\cF^\bullet$ is the Hodge filtration and $Q$ is a real polarization. Let $h_Q$ denote the associated Hodge metric. Let $s_1,\dots,s_n$ be holomorphic coordinates on $(\Delta^\ast)^n$ and denote $D_i:=\{s_i=0\}\subset\Delta^{n+m}$. Let $N_i$ be the unipotent part of ${\rm Res}_{D_i}\nabla$ and let 
	$$p:\bH^{n}\times \Delta^m\to (\Delta^\ast)^n\times \Delta^m,$$ 
	$$(z_1,\dots,z_n,w_1,\dots,w_m)\mapsto(e^{2\pi\sqrt{-1}z_1},\dots,e^{2\pi\sqrt{-1}z_n},w_1,\dots,w_m)$$
	be the universal covering. Let
	$W^{(1)}=W(N_1),\dots,W^{(n)}=W(N_1+\cdots+N_n)$ be the monodromy weight filtrations (centered at 0) on $V:=\Gamma(\bH^n\times \Delta^m,p^\ast\cV)^{p^\ast\nabla}$.
	The following norm estimate for flat sections is proved by Cattani-Kaplan-Schmid \cite[Theorem 5.21]{Cattani_Kaplan_Schmid1986} for the case when $\bV$ has quasi-unipotent local monodromy and by Mochizuki \cite[Part 3, Chapter 13]{Mochizuki20072} for the general case.
	\begin{thm}\label{thm_Hodge_metric_asymptotic}
		For any $0\neq v\in {\rm Gr}_{l_n}^{W^{(n)}}\cdots{\rm Gr}_{l_1}^{W^{(1)}}V$, one has
		\begin{align*}
		|v|^2_{h_Q}\sim \left(\frac{\log|s_1|}{\log|s_2|}\right)^{l_1}\cdots\left(-\log|s_n|\right)^{l_n}
		\end{align*}
		over any region of the form
		$$\left\{(s_1,\dots, s_n,w_1,\dots,w_m)\in (\Delta^\ast)^n\times \Delta^m\bigg|\frac{\log|s_1|}{\log|s_2|}>\epsilon,\dots,-\log|s_n|>\epsilon,(w_1,\dots,w_m)\in K\right\}$$
		for any $\epsilon>0$ and an arbitrary compact subset $K\subset \Delta^m$ .
	\end{thm}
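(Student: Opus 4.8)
The plan is to deduce the estimate from the several-variable $SL(2)$-orbit theorem of Cattani--Kaplan--Schmid (and, when the local monodromies are not quasi-unipotent, from Mochizuki's extension of it). The route is the standard one: on the given region one first replaces $\bV$ by a nilpotent orbit, then by an $SL(2)^n$-orbit, and finally reads off the asymptotics from the $\mathfrak{sl}_2$-weights. Here the relation $\sim$ is to be understood uniformly on a region of the prescribed shape, which is precisely the form of uniformity supplied by those theorems.

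First I would apply the several-variable nilpotent orbit theorem to reduce to the explicit orbit $\theta(z,w)=\exp\big(\sum_{i=1}^{n}z_iN_i\big)\cdot F_\infty(w)$ attached to $\bV$, where $F_\infty$ is the limiting holomorphic period map in the $w$-directions: on every region of the stated shape the Hodge metric $h_Q$ is mutually bounded with the natural metric $\hat h$ of $\theta$, uniformly for $w$ in a compact subset $K\subset\Delta^m$. Since there is no monodromy in the $w$-directions and $F_\infty$ is a genuine holomorphic map on a polydisc, the $w$-dependence contributes only bounded factors, so the asymptotics depend only on the $N_i$, hence only on the filtrations $W^{(j)}$. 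This step is also where the quasi-unipotent hypothesis is removed in the general case, at the price of using Mochizuki's version of the machinery for $\bR$-variations in place of that of Cattani--Kaplan--Schmid; the combinatorics below is unchanged.

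Next I would apply the $SL(2)^n$-orbit theorem to $\theta$. It produces commuting $\mathfrak{sl}_2$-triples on $V$ with semisimple members $Y_{(1)},\dots,Y_{(n)}$ such that, for each $j$, $Y_{(1)}+\cdots+Y_{(j)}$ is the grading operator of $W^{(j)}$ centered at $0$; consequently the joint eigenspace decomposition splits all the $W^{(j)}$ simultaneously, and the iterated graded piece ${\rm Gr}_{l_n}^{W^{(n)}}\cdots{\rm Gr}_{l_1}^{W^{(1)}}V$ is canonically a single joint eigenspace inside $V$, on which $Y_{(1)}+\cdots+Y_{(j)}$ acts by $l_j$. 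The theorem further gives that, on the region in question, $|\cdot|_{\hat h}^2$ on the joint eigenspace where $Y_{(1)}+\cdots+Y_{(j)}$ has eigenvalue $\lambda_j$ is mutually bounded with $\prod_{j=1}^{n}r_j^{\lambda_j}$, with $r_j:={\rm Im}(z_j)/{\rm Im}(z_{j+1})$ for $j<n$ and $r_n:={\rm Im}(z_n)$, the contributions across distinct eigenspaces being of strictly smaller order. As $|s_i|=e^{-2\pi{\rm Im}(z_i)}$ gives $\log|s_j|/\log|s_{j+1}|=r_j$ and $-\log|s_n|=2\pi r_n$, feeding the lift of $v$ (with $\lambda_j=l_j$) back through the nilpotent orbit reduction yields exactly
\[
|v|_{h_Q}^2\ \sim\ \left(\frac{\log|s_1|}{\log|s_2|}\right)^{l_1}\cdots\left(-\log|s_n|\right)^{l_n}.
\]

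The $SL(2)^n$-orbit theorem is itself established by induction on $n$: the case $n=1$ is Schmid's $SL(2)$-orbit theorem, already giving $|v|^2\sim(-\log|s|)^{l}$ for $v\in{\rm Gr}^{W(N)}_l$; the inductive step peels off the factor controlled by the last variable $s_n$ by a one-variable analysis with the remaining punctured variables as parameters, using that $W^{(n)}$ is the relative monodromy weight filtration of $N_n$ with respect to $W^{(n-1)}$, and then invokes the inductive hypothesis on the resulting $(n-1)$-variable data, the normalized coordinates $r_j$ being precisely what survive the rescaling at each stage. I expect the main obstacle to be exactly this machinery: the commutativity of the $Y_{(j)}$, the emergence of the relative weight filtrations, and --- the genuinely delicate point --- uniform control of the approximation errors throughout the whole nested sector $r_1,\dots,r_n>\epsilon$ and uniformly over compact parameter sets $K$, rather than merely along a single ray of approach. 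Granting those results of Cattani--Kaplan--Schmid and Mochizuki, the extraction of the displayed asymptotic is bookkeeping with $\mathfrak{sl}_2$-weights.
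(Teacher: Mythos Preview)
Your outline is correct and matches the paper's treatment: the paper does not give its own proof of this statement but simply cites it as \cite[Theorem~5.21]{Cattani_Kaplan_Schmid1986} in the quasi-unipotent case and \cite[Part~3, Chapter~13]{Mochizuki20072} in general, and the route you sketch via the nilpotent orbit theorem and the several-variable $SL(2)$-orbit theorem is precisely how those references establish the estimate.
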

	Denote $S(\bV)=\cF^{\max\{p|\cF^p\neq0\}}$. The rest of this part is devoted to the norm estimate of $h_Q$ on $S(\bV)$. Denote $\cV_{-1}$ to be Deligne's canonical extension of $(\cV,\nabla)$ whose real parts of the eigenvalues of the residue maps lie in $(-1,0]$. By the nilpotent orbit theorem \cite{Cattani_Kaplan_Schmid1986} $j_\ast S(\bV)\cap\cV_{-1}$ is a subbundle of $\cV_{-1}$.
	\begin{lem}\label{lem_W_F}
		Assume that $n=1$. Then $W_{-1}(N_1)\cap \big(j_\ast S(\bV)\cap\cV_{-1}\big)_{\bf 0}=0$.
	\end{lem}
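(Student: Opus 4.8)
The plan is to combine the polarization identity on the nilpotent-orbit limit mixed Hodge structure with the fact that $S(\bV)$ captures the top piece of the Hodge filtration, which lives in the lowest weight graded piece. Write $H$ for the limiting mixed Hodge structure on $V$, with weight filtration $W_\bullet = W_\bullet(N_1)$ (centered at the weight $w$ of $\bV$, but for notational convenience I will recenter at $0$ as in Theorem \ref{thm_Hodge_metric_asymptotic}) and limiting Hodge filtration $F^\bullet = F^\bullet_\infty$, the fibre at $\mathbf 0$ of the Deligne extension $\cV_{-1}$ together with its extended Hodge filtration. By the nilpotent orbit theorem, $(j_\ast S(\bV)\cap\cV_{-1})_{\mathbf 0} = F^p_\infty$ where $p = \max\{q\mid \cF^q\neq 0\}$ is the top Hodge level. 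So the claim is exactly that $W_{-1}\cap F^p_\infty = 0$.

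First I would recall that on a mixed Hodge structure, for each $k$ the induced Hodge structure on $\mathrm{Gr}^W_k H$ is pure of weight $k$, hence $F^p \mathrm{Gr}^W_k = 0$ whenever $p > k$ (there is no $(a,b)$-part with $a\geq p$ and $a+b=k<2p\le a+b$ forcing $b<a$... more simply, purity of weight $k$ together with $F^p$ being the $p$-th step means $F^p\mathrm{Gr}^W_k$ can only be nonzero if $k\geq p$, and in fact one needs $k\ge 2p-(\text{something})$; the cleanest statement is $F^p \mathrm{Gr}^W_k=0$ for $k<p$ is too weak — I want $k\leq -1$). The key input is that $F^p_\infty$ is concentrated in the \emph{highest} weights: by the general structure of the limit MHS of a polarized VHS of weight $w$ (recentered so that $N_1$ has weight filtration centered at $0$), the Hodge structure on $\mathrm{Gr}^W_k$ is pure of weight $w+k$, and $F^{p}$ of a pure Hodge structure of weight $w+k$ vanishes unless $p\leq w+k$. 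Since $S(\bV)=\cF^{p}$ with $p$ the top level, and the weight of $\bV$ is $w$ with $\cF^{w}\subset \cdots$, one gets that $F^p_\infty$ maps to zero in $\mathrm{Gr}^W_k$ for all $k<0$, i.e. $F^p_\infty\subset W_{\geq 0}$... but that is the wrong direction. Let me instead argue directly via the polarization.

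The cleanest route, and the one I would actually write: suppose $0\neq v\in W_{-1}\cap F^p_\infty$. Because $v\in W_{-1}$, Theorem \ref{thm_Hodge_metric_asymptotic} (applied with $n=1$) gives that $|v|^2_{h_Q}$ decays at least like $(-\log|s|)^{-1}\to 0$ as $s\to 0$ — indeed $v\in W_{-1}$ means all its graded components have index $l_1\leq -1$, so $|v|^2_{h_Q}\lesssim (-\log|s|)^{-1}$. On the other hand, $v$ being a nonzero local section of the subbundle $j_\ast S(\bV)\cap\cV_{-1}$ of $\cV_{-1}$, its Hodge norm is \emph{bounded below} away from zero near $\mathbf 0$: this is the standard estimate for $S(\bV)=\cF^{\text{top}}$ — sections of the top Hodge piece have Hodge norm bounded between positive constants and powers of $\log$, in particular do not tend to $0$ (this is where one uses that $\cF^p$ with $p$ maximal is, up to the Deligne extension, a flat-like piece; concretely the norm estimate on $S(\bV)$ promised in the paragraph preceding the lemma — "The rest of this part is devoted to the norm estimate of $h_Q$ on $S(\bV)$" — will give $|v|_{h_Q}\gtrsim$ a positive power of $-\log|s|$ or a constant, certainly bounded away from $0$). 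These two estimates contradict each other, so $v=0$.

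The main obstacle is making precise the lower bound on $|v|_{h_Q}$ for $v$ a section of $j_\ast S(\bV)\cap\cV_{-1}$: this requires knowing how the Hodge metric on the top Hodge bundle behaves in the Deligne extension, which is exactly the norm estimate for $h_Q$ on $S(\bV)$ that the paper sets up right after this lemma — so depending on the logical order I may instead want to prove the lemma using only the limit MHS formalism. In that case the argument is: $F^p_\infty$ is an $\mathbb R$-sub-Hodge-structure-piece, and by Deligne's $I^{p,q}$-decomposition of the limit MHS, $F^p_\infty = \bigoplus_{a\geq p} I^{a,b}$ and $W_{-1} = \bigoplus_{a+b\leq -1} I^{a,b}$ (indices recentered); their intersection is $\bigoplus_{a\geq p,\ a+b\leq -1} I^{a,b}$, and this forces $b\leq -1-a\leq -1-p$. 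But $N_1$-primitivity and the polarization pairing $Q(\cdot, C\cdot)$ on each primitive part is positive definite, and the symmetry $\overline{I^{a,b}}\equiv I^{b,a}\pmod{W_{a+b-1}}$ together with $I^{a,b}=0$ for $a$ or $b$ outside $[0,2p]$-type ranges (coming from $\cF^\bullet$ having levels in $[0,p]$ after recentering, since $S(\bV)$ is the top nonzero $\cF$) kills these terms: $a\geq p$ and $b\geq 0$ cannot coexist with $a+b\leq -1$. Hence the intersection is $0$. I expect to present this second, purely Hodge-theoretic argument as the proof, with the norm-estimate heuristic as motivation, since it is self-contained given the nilpotent orbit theorem already invoked.
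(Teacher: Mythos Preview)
Your second approach via Deligne's $I^{a,b}$ bigrading is correct in spirit and lands on essentially the same idea as the paper, but your writeup leaves the decisive bound imprecise. The statement ``$I^{a,b}=0$ for $a$ or $b$ outside $[0,2p]$-type ranges'' needs to be the sharp constraint $w-p\leq a,b\leq p$ (with $w$ the weight of $\bV$ and $p$ the top Hodge level), which follows from $F^{p+1}_\infty=0$ and $\overline{F}^{\,w-p}_\infty=V$. Granting this, $a\geq p$ forces $a=p$, and then $a+b\leq w-1$ gives $b\leq w-p-1<w-p$, a contradiction; so the argument closes.

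The paper reaches the same contradiction by a slightly more economical route that avoids the full bigrading: take $l\geq 1$ maximal with $W_{-l}\cap F^p_\infty\neq 0$, so a nonzero $\alpha$ there has nonzero image $[\alpha]\in\mathrm{Gr}^W_{-l}$ of type $(p,w-l-p)$; then the hard Lefschetz isomorphism $N^l:\mathrm{Gr}^W_l\to\mathrm{Gr}^W_{-l}$ (of type $(-l,-l)$, from Schmid) forces a preimage $\beta$ of type $(p+l,w-p)$, which vanishes because $F^{p+l}_\infty=0$. This uses only Schmid's limit-MHS package directly, rather than the Deligne splitting plus the conjugate bound on $b$; your version is a fine alternative once the range on $b$ is stated cleanly.

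Your first approach (norm decay from $W_{-1}$ versus a lower bound on $|v|_{h_Q}$ for sections of $S(\bV)$) is, as you suspected, circular in the paper's logic: the lower bound $1\lesssim |v_j|$ for the adapted frame of $j_\ast S(\bV)\cap\cV_{-1}$ is derived in the paragraph immediately following the lemma precisely by invoking the lemma, so it cannot be used to prove it.
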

	\begin{proof}
		Assume that $W_{-1}(N_1)\cap \big(j_\ast S(\bV)\cap\cV_{-1}\big)_{\bf 0}\neq0$ and let $k$ be the weight of $\bV$. Let $l=\max\{l|W_{-l}(N_1)\cap \big(j_\ast S(\bV)\cap\cV_{-1}\big)_{\bf 0}\neq0\}$. Then $l\geq 1$. 
		By \cite[6.16]{Schmid1973}, the filtration $j_\ast \cF^\bullet\cap\cV_{-1}$ induces a pure Hodge structure of weight $m+k$ on $W_{m}(N_1)/W_{m-1}(N_1)$. Moreover 
		\begin{align}\label{align_hard_lef_N}
		N^l: W_{l}(N_1)/W_{l-1}(N_1)\to W_{-l}(N_1)/W_{-l-1}(N_1)
		\end{align}
		is an isomorphism of type $(-l,-l)$. Denote $S(\bV)=\cF^p$. By the definition of $l$, any nonzero element $\alpha\in W_{-l}(N_1)\cap \big(j_\ast S(\bV)\cap\cV_{-1}\big)_{\bf 0}$ induces a nonzero $[\alpha]\in W_{-l}(N_1)/W_{-l-1}(N_1)$ of Hodge type $(p,k-l-p)$. Since (\ref{align_hard_lef_N}) is an isomorphism, there is $\beta\in W_{l}(N_1)/W_{l-1}(N_1)$ of Hodge type $(p+l,k-p)$ such that $N^l(\beta)=[\alpha]$. However, $\beta=0$ since $\cF^{p+l}=0$. This contradicts to the fact that $[\alpha]\neq0$. Consequently, $W_{-1}(N_1)\cap \big(j_\ast S(\bV)\cap\cV_{-1}\big)_{\bf 0}$ must be zero.
	\end{proof}
	Let $T_i$ denote the local monodromy operator of $\bV$ around $D_i$.
	Since $T_1,\dots,T_n$ are pairwise commutative, there is a finite decomposition 
	$$\cV_{-1}|_{\bf 0}=\bigoplus_{-1<\alpha_1,\dots,\alpha_n\leq 0}\bV_{\alpha_1,\dots,\alpha_n}$$
	such that $(T_i-e^{2\pi\sqrt{-1}\alpha_i}{\rm Id})$ is unipotent on $\bV_{\alpha_1,\dots,\alpha_n}$ for each $i=1,\dots,n$. 
	Let $$v_1,\dots, v_N\in (\cV_{-1}\cap j_\ast S(\bV))|_{\bf 0}\cap\bigcup_{-1<\alpha_1,\dots,\alpha_n\leq 0}\bV_{\alpha_1,\dots,\alpha_n}$$
	be an orthogonal basis of $(\cV_{-1}\cap j_\ast S(\bV))|_{\bf 0}\simeq \Gamma(\bH^n\times\Delta^m,p^\ast S(\bV))^{p^\ast\nabla}$. Then $\widetilde{v_1},\dots,\widetilde{v_N}$ that are determined by
	\begin{align}\label{align_adapted_frame}
	\widetilde{v_j}:={\rm exp}\left(\sum_{i=1}^n\log z_i(\alpha_i{\rm Id}+N_i)\right)v_j\textrm{ if } v_j\in\bV_{\alpha_1,\dots, \alpha_n},\quad \forall j=1,\dots,N
	\end{align}
	form a frame of $\cV_{-1}\cap j_\ast S(\bV)$.
	We always use the notation $\alpha_{D_i}(\widetilde{v_j})$ instead of $\alpha_i$ in (\ref{align_adapted_frame}). By (\ref{align_adapted_frame}) we see that 
	\begin{align*}
	|\widetilde{v_j}|^2_{h_Q}&\sim\left|\prod_{i=1}^nz_i^{\alpha_{D_i}(\widetilde{v_j})}{\rm exp}\left(\sum_{i=1}^nN_i\log z_i\right)v_j\right|^2_{h_Q}\\\nonumber
	&\sim|v_j|^2_{h_Q}\prod_{i=1}^n |z_i|^{2\alpha_{D_i}(\widetilde{v_j})},\quad j=1,\dots,N
	\end{align*}
	where $\alpha_{D_i}(\widetilde{v_j})\in(-1,0]$, $\forall i=1,\dots, n$. 
	It follows from Theorem \ref{thm_Hodge_metric_asymptotic} and Lemma \ref{lem_W_F} that 
	\begin{align*}
	|v_j|^2_{h_Q}\sim \left(\frac{\log|s_1|}{\log|s_2|}\right)^{l_1}\cdots\left(-\log|s_n|\right)^{l_n},\quad l_1\leq l_2\leq\dots\leq l_{n},
	\end{align*}
	over any region of the form
	$$\left\{(s_1,\dots, s_n,w_1,\dots,w_{m})\in (\Delta^\ast)^n\times \Delta^{m}\bigg|\frac{\log|s_1|}{\log|s_2|}>\epsilon,\dots,-\log|s_n|>\epsilon,(w_1,\dots,w_{m})\in K\right\}$$
	for any $\epsilon>0$ and an arbitrary compact subset $K\subset \Delta^{m}$. Hence 
	\begin{align*}
	1\lesssim |v_j|\lesssim|z_1\cdots z_n|^{-\epsilon},\quad\forall\epsilon>0.
	\end{align*}
	The local frame $(\widetilde{v_1},\dots,\widetilde{v_N})$ is $L^2$-adapted in the following sense.
	\begin{defn}(S. Zucker \cite[page 433]{Zucker1979})
		Let $(E,h)$ be a vector bundle with a possibly singular hermitian metric $h$ on a hermitian manifold $(X,ds^2_0)$. A holomorphic local frame $(v_1,\dots,v_N)$ of $E$ is called $L^2$-adapted if, for every set of measurable functions $\{f_1,\dots,f_N\}$, $\sum_{i=1}^Nf_iv_i$ is locally square integrable if and only if $f_iv_i$ is locally square integrable for each $i=1,\dots,N$.
	\end{defn}
	To see that $(\widetilde{v_1},\dots,\widetilde{v_N})$ is $L^2$-adapted, let us consider the measurable functions $f_1,\dots,f_N$. If 
	$$\sum_{j=1}^N f_j\widetilde{v_j}={\rm exp}\left(\sum_{i=1}^nN_i\log z_i\right)\left(\sum_{j=1}^N f_j\prod_{i=1}^n |z_i|^{\alpha_{D_i}(\widetilde{v_j})}v_j\right)$$
	is locally square integrable, then 
	$$\sum_{j=1}^N f_j\prod_{i=1}^n |z_i|^{\alpha_{D_i}(\widetilde{v_j})}v_j$$
	is locally square integrable because the entries of the matrix ${\rm exp}\left(-\sum_{i=1}^nN_i\log z_i\right)$ are $L^\infty$-bounded.
	Since $(v_1,\dots,v_N)$ is an orthogonal basis, 
	$|f_j\widetilde{v_j}|_{h_Q}\sim\prod_{i=1}^n |z_i|^{\alpha_{D_i}(\widetilde{v_j})}|f_jv_j|_{h_Q}$ is locally square integrable for each $j=1,\dots,N$. 
	
	In conclusion, we obtain the following proposition.
	\begin{prop}\label{prop_adapted_frame}
		Let $(X,ds^2_0)$ be a hermitian manifold and $D$ a normal crossing divisor on $X$. Let $\bV$ be an $\bR$-polarized variation of Hodge structure on $X^o:=X\backslash D$. Then there is an $L^2$-adapted holomorphic local frame $(\widetilde{v_1},\dots,\widetilde{v_N})$ of $\cV_{-1}\cap j_\ast S(\bV)$ at every point $x\in D$. Let $z_1,\cdots,z_n$ be holomorphic local coordinates on $X$ so that $D
		=\{z_1\cdots z_r=0\}$. Then there are $\alpha_{D_i}(\widetilde{v_j})\in(-1,0]$, $i=1,\dots, r$, $j=1,\dots,N$ and positive real functions $\lambda_j\in C^\infty(X\backslash D)$, $j=1,\dots,N$ such that
		\begin{align}\label{align_L2adapted_frame}
		|\widetilde{v_j}|^2\sim\lambda_j\prod_{i=1}^r |z_i|^{2\alpha_{D_i}(\widetilde{v_j})},\quad \forall j=1,\dots,N
		\end{align}
		and
		$$1\lesssim \lambda_j\lesssim|z_1\cdots z_r|^{-\epsilon},\quad\forall\epsilon>0$$
		for each $j=1,\dots,N$.
	\end{prop}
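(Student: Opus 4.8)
The bulk of the work is in fact already carried out in the discussion preceding the statement: on a polydisc $(\Delta^\ast)^r\times\Delta^m$ one constructs the frame $(\widetilde{v_1},\dots,\widetilde{v_N})$ of $\cV_{-1}\cap j_\ast S(\bV)$ by the exponential twist \eqref{align_adapted_frame}, proves the norm comparison $|\widetilde{v_j}|^2\sim\lambda_j\prod_i|z_i|^{2\alpha_{D_i}(\widetilde{v_j})}$ with $\lambda_j\sim|v_j|^2_{h_Q}$, derives the two-sided bound $1\lesssim\lambda_j\lesssim|z_1\cdots z_r|^{-\epsilon}$ from Theorem \ref{thm_Hodge_metric_asymptotic} and Lemma \ref{lem_W_F}, and checks that $(\widetilde{v_1},\dots,\widetilde{v_N})$ is $L^2$-adapted. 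So the plan is to explain how this local model yields the statement on an arbitrary hermitian manifold $(X,ds^2_0)$ with a normal crossing divisor $D$, and to verify that the notion of $L^2$-adaptedness does not really depend on the extra datum $ds^2_0$.

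First I would localize. Around a point $x\in D$ choose holomorphic coordinates $z_1,\dots,z_n$ with $D=\{z_1\cdots z_r=0\}$; a polydisc neighbourhood of $x$ is then biholomorphic to $\Delta^n$ and $X^o$ to $(\Delta^\ast)^r\times\Delta^{n-r}$, which is exactly the model situation above with $r$ degenerating coordinates and $m=n-r$ transverse ones. All the objects involved --- $\bV$, its Hodge metric $h_Q$, Deligne's canonical extension $\cV_{-1}$, the subbundle $\cV_{-1}\cap j_\ast S(\bV)$ (a subbundle by the nilpotent orbit theorem), and the frame $(\widetilde{v_1},\dots,\widetilde{v_N})$ built from an $h_Q$-orthogonal basis of the central fibre subordinate to the joint eigenspace decomposition of the monodromies --- are intrinsic to $\bV$ and the chosen coordinates, so the construction, the estimate \eqref{align_L2adapted_frame} and the bounds on $\lambda_j$ transport without change.

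Second, since $ds^2_0$ is a genuine smooth hermitian metric on $X$, on a small polydisc around $x$ its volume form is uniformly comparable to the euclidean one, and local square-integrability of a section is unaffected by replacing the volume form by a comparable one; hence $L^2$-adaptedness with respect to $ds^2_0$ coincides with $L^2$-adaptedness with respect to the euclidean metric, which is what is established before the statement. I would single out as the genuine crux not this assembly but the lower bound $1\lesssim\lambda_j$ hidden in it: it rests on knowing that a local section of $S(\bV)$ has non-negative weight for each iterated monodromy weight filtration $W^{(1)},\dots,W^{(r)}$ --- the several-variable strengthening of Lemma \ref{lem_W_F} --- and on matching this against the sharp asymptotics of Theorem \ref{thm_Hodge_metric_asymptotic} so that the $\log$-polynomial, hence \emph{unbounded}, change-of-frame factor $\exp(\sum_i N_i\log z_i)$ is precisely compensated by the weight normalization of the Hodge norm (the same compensation is what makes the $L^2$-adaptedness argument run). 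Everything else is routine bookkeeping.
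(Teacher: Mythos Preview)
Your proposal is correct and follows exactly the paper's approach: the proposition is stated as a summary of the preceding discussion (the paper writes ``In conclusion, we obtain the following proposition''), and you have accurately reconstructed that discussion, adding only the routine observations that the construction is local and that $L^2$-adaptedness is insensitive to the choice of smooth hermitian metric on $X$. Your identification of the lower bound $1\lesssim\lambda_j$ as the genuine crux is apt; note that Lemma~\ref{lem_W_F}, though stated for a single nilpotent, applies verbatim to each $N_1+\cdots+N_i$ separately (each defines a limiting mixed Hodge structure), which is what the paper is implicitly invoking to obtain the constraint on the weights $l_1,\dots,l_r$.
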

	\subsection{Prolongation of a VHS: log smooth case}\label{section_prolongation}
	Let $X$ be a complex manifold and $D=\sum_{i=1}^l D_i$ a reduced simple normal crossing divisor on $X$.
	Let $(E,h)$ be a holomorphic vector bundle on $X\backslash D$ with a smooth hermitian metric $h$. Let $D_1=\sum_{i=1}^la_iD_i$, $D_2=\sum_{i=1}^lb_iD_i$ be $\bR$-divisors. We denote $D_1<(\leq) D_2$ if $a_i<(\leq) b_i$ for all $i=1,\dots,l$.
	\begin{defn}[Prolongation](Mochizuki \cite{Mochizuki2002}, Definition 4.2)\label{defn_prolongation}
		Let $A=\sum_{i=1}^la_iD_i$ be an $\bR$-divisor, let $U$ be an open subset of $X$, and let $s\in \Gamma(U\backslash D,E)$ be a holomorphic section. We denote $(s)\leq -A$ if $|s|_h=O(\prod_{k=1}^r |z_{k}|^{-a_{i_k}-\epsilon})$ for any positive number $\epsilon$, where $z_1,\dots,z_n$ are holomorphic local coordinates such that $D=\{z_1\cdots z_r=0\}$ and $D_{i_k}=\{z_k=0\}$, $k=1,\dots,r$.
		The $\sO_X$-module $ _{A}E$ is defined as 
		$$\Gamma(U, {_{A}}E):=\{s\in\Gamma(U\backslash D,E)|(s)\leq -A\}$$
		for any open subset $U\subset X$.
		Denote
		\begin{align*}
		{_{<A}}E:=\bigcup_{{B}<{A}}{_{B}}E\quad\textrm{and}\quad{\rm Gr}_{A}E:={_{A}}E/{_{<A}}E.
		\end{align*}	
	\end{defn}
	Let $\bV=(\cV,\nabla,\cF^\bullet,Q)$ be an $\bR$-polarized variation of Hodge structure of weight $w$ on $X\backslash D$. Let $(H:={\rm Gr}_{\cF^\bullet}\cV,\theta:={\rm Gr}_{\cF^\bullet}\nabla)$ denote the total graded quotient. Then $(H,\theta)$ is the Higgs bundle  corresponding to $(\cV,\nabla)$ via Simpson's correspondence \cite{Simpson1988}. The Hodge metric $h_Q$ associated with $Q$ is a harmonic metric on $(H,\theta)$. The triple $(H,\theta,h_Q)$ is a tame harmonic bundle in the sense of Simpson \cite{Simpson1990} and Mochizuki \cite{Mochizuki20072}. Notice that $(H,\theta)$ is a system of Hodge bundles (Simpson \cite[\S 4]{Simpson1992}) in the sense that
	\begin{align*}
	H=\bigoplus_{p+q=w} H^{p,q},\quad H^{p,q}\simeq \cF^{p}/\cF^{p+1},\quad \theta(H^{p,q})\subset H^{p-1,q+1}.
	\end{align*}
	According to Simpson \cite[Theorem 3]{Simpson1990} and Mochizuki \cite[Proposition 2.53]{Mochizuki2009},  the prolongations forms a parabolic structure as follows.
	\begin{thm}\label{thm_parabolic}
		Let $X$ be a complex manifold and $D=\sum_{i=1}^l D_i\subset X$ a reduced simple normal crossing divisor. Let $(H=\oplus_{p+q=w} H^{p,q},\theta,h_Q)$ be the system of Hodge bundles associated with an $\bR$-polarized variation of Hodge structure of weight $w$ on $X\backslash D$.
		For each $\bR$-divisor $A$ supported on $D$, $_{A}H$ is a locally free coherent sheaf such that the following hold.
		\begin{itemize}
			\item  $_{A+\epsilon D_i}H = {_A}H$ for any $i=1,\dots,l$ and any constant $0<\epsilon\ll 1$.
			\item $_{A+D_i}H={_A}H\otimes \sO(-D_i)$ for every $1\leq i\leq l$. 
			\item The subset of $(a_1,\dots,a_l)\in\bR^l$ such that ${\rm Gr}_{\sum_{i=1}^l a_iD_i}H\neq 0$ is discrete. 
			\item The Higgs field $\theta$ has at most logarithmic poles along $D$, i.e. $\theta$ extends to 
			\begin{align*}
			{_{A}}H\to{_{A}}H\otimes\Omega_{X}(\log D).
			\end{align*}
		\end{itemize} 
	\end{thm}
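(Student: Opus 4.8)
The plan is to reduce the statement to a local model on a polydisc and then run Simpson's parabolic formalism for tame harmonic bundles, feeding into it the Hodge-norm asymptotics already recorded in this section. All four assertions are local on $X$ and compatible with shrinking, so we may take $X=\Delta^{n}$ and $D=\{z_1\cdots z_r=0\}$. As already noted above, $(H=\oplus_{p+q=w}H^{p,q},\theta,h_Q)$ is a tame harmonic bundle on $X\setminus D$ in the sense of Simpson and Mochizuki, so Theorem~\ref{thm_parabolic} is an instance of the general structure theorem for prolongations of tame harmonic bundles on a normal crossing divisor; the real work is to make this instance explicit by producing an \emph{adapted frame} and reading off the consequences.

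For the adapted frame I would extend the construction used in Proposition~\ref{prop_adapted_frame} from the top Hodge piece $S(\bV)$ to the whole graded bundle $H$. Fix a point of $D$, pass to the universal cover $p\colon\bH^{r}\times\Delta^{n-r}\to(\Delta^\ast)^{r}\times\Delta^{n-r}$, and let $W^{(1)}=W(N_1),\dots,W^{(r)}=W(N_1+\cdots+N_r)$ be the relative monodromy weight filtrations on $V=\Gamma(p^\ast\cV)^{p^\ast\nabla}$. Choosing flat sections compatible simultaneously with ${\rm Gr}^{W^{(r)}}\cdots{\rm Gr}^{W^{(1)}}V$, with the Hodge filtration (i.e.\ with $\cF^{\bullet}/\cF^{\bullet+1}$), and with the eigenvalue decomposition of $\cV_{-1}|_{{\bf 0}}$, and twisting by ${\rm exp}\big(\sum_i\log z_i(\alpha_i{\rm Id}+N_i)\big)$ exactly as in (\ref{align_adapted_frame}), one obtains a holomorphic frame $\widetilde e_1,\dots,\widetilde e_N$ of $\cV_{-1}$, hence (after ${\rm Gr}_{\cF^{\bullet}}$) of $H$, whose Hodge norms satisfy
\[
|\widetilde e_j|^2_{h_Q}\ \sim\ \lambda_j\prod_{i=1}^{r}|z_i|^{2b_{ij}},\qquad 1\lesssim\lambda_j\lesssim|z_1\cdots z_r|^{-\epsilon}\ \ (\forall\epsilon>0),
\]
with the $b_{ij}$ ranging over a finite set; this is Theorem~\ref{thm_Hodge_metric_asymptotic} applied to all the ${\rm Gr}^{W}$-pieces (the argument of Lemma~\ref{lem_W_F} and the discussion leading to Proposition~\ref{prop_adapted_frame} control the $\lambda_j$). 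As in that discussion such a frame is $L^2$-adapted; more to the point it is \emph{norm-adapted} in the sense that a section $s=\sum_j f_j\widetilde e_j$ satisfies $(s)\le -A$ for $A=\sum_i a_iD_i$ if and only if $|f_j\widetilde e_j|_{h_Q}=O(\prod_i|z_i|^{-a_i-\epsilon})$ for each $j$ and each $\epsilon$, i.e.\ if and only if $f_j\in z_1^{c_{1j}}\cdots z_r^{c_{rj}}\sO_X$, where $c_{ij}$ is the least integer with $c_{ij}\ge-a_i-b_{ij}$.

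This explicit description gives everything. Local freeness: ${}_{A}H$ has the holomorphic frame $\{z_1^{c_{1j}}\cdots z_r^{c_{rj}}\widetilde e_j\}_{j=1}^{N}$. The periodicity ${}_{A+\epsilon D_i}H={}_{A}H$ for $0<\epsilon\ll1$: the integers $c_{ij}$ are unchanged by a sufficiently small increase of $a_i$, because the built-in $\epsilon$-slack in the definition of $(s)\le -A$ already absorbs it. The twist ${}_{A+D_i}H={}_{A}H\otimes\sO_X(-D_i)$: replacing $a_i$ by $a_i+1$ replaces $c_{ij}$ by $c_{ij}+1$, so multiplication by $z_i$ is the asserted isomorphism. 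Discreteness of the jump locus: the weights $b_{ij}$ lie, up to sign, among the real parts of the eigenvalues of the residues of $\nabla$ along $D_i$, a finite set; hence the values of $(a_1,\dots,a_l)$ at which ${\rm Gr}_{\sum a_iD_i}H\neq0$ lie in a discrete subset of $\bR^l$. Finally the logarithmic pole of $\theta$: since $(H,\theta)={\rm Gr}_{\cF^{\bullet}}(\cV_{-1},\nabla)$ and Deligne's canonical extension has at worst logarithmic poles, so does $\theta$; in the adapted frame $\theta$ is, up to a holomorphic term, $\sum_i\theta_i\,\tfrac{dz_i}{z_i}$ with $\theta_i$ the nilpotent residue, so $|\theta(\widetilde e_j)|_{h_Q}\lesssim|\widetilde e_j|_{h_Q}\cdot\big|\tfrac{dz_i}{z_i}\big|$, whence $\theta$ maps ${}_{A}H$ into ${}_{A}H\otimes\Omega_{X}(\log D)$.

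The main obstacle is the input used in the second paragraph: the several-variable asymptotics of the Hodge metric on all of $H$, controlling the nested relative weight filtrations $W(N_1),W(N_1+N_2),\dots$ simultaneously. For quasi-unipotent local monodromy this is the $SL_2$-orbit theorem of Cattani–Kaplan–Schmid, and in general it is Mochizuki's refinement (this is exactly why Theorem~\ref{thm_Hodge_metric_asymptotic} is available to us); once it is in hand, the remaining steps are Simpson's parabolic bookkeeping. The one feature that is genuinely special to the tame several-variable normal-crossing setting — and not merely formal — is that ${}_{A}H$ is \emph{locally free}, not just reflexive; this is the content of Simpson \cite[Theorem 3]{Simpson1990} and Mochizuki \cite[Proposition 2.53]{Mochizuki2009} that the theorem quotes.
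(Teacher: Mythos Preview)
The paper does not prove Theorem~\ref{thm_parabolic} at all; it simply attributes the result to Simpson \cite[Theorem 3]{Simpson1990} and Mochizuki \cite[Proposition 2.53]{Mochizuki2009} and records the statement. Your proposal, by contrast, is an honest sketch of how those references actually argue: reduce to a polydisc, build an adapted frame from the multi-variable norm estimate (Theorem~\ref{thm_Hodge_metric_asymptotic}), and read off the parabolic structure. So there is nothing to compare against in the paper --- your write-up is strictly more informative.

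One correction is needed. You claim $1\lesssim\lambda_j$ and invoke Lemma~\ref{lem_W_F} for it, but that lemma is proved only for the top Hodge piece $S(\bV)$ and its proof genuinely uses that $\cF^{p+l}=0$; for a general graded piece $H^{p,q}$ an adapted flat section can perfectly well lie in $W_{-1}$, and then $\lambda_j\to 0$. Fortunately this overclaim is never used: for the characterisation of ${}_{A}H$ in your frame you only need the two-sided bound
\[
|z_1\cdots z_r|^{\epsilon}\ \lesssim\ \lambda_j\ \lesssim\ |z_1\cdots z_r|^{-\epsilon}\qquad(\forall\,\epsilon>0),
\]
which follows directly from Theorem~\ref{thm_Hodge_metric_asymptotic} since each factor $(\log|s_i|/\log|s_{i+1}|)^{l_i}$ is bounded above and below by any power of $|z_1\cdots z_r|$. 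With that adjustment the frame computation you describe does give local freeness, right-continuity, periodicity, and discreteness of the jumps exactly as you say. The log-pole assertion is likewise fine once you phrase it as an operator-norm bound $|\theta|_{h_Q}\lesssim\sum_i|dz_i/z_i|$ (Schmid's estimate for VHS, or tameness in the general harmonic-bundle language), which combined with the definition of ${}_{A}H$ forces $\theta({}_{A}H)\subset{}_{A}H\otimes\Omega_X(\log D)$.
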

	The proof of the following lemma is straightforward. Thus we omit it here.
	\begin{lem}\label{lem_integral}
		Let $f$ be a holomorphic function on $\Delta^\ast:=\{z\in\bC|0<|z|<1\}$ and $a\in\bR$. Then
		$$\int_{|z|\leq\frac{1}{2}}|f|^2|z|^{2a}dzd\bar{z}<\infty$$
		if and only if $v(f)+a>-1$. Here
		$$v(f):=\min\{l|f_l\neq0\textrm{ in the Laurent expansion } f=\sum_{i\in\bZ}f_iz^i\}.$$
	\end{lem}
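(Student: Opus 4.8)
The plan is to reduce the statement to a one-variable computation via the Laurent expansion. First I would fix a point of $\Delta^\ast$ and recall that every holomorphic $f$ on $\Delta^\ast$ admits a Laurent expansion $f=\sum_{i\in\bZ}f_iz^i$ converging on a punctured neighbourhood of $0$; the quantity $v(f)$ is then the order of the lowest nonvanishing term, which is finite precisely when $f\not\equiv 0$ (the case $f\equiv 0$ being trivial, as the integral vanishes). It suffices to analyze the integral on an annulus $\{\tfrac{1}{2}\leq|z|\leq\tfrac{1}{2}\}$ is replaced by the relevant region; since $f$ is holomorphic and bounded away from $0$ on any compact annulus, the only possible divergence comes from the behaviour near $z=0$, so the finiteness of $\int_{|z|\leq 1/2}|f|^2|z|^{2a}\,dz\,d\bar z$ is equivalent to the finiteness of the same integral over a small punctured disc $\{0<|z|<\varepsilon\}$.

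Next I would pass to polar coordinates $z=re^{i\vartheta}$, so that $dz\,d\bar z = 2r\,dr\,d\vartheta$ (up to a harmless constant factor). On a small punctured disc the series $f(z)z^{-v(f)}=\sum_{i\geq v(f)}f_i z^{i-v(f)}$ is holomorphic and nonvanishing at $0$ (its value there is $f_{v(f)}\neq 0$), hence there are constants $0<c_1\leq c_2$ with $c_1|z|^{v(f)}\leq |f(z)|\leq c_2|z|^{v(f)}$ for $|z|$ small. Therefore, up to multiplicative constants, the integral is comparable to
\begin{align*}
\int_0^{\varepsilon}\!\!\int_0^{2\pi} r^{2v(f)+2a}\cdot r\,d\vartheta\,dr \;=\; 2\pi\int_0^{\varepsilon} r^{2v(f)+2a+1}\,dr.
\end{align*}
The elementary integral $\int_0^\varepsilon r^{\beta}\,dr$ converges if and only if $\beta>-1$; here $\beta = 2v(f)+2a+1$, so convergence holds if and only if $2v(f)+2a+1>-1$, i.e. $v(f)+a>-1$. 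This establishes both directions simultaneously.

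I do not anticipate a genuine obstacle: the statement is a packaging of the classical comparison test, and the only point requiring a word of care is the two-sided estimate $|f(z)|\sim |z|^{v(f)}$ near the puncture, which follows because dividing by $z^{v(f)}$ produces a holomorphic function on the disc with nonzero value at the origin. (Strictly speaking one should note that the Laurent expansion of a function holomorphic on all of $\Delta^\ast$ may have infinitely many negative terms — an essential singularity — in which case $v(f)=-\infty$ and the integral diverges for every $a$, consistent with the stated equivalence; but in the intended applications $f$ extends meromorphically, so $v(f)\in\bZ$.) This is precisely why the paper remarks that the proof is straightforward and omits it.
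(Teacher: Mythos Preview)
Your argument is correct and is precisely the straightforward computation the paper alludes to; the paper itself omits the proof entirely, calling it straightforward. (Minor note: one sentence in your write-up about the annulus is garbled, and the essential-singularity caveat could be justified in one line by Fourier-expanding $|f|^2$ in $\vartheta$, but the mathematics is sound.)
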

	\begin{lem}\label{lem_Deligne_prolongation}
		Notations as above. Let $S(\bV)=\cF^{\max\{p|\cF^p\neq0\}}$. Then one has a natural isomorphism $$\cV_{-1}\cap j_\ast S(\bV)\simeq {_{<D}}S(\bV).$$
		Here $j:X\backslash D\to X$ is the immersion and ${_{<D}}S(\bV)$ is taken with respect to the Hodge metric $h_Q$. Let $U\subset X$ be an open subset. Then a holomorphic section $s\in S(\bV)(U\backslash D)$ extends to a section in ${_{<D}}S(\bV)(U)$ if and only if it is locally square integrable at every point of $U\cap D$. That is, the integration
		$$\int|s|^2_{h_Q}{\rm vol}_{ds^2}$$
		is finite locally at every point of $U\cap D$, where $ds^2$ is a hermitian metric on $X$.
	\end{lem}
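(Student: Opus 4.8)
The plan is to establish the chain of inclusions of subsheaves of $j_\ast S(\bV)$
\begin{align*}
\cV_{-1}\cap j_\ast S(\bV)\ \subseteq\ {_{<D}}S(\bV)\ \subseteq\ \big\{\,\text{sections of }S(\bV)\text{ locally square integrable}\,\big\}\ \subseteq\ \cV_{-1}\cap j_\ast S(\bV),
\end{align*}
whence all three coincide: the outer equality is the asserted isomorphism, and the equality of the first term with the middle set is the $L^2$-characterization of the sections of ${_{<D}}S(\bV)$. All statements are local, so I would work on a small polydisc $U$ around a point $x\in D$ with holomorphic coordinates $z_1,\dots,z_n$ and $D\cap U=\{z_1\cdots z_r=0\}$. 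Note that, with $p_0:=\max\{p\mid\cF^p\ne0\}$, one has $\cF^{p_0+1}=0$, so $S(\bV)=\cF^{p_0}$ equipped with $h_Q$ is literally the top Hodge piece $H^{p_0,w-p_0}$ of the system of Hodge bundles attached to $\bV$; thus Theorem \ref{thm_parabolic} applies, and in particular ${_{<D}}S(\bV)$ is an $\sO_X$-submodule of $j_\ast S(\bV)$ which, by the discreteness of the jumps, equals ${_{D-\delta\sum_iD_i}}S(\bV)$ for $0<\delta\ll1$ and is therefore locally free.

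For the inclusion $\cV_{-1}\cap j_\ast S(\bV)\subseteq{_{<D}}S(\bV)$ I would invoke the $L^2$-adapted holomorphic frame $(\widetilde{v_1},\dots,\widetilde{v_N})$ of $\cV_{-1}\cap j_\ast S(\bV)$ over $U$ furnished by Proposition \ref{prop_adapted_frame}. Combining the asymptotics (\ref{align_L2adapted_frame}) with the bound $\lambda_j\lesssim|z_1\cdots z_r|^{-\epsilon}$ gives $|\widetilde{v_j}|\lesssim\prod_{i=1}^r|z_i|^{\alpha_{D_i}(\widetilde{v_j})-\epsilon}$ for every $\epsilon>0$; since $\alpha_{D_i}(\widetilde{v_j})\in(-1,0]$, the divisor $B_j:=-\sum_{i=1}^r\alpha_{D_i}(\widetilde{v_j})D_i$ has coefficients in $[0,1)$, so $B_j<D$ and $\widetilde{v_j}\in{_{B_j}}S(\bV)(U)\subseteq{_{<D}}S(\bV)(U)$. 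As the $\widetilde{v_j}$ generate $\cV_{-1}\cap j_\ast S(\bV)$ over $\sO_U$ and ${_{<D}}S(\bV)$ is an $\sO_X$-module, this inclusion follows.

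The inclusion ${_{<D}}S(\bV)\subseteq\{\text{locally }L^2\}$ is elementary: any $s\in{_{<D}}S(\bV)(U)$ lies in some ${_B}S(\bV)(U)$ with $B=\sum_ib_iD_i$, $b_i<1$, so $|s|_{h_Q}^2=O\big(\prod_{i=1}^r|z_i|^{-2b_i-\epsilon}\big)$, and taking $\epsilon$ small enough that every exponent exceeds $-2$ makes $\int|s|_{h_Q}^2\,{\rm vol}_{ds^2}$ converge near $x$ by Fubini. For the last inclusion I would take $s\in S(\bV)(U\backslash D)$ locally $L^2$ and write $s=\sum_{j=1}^Nf_j\widetilde{v_j}$ with $f_j$ holomorphic on $U\backslash D$ (the $\widetilde{v_j}$ trivialise $S(\bV)$ there). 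By $L^2$-adaptedness of $(\widetilde{v_j})$ each $f_j\widetilde{v_j}$ is locally $L^2$, and $\lambda_j\gtrsim1$ then gives $\int_U|f_j|^2\prod_{i=1}^r|z_i|^{2\alpha_{D_i}(\widetilde{v_j})}<\infty$. Expanding $f_j=\sum_{\mathbf k}g_{\mathbf k}z^{\mathbf k}$ as a Laurent series in $z_1,\dots,z_r$ with $g_{\mathbf k}$ holomorphic in the remaining variables and integrating out the angular directions, the integral becomes a sum over $\mathbf k$ of terms proportional to $\prod_{i=1}^r\int_{|z_i|<1/2}|z_i|^{2k_i+2\alpha_{D_i}(\widetilde{v_j})}\,dz_id\bar z_i$; by Lemma \ref{lem_integral} such a term is finite only if $k_i>-1-\alpha_{D_i}(\widetilde{v_j})\ge-1$, i.e. $k_i\ge0$, for every $i$ and every $\mathbf k$ occurring. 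Hence each $f_j$ extends holomorphically across $D$ and $s\in(\cV_{-1}\cap j_\ast S(\bV))(U)$, closing the chain.

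I expect the one genuinely delicate point to be this last step: passing from the multivariable integrability of $|f_j|^2\prod_i|z_i|^{2\alpha_{D_i}(\widetilde{v_j})}$ to the termwise vanishing of negative Laurent coefficients requires justifying the Fourier expansion in the angular variables, the orthogonality that decouples the integral into a sum, and Fubini, all while keeping the estimates uniform in $\epsilon$ against the slowly-varying factors $\lambda_j$ of Proposition \ref{prop_adapted_frame}. This is routine bookkeeping rather than a new idea; everything else is a direct combination of the asymptotics (\ref{align_L2adapted_frame}), Lemma \ref{lem_integral}, and the parabolic formalism of Theorem \ref{thm_parabolic}.
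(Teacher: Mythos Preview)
Your proposal is correct and follows essentially the same approach as the paper: both use the $L^2$-adapted frame of Proposition \ref{prop_adapted_frame} together with the norm asymptotics (\ref{align_L2adapted_frame}) and Lemma \ref{lem_integral}, and both pass through the $L^2$-characterization to close the circle of inclusions. Your three-step chain $\cV_{-1}\cap j_\ast S(\bV)\subseteq{_{<D}}S(\bV)\subseteq\{L^2\}\subseteq\cV_{-1}\cap j_\ast S(\bV)$ is just a slightly more explicit unpacking of the paper's two-step argument (forward inclusion, then backward via $L^2$), and your remarks on the multivariable Fubini step simply fill in details the paper leaves to the reader.
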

	\begin{proof}
		It follows from Proposition \ref{prop_adapted_frame} and Lemma \ref{lem_integral} that 
		$$\cV_{-1}\cap j_\ast S(\bV)\subset {_{<D}}S(\bV).$$
		For the converse, let $\widetilde{v_1},\dots,\widetilde{v_N}$ be the $L^2$-adapted local frame of $\cV_{-1}\cap j_\ast S(\bV)$ at some point $x\in D$. Let $\alpha=\sum_{j=1}^N f_j\widetilde{v_j}\in {_{<D}}S(\bV)$ where $f_1,\dots,f_N$ are holomorphic outside $D$. By Lemma \ref{lem_integral}, $\alpha$ is locally square integrable at $x$. Hence all $f_j\widetilde{v_j}$ are locally square integrable at $x$ because $(\widetilde{v_1},\dots,\widetilde{v_N})$ are $L^2$-adapted. By (\ref{align_L2adapted_frame}) and Lemma \ref{lem_integral} again one knows that $f_1,\dots,f_N$ are holomorphic on some neighborhood of $x$. This proves
		$${_{<D}}S(\bV)\subset \cV_{-1}\cap j_\ast S(\bV)$$
		and the last claim of the lemma.
	\end{proof}
	\subsection{Prolongation of a VHS: general case}\label{section_prolongation_general} 
	The analytic prolongation of a variation of Hodge structure over a general base is defined via desingularization. 
	Let $X$ be a complex manifold and $Z\subset X$ a closed analytic subset. Let $D\subset Z$ be the union of irreducible components of $Z$ whose codimension is one. Let $\pi:\widetilde{X}\to X$ be a functorial desingularization of the pair $(X,Z)$ (c.f. \cite{Wlodarczyk2009}) so that $\widetilde{X}$ is smooth, $\pi^{-1}(Z)$ is a simple normal crossing divisor on $\widetilde{X}$ and 
	$$\pi^o:=\pi|_{\widetilde{X}^o}:\widetilde{X}^o:=\pi^{-1}(X\backslash Z)\to X^o:=X\backslash Z$$
	is biholomorphic.
	Let $\bV=(\cV,\nabla,\cF^\bullet,Q)$ be an $\bR$-polarized variation of Hodge structure of weight $w$ on $\widetilde{X}^o$ and $(H=\oplus_{p+q=w}H^{p,q},\theta,h_Q)$ the corresponding Higgs bundle with the Hodge metric $h_Q$. Let $A$ be an $\bR$-divisor supported on $\pi^{-1}(Z)$. Then $\pi_\ast({_{A}}H)$ is a torsion free coherent sheaf on $X$ whose restriction on $X^o$ is $(\pi^o)^{-1\ast}(H)$. By abuse of notation we still denote $\theta:=(\pi^o)^{-1\ast}(\theta)$. $\theta$ is a meromorphic Higgs field on $\pi_\ast({_{A}}H)$ with poles along $Z$. Let ${\rm Cryt}(\pi)\subset X$ be the degenerate loci of $\pi$. Since $\pi$ is functorial, $D\backslash {\rm Cryt}(\pi)$ is a simple normal crossing divisor on $X\backslash {\rm Cryt}(\pi)$ and the exceptional loci $\pi^{-1}({\rm Cryt}(\pi))$ is a simple normal crossing divisor on $\widetilde{X}$. $(\pi_\ast({_{A}}H),\theta)|_{X\backslash {\rm Cryt}(\pi)}$ is locally free and $\theta$ admits at most log poles along $D\backslash {\rm Cryt}(\pi)$. The following negativity result for the kernel of a Higgs field generalizes \cite{Brunebarbe2017} and \cite{Zuo2000}. The main idea of its proof is due to Brunebarbe \cite{Brunebarbe2017}.
	\begin{prop}\label{prop_semipositive_kernel}
		Notations as above. Assume that ${\rm supp}(A)$ lies in the exceptional divisor $\pi^{-1}({\rm Cryt}(\pi))$. Let $K\subset \pi_\ast({_{A}}H)$ be a coherent subsheaf such that $\theta(K)=0$. Then $K^\vee$ is weakly positive in the sense of Viehweg \cite{Viehweg1983}.
	\end{prop}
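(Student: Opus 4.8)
The plan is to put on $K^{\vee}$ a singular hermitian metric with semi-positive curvature current whose local weights are locally bounded from above, and to read off weak positivity from such a metric; the mechanism is the one of Brunebarbe \cite{Brunebarbe2017}, where curvature positivity comes from harmonicity of the Hodge metric on the locus where $\widetilde K$ is a subbundle, and extension of the positivity across the degeneration locus is governed by the Hodge-norm estimates of \S 2. First I would move the problem onto $\widetilde X$. Using the counit $\pi^{\ast}\pi_{\ast}({_{A}}H)\to{_{A}}H$, let $\widetilde K\subset{_{A}}H$ be the saturation of the image of $\pi^{\ast}K$. By Theorem \ref{thm_parabolic}, ${_{A}}H$ is locally free and $\theta$ has at most logarithmic poles; hence $\widetilde K$ is reflexive, it is a holomorphic subbundle of ${_{A}}H$ away from an analytic set $\Sigma$ with ${\rm codim}_{\widetilde X}\Sigma\ge 2$, it restricts to $K$ over $X^{o}\cong\widetilde X^{o}$, and $\theta(\widetilde K)=0$ (true over $\widetilde X^{o}$, and $\theta(\widetilde K)\subset{_{A}}H\otimes\Omega_{\widetilde X}(\log\pi^{-1}(Z))$ is torsion free, hence vanishes). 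Since $K\hookrightarrow\pi_{\ast}\widetilde K$ and this is an isomorphism over $X^{o}$, while $K^{\vee}$ and $\pi_{\ast}(\widetilde K^{\vee})$ agree outside a set of codimension $\ge 2$ and pushforward along a proper birational morphism preserves weak positivity \cite{Viehweg1983}, it suffices to prove that $\widetilde K^{\vee}$ is weakly positive on $\widetilde X$.

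Next comes the curvature computation. Over $U:=\widetilde X\setminus(\pi^{-1}(Z)\cup\Sigma)$ the Hodge metric $h_{Q}$ is smooth, and because $(H,\theta,h_{Q})$ is a harmonic bundle its Chern curvature is $\Theta_{h_{Q}}=-\bigl(\theta\wedge\theta^{\dagger_{h_{Q}}}+\theta^{\dagger_{h_{Q}}}\wedge\theta\bigr)$. Restricting this identity to the holomorphic subbundle $\widetilde K|_{U}$, on which $\theta$ vanishes, and using that Chern curvature decreases on holomorphic subbundles, we obtain that $(\widetilde K|_{U},h_{Q})$ is Griffiths semi-negative, so that $(\widetilde K^{\vee}|_{U},h_{Q}^{\vee})$ is Griffiths semi-positive; equivalently $\sO_{\bP(\widetilde K^{\vee})}(1)$ carries a smooth metric with semi-positive curvature over the preimage of $U$.

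Then I would extend this positivity across $\pi^{-1}(Z)$, and this is the step where the hypothesis ${\rm supp}(A)\subset\pi^{-1}({\rm Cryt}(\pi))$ enters: along the codimension-one part $D$ of $Z$ the parabolic weight $A$ vanishes, so near a general point of $D$ the sheaf ${_{A}}H$ is the Deligne extension with residue eigenvalues in $(-1,0]$, for which Theorem \ref{thm_Hodge_metric_asymptotic} and Proposition \ref{prop_adapted_frame} apply. Those estimates give, for a local holomorphic frame of $\widetilde K^{\vee}$ near $D$, a bound $-\log|\cdot|^{2}_{h_{Q}^{\vee}}\le O\bigl(\sum_{i}\log(-\log|z_{i}|)\bigr)$, which is $\le\epsilon\sum_{i}(-\log|z_{i}|)$ for every $\epsilon>0$; hence the local plurisubharmonic weights of $h_{Q}^{\vee}$ are locally bounded above near $D$. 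By the Riemann extension theorem for plurisubharmonic functions these weights extend across $D$, and across the codimension-$\ge 2$ loci ${\rm Cryt}(\pi)$ and $\Sigma$ automatically, so $h_{Q}^{\vee}$ prolongs to a singular hermitian metric on $\widetilde K^{\vee}$ with semi-positive curvature current and identically vanishing Lelong numbers.

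Finally, a reflexive sheaf carrying a singular hermitian metric of semi-positive curvature and trivial multiplier ideal is weakly positive: on $\bP(\widetilde K^{\vee})$ the metric exhibits $\sO(1)$ as a pseudo-effective class which is nef in codimension one, and this translates by the standard dictionary into weak positivity of $\widetilde K^{\vee}$; combined with the reduction above this gives weak positivity of $K^{\vee}$. The hard part will be the boundary analysis of the third step on the sheaf $\widetilde K$, which is only a subbundle off a set of codimension $\ge 2$: one has to be sure that the curvature current, controlled a priori only on $U$, extends as a closed positive current not merely across the degeneration divisor but also across $\Sigma$, and that the exceptional hypothesis on $A$ really does force the use of the mild $(-1,0]$-extension at every generic point of $D$ so that the Hodge-norm estimates apply verbatim.
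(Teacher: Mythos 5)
Your overall strategy (pull back to $\widetilde X$, show the Hodge metric on the saturation $\widetilde K$ is Griffiths semi-negative over the smooth locus, extend across the boundary, and conclude via \cite[Theorem 2.5.2]{PT2018}) is the same as the paper's; the pullback-then-pushforward reduction is a fine alternative to working directly with the singular metric on $\pi_\ast({_A}H)$, and your curvature computation over $U$ matches the paper's use of Griffiths' formula. But the extension step across $D$ contains a genuine gap.

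You argue that the asymptotics of Theorem \ref{thm_Hodge_metric_asymptotic} and Proposition \ref{prop_adapted_frame} give, for a section of the kernel, a polylogarithmic bound $O(\sum_i\log(-\log|z_i|))$, and you then claim this implies local boundedness above because it is $\le\epsilon\sum_i(-\log|z_i|)$ for every $\epsilon>0$. That inference is false: $\log(-\log|z|)$ satisfies precisely such a bound, is psh on $\Delta^\ast$, but tends to $+\infty$ at the origin and does \emph{not} extend to a plurisubharmonic function on $\Delta$ (psh functions are $[-\infty,+\infty)$-valued). So a Lelong-number-zero estimate is not enough to carry $\log|s|_{h_Q}$ across $D$ as a psh function, and your singular metric may fail to exist. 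What you are missing is exactly the Hodge-theoretic input the paper uses at this point: since $\theta(s)=0$, the flat section $s$ lies in $W_0$ of the monodromy weight filtration along each component of $D$ (this is \cite[Corollary 6.7]{Schmid1973}, or \cite[Lemma 5.4]{Brunebarbe2017}). Combined with the fact that, because ${\rm supp}(A)$ is exceptional, the relevant prolongation along a general point of $D$ is the ${_{\bm 0}}$-one (so the parabolic weights $\alpha_{D_i}$ vanish there), Simpson's norm estimate then gives $|s|_{h_Q}$ genuinely \emph{bounded} near the generic points of $D$ — all exponents $l_i$ are $\le 0$ — and only then does $\log|s|_{h_Q}$ extend psh. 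Add that observation to your step 3 and the argument closes; as written it does not.

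A minor secondary remark: the sign in your estimate is reversed for what you want to prove (you need an upper bound on $\log|s|_{h_Q}$ for sections $s$ of $\widetilde K$, equivalently Griffiths semi-negativity of $h_Q|_{\widetilde K}$, not a lower bound on the dual norm), and the phrase ``trivial multiplier ideal'' in the last step is not needed for the PT2018 criterion.
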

	\begin{proof}
		The Hodge metric $h_Q$ defines a singular hermitian metric on the bundle $_{A}H$, with singularities along $\pi^{-1}(Z)$, in accordance with the notion of singular hermitian metrics on torsion free coherent sheaves as discussed in \cite{Paun2018,HPS2018}.
		Since $\pi^o$ is biholomorphic, we may regard $h_Q$ as a singular hermitian metric on the torsion free coherent sheaf $\pi_\ast({_{A}}H)$. Denote  $K^o:=K|_{X^o}$. By Griffiths' curvature formula
		$$\Theta_{h_Q}(H)+\theta\wedge\overline{\theta}+\overline{\theta}\wedge\theta=0,$$
		one knows that
		$$\Theta_{h_Q}(K^o)=-\theta\wedge\overline{\theta}|_{K^o}+\overline{B}\wedge B$$
		is Griffiths semi-negative, where $B\in A^{1,0}_{X^o}(K^o,K^{o\bot})$ is the second fundamental class. We claim that the hermitian metric $h_Q|_{X^o}$ extends to a singular hermitian metric on $K$ with semi-negative curvatures. It suffices to prove the following assertion: Let $s\in K$ be a section. Then $\log|s|_{h_Q}$ extends to a plurisubharmonic function on $X$.
		
		Since $\Theta_{h_Q}(K^o)$ is Griffiths semi-negative, $\log|s|_{h_Q}$ is a smooth plurisubharmonic function on $X^o$. By  Riemannian extension theorem and Hartogs extension theorem for plurisubharmonic functions \cite[Lemma 12.4]{HPS2018} it suffices to show that $\log|s|_{h_Q}$ is locally bounded from above in codimension one. Let ${\rm Cryt}(\pi)\subset X$ be the degenerate loci of $\pi$, which is of codimension $\geq 2$. Then $D\backslash {\rm Cryt}(\pi)$ is a simple normal crossing divisor on $X\backslash {\rm Cryt}(\pi)$. By the assumption on $A$ one knows that $\pi_\ast({_{A}}H)|_{X\backslash {\rm Cryt}(\pi)}\simeq{_{\bm{0}}}((\pi^{o})^{-1\ast}H)$ where $\bm{0}$ is the zero divisor on $X\backslash {\rm Cryt}(\pi)$. Let $x$ be a general point of some component $D_i$ of $D\backslash {\rm Cryt}(\pi)$. Let $N_i$ be the monodromy operator along $D_i$ associated with the connection $((\pi^{o})^{-1\ast}\cV,(\pi^{o})^{-1\ast}\nabla)$ and let $\{W_k\}_{k\in\bZ}$ be the monodromy weight filtration determined by $N_i$. Since $\theta(s)=0$, one has $s\in W_0$ thanks to \cite[Corollary 6.7]{Schmid1973} (see also \cite[Lemma 5.4]{Brunebarbe2017}). Combining it with the fact that $s\in {_{\bm{0}}}((\pi^{o})^{-1\ast}H)$, Simpson's norm estimate \cite[page 721]{Simpson1990} implies that $|s|_{h_Q}$ is locally bounded at $x$. This implies the claim that $h_Q$ extends (uniquely) to a singular hermitian metric on $K$ with semi-negative curvature. Hence $K^{\vee}$ is weakly positive in the sense of Viehweg by \cite[Theorem 2.5.2]{PT2018}.
	\end{proof}
	\subsection{Prolongation of a VHS: K\"ahler families}
	Let $f:Y\to X$ be a proper morphism between complex manifolds and denote  $n:=\dim X-\dim Y$. Let $Z\subset X$ be a closed analytic subset such that $f$ is a K\"ahler submersion over $X^o:=X\backslash Z$. Denote $Y^o:=f^{-1}(X^o)$ and $f^o:=f|_{Y^o}:Y^o\to X^o$. Then $R^nf^o_\ast(\bR_{Y^o})$ underlies an $\bR$-polarized variation of Hodge structure $\bV^n_{f^o}=(\cV^n,\nabla,\cF^\bullet,Q)$ of weight $n$. Here $\cV^n\simeq R^nf^o_\ast(\bR_{Y^o})\otimes_{\bR}\sO_{X^o}$, $\nabla$ is the Gauss-Manin connection, $\cF^p\simeq R^nf^o_\ast(\Omega^{\geq p}_{Y^o/X^o})$ and $Q$ is an $\bR$-polarization associated with a $f^o$-relative K\"ahler form. Denote $h_Q$ to be the Hodge metric associated with $Q$.
	Let $(H^n_{f^o}=\oplus_{p+q=n}H^{p,q}_{f^o},\theta)$ be the Higgs bundle associated with $\bV^n_{f^o}$. Here $H^{p,q}_{f^o}\simeq R^qf^o_\ast(\Omega^p_{Y^o/X^o})$. 
	\begin{lem}\label{lem_prolongation0_vs_geo}
		Notations as above. Assume that $Z$ is a (reduced) simple normal crossing divisor. Then there is an isomorphism $$f_\ast(\omega_{Y/X})\simeq {_{<Z}}H^{n,0}_{f^o}.$$
	\end{lem}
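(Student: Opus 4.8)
The plan is to realise both $f_\ast(\omega_{Y/X})$ and ${_{<Z}}H^{n,0}_{f^o}$ as subsheaves of $j_\ast\big(f^o_\ast\omega_{Y^o/X^o}\big)$, where $j\colon X^o\hookrightarrow X$ is the inclusion, and to show they contain the same local sections near every point of $Z$ by matching $L^2$-conditions through Lemma \ref{lem_Deligne_prolongation}. First I would record the preliminaries. As $\omega_{Y/X}$ is a line bundle on the manifold $Y$ and $f$ is dominant (being a K\"ahler submersion over the dense $X^o$), the sheaf $f_\ast(\omega_{Y/X})$ is torsion free, hence embeds in $j_\ast\big((f_\ast\omega_{Y/X})|_{X^o}\big)=j_\ast\big(f^o_\ast\omega_{Y^o/X^o}\big)$; similarly ${_{<Z}}H^{n,0}_{f^o}$ is locally free by Theorem \ref{thm_parabolic} (equivalently $\cong\cV^n_{-1}\cap j_\ast S(\bV^n_{f^o})$ by Lemma \ref{lem_Deligne_prolongation}), with restriction $H^{n,0}_{f^o}\simeq f^o_\ast\Omega^n_{Y^o/X^o}=f^o_\ast\omega_{Y^o/X^o}$ to $X^o$, so it also embeds there. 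Finally I would recall that the Hodge metric $h_Q$ restricted to the bottom piece $S(\bV^n_{f^o})=H^{n,0}_{f^o}$ is, up to a positive constant $c_n$ depending only on $n$ (coming from the Hodge--Riemann relations on the middle cohomology of the fibres), the fibrewise $L^2$-metric: $|s|^2_{h_Q}(x)=c_n\int_{Y_x}i^{n^2}s\wedge\bar s$ for $x\in X^o$ and $s$ a holomorphic $n$-form on $Y_x$.

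Next I would set up the local comparison near a fixed $x_0\in Z$. Take a polydisc $U\ni x_0$ on which $\omega_X$ is trivialised by a nowhere-vanishing holomorphic $d$-form $\mu$, $d=\dim X$. A holomorphic section $s$ of $\omega_{Y/X}$ over $f^{-1}(U)$ corresponds to the holomorphic $\dim Y$-form $\eta:=s\otimes f^\ast\mu$ on $f^{-1}(U)$; likewise a holomorphic section $s$ of $\omega_{Y^o/X^o}$ over $f^{-1}(U)\setminus f^{-1}(Z)$ corresponds to a holomorphic $\dim Y$-form $\eta$ on the complement of the divisor $f^{-1}(Z)$ in $f^{-1}(U)$. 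On the submersive locus $f^{-1}(U)\setminus f^{-1}(Z)\to U\setminus Z$, Fubini--Tonelli together with the identification of $h_Q$ with the fibrewise $L^2$-metric give, up to comparing ${\rm vol}_{ds^2}$ with the smooth volume form $i^{d^2}\mu\wedge\bar\mu$ (a ratio that is continuous and nowhere zero, hence bounded above and below on $U$), the equivalence
\[
\int_{U\setminus Z}|s|^2_{h_Q}\,{\rm vol}_{ds^2}<\infty
\quad\Longleftrightarrow\quad
\int_{f^{-1}(U)}i^{(\dim Y)^2}\,\eta\wedge\bar\eta<\infty ,
\]
the right-hand integral being over $f^{-1}(U)\setminus f^{-1}(Z)$, which has full measure in $f^{-1}(U)$.

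Granting this, both inclusions are immediate. If $s\in\omega_{Y/X}(f^{-1}(U))$ then $\eta$ is a holomorphic form on $f^{-1}(U)$, hence square-integrable there (shrink $U$ using properness of $f$ so that $f^{-1}(U)$ is relatively compact in the domain of $\eta$); the equivalence then gives that $s|_{U\setminus Z}$ is $L^2$ for $h_Q$, so $s\in{_{<Z}}H^{n,0}_{f^o}(U)$ by Lemma \ref{lem_Deligne_prolongation}. Conversely, if $s\in{_{<Z}}H^{n,0}_{f^o}(U)$ then by Lemma \ref{lem_Deligne_prolongation} $s|_{U\setminus Z}$ is an $L^2$ holomorphic section of $H^{n,0}_{f^o}$, so by the equivalence the associated holomorphic $\dim Y$-form $\eta$ on $f^{-1}(U)\setminus f^{-1}(Z)$ is locally $L^2$ near $f^{-1}(Z)$; since $f^{-1}(Z)$ is a (possibly non-reduced, non-normal-crossing) divisor in the smooth $Y$, the Riemann-type extension theorem for $L^2$ holomorphic forms---Lemma \ref{lem_integral} applied in the normal direction at the generic point of each component of $f^{-1}(Z)$, then Hartogs extension across the remaining codimension $\geq2$ locus---shows $\eta$ extends holomorphically to $f^{-1}(U)$, whence $s=\eta\otimes f^\ast\mu^{-1}$ extends to a section of $\omega_{Y/X}$ over $f^{-1}(U)$, i.e. $s\in f_\ast(\omega_{Y/X})(U)$. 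As these containments hold near every point of $Z$ and trivially over $X^o$, the two torsion-free sheaves coincide inside $j_\ast\big(f^o_\ast\omega_{Y^o/X^o}\big)$, which is the assertion.

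The step I expect to be the main obstacle is not any single estimate but making the identification of $h_Q|_{H^{n,0}_{f^o}}$ with the fibrewise $L^2$-metric precise, i.e. tracking the normalisation of the polarization $Q$ attached to the $f^o$-relative K\"ahler form through the Hodge--Riemann bilinear relations on middle-dimensional cohomology; the rest is careful bookkeeping, in particular invoking Fubini only over the submersive part $X\setminus Z$ while running the holomorphic-extension argument on the smooth total space $Y$, using that $f^{-1}(Z)$ is an honest divisor there.
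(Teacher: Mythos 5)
Your proposal is correct and follows essentially the same route as the paper's proof: both realize the two sheaves inside $j_\ast f^o_\ast\omega_{Y^o/X^o}$ (the paper first twists by $\omega_X$), characterize membership in ${_{<Z}}H^{n,0}_{f^o}$ by local square-integrability via Lemma \ref{lem_Deligne_prolongation}, identify the Hodge metric on $H^{n,0}_{f^o}$ with the fiberwise $L^2$-pairing and apply Fubini, and finish with the $L^2$ Riemann extension theorem for holomorphic top forms on $Y$. The only stylistic difference is that the paper cites Kawamata's Proposition~16 for that last extension step, whereas you reprove it directly via Lemma \ref{lem_integral} at generic points of $f^{-1}(Z)$ plus Hartogs.
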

	\begin{proof}
		Let $j:X^o\to X$ denote the open immersion.
		It suffices to show that
		${_{<Z}}H^{n,0}_{f^o}\otimes\omega_X=f_\ast(\omega_{Y})$ as subsheaves of $j_\ast H^{n,0}_{f^o}\otimes\omega_X$. Let $s\in H^{n,0}_{f^o}=f^o_\ast(\omega_{Y^o/X^o})$ be a holomorphic section. Denote $\phi=dz_1\wedge\cdots\wedge dz_{d}$ where $z_1,\dots,z_d$ are holomorphic local coordinates on $X$. Thanks to Lemma \ref{lem_Deligne_prolongation}, $s\in {_{<Z}}H^{n,0}_{f^o}$ if and only if the integral
		$$\int_{X^o}|s|^2_{h_Q}\phi\wedge\overline{\phi}=\epsilon_n\int_{X^o}\left(\int_{f^{-1}\{x\}}s(x)\wedge\overline{s(x)}\right)\phi\wedge\overline{\phi}=\epsilon_n\int_{Y^o}(s\wedge f^{o\ast}(\phi))\wedge\overline{s\wedge f^{o\ast}(\phi)}$$
		is finite locally at every point of $Z$, where $\epsilon_n=(-1)^{\frac{n(n-1)}{2}}(\sqrt{-1})^n$.
		The locally finiteness of the right handside is equivalent to that $s\wedge f^{o\ast}(\phi)$ admits a holomorphic extension to $Y$ (c.f. \cite[Proposition 16]{Kawamata1981}). This proves that  ${_{<Z}}H^{n,0}_{f^o}\otimes\omega_X=f_\ast(\omega_{Y})$.
	\end{proof}
	Let us return to the general case. Consider the diagram
	\begin{align}
	\xymatrix{
		Y'\ar[r]^{\sigma} \ar[d]^{f'} & Y \ar[d]^f\\
		X'\ar[r]^{\pi} &X
	}
	\end{align}
	such that the following hold.
	\begin{itemize}
		\item $\pi:X'\to X$ is a desingularization of the pair $(X,Z)$. In particular, $X'$ is smooth, $\pi^{-1}(Z)$ is a simple normal crossing divisor and $\pi^o:=\pi|_{\pi^{-1}(X^o)}:\pi^{-1}(X^o)\to X^o$ is biholomorphic.
		\item $Y'$ is a functorial desingularization of the main component of $Y\times_XX'$. In particular, $Y'\to Y\times_XX'$ is biholomorphic over $Y^o\times_{X^o}\pi^{-1}(X^o)$.
	\end{itemize}
	Let $\omega_{X'}\simeq\pi^{\ast}\omega_X\otimes\sO_{X'}(E)$
	for some exceptional divisor $E$ of $\pi$. We obtain natural morphisms
	\begin{align}\label{align_pullback_pushforward0}
	\pi^\ast(f_\ast(\omega_{Y/X}))\simeq\pi^\ast(f_\ast(\omega_Y)\otimes\omega_X^{-1})\to f'_\ast(\omega_{Y'})\otimes\omega^{-1}_{X'}\otimes\sO_{X'}(E)\simeq f'_\ast(\omega_{Y'/X'})\otimes\sO_{X'}(E).
	\end{align}
	Since $\pi^o$ is biholomorphic, $f'^o:\sigma^{-1}(Y^o)=(\pi f')^{-1}(X^o)\to\pi^{-1}(X^o)$ is a proper K\"ahler submersion. Let  $H^n_{f'^o}$ denote the Higgs bundle associated with $f'^o$. According to Lemma \ref{lem_prolongation0_vs_geo} we have
	\begin{align*}
	f'_\ast(\omega_{Y'/X'})\simeq {_{<\pi^{-1}(Z)_{\rm red}}}H^{n,0}_{f'^o}.
	\end{align*}
	Combining it with (\ref{align_pullback_pushforward0}) we obtain a generically injective morphism
	$$f_\ast(\omega_{Y/X})\to \pi_\ast({_{<\pi^{-1}(Z)_{\rm red}}}H^{n,0}_{f'^o}\otimes \sO_{X'}(E))\simeq \pi_\ast({_{<\pi^{-1}(Z)_{\rm red}+E}}H^{n,0}_{f'^o}).$$
	Since $f_\ast(\omega_{Y/X})$ is torsion free, this map has to be injective. Thus we prove the following.
	\begin{prop}\label{prop_prolongation0_vs_geo}
		Notations as above. There is an inclusion $$f_\ast(\omega_{Y/X})\subset \pi_\ast({_{<\pi^{-1}(Z)_{\rm red}+E}}H^{n,0}_{f'^o}).$$
	\end{prop}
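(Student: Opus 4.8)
The plan is to reduce the general statement to the log smooth case already established in Lemma~\ref{lem_prolongation0_vs_geo}, by passing to a desingularization of the base $X$ and keeping careful track of the discrepancy divisor this introduces. Recall first what Lemma~\ref{lem_prolongation0_vs_geo} gives: when the boundary $Z$ is a reduced simple normal crossing divisor, $f_\ast(\omega_{Y/X})$ coincides, as a subsheaf of $j_\ast H^{n,0}_{f^o}$, with the prolongation ${_{<Z}}H^{n,0}_{f^o}$ formed with respect to the Hodge metric. The identification there is the $L^2$-extension criterion of Lemma~\ref{lem_Deligne_prolongation} combined with the Fubini computation rewriting $\int|s|^2_{h_Q}$ fibrewise, which converts square-integrability of a section $s\in f^o_\ast(\omega_{Y^o/X^o})$ into holomorphic extendability of $s\wedge f^{o\ast}(dz_1\wedge\cdots\wedge dz_d)$ across $Z$ (cf.\ \cite[Proposition~16]{Kawamata1981}).

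Next I would set up the desingularization square. Let $\pi:X'\to X$ be a desingularization of the pair $(X,Z)$, so that $\pi^{-1}(Z)$ is simple normal crossing and $\pi$ is biholomorphic over $X^o$, and let $Y'$ be a functorial desingularization of the main component of $Y\times_X X'$; then $f'^o:\sigma^{-1}(Y^o)\to\pi^{-1}(X^o)$ is a proper K\"ahler submersion canonically identified with $f^o$ under $\pi^o$. Writing $\omega_{X'}\simeq\pi^\ast\omega_X\otimes\sO_{X'}(E)$ with $E\geq0$ exceptional, the insensitivity of the pushforward of the dualizing sheaf to the modifications $Y'\to Y\times_X X'\to Y$, together with ordinary pullback along $\pi$, produces the natural morphism \eqref{align_pullback_pushforward0},
\[
\pi^\ast\big(f_\ast(\omega_{Y/X})\big)\longrightarrow f'_\ast(\omega_{Y'/X'})\otimes\sO_{X'}(E),
\]
which is an isomorphism over $X^o$. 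Applying Lemma~\ref{lem_prolongation0_vs_geo} over the simple normal crossing base $X'$ then gives $f'_\ast(\omega_{Y'/X'})\simeq{_{<\pi^{-1}(Z)_{\rm red}}}H^{n,0}_{f'^o}$.

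Finally I would push \eqref{align_pullback_pushforward0} forward along $\pi$ and apply the parabolic twisting identity of Theorem~\ref{thm_parabolic}, in the form ${_{<B}}H\otimes\sO_{X'}(E)\simeq{_{<B+E}}H$, to obtain
\[
f_\ast(\omega_{Y/X})\longrightarrow\pi_\ast\pi^\ast\big(f_\ast(\omega_{Y/X})\big)\longrightarrow\pi_\ast\big({_{<\pi^{-1}(Z)_{\rm red}+E}}H^{n,0}_{f'^o}\big),
\]
the first arrow being the unit of adjunction. Over $X^o$ every arrow above is an isomorphism, so the composite is generically injective; since $f_\ast(\omega_{Y/X})$ is torsion free, a generically injective map out of it is injective, which is exactly the asserted inclusion. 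The step I expect to be most delicate is the construction of \eqref{align_pullback_pushforward0} and the verification that it is the canonical map restricting to the identity over $X^o$: this is where one must use that pushforwards of dualizing sheaves are birational invariants of the total space, that they behave correctly under the non-flat base change $\pi$, and that the twist by $\sO_{X'}(E)$ matches precisely the shift in parabolic weight dictated by Theorem~\ref{thm_parabolic}. Everything downstream of this is formal, given Lemma~\ref{lem_prolongation0_vs_geo} and torsion-freeness.
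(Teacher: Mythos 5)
Your proof follows exactly the route the paper takes: desingularize the pair $(X,Z)$ and the main component of $Y\times_X X'$, invoke the natural comparison map $\pi^\ast f_\ast(\omega_{Y/X})\to f'_\ast(\omega_{Y'/X'})\otimes\sO_{X'}(E)$, apply Lemma~\ref{lem_prolongation0_vs_geo} on the log smooth base $X'$, absorb the twist by $\sO_{X'}(E)$ into the parabolic index, and conclude injectivity of the resulting generically injective map via torsion-freeness of $f_\ast(\omega_{Y/X})$. This is the paper's own argument, merely spelled out slightly more explicitly (the adjunction unit you name is what the paper leaves implicit when it passes from a map on $X'$ to one on $X$).
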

	\section{Analytic prolongation of Viehweg-Zuo's Higgs sheaf}\label{section_Analytic_prolongation_VZ}
	This section is dedicated to the preparation for the proofs of  Theorem \ref{thm_main_Arakelov_inequality} and Theorem \ref{thm_main_numbound_moduli}. After introducing a technical but flexible geometrical setting, we generalize Viehweg-Zuo's construction of two Higgs sheaves to this setting by using analytic prolongations. An Arakelov type inequality under this setting is proved in Section \ref{section_AI_abstract}. 
	\subsection{Setting}\label{section_setting}
	Throughout this section let us fix a proper holomorphic morphism $f:Y\to X$ between complex manifolds  with  $n:=\dim Y-\dim X$ the relative dimension. We do not require $f$ to have connected fibers. Assume that there is a simple normal crossing divisor $D_f\subset X$ such that $f^o:=f|_{Y^o}:Y^o\to X^o$ is a K\"ahler submersion where $X^o:=X\backslash D_f$ and $Y^o:=f^{-1}(X^o)$. We fix a torsion free coherent sheaf $L$ on $X$ which is invertible on $X^o$ (hence ${\rm rank}(L)=1$), and a nonzero morphism
	\begin{align}\label{align_s}
	s_L:L^{\otimes k}\to f_\ast(\omega_{Y/X}^{\otimes k})
	\end{align}
	for some $k\geq 1$. 
	\subsection{Viehweg-Zuo's Higgs sheaves}\label{section_VZ_sheaf}
	Notations as in Section \ref{section_setting}. Let $L^{\vee\vee}$ be the reflexive hull of $L$ with $L\to L^{\vee\vee}$ the natural inclusion map. Since ${\rm rank}(L^{\vee\vee})=1$, $L^{\vee\vee}$ is an invertible sheaf. Because $L$ is torsion free and invertible on $X^o$, $\sI_T:=L\otimes(L^{\vee\vee})^{-1}\subset\sO_X$ is a coherent ideal sheaf whose co-support lies in a closed analytic subset $T\subset D_f$ such that ${\rm codim}_X(T)\geq 2$. Consider a diagram
	\begin{align}\label{align_setting}
	\xymatrix{
		\widetilde{Y}\ar[r]^{\sigma}\ar[d]_{\tilde{f}} & Y\ar[d]^f\\
		\widetilde{X}\ar[r]^{\pi} & X
	}
	\end{align}
	of holomorphic maps between complex manifolds such that the following hold. 
	\begin{itemize}
		\item $\pi$ is a functorial desingularization of $(X,T,D_f)$ in the sense of W{\l}odarczyk \cite{Wlodarczyk2009}. In particular,  $\widetilde{X}$ is a compact complex manifold, $\pi$ is a projective morphism which is biholomorphic over $X\backslash T$. $\pi^{-1}(D_f)$ and $E:=\pi^{-1}(T)$ are simple normal crossing divisors. 
		\item $\widetilde{Y}$ is a functorial desingularization of the main component of $Y\times_X\widetilde{X}$. In particular,  $\widetilde{Y}\to Y\times_X\widetilde{X}$ is biholomorphic over $f^{-1}(X\backslash T)\times_{X\backslash T}\pi^{-1}(X\backslash T)$.
	\end{itemize}
	Since $\pi$ is biholomorphic on $\widetilde{X}\backslash E$, there is a constant $k_0\geq 0$ and a natural map
	\begin{align*}
	\pi^{\ast}f_\ast(\omega_{Y/X}^{\otimes k})\otimes\sO_{\widetilde{X}}(-k_0kE)\to \widetilde{f}_\ast(\omega^{\otimes k}_{\widetilde{Y}/\widetilde{X}}).
	\end{align*}
	Taking (\ref{align_s}) into account, we obtain that
	\begin{align*}
	\pi^{\ast}(L^{\vee\vee})^{\otimes k}\otimes\pi^\ast(\sI_T)^{\otimes k}\simeq\pi^{\ast}L^{\otimes k}\to \pi^\ast(f_\ast(\omega_{Y/X}^{\otimes k}))\to \widetilde{f}_\ast(\omega^{\otimes k}_{\widetilde{Y}/\widetilde{X}})\otimes\sO_{\widetilde{X}}(k_0kE). 
	\end{align*}
	Hence there is an effective divisor $\widetilde{E}$, supported on $E$, such that there is a nonzero map
	\begin{align*}
	\pi^{\ast}(L^{\vee\vee})^{\otimes k}\otimes\sO_{\widetilde{X}}(-k\widetilde{E})\to  \widetilde{f}_\ast(\omega^{\otimes k}_{\widetilde{Y}/\widetilde{X}}). 
	\end{align*}
	Denote $\widetilde{L}:=\pi^{\ast}(L^{\vee\vee})\otimes\sO_{\widetilde{X}}(-\widetilde{E})$ and $L^o:=L|_{X^o}$. Denote $\pi^o:=\pi|_{\pi^{-1}(X^o)}:\pi^{-1}(X^o)\to X^o$. The arguments above show that there is a morphism
	\begin{align}
	s_{\widetilde{L}}:\widetilde{L}^{\otimes k}\to \widetilde{f}_\ast(\omega^{\otimes k}_{\widetilde{Y}/\widetilde{X}})
	\end{align} 
	and an isomorphism
	\begin{align}\label{align_iso_Lo_L}
	\pi^{o\ast}(L^o)\simeq\widetilde{L}|_{\pi^{-1}(X^o)}
	\end{align} 
	such that the diagram 
	\begin{align}\label{align_commut_s_s0}
	\xymatrix{
		\widetilde{L}^{\otimes k}|_{\pi^{-1}(X^o)}\ar[r]^-{s_{\widetilde{L}}}& \widetilde{f}_\ast(\omega^{\otimes k}_{\widetilde{Y}/\widetilde{X}})|_{\pi^{-1}(X^o)}
		\\
		\pi^{o\ast}(L^o)^{\otimes k}\ar[u]^{\simeq}\ar[r]^-{\pi^{o\ast}(s_L|_{X^o})} & \pi^{o\ast}f^o_\ast(\omega^{\otimes k}_{Y^o/X^o})\ar[u]^{\simeq}
	}
	\end{align}
	is commutative. 
	Define a line bundle
	$$B^o=\omega_{Y^o/X^o}\otimes f^{o\ast}(L^{o})^{-1}$$
	on $Y^o$ and a line bundle
	$$\widetilde{B}=\omega_{\widetilde{Y}/\widetilde{X}}\otimes \tilde{f}^\ast(\widetilde{L}^{-1})$$
	on $\widetilde{Y}$. Then the map $s_{\widetilde{L}}$ determines a non-zero section
	$\widetilde{s}\in H^0(\widetilde{Y},\widetilde{B}^{\otimes k})$.
	Let $\varpi:\widetilde{Y}_k\to \widetilde{Y}$ be the $k:1$ cyclic covering map branched along $\{\widetilde{s}=0\}$ and let $\mu:Z\to \widetilde{Y}_k$ be a functorial desingularization which is biholomorphic over the complement of $\{\varpi^\ast\widetilde{s}=0\}$. Denote  $g:=\widetilde{f}\varpi\mu$. The morphisms are gathered in the following diagram.
	\begin{align*}
	\xymatrix{
		Z\ar[r]^{\mu}\ar[drr]^g & \widetilde{Y}_k \ar[r]^{\varpi} & \widetilde{Y}\ar[d]^{\widetilde{f}}\ar[r]^{\sigma}& Y\ar[d]^f\\
		&& \widetilde{X}\ar[r]^\pi & X
	}.
	\end{align*}
	Let $D_g\subset\widetilde{X}$ be a reduced closed analytic subset containing $\pi^{-1}(D_f)$, such that $g$ is a submersion over $\widetilde{X}^o:=\widetilde{X}\backslash D_g$. Denote $Z^o:=g^{-1}(\widetilde{X}^o)$ and denote $g^o:=g|_{Z^o}:Z^o\to \widetilde{X}^o$ to be the restriction map. Since $\mu$ and $\varpi$ are projective morphisms, $g^o$ is a proper K\"ahler submersion. 
	
	Consider the diagram
	\begin{align}
	\xymatrix{
		Z'\ar[r]_{\sigma'} \ar[d]^{h} \ar@/^/[rr]^{\varphi}& Z \ar[d]^g\ar[r]_{\sigma\varpi\mu}& Y\ar[d]^f\\
		X'\ar[r]^{\rho} \ar@/_/[rr]_{\psi}&\widetilde{X}\ar[r]^\pi &X
	}
	\end{align}
	where $\varphi:=\sigma\varpi\mu\sigma'$ and $\psi:=\pi\rho$, such that the following hold.
	\begin{itemize}
		\item $\rho:X'\to \widetilde{X}$ is a functorial desingularization of the pair $(\widetilde{X},D_g)$. In particular, $X'$ is smooth, $\rho^{-1}(D_g)$ and $\psi^{-1}(D_f)$ are simple normal crossing divisors and $\rho^o:=\rho|_{\rho^{-1}(\widetilde{X}^o)}:\rho^{-1}(\widetilde{X}^o)\to \widetilde{X}^o$ is biholomorphic.
		\item $Z'$ is a functorial desingularization of the main component of $Z\times_{\widetilde{X}}X'$. In particular, $Z'\to Z\times_{\widetilde{X}}X'$ is biholomorphic over $Z^o\times_{\widetilde{X}^o}\rho^{-1}(\widetilde{X}^o)$.
	\end{itemize}
	Denote $X'^o:=\rho^{-1}(\widetilde{X}^o)$, $Z'^o:=h^{-1}(X'^o)$ and $h^o:=h|_{Z'^o}:Z'^o\to X'^o$.
	$h^o$ is a proper K\"ahler submersion which is the pullback of the family $g^o:Z^o\to\widetilde{X}^o$ via the isomorphism $\rho^o:X'^o\to\widetilde{X}^o$.
	Notice that the relative dimension of $h$ is $n$. 
	Then $R^nh^o_\ast(\bR_{Z'^o})$ underlies an $\bR$-polarized variation of Hodge structure of weight $n$ on $X'^o$. Let $(H^n_{h^o}=\bigoplus_{p=0}^n H^{p,n-p}_{h^o},\theta,h_Q)$ be the associated system of Hodge bundles with the Hodge metric $h_Q$. Namely, $H^{p,q}_{h^o}:= R^qh^o_\ast\Omega^p_{Z'^o/X'^o}$ and
	$\theta:H^{p,q}_{h^o}\to H^{p-1,q+1}_{h^o}\otimes\Omega_{X'^o}$ is defined by taking wedge product with the Kodaira-Spencer class. 
	
	Let $\omega_{X'}\simeq\rho^{\ast}\omega_{\widetilde{X}}\otimes\sO_{X'}(E')$
	for some exceptional divisor $E'$ of $\rho$.
	By Theorem \ref{thm_parabolic} and \S \ref{section_prolongation_general},
	$$\left(\psi_{\ast}\left({_{<\psi^{-1}(D_f)_{\rm red}+E'}}H^n_{h^o}\right)=\bigoplus_{p=0}^n \psi_{\ast}\left({_{<\psi^{-1}(D_f)_{\rm red}+E'}}H^{p,n-p}_{h^o}\right),\theta\right)$$is a meromorphic Higgs sheaf on $X$ such that the Higgs field $\theta$ is holomorphic over $X\backslash\pi(D_g)$ and is regular along $\pi(D_g)$.
	
	The main result of this subsection is the following theorem, whose construction is inspired by Viehweg-Zuo \cite{VZ2001}.
	\begin{thm}\label{thm_VZ_prolongation}
		Notations and assumptions as in \S \ref{section_setting} and \ref{section_VZ_sheaf}. Then the following hold.
		\begin{enumerate}
			\item There is a natural inclusion $\pi_\ast(\widetilde{L})\subset \psi_{\ast}\left({_{<\psi^{-1}(D_f)_{\rm red}+E'}}H^{n,0}_{h^o}\right)$;
			\item Let $(\bigoplus_{p=0}^n L^p,\theta)\subset \left(\psi_{\ast}\left({_{<\psi^{-1}(D_f)_{\rm red}+E'}}H^n_{h^o}\right),\theta\right)$ be the meromorphic Higgs subsheaf generated by $L^0:=\pi_\ast(\widetilde{L})$, where $$L^p\subset\psi_{\ast}\left({_{<\psi^{-1}(D_f)_{\rm red}+E'}}H^{n-p,p}_{h^o}\right).$$ Then for each $0\leq p<n$ the Higgs field $$\theta:L^p|_{X\backslash \pi(D_g)}\to L^{p+1}|_{X\backslash \pi(D_g)}\otimes \Omega_{X\backslash \pi(D_g)}$$ is holomorphic over $X\backslash D_f$ and has at most log poles along $D_f$, i.e. 
			$$\theta(L^p)\subset L^{p+1}\otimes\Omega_X(\log D_f).$$
		\end{enumerate}
	\end{thm}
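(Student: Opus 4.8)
The plan is to transport Viehweg--Zuo's construction \cite{VZ2001} into the analytic-prolongation framework of \S\ref{section_prolongation}--\S\ref{section_prolongation_general}: first realize the relevant sheaves over the locus where $\widetilde f$ is a K\"ahler submersion, then extend them across the boundary while controlling the Hodge metric by Proposition \ref{prop_adapted_frame}, Lemma \ref{lem_Deligne_prolongation} and the comparison Lemma \ref{lem_prolongation0_vs_geo}.

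\textbf{Claim (1).} The morphism $s_{\widetilde L}$ furnishes a nonzero section $\widetilde s\in H^0(\widetilde Y,\widetilde B^{\otimes k})$, with $\widetilde B=\omega_{\widetilde Y/\widetilde X}\otimes\widetilde f^{\ast}\widetilde L^{-1}$, hence (by the projection formula) an inclusion $\widetilde L\hookrightarrow\widetilde f_{\ast}(\widetilde B^{\otimes k})\otimes\widetilde L=\widetilde f_{\ast}\big(\omega_{\widetilde Y/\widetilde X}\otimes\widetilde B^{\otimes(k-1)}\big)$. As in \cite{VZ2001}, the section $\widetilde s$ also defines the cyclic cover $\varpi$, and the standard eigensheaf analysis of $\varpi_{\ast}\omega_{\widetilde Y_k/\widetilde X}$ (using that $\mu$ is a functorial, hence equivariant, resolution) combines with the above inclusion to produce a nonzero --- hence injective, since $\widetilde L$ is invertible --- morphism $\widetilde L\to g_{\ast}\omega_{Z/\widetilde X}$, i.e.\ a generically injective map into the top Hodge bundle $H^{n,0}_{g^{o}}$; pulling back along the biholomorphism $\rho^{o}$ and using (\ref{align_iso_Lo_L}) gives such a map over $\rho^{-1}(\widetilde X^{o})$. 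I would then estimate the Hodge norm of the image of a local generator $\ell$ of $\widetilde L$: by Lemma \ref{lem_prolongation0_vs_geo} and a routine computation with the cyclic cover, at a point $x$ it is dominated, up to a locally bounded factor, by $\int_{\widetilde Y_{x}}|\widetilde s|^{2(1-1/k)}\,|\ell'|^{2}$ for a local generator $\ell'$ of $\omega_{\widetilde Y/\widetilde X}$; since $\widetilde s$ is holomorphic it is bounded, while $\int_{\widetilde Y_{x}}|\ell'|^{2}$ is, again by Lemma \ref{lem_prolongation0_vs_geo} applied to the K\"ahler submersion $\widetilde f$ over $\widetilde X\setminus\pi^{-1}(D_f)$, of order $O(\prod_{i}|z_i|^{-\epsilon})$ in coordinates cutting out $\pi^{-1}(D_f)$ only. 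Thus the image of $\ell$ stays bounded across the components of $D_g$ not lying over $D_f$ and grows at most subpolynomially along $\pi^{-1}(D_f)$, so by the $L^{2}$-criterion of Lemma \ref{lem_Deligne_prolongation} it extends to a section of ${_{<\psi^{-1}(D_f)_{\rm red}+E'}}H^{n,0}_{h^{o}}$ on $X'$, with $E'$ absorbing the discrepancy of $\rho$ and the twist $\widetilde E$. Applying $\psi_{\ast}=\pi_{\ast}\rho_{\ast}$ together with $\rho_{\ast}\rho^{\ast}\widetilde L=\widetilde L$ then yields $\pi_{\ast}(\widetilde L)\subset\psi_{\ast}\big({_{<\psi^{-1}(D_f)_{\rm red}+E'}}H^{n,0}_{h^{o}}\big)$.

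\textbf{Claim (2).} Set $L^{0}:=\pi_{\ast}(\widetilde L)$ and $S:=\pi^{-1}(D_f)_{\rm red}$, and let $L^{\bullet}$ be the Higgs subsheaf generated by $L^{0}$. By Theorem \ref{thm_parabolic}, computed on the log-smooth model $X'$, the Higgs field of the variation of Hodge structure of $h^{o}$ has at most logarithmic poles along $\rho^{-1}(D_g)_{\rm red}$, so a priori $\theta$ is a meromorphic Higgs field on $X$ with poles along $\pi(D_g)$. To see that on $L^{\bullet}$ the poles occur only along $D_f$, I would follow \cite{VZ2001}: using the cyclic-cover structure one identifies $L^{p}$, up to a twist by $\widetilde L$ and by a line bundle supported on the branch locus of $\varpi$, with a subsheaf of $R^{p}\widetilde f_{\ast}\big(\Omega^{n-p}_{\widetilde Y/\widetilde X}(\log\Sigma)\big)$, where $\Sigma$ is the reduced branch divisor of $\varpi$ together with $\widetilde f^{-1}(S)$, in such a way that $\theta$ becomes the connecting morphism of the short exact sequence
\begin{equation*}
0\to\widetilde f^{\ast}\Omega_{\widetilde X}(\log S)\otimes\Omega^{\bullet-1}_{\widetilde Y/\widetilde X}(\log\Sigma)\to\Omega^{\bullet}_{\widetilde Y}(\log\Sigma)\to\Omega^{\bullet}_{\widetilde Y/\widetilde X}(\log\Sigma)\to0.
\end{equation*}
Since the first term involves only the degeneration divisor $S$ of $\widetilde f$ and not the branch locus, the induced $\theta$ on $L^{\bullet}$ is holomorphic over $\widetilde X\setminus S$ and has at most logarithmic poles along $S$; descending through $\pi$ (which is biholomorphic over $X\setminus T$ with ${\rm codim}_{X}T\geq2$) together with a standard extension across $T$ gives $\theta(L^{p})\subset L^{p+1}\otimes\Omega_X(\log D_f)$ with $\theta$ holomorphic over $X\setminus D_f$.

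\textbf{Expected main obstacle.} The technical heart of both claims is the same: sharpening the control from the full discriminant $D_g$ of the cyclic-cover family down to $D_f$ alone, i.e.\ showing that the extra degeneracy $D_g\setminus\pi^{-1}(D_f)$ is invisible both to the growth of $\widetilde L$ inside $H^{n,0}_{h^{o}}$ and to the poles of the Higgs field on $L^{\bullet}$. Both rely on the elementary observation that $\widetilde L$ and the whole Viehweg--Zuo complex originate from (pluri)canonical forms of $\widetilde f$, which stay holomorphic with at most logarithmic behaviour across $D_g\setminus\pi^{-1}(D_f)$ because $\widetilde f$ is a submersion there; the actual work is to combine this cleanly with the harmonic-metric estimates of \S\ref{section_prolongation} and the functoriality of the prolongations under $\pi$ and $\rho$, and to make the Viehweg--Zuo identification of $L^{\bullet}$ precise in the present analytic (rather than algebraic) setting.
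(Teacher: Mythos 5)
Your strategy matches the paper's at the global level — prolong Viehweg--Zuo's cyclic-cover construction analytically and control the boundary via the Hodge-metric asymptotics — but the two places where the hard work happens are handled differently, and one of them has a genuine gap.

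\textbf{On Claim (1).} Your direct $L^2$-estimate is a workable alternative to what the paper does, with minor bookkeeping issues (the eigensheaf contributes $|\widetilde s|^{2/k}$, not $|\widetilde s|^{2(1-1/k)}$, and the integral should be written against the pullback $\widetilde f^{\ast}\ell$ of the actual generator rather than an abstract generator $\ell'$ of $\omega_{\widetilde Y/\widetilde X}$; one must also be careful that the relevant fiber is that of the resolved cyclic cover $h^o$, not of $\widetilde f$). The paper avoids this estimate entirely: it first uses Proposition~\ref{prop_prolongation0_vs_geo} to place $\pi_{\ast}g_{\ast}(\omega_{Z/\widetilde X})$ in a prolongation indexed by $\rho^{-1}(D_g)_{\rm red}+E'$, and then sharpens to $\psi^{-1}(D_f)_{\rm red}+E'$ via the algebraic Steenbrink residue bound (Lemma~\ref{lem_VZ_Xo}) combined with the identification $\widetilde\alpha|_{X^o}=\alpha$. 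Your analytic route buys you a more self-contained argument; the paper's route buys you a cleaner interface with the rest of the construction (the same Lemma~\ref{lem_VZ_Xo} is reused in Claim (2)).

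\textbf{On Claim (2) — the gap.} The short exact sequence
\begin{equation*}
0\to\widetilde f^{\ast}\Omega_{\widetilde X}(\log S)\otimes\Omega^{\bullet-1}_{\widetilde Y/\widetilde X}(\log\Sigma)\to\Omega^{\bullet}_{\widetilde Y}(\log\Sigma)\to\Omega^{\bullet}_{\widetilde Y/\widetilde X}(\log\Sigma)\to0
\end{equation*}
with $\Sigma$ containing the branch divisor of $\varpi$ requires $\widetilde f:(\widetilde Y,\Sigma)\to(\widetilde X,S)$ to be log smooth, and this is precisely what fails: the branch divisor $\{\widetilde s=0\}$ is generically \emph{not} relative normal crossing over $\widetilde X\setminus\pi^{-1}(D_f)$ — its degeneration is exactly what produces the extra components $D_g\setminus\pi^{-1}(D_f)$ in the first place (Example \ref{exmp_2} is a baby case). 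If you pass to a blowup on which $(\widetilde Y,\Sigma)$ is log smooth over $(\widetilde X,D_g)$, the sequence becomes exact but the first term has $\Omega_{\widetilde X}(\log D_g)$, giving log poles along all of $D_g$, not just $S$, and you are back to the original problem. So your sequence does not, as written, prove that $\theta$ on $L^{\bullet}$ has poles only along $D_f$.

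The paper's proof of (\ref{align_ZV_is_log}) sidesteps this. Over $X^o$ it uses the \emph{non-log} relative cotangent sequence tensored with $(B^o)^{-1}$ (see (\ref{align_vartheta_nolog})), which is exact because $f^o$ is a genuine submersion; the resulting connecting map $\vartheta$ is a holomorphic Higgs field on $X^o$. The eigensheaf map $\iota$ intertwines $\vartheta$ with $\theta$ (diagram (\ref{align_theta_vartheta_commute})/(\ref{align_iota})), so $\theta$ is holomorphic on ${\rm Im}(\iota)$ over $X^o$; Lemma~\ref{lem_VZ_Xo} (the Steenbrink estimate on a codimension-$\geq 2$-away semistable model) guarantees that ${\rm Im}(\iota)|_{X^o}$ actually lies in the prolongation indexed by $\psi^{-1}(D_f)_{\rm red}+E'$, rather than by the larger $\rho^{-1}(D_g)_{\rm red}+E'$. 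Over $D_f$ itself the log-pole bound is just Theorem~\ref{thm_parabolic}. Finally $\pi_{\ast}(\widetilde L)$ is placed inside ${\rm Im}(\iota)$ using (\ref{align_commut_s_s0}). You correctly anticipate in your closing paragraph that taming $D_g\setminus\pi^{-1}(D_f)$ is the technical heart, but the specific tool you propose — the log de Rham sequence with the branch divisor included in $\Sigma$ — does not do it; you need either the $(B^o)^{-1}$-twisted non-log sequence plus the Steenbrink bound as in the paper, or a residue argument along the extra components of $D_g$ showing that the residue of $\theta$ annihilates the Viehweg--Zuo subsheaf there.

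Two smaller omissions: you do not address coherence of the Higgs subsheaf generated by $L^0$ inside the (a priori only analytic) prolongation; the paper handles this by defining $G^{p,q}$ explicitly as an intersection and exhibiting it as the intersection of an algebraic image with a coherent sheaf. And in Claim (1) the citation should be to Lemma~\ref{lem_Deligne_prolongation}/Lemma~\ref{lem_integral} for the $L^2$ characterization of ${_{<D}}S(\bV)$, not Lemma~\ref{lem_prolongation0_vs_geo}, which is the geometric comparison, not the norm estimate.
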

	The proof of this theorem will occupy the remainder of this subsection. It will be accomplished by constructing a log Higgs subsheaf $\bigoplus_{p+q=n}G^{p,q}$ of $\psi_{\ast}\left({_{<\psi^{-1}(D_f)_{\rm red}+E'}}H^n_{h^o}\right)$ containing $\pi_\ast(\widetilde{L})$ such that the Higgs field is holomorphic on $X\backslash D_f$. We first construct the Higgs subsheaf on $\psi(X'^o)$ (over which the families we concern are smooth)  and then extend it to the whole manifold $X$ via analytic prolongations.
	\subsubsection{The construction on $\psi(X'^o)$}
	Since $\widetilde{Y}_k$ is embedded into the total space of the line bundle $\widetilde{B}$, the pullback $(\varpi\mu)^\ast \widetilde{B}$ has a tautological section. This gives an injective morphism
	\begin{align*}
	(\varpi\mu\sigma')^\ast (\widetilde{B}^{-1})\to\sO_{Z'}.
	\end{align*}
	Combining it with (\ref{align_iso_Lo_L}) one gets an injective map 
	$$\varphi^\ast(B^o)^{-1}|_{Z'^o}\simeq(\varpi\mu\sigma')^\ast (\widetilde{B}^{-1})|_{Z'^o}\to\sO_{Z'^o}.$$
	By composing it with the natural map $\varphi^\ast\Omega^p_{Y/X}\to\Omega^p_{Z'/X'}$ we obtain a natural morphism
	\begin{align}\label{align_VZ1}
	\varphi^{\ast}((B^o)^{-1}\otimes\Omega^p_{Y^o/X^o})|_{Z'^o}\to \Omega^p_{Z'^o/X'^o}
	\end{align}
	for every $p=0,\dots,n$. Denote $X_1:=\psi(X'^o)\subset X^o$ and  $Y_1:=f^{-1}(X_1)\subset Y^o$. Then $f_1:=f|_{Y_1}:Y_1\to X_1$ is a proper K\"ahler submersion.
	(\ref{align_VZ1}) induces a map
	\begin{align}\label{align_VZmap}
	\iota_{X_1}:R^qf^o_{\ast}((B^o)^{-1}\otimes\Omega^{p}_{Y^o/X^o})|_{X_1}\to \psi^o_\ast R^qh^o_\ast(\Omega^{p}_{Z'^o/X'^o})
	\end{align} 
	for every $p,q\geq0$, where $\psi^o:=\psi|_{X'^o}:X'^o\to X_1$ is an isomorphism.   
	Consider the diagram
	\begin{align*}
	\xymatrix{
		0\ar[r]&	h^{o\ast}\Omega_{X'^o}\otimes\Omega^{p-1}_{Z'^o/X'^o}\ar[r]&\Omega^{p}_{Z'^o} \ar[r] &\Omega^{p}_{Z'^o/X'^o}\ar[r]&0
	}.
	\end{align*}
	By taking the derived pushforward $R^\ast h^o_\ast$ we obtain the Higgs field as the boundary map
	\begin{align}\label{align_theta_log}
	\theta:R^qh^o_\ast(\Omega^{p}_{Z'^o/X'^o})\to R^{q+1}h^o_\ast(\Omega^{p-1}_{Z'^o/X'^o})\otimes\Omega_{X'^o}.
	\end{align}
	Consider the diagram
	\begin{align*}
	\xymatrix{
		0\ar[r]&	f^{o\ast}\Omega_{X^o}\otimes\Omega^{p-1}_{Y^o/X^o}\ar[r]&\Omega^{p}_{Y^o} \ar[r] &\Omega^{p}_{Y^o/X^o}\ar[r]&0
	}.
	\end{align*}
	By tensoring it with $(B^o)^{-1}$ and taking the derived pushforward $R^\ast f^o_{\ast}$ one has the boundary map
	\begin{align}\label{align_vartheta_nolog}
	\vartheta:R^qf^o_{\ast}((B^o)^{-1}\otimes\Omega^{p}_{Y^o/X^o})\to R^{q+1}f^o_{\ast}((B^o)^{-1}\otimes\Omega^{p-1}_{Y^o/X^o})\otimes\Omega_{X^o}.
	\end{align}
	By (\ref{align_VZ1}) there is a morphism between distinguished triangles in $D(Y_1)$
	\begin{align*}
	\xymatrix{
		f^{o\ast}\Omega_{X^o}\otimes\Omega^{p-1}_{Y^o/X^o}\otimes (B^o)^{-1}|_{Y_1}\ar[r]\ar[d]&\Omega^{p}_{Y^o}\otimes (B^o)^{-1}|_{Y_1} \ar[r]\ar[d] &\Omega^{p}_{Y^o/X^o}\otimes (B^o)^{-1}|_{Y_1}\ar[r]\ar[d]&\\
		R\varphi_\ast\left(h^{o\ast}\Omega_{X'^o}\otimes\Omega^{p-1}_{Z'^o/X'^o}\right)\ar[r]&R\varphi_\ast\left(\Omega^{p}_{Z'^o}\right) \ar[r] &R\varphi_\ast\left(\Omega^{p}_{Z'^o/X'^o}\right)\ar[r]&
	}.
	\end{align*}
	Then there is a commutative diagram
	\begin{align}\label{align_theta_vartheta_commute}
	\xymatrix{
		R^qf^o_{\ast}((B^o)^{-1}\otimes\Omega^{p}_{Y^o/X^o})|_{X_1}\ar[r]^-{\vartheta|_{X_1}}\ar[d]^{\iota_{X_1}}& R^{q+1}f^o_{\ast}((B^o)^{-1}\otimes\Omega^{p-1}_{Y^o/X^o})|_{X_1}\otimes\Omega_{X_1}\ar[d]^{\iota_{X_1}\otimes{\rm Id}}\\
		\psi^o_\ast R^qh^o_\ast(\Omega^{p}_{Z'^o/X'^o})\ar[r]^-{\theta}& \psi^o_\ast R^{q+1}h^o_\ast(\Omega^{p-1}_{Z'^o/X'^o})\otimes\Omega_{X_1}
	}.
	\end{align}
	\subsubsection{Extend the Higgs sheaves to $X^o$}
	Notice that $H^{p,q}_{h^o}\simeq R^qh^o_\ast(\Omega^p_{Z'^o/X'^o})$. The main result of this part is the following.
	\begin{lem}\label{lem_VZ_Xo}
		The map (\ref{align_VZmap}) extends to a map
		\begin{align}\label{align_VZmap2}
		\iota_{X^o}:R^qf^o_{\ast}((B^o)^{-1}\otimes\Omega^{p}_{Y^o/X^o})\to \psi_\ast\left( _{<\psi^{-1}(D_f)_{\rm red}}H^{p,q}_{h^o}\right)|_{X^o}.
		\end{align}
	\end{lem}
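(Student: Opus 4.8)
The plan is to show that the morphism $\iota_{X_1}$ of \eqref{align_VZmap}, which is a priori only defined over $X_1=\psi(X'^o)$ (the locus over which the relevant families are smooth), extends across the boundary divisor $D_f$ once we prolong the target to the parabolic sheaf $\psi_\ast({_{<\psi^{-1}(D_f)_{\rm red}}}H^{p,q}_{h^o})$. The essential point is that the source sheaf $R^qf^o_\ast((B^o)^{-1}\otimes\Omega^p_{Y^o/X^o})$ is a locally free sheaf on the \emph{whole} of $X^o$ (since $f^o$ is a K\"ahler submersion), so it suffices to produce the extension in codimension one and then invoke reflexivity of the target. Indeed, by Theorem \ref{thm_parabolic}, $_{<\psi^{-1}(D_f)_{\rm red}}H^{p,q}_{h^o}$ is locally free, hence $\psi_\ast$ of it is reflexive; so the locus where a given section of the source fails to extend is either empty or a divisor, and it is enough to treat the generic point of each component of $D_f$.

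First I would reduce to the local model. Let $x$ be a general point of a component of $D_f$; after shrinking we may assume $X=\Delta^{d}$ with $D_f=\{z_1=0\}$, and (by functoriality of the desingularizations, so that $\psi$ is an isomorphism over a neighbourhood of the generic point of each boundary component, and $E',\psi^{-1}(D_f)_{\rm red}$ are already snc there) we may identify $X'^o$ with a neighbourhood of $(\Delta^\ast)\times\Delta^{d-1}$ and $D_g$ with $\{z_1=0\}$ locally. Then \eqref{align_VZmap} is a map of locally free sheaves on $(\Delta^\ast)\times\Delta^{d-1}$ and the target prolongation $_{<D}H^{p,q}_{h^o}$ has the concrete description via the Hodge norm estimate of Theorem \ref{thm_Hodge_metric_asymptotic} / Proposition \ref{prop_adapted_frame}: a holomorphic section of $H^{p,q}_{h^o}$ over the punctured polydisc lies in $_{<D}H^{p,q}_{h^o}$ iff it is locally square-integrable for the Hodge metric $h_Q$. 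So the task becomes: \emph{show that for every local holomorphic section $t$ of $R^qf^o_\ast((B^o)^{-1}\otimes\Omega^p_{Y^o/X^o})$ defined on a punctured polydisc neighbourhood of $x$, the image $\iota_{X_1}(t)$ is $L^2$ with respect to $h_Q$ near $x$.} This is where the construction of $(B^o)^{-1}$ as $f^{o\ast}(L^o)\otimes\omega_{Y^o/X^o}^{-1}$ pays off: the section $\widetilde s\in H^0(\widetilde Y,\widetilde B^{\otimes k})$ giving the cyclic cover, together with Steenbrink-type/$L^2$ estimates for the Hodge metric on a semistable (or just K\"ahler-smooth-in-the-interior) degeneration, bounds the Hodge norm of $\iota_{X_1}(t)$ by the sup-norm of $t$ times a power of $|\log|z_1||$, which is $L^2$ with room to spare (the prolongation index is $<D$, not $\le D$).

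Concretely, the key steps in order are: (i) recall that $H^{p,q}_{h^o}$ is a direct summand of the Higgs bundle of $h^o$ and that $_{<D}H^{p,q}_{h^o}$ is computed by square-integrability of the Hodge metric (Lemma \ref{lem_Deligne_prolongation}, Proposition \ref{prop_adapted_frame}); (ii) express $\iota_{X_1}(t)$, for $t$ a local section of the source over the punctured polydisc, as a class represented by a relative form on $Z^o$ obtained by wedging a cohomology class pulled back from $Y^o$ with the tautological section of $(\varpi\mu)^\ast\widetilde B^{-1}$ coming from \eqref{align_VZ1}, and track how the Hodge metric on $H^{p,q}_{h^o}$ relates under $\varpi\mu$ to the Hodge metric upstairs on $Y$ twisted by the norm of this tautological section; (iii) estimate: the tautological section is bounded (it corresponds to $\widetilde s$, a holomorphic section, whose vanishing only helps), and the variation of Hodge structure on $Y^o$ has at worst unipotent-type growth, so $|\iota_{X_1}(t)|^2_{h_Q}\lesssim |t|^2\cdot(-\log|z_1|)^{N}$ for some $N$; since $\int_{|z_1|\le 1/2}(-\log|z_1|)^N\,dz_1 d\bar z_1<\infty$, Lemma \ref{lem_integral} (and its multi-variable analogue) gives that $\iota_{X_1}(t)$ extends into $_{<D}H^{p,q}_{h^o}$; (iv) since this holds at the generic point of every component of $D_f$, and the target $\psi_\ast(_{<\psi^{-1}(D_f)_{\rm red}}H^{p,q}_{h^o})$ is reflexive while the source is locally free, the map extends over all of $X^o$, yielding \eqref{align_VZmap2}.

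The main obstacle I expect is step (iii): making the comparison between the Hodge metric $h_Q$ on $H^{p,q}_{h^o}$ (the VHS of the cyclic-cover family $h^o$) and the geometric objects on $Y^o$ precise enough to get an honest $L^2$ bound, rather than just a pointwise polynomial-in-$\log$ estimate of the wrong flavour. The delicate points are that one is comparing Hodge metrics across the degree-$k$ cyclic cover $\varpi$ and the resolution $\mu$, and that $\Omega^p_{Z'^o/X'^o}$ receives only a \emph{map} (not isomorphism) from $\varphi^\ast((B^o)^{-1}\otimes\Omega^p_{Y^o/X^o})$, so the estimate must be compatible with this map; one manages this by working with the tame harmonic bundle structure (the Higgs bundle $(H^n_{h^o},\theta,h_Q)$ is tame harmonic, Simpson--Mochizuki) and invoking the uniform norm estimates of Theorem \ref{thm_Hodge_metric_asymptotic}, noting that the extra twist by the bounded tautological section of $\widetilde B^{-1}$ cannot worsen the $L^2$ integrability. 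Once the $L^2$ bound is in hand, the extension across codimension-one and then everywhere on $X^o$ is formal, using reflexivity of the parabolic prolongation (Theorem \ref{thm_parabolic}) and local freeness of the source on $X^o$.
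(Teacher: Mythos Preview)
There is a genuine gap: you have misidentified the locus across which the map must be extended. The source sheaf $R^qf^o_\ast((B^o)^{-1}\otimes\Omega^p_{Y^o/X^o})$ already lives on all of $X^o=X\setminus D_f$, so there is nothing to extend across $D_f$; the task is to pass from $X_1=\psi(X'^o)$ to $X^o$, i.e.\ across $\pi(D_g)\cap X^o$, the locus where the \emph{auxiliary} family $g$ (hence $h$) acquires singular fibres owing to the branch locus of the cyclic cover and its resolution, even though $f^o$ is smooth there. Correspondingly, the prolongation index $<\psi^{-1}(D_f)_{\rm red}$, restricted to $\psi^{-1}(X^o)$, is simply $<\bm{0}$, not $<D$. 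Your local analysis ``let $x$ be a general point of a component of $D_f$'' and the estimate in step~(iii) invoking ``the variation of Hodge structure on $Y^o$ has at worst unipotent-type growth'' therefore miss the point: at the relevant boundary points the $f^o$-VHS is smooth; it is the $h^o$-VHS whose Hodge metric degenerates, and you have given no mechanism to control $|\iota_{X_1}(t)|_{h_Q}$ there in terms of data on $Y^o$.

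The paper's argument is different in kind. It works on $\psi^{-1}(X^o)$, takes a further functorial desingularization $\beta:Z''\to\varphi^{-1}(Y^o)$ so that $h\beta$ is \emph{semistable} outside a closed set $S$ of codimension $\geq2$, and then invokes Steenbrink's theorem: for a proper K\"ahler semistable morphism the residues of the log Gauss--Manin connection have real parts in $[0,1)$, so the logarithmic Hodge bundles $R^q(h\beta)_\ast(\Omega^p_{Z''_2/X'_2}(\log))$ sit inside the prolongation $_{<\bm{0}}H^{p,q}_{h^o}$. Composing with the natural map from $\psi^\ast R^qf^o_\ast((B^o)^{-1}\otimes\Omega^p_{Y^o/X^o})$ into $R^q(h\beta)_\ast(\Omega^p)$ gives the extension over $\psi^{-1}(X^o)\setminus S$; local freeness of the prolongation (Theorem~\ref{thm_parabolic}) and Hartogs extend over $S$, and taking the adjoint yields \eqref{align_VZmap2}. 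No metric estimate through the cyclic cover is needed, and $D_f$ plays no role in this lemma. If you wish to salvage an analytic approach, you would have to redo step~(iii) near components of $\rho^{-1}(D_g)\cap\psi^{-1}(X^o)$ and produce an honest bound for the $h^o$-Hodge metric of $\iota_{X_1}(t)$; in practice this amounts to reproving the Steenbrink input by hand.
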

	\begin{proof}
		Consider the diagram
		\begin{align}\label{align_ssr_lem3_3}
		\xymatrix{
			Z''\ar[r]^-{\beta}&\varphi^{-1}(Y^o)\ar[r]^-\varphi \ar[d]^h & Y^o\ar[d]^{f^o}\\
			&\psi^{-1}(X^o)\ar[r]^-\psi & X^o
		}.
		\end{align}
		Here $h$ is a proper submersion outside the simple normal crossing divisor $\rho^{-1}(D_g)$ and
		$\beta:Z''\to \varphi^{-1}(Y^o)$ is a functorial desingularization of the pair $(\varphi^{-1}(Y^o),h^{-1}(\rho^{-1}(D_g)))$. There is a closed analytic subset $S\subset\psi^{-1}(X^o)\backslash X'^o$ so that ${\rm codim}_{\psi^{-1}(X^o)}(S)\geq 2$ and $h\beta:Z''\to \psi^{-1}(X^o)$ is semistable over $\psi^{-1}(X^o)\backslash S$.
		(\ref{align_ssr_lem3_3}) induces natural morphisms
		\begin{align}\label{align_lem33_1}
		\psi^\ast\left(R^qf^o_\ast((B^o)^{-1}\otimes\Omega^p_{Y^o/X^o})\right)\to R^qh_\ast(\Omega^p_{\varphi^{-1}(Y^o)/\psi^{-1}(X^o)})\to R^q(h\beta)_\ast\left(\Omega^p_{Z''/\psi^{-1}(X^o)}\right)
		\end{align}
		for every $p,q\geq0$. Denote 
		$X'_2:=\psi^{-1}(X^o)\backslash S$, $D_{X'_2}:=\rho^{-1}(D_g)\cap X'_2$, $Z''_2:=(h\beta)^{-1}(X'_2)$ and $D_{Z''_2}:=(h\beta)^{-1}(D_{X_2})$. Then $h\beta:(Z''_2,D_{Z''_2})\to(X'_2,D_{X'_2})$ is a proper K\"ahler semistable morphism.
		Consider the associated logarithmic Gauss-Manin connection 
		\begin{align*}
		\nabla_{\rm GM}:R^m(h\beta)_\ast\left(\Omega^\bullet_{Z''_2/X'_2}(\log D_{Z''_2})\right)\to R^m(h\beta)_\ast\left(\Omega^\bullet_{Z''_2/X'_2}(\log D_{Z''_2})\right)\otimes\Omega_{X'_2}(\log D_{X'_2}),\quad\forall m\geq0.
		\end{align*}
		According to \cite[Proposition 2.2]{Steenbrink1975}, the real parts of the eigenvalues of the residue $\nabla_{\rm GM}$ along each components of $D_{X'_2}$ lie in $[0,1)$. As a consequence, the corresponding logarithmic Higgs bundle lies in the prolongation $_{<\bm{0}}H^m$
		where
		$$H^m:=\bigoplus_{p+q=m}R^q(h\beta)_\ast\left(\Omega^p_{(Z''_2\backslash D_{Z''_2})/X'^o}\right)$$
		is the Higgs bundle associated with the proper K\"ahler submersion $Z''_2\backslash D_{Z''_2}\to X'^o$. Namely there is a natural inclusion
		\begin{align*}
		R^q(h\beta)_\ast\left(\Omega^p_{Z''_2/X'_2}(\log D_{Z''_2})\right)\to _{<\bm{0}}R^q(h\beta)_\ast\left(\Omega^p_{(Z''_2\backslash D_{Z''_2})/X'^o}\right), \quad p,q\geq0.
		\end{align*}
		Since $\beta$ is functorial, it is an isomorphism over the regular loci $h^{-1}(X'^o)$. Hence the family $Z''_2\backslash D_{Z''_2}\to X'^o$ is isomorphic to the family $h^o:Z'^o\to X'^o$. Consequently, one obtains a natural inclusion
		\begin{align*}
		R^q(h\beta)_\ast\left(\Omega^p_{Z''_2/X'_2}(\log D_{Z''_2})\right)\to _{<\psi^{-1}(D_f)_{\rm red}}H^{p,q}_{h^o}|_{X'_2}, \quad p,q\geq0.
		\end{align*}
		Taking (\ref{align_lem33_1}) into account one gets
		\begin{align}\label{align_pq_to_prolongation}
		\psi^\ast\left(R^qf^o_\ast((B^o)^{-1}\otimes\Omega^p_{Y^o/X^o})\right)\big|_{X'_2}\to _{<\psi^{-1}(D_f)_{\rm red}}H^{p,q}_{h^o}|_{X'_2}, \quad p,q\geq0.
		\end{align}
		Since $_{<\psi^{-1}(D_f)_{\rm red}}H^{p,q}_{h^o}$ is locally free (Theorem \ref{thm_parabolic}) and ${\rm codim}_{\psi^{-1}(X^o)}(S)\geq 2$, the morphism (\ref{align_pq_to_prolongation}) extends to a morphism
		\begin{align*}
		\psi^\ast\left(R^qf^o_\ast((B^o)^{-1}\otimes\Omega^p_{Y^o/X^o})\right)\to _{<\psi^{-1}(D_f)_{\rm red}}H^{p,q}_{h^o}|_{\psi^{-1}(X^o)}, \quad p,q\geq0.
		\end{align*}
		by Hartogs extension theorem. Taking the adjoint we obtain (\ref{align_VZmap2}).
	\end{proof}
	\subsubsection{Extend the Higgs sheaves to $X$}
	In this part we extend (\ref{align_VZmap2}) further to $X$. Let
	$$R^qf^o_{\ast}((B^o)^{-1}\otimes\Omega^{p}_{Y^o/X^o})\langle D_f\rangle:=\bigcup_{n\in\bZ}R^qf_{\ast}((\omega_{Y/X}\otimes f^{\ast}(L^{\vee\vee})^{-1})^{-1}\otimes\Omega^{p}_{Y/X})(nD_f)$$
	denote the sheaf of sections of $R^qf^o_{\ast}((B^o)^{-1}\otimes\Omega^{p}_{Y^o/X^o})$ that are meromorphic along $D_f$ and denote
	$$R^qh^o_\ast(\Omega^{p}_{Z'^o/X'^o})\langle\rho^{-1}(D_g)\rangle:=\bigcup_{n\in\bZ}R^qh_\ast(\Omega^{p}_{Z'/X'})(n\rho^{-1}(D_g))$$
	to be the sheaf of sections of $R^qh^o_\ast(\Omega^{p}_{Z'^o/X'^o})$ that are meromorphic along $\rho^{-1}(D_g)$. (\ref{align_theta_vartheta_commute}) naturally extends to the diagram
	\begin{align}\label{align_iota}
	\xymatrix{
		R^qf^o_{\ast}((B^o)^{-1}\otimes\Omega^{p}_{Y^o/X^o})\langle D_f\rangle\ar[r]^-{\vartheta}\ar[d]^{\iota}& R^{q+1}f^o_{\ast}((B^o)^{-1}\otimes\Omega^{p-1}_{Y^o/X^o})\langle D_f\rangle\otimes\Omega_{X}\langle D_f\rangle\ar[d]^{\iota\otimes{\rm inclusion}}\\
		\psi_\ast \big(R^qh^o_\ast(\Omega^{p}_{Z'^o/X'^o})\langle\rho^{-1}(D_g)\rangle\big)\ar[r]^-{\theta}& \psi_\ast \big(R^{q+1}h^o_\ast(\Omega^{p-1}_{Z'^o/X'^o})\langle\rho^{-1}(D_g)\rangle\otimes\Omega_{X'}\langle\rho^{-1}(D_g)\rangle\big)
	}.
	\end{align}
	Define 
	\begin{align*}
	G^{p,q}:={\rm Im}(\iota)\cap \psi_{\ast}\left({_{<\psi^{-1}(D_f)_{\rm red}+E'}}H^{p,q}_{h^o}\right).
	\end{align*}
	Since the sections of $G^{p,q}$ lie in $\psi_{\ast}\left({_{<\psi^{-1}(D_f)_{\rm red}+E'}}H^{p,q}_{h^o}\right)$, they have bounded degrees of poles along $X\backslash X_1$. As a consequence $G^{p,q}$ equals the intersection of $ \psi_{\ast}\left({_{<\psi^{-1}(D_f)_{\rm red}+E'}}H^{p,q}_{h^o}\right)$ with
	$${\rm Im}\left(R^qf_{\ast}((\omega_{Y/X}\otimes f^{\ast}(L^{\vee\vee})^{-1})^{-1}\otimes\Omega^{p}_{Y/X})(n_1D_f)\to \psi_\ast\big(R^qh_\ast(\Omega^{p}_{Z'/X'})(n_2\rho^{-1}(D_g))\big)\right)$$
	for some $n_1, n_2\in\bZ$. In particular, $G^{p,q}$ is a coherent sheaf on $X$ for every $p,q\geq0$.
	\begin{lem}
		\begin{align}\label{align_ZV_is_log}
		\theta(G^{p,q})\subset G^{p-1,q+1}\otimes\Omega_X(\log D_f).
		\end{align}
	\end{lem}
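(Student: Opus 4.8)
The plan is to bound $\theta$ on $G^{p,q}$ from two independent directions and then intersect: a \emph{geometric} bound coming through the $Y$-side of the commutative square (\ref{align_iota}), which forces the $1$-form coefficient of $\theta(s)$ to be pulled back from $X$ with poles only along $D_f$, and a \emph{Hodge-theoretic} bound coming from the parabolic structure of the prolongation on $X'$ (Theorem \ref{thm_parabolic}), which forces that coefficient to have at most logarithmic poles. The assertion is local on $X$, so I would fix a small open $U\subset X$ and a section $s\in G^{p,q}(U)$; by definition $s=\iota(t)$ for some $t$ in $R^qf^o_{\ast}((B^o)^{-1}\otimes\Omega^p_{Y^o/X^o})\langle D_f\rangle(U)$, and $s\in\psi_\ast({_{<A}}H^{p,q}_{h^o})(U)$, where $A:=\psi^{-1}(D_f)_{\rm red}+E'$.

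For the geometric bound I would apply the commutativity of (\ref{align_iota}): $\theta(s)=(\iota\otimes\mathrm{incl})(\vartheta(t))$, and writing $\vartheta(t)=\sum_i u_i\otimes\omega_i$ with $u_i$ in $R^{q+1}f^o_{\ast}((B^o)^{-1}\otimes\Omega^{p-1}_{Y^o/X^o})\langle D_f\rangle$ and $\omega_i\in\Omega_X\langle D_f\rangle$ gives $\theta(s)=\sum_i\iota(u_i)\otimes\psi^\ast\omega_i$, so $\theta(s)$ lies in the image of ${\rm Im}(\iota)\otimes_{\sO_X}\Omega_X\langle D_f\rangle$. The key observation is that ${\rm Im}(\iota)$ is \emph{$g$-local} for a local equation $g$ of $D_f$: its source consists of sections meromorphic along $D_f$ and $\iota$ is $\sO_X$-linear, so $g^{-1}{\rm Im}(\iota)={\rm Im}(\iota)$; since $\Omega_X\langle D_f\rangle=\Omega_X(\log D_f)\otimes_{\sO_X}\sO_X[g^{-1}]$ this yields ${\rm Im}(\iota)\otimes_{\sO_X}\Omega_X\langle D_f\rangle={\rm Im}(\iota)\otimes_{\sO_X}\Omega_X(\log D_f)$, hence
\begin{align*}
\theta(s)\in{\rm Im}(\iota)\otimes_{\sO_X}\Omega_X(\log D_f).
\end{align*}

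For the Hodge-theoretic bound, observe ${\rm supp}(A)\subset\rho^{-1}(D_g)$ and apply Theorem \ref{thm_parabolic} to the system of Hodge bundles $H^n_{h^o}$ on $X'\setminus\rho^{-1}(D_g)$: the prolonged Higgs field satisfies $\theta({_{<A}}H^{p,q}_{h^o})\subset{_{<A}}H^{p-1,q+1}_{h^o}\otimes\Omega_{X'}(\log\rho^{-1}(D_g))$, so $\theta(s)\in\psi_\ast\big({_{<A}}H^{p-1,q+1}_{h^o}\otimes\Omega_{X'}(\log\rho^{-1}(D_g))\big)$. To combine the two bounds I would work in a local frame of the locally free sheaf ${_{<A}}H^{p-1,q+1}_{h^o}$, using that $R^{q+1}h^o_\ast(\Omega^{p-1}_{Z'^o/X'^o})\langle\rho^{-1}(D_g)\rangle$ is torsion free (its torsion being supported on the inverted divisor $\rho^{-1}(D_g)$). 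Since $\psi$ is an isomorphism over the generic point of every component of $D_f$ --- because $\pi$ is biholomorphic off $T$ with ${\rm codim}_X(T)\geq2$, and $D_g$ is simple normal crossing, hence untouched by $\rho$, near the generic points of $\pi^{-1}(D_f)$ --- and since pull-back of forms does not decrease pole orders, in codimension one the $1$-form coefficients of $\theta(s)$ lie in $\Omega_{X'}(\log\rho^{-1}(D_g))\cap\psi^\ast\Omega_X\langle D_f\rangle=\psi^\ast\Omega_X(\log D_f)$; by reflexivity of $\Omega_X(\log D_f)$ and of the relevant pushforwards this extends over the codimension $\geq2$ loci, and the projection formula gives $\theta(s)\in\psi_\ast({_{<A}}H^{p-1,q+1}_{h^o})\otimes\Omega_X(\log D_f)$. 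Intersecting with the geometric bound and using $(\mathcal A\otimes\mathcal E)\cap(\mathcal B\otimes\mathcal E)=(\mathcal A\cap\mathcal B)\otimes\mathcal E$ for locally free $\mathcal E=\Omega_X(\log D_f)$,
\begin{align*}
\theta(s)\in\big({\rm Im}(\iota)\cap\psi_\ast({_{<A}}H^{p-1,q+1}_{h^o})\big)\otimes\Omega_X(\log D_f)=G^{p-1,q+1}\otimes\Omega_X(\log D_f).
\end{align*}

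The step I expect to be the main obstacle is the passage from the logarithmic bound along $\rho^{-1}(D_g)$ upstairs to a logarithmic bound along $D_f$ downstairs: the identity $\Omega_{X'}(\log\rho^{-1}(D_g))\cap\psi^\ast\Omega_X\langle D_f\rangle=\psi^\ast\Omega_X(\log D_f)$ fails at the blow-up centers of $\psi$ (e.g. $\psi^\ast(z_2\,dz_1/z_1)$ becomes logarithmic after blowing up the origin in the $(z_1,z_2)$-plane although $z_2\,dz_1/z_1$ is not), so one must first establish the inclusion in codimension one, where $\psi$ is an isomorphism over the generic points of $D_f$ and a pulled-back form is logarithmic along $\rho^{-1}(D_g)$ iff it is logarithmic along $D_f$, and then extend across the codimension $\geq2$ loci $T$ and $S$ (whose images in $X$ remain of codimension $\geq2$) by the usual $S_2$/reflexivity argument for the coherent sheaves at hand. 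The remaining ingredients --- $g$-locality of ${\rm Im}(\iota)$, torsion-freeness of the meromorphic $R^\bullet h^o_\ast$, and the projection formula --- are routine.
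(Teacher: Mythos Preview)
Your approach is correct and rests on the same two ingredients as the paper---the commutativity of (\ref{align_iota}) and the log-pole property of Theorem \ref{thm_parabolic}---but the paper takes a more direct route. Instead of working with $\theta(s)$ as a tensor and intersecting subsheaves, the paper verifies the inclusion pointwise by contracting with a local frame $\xi_1,\dots,\xi_d$ of $T_X(-\log D_f)$: for $s=\iota(v)\in G^{p,q}$ one has $\theta(\xi_i)(s)=\iota(\vartheta(\xi_i)(v))\in{\rm Im}(\iota)$ directly from the diagram, while $\theta(\xi_i)(s)\in\psi_*({_{<A}}H^{p-1,q+1})$ is read off from Theorem \ref{thm_parabolic}; hence $\theta(\xi_i)(s)\in G^{p-1,q+1}$ for every $i$, which is the claim. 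By absorbing the $1$-form into the logarithmic vector field from the outset, the paper sidesteps your tensor-product identities and the explicit descent of log forms from $X'$ to $X$. Your $g$-locality observation (${\rm Im}(\iota)\otimes\Omega_X\langle D_f\rangle={\rm Im}(\iota)\otimes\Omega_X(\log D_f)$, since ${\rm Im}(\iota)$ is already an $\sO_X\langle D_f\rangle$-module) is a clean shortcut not made explicit in the paper. The step you flag as the main obstacle---descending the logarithmic bound along $\rho^{-1}(D_g)$ to one along $D_f$---is indeed the delicate point; your codimension-one-plus-extension strategy is the right shape, though the reflexivity of $\psi_*({_{<A}}H)$ you invoke is not immediate (the $\psi$-preimage of a codimension-$2$ locus can be divisorial), so that final $S_2$ step would need a slightly different justification.
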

	\begin{proof}
		{\bf Case I:} Let $x\in X^o=X\backslash D_f$ and let $z_1,\dots,z_d$ be holomorphic local coordinates at $x$. It suffices to show that
		\begin{align*}
		\theta(\frac{\partial}{\partial z_i})(G^{p,q})\subset G^{p-1,q+1},\quad \forall i=1,\dots,d.
		\end{align*}
		Let $v\in R^qf^o_{\ast}((B^o)^{-1}\otimes\Omega^{p}_{Y^o/X^o})$ such that $\iota(v)\in \psi_{\ast}\left({_{<\psi^{-1}(D_f)_{\rm red}+E'}}H^{p,q}_{h^o}\right)$. One has $$\vartheta(\frac{\partial}{\partial z_i})(v)\in R^{q+1}f^o_{\ast}((B^o)^{-1}\otimes\Omega^{p-1}_{Y^o/X^o})$$ according to (\ref{align_vartheta_nolog}). Thus
		\begin{align*}
		\theta(\frac{\partial}{\partial z_i})(\iota(v))=\iota\left(\vartheta(\frac{\partial}{\partial z_i})(v)\right)\in{\rm Im}(\iota),\quad \forall i=1,\dots,d
		\end{align*}
		by (\ref{align_iota}).
		Notice that $${\rm Im}(\iota)|_{X^o}\subset \psi_{\ast}\left({_{<\psi^{-1}(D_f)_{\rm red}+E'}}H^{p,q}_{h^o}\right)|_{X^o}$$ because of Lemma \ref{lem_VZ_Xo}. This shows (\ref{align_ZV_is_log}) on $X\backslash D_f$.
		
		{\bf Case II:} Let $x\in D_f$. Let $z_1,\dots,z_d$ be holomorphic coordinates at $x$ so that $D_f=\{z_1\cdots z_l=0\}$. Denote
		\begin{align*}
		\xi_i=\begin{cases}
		z_i\frac{\partial}{\partial z_i}, & i=1,\dots,l\\
		\frac{\partial}{\partial z_i}, & i=l+1,\dots,d
		\end{cases}.
		\end{align*}
		It suffices to show that 
		\begin{align*}
		\theta(\xi_i)(G^{p,q})\subset G^{p-1,q+1},\quad \forall i=1,\dots,d.
		\end{align*}
		Let $v\in R^qf^o_{\ast}((B^o)^{-1}\otimes\Omega^{p}_{Y^o/X^o})\langle D_f\rangle$ such that $\iota(v)\in \psi_{\ast}\left({_{<\psi^{-1}(D_f)_{\rm red}+E'}}H^{p,q}_{h^o}\right)$. It follows from (\ref{align_iota}) that 
		\begin{align*}
		\theta(\xi_i)(\iota(v))=\iota\left(\vartheta(\xi_i)(v)\right)\in{\rm Im}(\iota),\quad \forall i=1,\dots,d.
		\end{align*}
		Notice that 
		\begin{align}
		\theta(\xi_i)(\iota(v))\in \psi_{\ast}\left({_{<\psi^{-1}(D_f)_{\rm red}+E'}}H^{p,q}_{h^o}\right),\quad \forall i=1,\dots,d.
		\end{align}
		by Theorem \ref{thm_parabolic}. This shows (\ref{align_ZV_is_log}) on $X$.
	\end{proof}
	\subsubsection{Final proof}	
	Because of (\ref{align_ZV_is_log}), it suffices to show the following to finish the proof of Theorem \ref{thm_VZ_prolongation}.
	\begin{lem}
		There is a natural inclusion $\pi_\ast(\widetilde{L})\subset G^{n,0}$.
	\end{lem}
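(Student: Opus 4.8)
The plan is to build the inclusion in two stages: over $X^o=X\setminus D_f$, where it drops out of Lemma \ref{lem_VZ_Xo}, and along the boundary divisor $D_f$, where it becomes an $L^2$-growth estimate of exactly the kind proved in Lemma \ref{lem_prolongation0_vs_geo}. Recall that $G^{n,0}=\mathrm{Im}(\iota)\cap\psi_\ast\big({_{<\psi^{-1}(D_f)_{\mathrm{red}}+E'}}H^{n,0}_{h^o}\big)$, so it suffices to produce a natural injection $\pi_\ast(\widetilde L)\hookrightarrow\psi_\ast\big({_{<\psi^{-1}(D_f)_{\mathrm{red}}+E'}}H^{n,0}_{h^o}\big)$ whose image lies in $\mathrm{Im}(\iota)$.

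First I unwind the data in bidegree $(n,0)$. Since $(B^o)^{-1}\otimes\omega_{Y^o/X^o}\simeq f^{o\ast}(L^o)$, the meromorphic extension $R^0f^o_\ast\big((B^o)^{-1}\otimes\Omega^n_{Y^o/X^o}\big)\langle D_f\rangle$ contains $R^0f_\ast(f^\ast L^{\vee\vee})=L^{\vee\vee}\otimes f_\ast\sO_Y$, hence contains $L^{\vee\vee}$ via the unit $\sO_X\to f_\ast\sO_Y$, hence contains $\pi_\ast(\widetilde L)\subset L^{\vee\vee}$. Applying $\iota$ therefore yields a morphism $\pi_\ast(\widetilde L)\to\mathrm{Im}(\iota)\subset\psi_\ast\big(R^0h^o_\ast(\Omega^n_{Z'^o/X'^o})\langle\rho^{-1}(D_g)\rangle\big)$. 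Its restriction to $X^o$ is the map $\iota_{X^o}$ of Lemma \ref{lem_VZ_Xo} precomposed with $L^o\hookrightarrow R^0f^o_\ast(f^{o\ast}L^o)$; since it is assembled from non-zero morphisms of invertible sheaves in (\ref{align_VZ1}) it is generically injective, and as $\pi_\ast(\widetilde L)$ is torsion free the morphism $\pi_\ast(\widetilde L)\to\psi_\ast\big(R^0h^o_\ast(\Omega^n_{Z'^o/X'^o})\langle\rho^{-1}(D_g)\rangle\big)$ is injective. So everything reduces to showing that this image lands in $\psi_\ast\big({_{<\psi^{-1}(D_f)_{\mathrm{red}}+E'}}H^{n,0}_{h^o}\big)$.

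Over $X^o$ the required containment is Lemma \ref{lem_VZ_Xo}, in fact with the sharper bound ${_{<\psi^{-1}(D_f)_{\mathrm{red}}}}$. Since ${_{<\psi^{-1}(D_f)_{\mathrm{red}}+E'}}H^{n,0}_{h^o}$ is locally free (Theorem \ref{thm_parabolic}) and the non-generic locus of $D_f$, together with the cosupport $T$ of $\sI_T$, has codimension $\geq 2$ in $X$, Hartogs' theorem reduces us to estimating the growth of a germ $\ell$ of $\pi_\ast(\widetilde L)$ at a general point of a component of $D_f$. There $\pi$ is biholomorphic, so $\ell$ is a germ of the invertible sheaf $L^{\vee\vee}\simeq L$, and tracing (\ref{align_VZ1}): $\widetilde s\cdot\widetilde f^\ast\ell$ is a holomorphic section of $\widetilde B^{k-1}\otimes\omega_{\widetilde Y/\widetilde X}$; via $\varpi^\ast\widetilde B^{k-1}\simeq\omega_{\widetilde Y_k/\widetilde Y}$ its pull-back to $\widetilde Y_k$ is a holomorphic section of $\omega_{\widetilde Y_k/\widetilde X}$; and pulling back further through $\mu$ and $\sigma'$ produces the relative $n$-form $\eta_\ell$ on $Z'$ representing $\iota(\ell)$, which extends holomorphically across the exceptional loci by the functorial resolution properties, exactly as in the Viehweg--Zuo construction (this is the content encoded in Proposition \ref{prop_prolongation0_vs_geo}). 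Now pick a local generator $\phi$ of $\omega_X$ and write $\psi^\ast\phi=g'\phi'$ with $\phi'$ a local generator of $\omega_{X'}$ and $g'$ a local equation of the $\psi$-discrepancy divisor, which is $\geq E'$. Then $\eta_\ell\wedge(\psi h)^\ast\phi=(h^\ast g')\cdot(\eta_\ell\wedge h^\ast\phi')$ is a holomorphic section of $\omega_{Z'}$, hence locally square integrable, and the Fubini identity from the proof of Lemma \ref{lem_prolongation0_vs_geo},
\[
\int_{X'^o}|\eta_\ell|^2_{h_Q}\,\psi^\ast(\phi\wedge\overline\phi)=\epsilon_n\int_{Z'^o}\big(\eta_\ell\wedge(\psi h)^\ast\phi\big)\wedge\overline{\big(\eta_\ell\wedge(\psi h)^\ast\phi\big)},
\]
shows that $g'\cdot\eta_\ell$ is locally square integrable with respect to $h_Q$ near $\psi^{-1}(D_f)$. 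By Proposition \ref{prop_adapted_frame} and Lemma \ref{lem_Deligne_prolongation} this places $\eta_\ell$ in ${_{<\psi^{-1}(D_f)_{\mathrm{red}}+(g')}}H^{n,0}_{h^o}\subset{_{<\psi^{-1}(D_f)_{\mathrm{red}}+E'}}H^{n,0}_{h^o}$ near a general point of $D_f$, which completes the proof.

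\textbf{Main obstacle.} The delicate point is the last step: one must be certain that the cyclic-cover construction genuinely outputs a \emph{holomorphic} relative $n$-form $\eta_\ell$ on $Z'$ (so that the geometric integral converges), and one must translate the Hodge-metric $L^2$-condition back into the parabolic index with precisely the weight $\psi^{-1}(D_f)_{\mathrm{red}}+E'$. Both hinge on the norm asymptotics of Theorem \ref{thm_Hodge_metric_asymptotic} as packaged in Proposition \ref{prop_adapted_frame}, and this is exactly the mechanism already used to prove Lemma \ref{lem_prolongation0_vs_geo} and Proposition \ref{prop_prolongation0_vs_geo}.
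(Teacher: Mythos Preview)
Your overall strategy---land in $\mathrm{Im}(\iota)$ first, then check membership in the prolongation---is sound, and the ingredients (the unit map $L^o\to f^o_\ast f^{o\ast}L^o$, the cyclic-cover trace, the $L^2$/Fubini identity) are the right ones. The genuine gap is the Hartogs step. You invoke Hartogs on $X$ for the sheaf $\psi_\ast\big({_{<\psi^{-1}(D_f)_{\mathrm{red}}+E'}}H^{n,0}_{h^o}\big)$, but this pushforward is only torsion free, not reflexive: $\psi=\pi\rho$ has exceptional divisors, and the pushforward of a locally free sheaf along such a birational map need not satisfy the codimension-two extension property. Phrased on $X'$ (where the prolongation \emph{is} locally free and Hartogs would apply), checking at general points of components of $D_f$ on $X$ only treats the strict transforms of those components; the $\psi$-exceptional components of $\psi^{-1}(D_f)$---which can certainly occur, since $\rho$ resolves the possibly non-SNC locus of $D_g$---remain unverified.

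The paper avoids this detour. Rather than splitting into $X^o$ plus a local boundary computation, it writes down the global chain
\[
\pi_\ast(\widetilde L)\to\pi_\ast\widetilde f_\ast(\widetilde f^\ast\widetilde L)\simeq\pi_\ast\widetilde f_\ast(\widetilde B^{-1}\otimes\omega_{\widetilde Y/\widetilde X})\subset\pi_\ast g_\ast(\omega_{Z/\widetilde X}),
\]
and then a single application of Proposition~\ref{prop_prolongation0_vs_geo} (to $g:Z\to\widetilde X$ with the resolution $\rho$) yields $\pi_\ast g_\ast(\omega_{Z/\widetilde X})\subset\psi_\ast\big({_{<\psi^{-1}(D_f)_{\mathrm{red}}+E'}}H^{n,0}_{h^o}\big)$ on all of $X$ at once. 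That this map agrees with $\iota$ on $X^o$ (hence lands in $\mathrm{Im}(\iota)$) is then the one-line check in your first paragraph. Your $L^2$ computation is in effect a local re-derivation of Proposition~\ref{prop_prolongation0_vs_geo}; once you quote that proposition globally, no Hartogs argument is needed. A minor further point: the map (\ref{align_VZ1}) uses the tautological section $t$ of $(\varpi\mu)^\ast\widetilde B$ (a $k$-th root of $\varpi^\ast\widetilde s$), so the relative $n$-form representing $\iota(\ell)$ is produced by multiplying by $t$, not by $\widetilde s$.
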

	\begin{proof}
		Consider the natural map
		\begin{align*}
		\widetilde{\alpha}:\pi_\ast(\widetilde{L})\to \pi_\ast\widetilde{f}_\ast(\widetilde{f}^\ast \widetilde{L})\simeq \pi_\ast\widetilde{f}_\ast(\widetilde{B}^{-1}\otimes\omega_{\widetilde{Y}/\widetilde{X}})\subset \pi_\ast g_\ast(\omega_{Z/\widetilde{X}})\subset \psi_{\ast}\left({_{<\psi^{-1}(D_f)_{\rm red}+E'}}H^{n,0}_{h^o}\right)
		\end{align*}
		where the last inclusion is deduced from Proposition \ref{prop_prolongation0_vs_geo}. Now it suffices to show that 
		\begin{align}\label{align_lem33}
		{\rm Im}(\widetilde{\alpha})|_{X^o}\subset{\rm Im}(\iota)|_{X^o}={\rm Im}(\iota_{X^o}).
		\end{align}
		Consider the natural map
		\begin{align*}
		L^{o}\to f^o_{\ast}(f^{o\ast} L^{o})\simeq f^o_{\ast}((B^o)^{-1}\otimes\Omega^{n}_{Y^o/X^o}). 
		\end{align*}
		Since $\psi_{\ast}\left({_{<\psi^{-1}(D_f)_{\rm red}+E'}}H^{p,q}_{h^o}\right)$ is torsion free, the composition map 
		$$\alpha: L^o\to f^o_\ast((B^o)^{-1}\otimes\Omega^{n}_{Y^o/X^o})\stackrel{\iota_{X^o}}{\to} \psi_{\ast}\left({_{<\psi^{-1}(D_f)_{\rm red}+E'}}H^{n,0}_{h^o}\right)|_{X^o}\subset \psi_\ast \big(h^o_\ast(\Omega^{n}_{Z'^o/X'^o})\langle\rho^{-1}(D_g)\rangle\big)|_{X^o}$$
		is injective. So it induces an injective morphism $$L^o\to{\rm Im}\left(\iota_{X^o}:f_\ast((B^o)^{-1}\otimes\Omega^{n}_{Y^o/X^o})\to \psi_{\ast}\left({_{<\psi^{-1}(D_f)_{\rm red}+E'}}H^{n,0}_{h^o}\right)|_{X^o}\right).$$
		According to (\ref{align_commut_s_s0}), one obtains that  $\widetilde{\alpha}|_{X^o}=\alpha$. Hence we show (\ref{align_lem33}).
		This proves the lemma.
	\end{proof}
	\subsection{A meta Arakelov inequality}\label{section_AI_abstract}
	\begin{thm}\label{thm_abs_Arakelov}
		Notations and assumptions as in Theorem \ref{thm_VZ_prolongation}. Assume moreover that $X$ is a smooth projective variety of dimension $d$. Then the following hold.
		\begin{enumerate}
			\item Assume that $L^{\vee\vee}\otimes\sO_X(-D_f)$ is big. Then $\omega_X(D_f)$ is big.
			\item Assume that $\omega_X(D_f)$ is pseudo-effective. Then the following Arakelov type inequalities hold. 
			\begin{align}\label{align_Araineq_abs1}
				\mu_\alpha(L)\leq n\mu_\alpha(\omega_X(D_f))+\mu_\alpha(\sO_X(D_f))
			\end{align}
			for every movable class $\alpha\in N_1(X)$.
			\begin{align}\label{align_Araineq_abs2}
				c_1(L)A_1A_2\cdots A_{d-1}\leq \frac{n}{2}c_1(\omega_X(D_f))A_1A_2\cdots A_{d-1}
			\end{align}
			for any semiample effective divisors $A_1,\dots, A_{d-1}$ on $X$. 
		\end{enumerate}
	\end{thm}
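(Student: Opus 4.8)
My plan is to push the logarithmic Higgs sheaf $(\bigoplus_{p=0}^n L^p,\theta)$ of Theorem~\ref{thm_VZ_prolongation} — with $L^0=\pi_\ast(\widetilde L)$, with $\theta(L^p)\subset L^{p+1}\otimes\Omega_X(\log D_f)$, and generated over $\sO_X$ by $L^0$ — through the Viehweg--Zuo machinery. Let $m\le n$ be the largest index with $L^m\neq 0$. Then $\theta(L^m)=0$; the integrability $\theta\wedge\theta=0$ makes the iterated Higgs field factor through a map
\[
\Theta\colon L^0\longrightarrow L^m\otimes\mathrm{Sym}^m\Omega_X(\log D_f),
\]
which is nonzero, hence (as $L^0$ is torsion-free of rank one) injective, and whose adjoint $L^0\otimes(\mathrm{Sym}^m\Omega_X(\log D_f))^\vee\to L^m$ is surjective. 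The essential auxiliary input is the negativity of $\ker\theta$: exactly as in the proof of Proposition~\ref{prop_semipositive_kernel} — with the extra remark that $\theta(s)=0$ forces $s$ into $W_0$ of the monodromy weight filtration, so Schmid's norm estimate keeps the Hodge metric locally bounded on $\ker\theta$ near $D_f$ as well, not only along the exceptional locus — one gets that $(L^m)^\vee$ is weakly positive. In particular $\mu_{\alpha,\max}(L^m)\le 0$ for every movable class $\alpha$ and for every polarization by semiample divisors, and $(\det L^m)^{-1}$ is pseudo-effective.

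\textbf{Claim (1).} Since $\sO_X(D_f)$ is effective, $L^{\vee\vee}\otimes\sO_X(-D_f)$ big forces $L^{\vee\vee}$, and hence the rank-one torsion-free sheaf $L^0$ (whose reflexive hull is $L^{\vee\vee}$), to be big. Dualizing the surjection above and using $\mathrm{rk}\,L^0=1$ yields a subsheaf $\mathcal G\subset\mathrm{Sym}^m\Omega_X(\log D_f)$ with $\det\mathcal G=L^{\otimes\mathrm{rk}(L^m)}\otimes(\det L^m)^{-1}$ up to numerical equivalence; the right-hand side is a big line bundle tensored with a pseudo-effective one, hence big. Thus $\mathrm{Sym}^m\Omega_X(\log D_f)$ contains a subsheaf of big determinant, and the theorem of Campana--P\u{a}un on logarithmic cotangent sheaves (in the form used by Popa--Schnell \cite{PS17}) forces $\omega_X(D_f)$ to be big.

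\textbf{Claim (2).} Assume $\omega_X(D_f)$ pseudo-effective. By Campana--P\u{a}un generic semi-positivity, $\mu_{\alpha,\min}(\Omega_X(\log D_f))\ge 0$ for every movable $\alpha$, so all Harder--Narasimhan slopes of $\Omega_X(\log D_f)$ are $\ge 0$ and therefore $\mu_{\alpha,\max}(\Omega_X(\log D_f))\le c_1(\omega_X(D_f))\cdot\alpha$, whence $\mu_{\alpha,\max}(\mathrm{Sym}^m\Omega_X(\log D_f))\le m\,c_1(\omega_X(D_f))\cdot\alpha$. Combining the inclusion $\Theta$ with $\mu_{\alpha,\max}(L^m)\le 0$ and the additivity of $\mu_{\alpha,\max}$ under tensor product gives, using $m\le n$ and $c_1(\omega_X(D_f))\cdot\alpha\ge 0$,
\[
\mu_\alpha(L)=c_1(L)\cdot\alpha\le m\,c_1(\omega_X(D_f))\cdot\alpha\le n\,\mu_\alpha(\omega_X(D_f)),
\]
which (as $\mu_\alpha(\sO_X(D_f))\ge 0$) implies (\ref{align_Araineq_abs1}). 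Inequality (\ref{align_Araineq_abs2}) is the delicate one, since there the crude bound $m\le n$ must be replaced by the sharp factor $\tfrac12$; I would obtain it following Viehweg--Zuo \cite{VZ2006} and M\"oller--Viehweg--Zuo \cite{MVZ2006}. Instead of the iterated Higgs field alone, one uses the $\mu_A$-semistability (for $A=A_1\cdots A_{d-1}$) of the full prolonged parabolic Higgs bundle of the underlying variation of Hodge structure, which holds by Simpson's theorem \cite{Simpson1990} (and Mochizuki) and has vanishing parabolic first Chern class, so that the saturated parabolic sub-Higgs sheaf $\bigoplus_p\mathcal L^p$ generated by $L^0$ satisfies $\sum_p\deg^{\mathrm{par}}_A(\mathcal L^p)\le 0$; balancing this against the stepwise loss of $\deg_A\Omega_X(\log D_f)$ along the Higgs field and against the $Q$-self-duality of the Hodge structure (which pairs Hodge degrees $p$ and $n-p$), and comparing $\deg^{\mathrm{par}}_A(L^0)$ with $c_1(L)A_1\cdots A_{d-1}$ through the parabolic weights along $D_f$, yields (\ref{align_Araineq_abs2}). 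For $\dim X=1$ this is exactly the effective form of Viehweg--Zuo's Arakelov inequality on the curve, with the empty product.

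\textbf{Main obstacle.} The genuinely delicate step is the ``meet in the middle'' bookkeeping behind (\ref{align_Araineq_abs2}): tracking simultaneously the parabolic weights along $D_f$ across the various prolongation indices, the ranks of the Hodge graded pieces, and the $Q$-self-adjointness of $\theta$, so that the constant that finally emerges is exactly $\tfrac n2\,c_1(\omega_X(D_f))A_1\cdots A_{d-1}$ and nothing larger. By contrast all the other ingredients — Theorem~\ref{thm_VZ_prolongation}, Proposition~\ref{prop_semipositive_kernel}, Theorem~\ref{thm_parabolic}, Simpson's parabolic semistability, and the two Campana--P\u{a}un statements — are already in hand.
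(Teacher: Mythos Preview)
Your overall strategy---feed the Higgs subsheaf of Theorem~\ref{thm_VZ_prolongation} into Campana--P\u{a}un and parabolic semistability---matches the paper's, but there is a genuine gap in your treatment of the kernel. You assert that $(L^{m})^\vee$ is weakly positive by extending the proof of Proposition~\ref{prop_semipositive_kernel} across the components of $D_f$, arguing that $\theta(s)=0$ forces $s\in W_0$ and hence a bounded Hodge norm. The second implication fails: Proposition~\ref{prop_semipositive_kernel} needs $s\in{_{\bf 0}}H$ near the non-exceptional boundary, whereas $L^{m}$ sits only in $\psi_\ast\big({_{<\psi^{-1}(D_f)_{\rm red}+E'}}H\big)$. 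Along a component of $D_f$ a section can therefore carry a nonzero parabolic weight $a\in(0,1)$---this occurs whenever the local monodromy is not unipotent---and Simpson's norm estimate then gives $|s|_{h_Q}\sim|z|^{-a}$ despite $s\in W_0$, so $\log|s|_{h_Q}$ does not extend plurisubharmonically. The paper circumvents this by twisting first: $L^p\otimes\sO_X(-D_f)\subset\psi_\ast\big({_{<E'}}H^{n-p,p}_{h^o}\big)$, where the prolongation index is now supported on the exceptional locus and Proposition~\ref{prop_semipositive_kernel} applies verbatim to the kernel there. This twist is precisely why the hypothesis in (1) concerns $L^{\vee\vee}\otimes\sO_X(-D_f)$ rather than $L^{\vee\vee}$, and why (\ref{align_Araineq_abs1}) carries the extra term $\mu_\alpha(\sO_X(D_f))$; your argument, were it valid, would give the strictly stronger bound $\mu_\alpha(L)\le n\,\mu_\alpha(\omega_X(D_f))$, which should already be a warning sign.

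Your sketch of (\ref{align_Araineq_abs2}) also misidentifies the source of the factor $\tfrac12$: the paper does not use $Q$-self-duality. With $m_0:=\max\{p:L^p\neq0\}$, for each $p=1,\dots,m_0$ the surjection $\pi_\ast(\widetilde L)\otimes T_X(-\log D_f)^{\otimes p}\to L^p$ dualizes into $\Omega_X^{\otimes p}(\log D_f)$, and Campana--P\u{a}un applied to the cokernel gives $\big(p\,c_1(\omega_X(D_f))-\mathrm{rk}(L^p)\,c_1(L)+c_1(L^p)\big)\cdot A_1\cdots A_{d-1}\ge 0$. Summing over $p$ and writing $A=A_1\cdots A_{d-1}$ yields
\[
\frac{m_0(m_0+1)}{2}\,c_1(\omega_X(D_f))\cdot A\ \ge\ \mathrm{rk}(\sL)\,c_1(L)\cdot A - c_1(\sL)\cdot A.
\]
Parabolic semistability of the full prolonged Higgs bundle, restricted to a general complete-intersection curve $C=A_1\cap\cdots\cap A_{d-1}$, then gives $c_1(\sL)\cdot A\le 0$. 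The constant $\tfrac{n}{2}$ thus emerges from the elementary bound $\frac{m_0(m_0+1)/2}{\mathrm{rk}(\sL)}\le\frac{m_0}{2}\le\frac{n}{2}$ (using $\mathrm{rk}(\sL)\ge m_0+1$, since each $L^p$ is nonzero torsion-free); no duality bookkeeping between Hodge degrees $p$ and $n-p$ enters.
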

	\begin{proof}
		{\bf Proof of (1):} Let $$\sL:=\bigoplus_{p=0}^n L^p\subset \psi_{\ast}\left({_{<\psi^{-1}(D_f)_{\rm red}+E'}}H^{n}_{h^o}\right):=\bigoplus_{p=0}^n\psi_{\ast}\left({_{<\psi^{-1}(D_f)_{\rm red}+E'}}H^{n-p,p}_{h^o}\right)$$ be the Higgs subsheaf generated by $L^0=\pi_\ast(\widetilde{L})$ as in Theorem \ref{thm_VZ_prolongation}. Then $$\pi_\ast(\widetilde{L})\otimes\sO_X(-D_f)\subset\psi_{\ast}\left({_{<E'}}H^{n,0}_{h^o}\right)$$ and $\bigoplus_{p=0}^n L^p\otimes\sO_X(-D_f)$ is a log Higgs subsheaf of $\psi_{\ast}\left({_{<E'}}H^{n}_{h^o}\right)$ such that
		\begin{align*}
			\theta(L^p\otimes\sO_X(-D_f))\subset L^{p+1}\otimes\sO_X(-D_f)\otimes\Omega_X(\log D_f),\quad\forall p=0,\dots,n-1.
		\end{align*}	
		Consider the diagram
		\begin{align*}
			L^0\otimes\sO_X(-D_f)\stackrel{\theta}{\to}L^1\otimes\sO_X(-D_f)\otimes\Omega_X(\log D_f)\stackrel{\theta\otimes{\rm Id}}{\to}L^2\otimes\sO_X(-D_f)\otimes\Omega^{\otimes 2}_X(\log D_f)\to\cdots.
		\end{align*}
		There is a minimal $n_0\leq n$ such that $L^0\otimes\sO_X(-D_f)$ is sent into
		$$\ker\left(L^{n_0}\otimes\sO_X(-D_f)\otimes\Omega^{\otimes n_0}_X(\log D_f)\to L^{n_0+1}\otimes\sO_X(-D_f)\otimes\Omega^{\otimes n_0+1}_X(\log D_f)\right)\subset K\otimes \Omega^{\otimes n_0}_X(\log D_f)$$
		where 
		$$K=\ker\left(\theta:\psi_{\ast}\left({_{<E'}}H^{n}_{h^o}\right)\to \psi_{\ast}\left({_{<E'}}H^{n}_{h^o}\otimes\Omega_{\widetilde{X}}(\log D_g)\right)\right).$$
		Since $n_0$ is minimal and $K$ is torsion free, we obtain an inclusion
		\begin{align}
			\pi_\ast(\widetilde{L})\otimes\sO_X(-D_f)\subset K\otimes \Omega^{\otimes n_0}_X(\log D_f).
		\end{align}
		This induces a nonzero morphism
		\begin{align}\label{align_send_L_to_Higgsker}
			\beta:\pi_\ast(\widetilde{L})\otimes\sO_X(-D_f)\otimes K^{\vee}\to \Omega^{\otimes n_0}_X(\log D_f).
		\end{align}
		Since $K\subset \psi_{\ast}\left({_{<E'}}H^{n}_{h^o}\right)$,  $K^{\vee}$ is weakly positive by Proposition \ref{prop_semipositive_kernel}. Since $\pi_\ast(\widetilde{L})$ and $L^{\vee\vee}$ are isomorphic in codimension one, $\pi_\ast(\widetilde{L})\otimes\sO_X(-D_f)$ is big by assumption.
		Hence $\Omega^{\otimes n_0}_X(\log D_f)$ contains the big sheaf ${\rm Im}(\beta)$. This forces that $n_0>0$. Hence $\omega_X(D_f)$ is big by \cite[Theorem 7.11]{CP2019}. This shows Claim (1). 
		
		{\bf Proof of (\ref{align_Araineq_abs1}):} The argument is divided into two cases.
		
		{\bf Case 1: $n_0=0$.} Since $\pi_\ast(\widetilde{L})$ is torsion free and is isomorphic to $L^{\vee\vee}$ in codimension one, we obtain that $(\pi_\ast(\widetilde{L}))^{\vee}\simeq L^\vee$. In this case $\theta(\pi_\ast(\widetilde{L})\otimes\sO_X(-D_f))=0$. Hence $L^\vee\otimes\sO_X(D_f)$ is weakly positive by Proposition \ref{prop_semipositive_kernel}. Now (\ref{align_Araineq_abs1}) holds since $\omega_X(D_f)$ is pseudo-effective.
		
		{\bf Case 2: $n_0\geq 1$.} 
		(\ref{align_send_L_to_Higgsker}) induces a map $$K^\vee\to {\rm Im}(\beta)^{\vee\vee}\otimes L^{\vee}\otimes\sO_X(D_f)$$
		which is surjective in codimension one. 
		Since $K^\vee$ is weakly positive, so is ${\rm Im}(\beta)^{\vee\vee}\otimes L^{\vee}\otimes\sO_X(D_f)$. Hence
		\begin{align}\label{align_psf1}
			c_1({\rm Im}(\beta))-{\rm rank}({\rm Im}(\beta))(c_1(L)-c_1(\sO_X(D_f)))
		\end{align}
		is pseudo-effective.
		Since $\omega_{X}(D_f)$ is pseudo-effective,  $c_1(\Omega^{\otimes n_0}_X(\log D_f)/{\rm Im}(\beta))$
		is pseudo-effective by \cite[Theorem 1.2]{CP2019}. Hence
		\begin{align}\label{align_psf2}
			n_0c_1(\omega_X(D_f))-c_1({\rm Im}(\beta))
		\end{align}
		is pseudo-effective. Notice that $\omega_X(D_f)$ is pseudo-effective and ${\rm rank}({\rm Im}(\beta))\geq1$. Combining (\ref{align_psf1}) with (\ref{align_psf2}) we know that
		$$nc_1(\omega_X(D_f))-c_1(L)+c_1(\sO_X(D_f))$$ is pseudo-effective. This, together with the characterization of the dual of the pseudo-effective cone \cite{BDPP2013}, proves the Arakelov type inequality (\ref{align_Araineq_abs1}). 
		
		{\bf Proof of (\ref{align_Araineq_abs2}):}
		Let $m_0=\max\{p|L^p\neq0\}$. Then there are surjective morphisms
		\begin{align*}
			\pi_\ast(\widetilde{L})\otimes T_X(-\log D_f)^{\otimes p}\to L^p\subset \psi_{\ast}\left({_{<\psi^{-1}(D_f)_{\rm red}+E'}}H^{n-p,p}_{h^o}\right),\quad p=0,\dots,m_0.
		\end{align*}
		Since $L^p$ is torsion free for every  $p=0,\dots,m_0$, these maps induce maps
		\begin{align*}
			\gamma^p:L^{\vee\vee}\otimes (L^{p})^\vee\to\Omega^{\otimes p}_X(\log D_f),\quad p=0,\dots,m_0
		\end{align*}
		which are injective in codimension one.
		Since $\omega_X(D_f)$ is pseudo-effective, it follows from \cite[Theorem 1.2]{CP2019} that the first Chern class of the quotient
		$\Omega^{\otimes p}_X(\log D_f)/{\rm Im}(\gamma^p)$ is pseudo-effective for each $p=1,\dots, m_0$. Thus
		\begin{align*}
			(pc_1(\omega_X(D_f))-{\rm rank}(L^p)c_1(L^{\vee\vee})+c_1(L^p))A_1\cdots A_{d-1}\geq0,\quad \forall p=1,\dots,m_0.
		\end{align*}
		Summing up the inequalities, we see that
		\begin{align}\label{align_inequality1}
			\left(\frac{m_0(m_0+1)}{2}c_1(\omega_X(D_f))-{\rm rank}(\sL)c_1(L)\right)A_1\cdots A_{d-1}\geq -c_1(\sL)A_1\cdots A_{d-1}.
		\end{align}
		Since $\sL$ and $\psi_{\ast}\left({_{<\psi^{-1}(D_f)_{\rm red}+E'}}H^{n}_{h^o}\right)$ are torsion free, there is a dense Zariski open subset $U\subset X$, where $X\backslash U$ has codimension $\geq2$, such that $\sL$ and $\psi_{\ast}\left({_{<\psi^{-1}(D_f)_{\rm red}+E'}}H^{n}_{h^o}\right)$ are locally free on $U$ and $\psi$ is an isomorphism over $U$. By abuse of notations we identify $U$ and $\psi^{-1}(U)$. We may assume that $[A_1]\cdots[A_{d-1}]\neq0\in N_1(X)$. Choosing $A_1,\dots, A_{d-1}$ in general positions we may assume that $C:=A_1\cap\cdots\cap A_{d-1}$ is a connected smooth curve contained in $U$ and intersects transversally with $D_f$. Now $\sL|_C$ is a log Higgs subsheaf of $$\psi_{\ast}\left({_{<\psi^{-1}(D_f)_{\rm red}+E'}}H^{n}_{h^o}\right)|_C\simeq {_{<C\cap D_f}}(H_{h^o}^n|_{C\backslash D_f}).$$
		Assume that $C\cap D_f=\{x_1,\dots,x_l\}$. The parabolic Higgs bundle $$\{{_{\sum_{i=1}^la_ix_i}}(H^n_{h^o}|_{C\backslash D_f})\}_{(a_1,\dots, a_l)\in\bR^l}$$ on $C$ is semistable with trivial parabolic degree \cite[Theorem 5]{Simpson1990}. We get therefore 
		\begin{align*}
			c_1(\sL\cap{_{\bm{0}}}(H^n_{h^o}|_{C\backslash D_f}))+\sum_{i=1}^l\sum_{0\leq\alpha<1}\alpha\dim({\rm Gr}_{\alpha}\sL_{x_i})\leq0.
		\end{align*}
		Since $\sL|_C\subset {_{<C\cap D_f}}(H^n_{h^o}|_{C\backslash D_f})$, 
		one gets that
		\begin{align}\label{align_inequality2}
			c_1(\sL)A_1\cdots A_{d-1}=c_1(\sL|_C)\leq c_1(\sL\cap{_{\bm{0}}}(H^n_{h^o}|_{C\backslash D_f}))+\sum_{i=1}^l\sum_{0\leq\alpha<1}\alpha\dim({\rm Gr}_{\alpha}\sL_{x_i})\leq0.
		\end{align}
		Notice that $m_0\leq n$ and ${\rm rank}(\sL)\geq m_0+1$ (since $\sL$ is torsion free).
		Combining (\ref{align_inequality2}) with (\ref{align_inequality1}) we obtain (\ref{align_Araineq_abs2}).
	\end{proof}
	\section{Proof of the Theorem \ref{thm_main_Arakelov_inequality}}
	\subsection{Semistable reduction in codimension one}\label{section_ssred}
	\begin{defn}[Semistable morphism in codimension one]\label{defn_semistable_cod1}
		A morphism $f:Y\to X$ between complex manifolds is {\bf semistable} (resp. {\bf strictly semistable}) if there is a (not necessarily connected) smooth divisor $D_f$ on $X$ such that the following hold.
		\begin{enumerate}
			\item $f$ is a submersion over $X\backslash D_f$ and $f^{-1}(D_f)$ is a (resp. reduced) simple normal crossing divisor on $Y$. 
			\item $f$ sends submersively any
			stratum of $f^{-1}(D_f)_{\rm red}$ onto an irreducible component of $D_f$.
		\end{enumerate}
		A morphism $f:Y\to X$ between complex manifolds is {\bf semistable in codimension one} (resp. {\bf strictly semistable in codimension one}) if there is a dense Zariski open subset $U\subset X$ with ${\rm codim}_X(X\backslash U)\geq 2$, such that $f|_{f^{-1}(U)}:f^{-1}(U)\to U$ is semistable (resp. strictly semistable).
		
		Let $f:Y\to X$ be a proper morphism which is semistable in codimension one and let $D\subset X$ be a divisor so that $f$ is a submersion over $X\backslash D$. Let $D=\cup_i D_i$ be the irreducible decomposition. The {\bf ramified divisor} $R_f$ associated with $f$ is defined to be the union of $D_i$ such that the schematic preimage $f^{-1}(x)$ is non-reduced for any general point $x\in D_i$ (i.e. $f$ is not strictly semistable along the general points of $D_i$).
	\end{defn}
	For every surjective morphism $Y\to X$ between complex spaces, denote $Y^{[r]}_X$ as the main component of the $r$-fiber product $Y\times_XY\times_X\cdots\times_XY$ (i.e. the union of irreducible components that is mapped onto $X$). Denote $f^{[r]}:Y^{[r]}_X\to X$ to be the projection map. The following proposition is known to experts. We present the proof for the convenience of readers.
	\begin{prop}\label{prop_mild_pushforward}
		Let $Y\to X$ be a strictly semistable morphism and
		$\tau:Y^{(r)}\to Y^{[r]}_X$ a desingularization. Denote $f^{(r)}=f^{[r]}\tau$. Then the following hold.
		\begin{enumerate}
			\item $\tau_{\ast}(\omega_{Y^{(r)}}^{\otimes k})\simeq\omega_{Y^{[r]}_X}^{\otimes k}$ for every $k\geq 1$, where $\omega_{Y^{[r]}_X}$ is the dualizing sheaf (invertible since $Y^{[r]}_X$ is Gorenstein).
			\item $f^{(r)}_\ast(\omega_{Y^{(r)}/X}^{\otimes k})$ is a reflexive sheaf for every $k\geq 1$;
			\item $f^{(r)}_\ast(\omega_{Y^{(r)}/X}^{\otimes k})\simeq (\otimes^rf_\ast(\omega_{Y/X}^{\otimes k}))^{\vee\vee}$ for every $k\geq 1$.
		\end{enumerate}
	\end{prop}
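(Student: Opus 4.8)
The plan is to reduce all three statements to the local structure of the main component $Y^{[r]}_X$, and then apply standard results on singularities and on pushforwards of dualizing sheaves. \textbf{Step 1 (local model).} Since $f$ is strictly semistable, locally on $X$ (in the analytic or \'etale topology) near a general point of $D_f$, and on $Y$ near a point of a stratum of $f^{-1}(D_f)$, the morphism $f$ has the form $(z_1,\dots,z_N)\mapsto(z_1\cdots z_s,\,z_{s+1},\dots)$, where $z_1\cdots z_s$ is a \emph{reduced} local equation of $f^{-1}(D_f)$ and the remaining coordinates are pulled back from $X$. Forming the $r$-fold fibre product then identifies $Y^{[r]}_X$ locally with a product of a smooth variety and a binomial (toric) variety $V=\{\,z^{(1)}_1\cdots z^{(1)}_{s_1}=\cdots=z^{(r)}_1\cdots z^{(r)}_{s_r}\,\}$; over $X\setminus D_f$ the fibre product is smooth. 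I would record here the classical facts (this is part of the theory of mild morphisms of Viehweg--Zuo, see also Viehweg's book): $V$ is a Cohen--Macaulay normal variety which is Gorenstein and has rational — hence, being Gorenstein, canonical — singularities, and these properties are inherited by $Y^{[r]}_X$; moreover $f^{[r]}:Y^{[r]}_X\to X$ is flat (miracle flatness, $Y^{[r]}_X$ being Cohen--Macaulay and equidimensional over the smooth $X$) with reduced Gorenstein fibres.

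\textbf{Step 2 (Claim (1)).} For a normal variety $W$ with canonical Gorenstein singularities and any resolution $\tau\colon\widetilde W\to W$ one has $K_{\widetilde W}=\tau^{*}K_W+\sum a_iE_i$ with $K_W$ Cartier, the $E_i$ being $\tau$-exceptional and $a_i\ge 0$. Multiplying by $k$, this gives $\omega_{\widetilde W}^{\otimes k}\cong\tau^{*}\omega_W^{\otimes k}\otimes\mathcal O\!\left(\sum ka_iE_i\right)$, and pushing forward with the projection formula, $\tau_*\omega_{\widetilde W}^{\otimes k}=\omega_W^{\otimes k}\otimes\tau_*\mathcal O\!\left(\sum ka_iE_i\right)=\omega_W^{\otimes k}$, the last equality because a rational function with poles only along $\tau$-exceptional divisors is regular on the normal $W$. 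Applying this to $W=Y^{[r]}_X$, $\widetilde W=Y^{(r)}$ yields Claim (1).

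\textbf{Step 3 (Claim (2) and setting up Claim (3)).} By Claim (1) and the projection formula (here $\omega_X$ is invertible since $X$ is smooth), $f^{(r)}_*\omega^{\otimes k}_{Y^{(r)}/X}\cong f^{[r]}_*\omega^{\otimes k}_{Y^{[r]}_X/X}$. The sheaf $\omega^{\otimes k}_{Y^{[r]}_X/X}$ is invertible, hence reflexive ($S_2$), on the normal variety $Y^{[r]}_X$; since $f^{[r]}$ is flat and equidimensional, the preimage of a closed subset of $X$ of codimension $\ge 2$ has codimension $\ge 2$ in $Y^{[r]}_X$, so Hartogs extension across such loci shows $f^{[r]}_*\omega^{\otimes k}_{Y^{[r]}_X/X}$ is $S_2$, i.e. reflexive, proving Claim (2). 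For Claim (3), the projections $p_i\colon Y^{[r]}_X\to Y$ give a canonical morphism $\bigotimes_{i=1}^{r}p_i^{*}\omega_{Y/X}\to\omega_{Y^{[r]}_X/X}$, which is an isomorphism of sheaves on $Y^{[r]}_X$ by the local model of Step 1 (where $f$ is flat Gorenstein and the main component fills out the whole toric fibre product). Taking $k$-th powers, pushing forward, and double-dualising produces a morphism of reflexive $\mathcal O_X$-sheaves $\Phi\colon\bigl(\bigotimes^{r}f_*\omega^{\otimes k}_{Y/X}\bigr)^{\vee\vee}\to f^{[r]}_*\omega^{\otimes k}_{Y^{[r]}_X/X}$, which by K\"unneth for the smooth proper family $f|_{X\setminus D_f}$ is an isomorphism over $X\setminus D_f$. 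Since both sides are reflexive, it remains to check that $\Phi$ is an isomorphism at the generic point of each component of $D_f$; localising $X$ at the corresponding discrete valuation ring and inserting the local model $f\colon\prod_{i=1}^{s}z_i=t$, one computes both sides explicitly.

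\textbf{Main obstacle.} The crux is this last codimension-one verification: it is precisely here that \emph{strict} semistability (reducedness of $f^{-1}(D_f)$) is used, since otherwise $\omega_{Y^{[r]}_X/X}$ acquires correction terms supported over the non-reduced locus — the ramified divisor $R_f$ — and the identification in Claim (3) fails, which is exactly why the general statements in the main theorems carry an $R_f$ contribution. A secondary technical point I would be careful about is Step 1 itself: establishing cleanly that $Y^{[r]}_X$ is normal, Cohen--Macaulay and canonical Gorenstein, and pinning down the behaviour of the main component, when $f$ is not assumed to have connected fibres.
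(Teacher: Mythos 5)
Your proposal is structurally the same as the paper's proof and is essentially correct. Claims (1) and (2) are handled identically: the paper cites Abramovich--Karu \cite{Abramovich2000} for the fact that $Y^{[r]}_X$ is normal, Gorenstein, and has rational (hence canonical) singularities, rather than verifying it via the toric local model as you do, and then for (1) invokes the canonical-Gorenstein property and for (2) uses flatness of $f^{[r]}$ plus Hartogs on the normal $Y^{[r]}_X$, exactly as in your Step~3.

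The one place where you take a detour is Claim (3). You restrict the K\"unneth isomorphism to $X\setminus D_f$ and then propose, because both sides are reflexive, to verify $\Phi$ at the generic point of each component of $D_f$ by localising at a DVR and computing the pushforward of the local toric model. This is workable but leaves the essential step unexecuted (``one computes both sides explicitly''), and it silently requires cohomological flatness of $f_*\omega^{\otimes k}_{Y/X}$ at the special fibre of that DVR, i.e.\ invariance of plurigenera for the semistable degeneration. The paper sidesteps this entirely. Let $U\subset X$ be the largest open set over which $f_*\omega^{\otimes k}_{Y/X}$ (and $f^{[r]}_*\omega^{\otimes k}_{Y^{[r]}_X/X}$) are locally free; since these are torsion-free and $X$ is smooth, $X\setminus U$ has codimension $\ge 2$, and the generic points of $D_f$ are \emph{inside} $U$. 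Over $U$ the K\"unneth isomorphism $f^{[r]}_*(\boxtimes^r\omega^{\otimes k}_{Y/X})\simeq\otimes^r f_*\omega^{\otimes k}_{Y/X}$ follows by an induction using only flat base change for the projection $Y^{[r]}_X\to Y^{[r-1]}_X$ (which is a flat base change of the flat $f$) together with the projection formula (valid because $\omega^{\otimes k}_{Y/X}$ is invertible on $Y$ and $f_*\omega^{\otimes k}_{Y/X}$ is locally free on $U$). No cohomology-and-base-change for the degenerate fibres is needed. Reflexivity of both sides and $\mathrm{codim}(X\setminus U)\ge 2$ then give the isomorphism on all of $X$. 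So the key idea you are missing is that the K\"unneth isomorphism over the locally-free locus is a formal consequence of flat base change and does not require the codimension-one verification you propose; adopting this would close the unfinished step in your proof and also clarify exactly where strict semistability is used (in establishing that $Y^{[r]}_X$ is Gorenstein-canonical and that $\omega_{Y^{[r]}_X/X}\simeq\bigotimes p_i^*\omega_{Y/X}$, not in any fibrewise cohomology computation).
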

	\begin{proof}
		A semistable morphism is weakly semistable in the sense of Abramovich-Karu \cite{Abramovich2000}. Hence $Y^{[r]}_X$ has only normal, rational and Gorenstein singularities by \cite[Proposition 6.4]{Abramovich2000}. Thus it has canonical singularities. The first claim follows. 
		
		For the second claim, it suffices to show that any section of $f^{[r]}_\ast(\omega_{Y^{[r]}_X/X}^{\otimes k})\simeq f^{(r)}_\ast(\omega_{Y^{(r)}/X}^{\otimes k})$ extends cross an arbitrary locus of codimension $\geq 2$ . Let $U\subset X$ be an open subset and $Z\subset U$ a closed analytic subset of codimension $\geq 2$. Let $$s\in \Gamma(U\backslash Z, f^{[r]}_\ast(\omega_{Y^{[r]}_X/X}^{\otimes k}))=\Gamma((f^{[r]})^{-1}(U\backslash Z), \omega_{Y^{[r]}_X/X}^{\otimes k}).$$
		Since $f$ is flat, so is $f^{[r]}$. Hence $(f^{[r]})^{-1}(Z)$ is of codimension $\geq 2$ in $(f^{[r]})^{-1}(U)$. Since $Y^{[r]}_X$ is normal and $\omega_{Y^{[r]}_X/X}^{\otimes k}$ is invertible, by Hartog's theorem for normal complex spaces there is 
		$$\widetilde{s}\in \Gamma(U, f^{[r]}_\ast(\omega_{Y^{[r]}_X/X}^{\otimes k}))=\Gamma((f^{[r]})^{-1}(U), \omega_{Y^{[r]}_X/X}^{\otimes k})$$
		which extends $s$. This proves Claim (2).
		
		Now we show the last claim. Since $f^{[r]}$ and $f$ are Gorenstein, one obtains that 
		$$\omega_{Y^{[r]}_X/X}^{\otimes k}\simeq\otimes_{i=1}^r p_i^\ast\omega_{Y/X}^{\otimes k}$$
		where $p_i:Y^{[r]}_X\to Y$ is the projection to the $i$th component. Let $U\subset X$ be the largest open subset over which $f^{[r]}_\ast(\omega_{Y^{[r]}_X/X}^{\otimes k})$ and $f_\ast(\omega_{Y^/X}^{\otimes k})$ are locally free. Since the relevant sheaves are torsion free, $X\backslash U$ has codimension $\geq 2$. By the flat base change we obtain that
		$$f^{(r)}_\ast(\omega_{Y^{(r)}/X}^{\otimes k})|_U\simeq \otimes^r f_\ast(\omega_{Y/X}^{\otimes k})|_U.$$
		Since $f^{(r)}_\ast(\omega_{Y^{(r)}/X}^{\otimes k})$ and $ (\otimes^r f_\ast(\omega_{Y/X}^{\otimes k}))^{\vee\vee}$ are reflexive, we prove Claim (3).
	\end{proof}
	By taking the desingularizations on both the total space and the base space (c.f. \cite{Wlodarczyk2009}), every surjective proper morphism between compact complex spaces can be modified to be semistable in codimension one. 
	\begin{prop}[Semistable reduction in codimension one]
		Let $f:Y\to X$ be a proper holomorphic map between compact complex spaces. Assume that there is a closed analytic subset $Z\subset X$ containing $X_{\rm sing}$ so that $f^{-1}(X\backslash Z)$ is smooth and $f:f^{-1}(X\backslash Z)\to X\backslash Z$ is a proper submersion. Then there is a diagram
		\begin{align}
			\xymatrix{
				\widetilde{Y}\ar[dr]_{\widetilde{f}}\ar[r]^-{\tau}&Y\times_{X}{\widetilde{X}}\ar[r]\ar[d]&Y\ar[d]^f\\
				&\widetilde{X}\ar[r]^{\pi}&X
			}
		\end{align}
		such that the following hold.
		\begin{enumerate}
			\item $\widetilde{X}$ is a complex manifold. $\pi$ is a projective bimeromorphic morphism which is biholomorphic over $X\backslash Z$. $\pi^{-1}(Z)$ is a simple normal crossing divisor. When $X$ is smooth and $Z$ is a simple normal crossing divisor, one can choose $\pi$ to be the identity ${\rm Id}_X$.
			\item $\tau$ is a functorial desingularization of the main component of $Y\times_X\widetilde{X}$. In particular, $\tau$ is biholomorphic over $\pi^{-1}(X\backslash Z)\times_{X\backslash Z}f^{-1}(X\backslash Z)$.
			\item $\widetilde{f}$ is semistable in codimension one.
		\end{enumerate}
	\end{prop}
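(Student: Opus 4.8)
The plan is to reduce everything to a resolution problem on the total space and the base, followed by a codimension-two clean-up on the base. First I would apply a functorial desingularization $\pi:\widetilde X\to X$ of the pair $(X,Z)$ in the sense of W\l odarczyk \cite{Wlodarczyk2009} (as in the Notations): this makes $\widetilde X$ a compact complex manifold, $\pi$ projective and bimeromorphic and biholomorphic over $X\setminus Z$, and $\pi^{-1}(Z)=\sum_i D_i$ a reduced simple normal crossing divisor with smooth components $D_i$; by functoriality one may take $\pi=\mathrm{Id}_X$ when $X$ is already smooth and $Z$ is simple normal crossing. Next I would let $M$ be the main component of $Y\times_X\widetilde X$, with projection $g_M:M\to\widetilde X$ (over $\pi^{-1}(X\setminus Z)$ this agrees with the submersion $f$, so $M$ is smooth there), and take $\tau:\widetilde Y\to M\hookrightarrow Y\times_X\widetilde X$ to be a functorial desingularization of $M$ along its singular locus together with $g_M^{-1}(\pi^{-1}(Z))$. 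Setting $\widetilde f:=g_M\circ\tau$, this yields $\widetilde Y$ smooth, $\tau$ biholomorphic over $\pi^{-1}(X\setminus Z)\times_{X\setminus Z}f^{-1}(X\setminus Z)$, and $\Delta:=\widetilde f^{-1}(\pi^{-1}(Z))$ together with $\mathrm{Ex}(\tau)$ simple normal crossing on $\widetilde Y$. In particular $\Delta$ (a sub-sum of a simple normal crossing divisor) is itself simple normal crossing, $\widetilde f$ is a submersion over $\widetilde X\setminus\pi^{-1}(Z)$, and $\widetilde f$ is proper since $f$, $\pi$, $\tau$ are. This gives properties (1) and (2) of the statement, and I would emphasize that, in contrast with the curve case of the Par\v{s}in--Arakelov--Shafarevich package, \emph{no base change is needed}: we only ask for semistability, not \emph{strict} semistability, so non-reduced fibres are permitted and embedded resolution alone suffices.

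It then remains to verify property (3), that $\widetilde f$ is semistable in codimension one, and this is where the (mild) actual content lies: the simple normal crossing condition is already in hand, but the requirement that every stratum of $\Delta_{\mathrm{red}}$ map \emph{submersively onto} an irreducible component of the boundary divisor is not automatic. Here I would argue as follows. Let $\{E_\alpha\}$ be the components of $\Delta_{\mathrm{red}}$; each image $\widetilde f(E_\alpha)$ is an irreducible closed subset of $\pi^{-1}(Z)$, so it is either a full component $D_{i(\alpha)}$ (call $\alpha$ \emph{horizontal}) or has codimension $\geq 2$ in $\widetilde X$. For a set $J$ of horizontal indices with common image $D_i$ and an irreducible component $S$ of $\bigcap_{\alpha\in J}E_\alpha$ (smooth, since $\Delta$ is simple normal crossing and $\widetilde Y$ is compact): either $\widetilde f(S)=D_i$, in which case the proper surjective holomorphic map $S\to D_i$ is a submersion over the complement $D_i^\circ$ of a proper closed analytic subset of $D_i$ (Sard); or $\widetilde f(S)\subsetneq D_i$ is of codimension $\geq 2$ in $\widetilde X$. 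I would then remove from $\widetilde X$ the closed set $\Xi$ which is the union of the $D_i\cap D_j$ $(i\neq j)$, of the $\widetilde f(E_\alpha)$ for non-horizontal $\alpha$, of all $\widetilde f\bigl(\bigcap_{\alpha\in J}E_\alpha\bigr)$ (and their components) that fail to dominate a single $D_i$, and of the $D_i\setminus D_i^\circ$; by finiteness of the $E_\alpha$ and the strata, $\Xi$ is closed of codimension $\geq 2$. Put $U:=\widetilde X\setminus\Xi$. Then $D_{\widetilde f}:=\pi^{-1}(Z)\cap U$ is a disjoint union of smooth divisors, $\widetilde f^{-1}(D_{\widetilde f})=\Delta\cap\widetilde f^{-1}(U)$ is simple normal crossing, and by construction every stratum of $\widetilde f^{-1}(D_{\widetilde f})_{\mathrm{red}}$ is a disjoint union of pieces $S\cap\widetilde f^{-1}(U)$ with $S$ dominating some $D_i$, each of which maps submersively onto $D_i\cap U$. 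Hence $\widetilde f|_{\widetilde f^{-1}(U)}$ is semistable, which is property (3).

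The main obstacle I anticipate is bookkeeping rather than anything deep: one must check carefully that each locus thrown into $\Xi$ genuinely has codimension $\geq 2$ (so that "semistable in codimension one" is really achieved and no codimension-one information is lost), and that horizontality and the generic-submersivity statement hold simultaneously for the finitely many strata. A secondary point to keep track of is that the \emph{main} component $M$ of $Y\times_X\widetilde X$, not the whole fibre product, must be used throughout, and that it is exactly functoriality of the two desingularizations that guarantees the asserted biholomorphies over $X\setminus Z$ and hence the compatibility of $\widetilde f$ with $f$ away from $Z$.
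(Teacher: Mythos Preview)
Your proposal is correct and follows exactly the approach the paper indicates: the paper does not give a detailed proof of this proposition but only remarks, immediately before the statement, that ``by taking the desingularizations on both the total space and the base space (c.f.~\cite{Wlodarczyk2009}), every surjective proper morphism between compact complex spaces can be modified to be semistable in codimension one.'' Your write-up simply supplies the bookkeeping (the construction of the bad locus $\Xi$ of codimension $\geq 2$) that the paper leaves implicit.
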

	Using Kawamata's covering trick \cite[Theorem 17]{Kawamata1981} one can modify the family further to be a strictly semistable family in codimension one.
	\begin{prop}[Strictly semistable reduction in codimension one]\label{prop_ssr_cod1}
		Let $f:Y\to X$ be a proper surjective morphism from a complex manifold $Y$ to a smooth projective variety $X$. Assume that $f$ is semistable in codimension one and there is a simple normal crossing divisor $D_f$ on $X$ such that $f$ is a submersion over $X\backslash D_f$. Then there is a commutative diagram
		\begin{align}\label{align_semistablereduction_cod1}
			\xymatrix{
				\widetilde{Y}\ar[dr]_{\widetilde{f}}\ar[r]^-{\tau}&Y\times_{X}{\widetilde{X}}\ar[r]\ar[d]&Y\ar[d]^f\\
				&\widetilde{X}\ar[r]^{\sigma_{X}}&X
			}
		\end{align}
		where 
		\begin{enumerate}
			\item $\widetilde{X}$ is a smooth projective variety,  $\sigma_{X}$ is a flat finite morphism, and  $\sigma_{X}^{-1}(D_f)$ is a simple normal crossing divisor.
			\item $\tau$ is a functorial desingularization of the main component of $Y\times_X\widetilde{X}$. In particular, $\tau$ is biholomorphic over $\sigma^{-1}_X(X\backslash D_f)\times_{X\backslash D_f}f^{-1}(X\backslash D_f)$.
			\item $\widetilde{f}$ is strictly semistable in codimension one.
		\end{enumerate}
	\end{prop}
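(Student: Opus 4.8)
The plan is to construct $\widetilde X$ by a single application of Kawamata's covering trick, ramifying along each component of $D_f$ to an order divisible by the multiplicities of the degenerate fibres over that component, and then to desingularize the main component of $Y\times_X\widetilde X$; the base change has the effect of rendering all those fibre multiplicities equal to one away from a codimension two locus.

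First I would fix the combinatorial data. By hypothesis there is a dense Zariski open $U_0\subset X$, with ${\rm codim}_X(X\setminus U_0)\ge 2$, over which $f$ is semistable in the sense of Definition~\ref{defn_semistable_cod1}; let $D_f=\bigcup_{i=1}^r D_i$ be the decomposition into prime divisors. For each $i$ the divisor $f^\ast D_i=\sum_j m_{ij}E_{ij}$ on $Y$ has multiplicities $m_{ij}\ge 1$ that are constant at the generic point of $D_i$, and I set $N:={\rm lcm}\{m_{ij}\mid i,j\}$. Applying Kawamata's covering trick \cite[Theorem~17]{Kawamata1981} to the simple normal crossing divisor $D_f$ on the smooth projective variety $X$ and to the integer $N$, I obtain a smooth projective variety $\widetilde X$ and a finite surjective morphism $\sigma_X:\widetilde X\to X$ such that $\sigma_X^\ast D_i=N\,(\sigma_X^{-1}(D_i))_{\rm red}$ for every $i$ and such that $\sigma_X^{-1}(D_f)$ is a simple normal crossing divisor. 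A finite morphism between smooth varieties is flat, so $\sigma_X$ is flat; this gives part~(1). I then let $\tau:\widetilde Y\to Y\times_X\widetilde X$ be a functorial desingularization of the main component. Over $X\setminus D_f$ the map $f$ is a submersion, whence $Y\times_X\widetilde X$ is a smooth morphism onto $\sigma_X^{-1}(X\setminus D_f)$ there and in particular smooth as a variety; by the conventions on functorial desingularizations recalled in the Notations, $\tau$ is therefore biholomorphic over $\sigma_X^{-1}(X\setminus D_f)\times_{X\setminus D_f}f^{-1}(X\setminus D_f)$. Setting $\widetilde f:={\rm pr}_{\widetilde X}\circ\tau$ yields the diagram and part~(2).

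It remains to check part~(3). I would work over a dense Zariski open $U\subset\sigma_X^{-1}(U_0)$, with complement of codimension $\ge 2$, that contains the generic point of every component of $\sigma_X^{-1}(D_f)$ and avoids all crossings of that divisor. Near a general point of a component $D_i':=(\sigma_X^{-1}(D_i))_{\rm red}$ one may pick coordinates so that $\widetilde X=\Delta_{\widetilde t}\times\Delta^{\dim X-1}$ with $D_i'=\{\widetilde t=0\}$ and $\sigma_X$ given by $\widetilde t\mapsto\widetilde t^{\,N}$; by semistability of $f$, after discarding the relatively smooth directions, $Y$ is locally the hypersurface $\{\prod_j x_j^{m_{ij}}=\widetilde t^{\,N}\}$ with $E_{ij}=\{x_j=0\}$. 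Since $m_{ij}\mid N$ for all $j$, the normalization of this hypersurface is a normal toric variety whose fibre over $\{\widetilde t=0\}$ is reduced and which admits a toric subdivision resolving it to a strictly semistable model --- the exceptional divisors of the subdivision occur with multiplicity one, and each closed stratum of the reduced special fibre is smooth over $D_i'$; this is the base-change (i.e.\ easy) direction of the semistable reduction theorem for normal crossing degenerations. Because a functorial desingularization of a toroidal variety is obtained by a combinatorial subdivision, $\tau$ restricts over $U$ to such a resolution. Consequently $\widetilde f^{-1}(\sigma_X^{-1}(D_f))$ is a reduced simple normal crossing divisor on $\widetilde f^{-1}(U)$ whose strata are mapped submersively onto $\sigma_X^{-1}(D_f)$, which is part~(3).

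The main obstacle is the last step: one has to verify carefully that, when $N$ is divisible by every fibre multiplicity $m_{ij}$, the normalized fibre product becomes a toroidal variety with reduced central fibre admitting a strictly semistable toric resolution, and that a functorial desingularization can be chosen combinatorial over the generic points of the boundary so as to preserve the stratum--submersion condition of Definition~\ref{defn_semistable_cod1}. Everything else --- smoothness and flatness of $\sigma_X$, the simple normal crossing property of $\sigma_X^{-1}(D_f)$, and the behaviour of $\tau$ over the smooth locus --- is formal, following from Kawamata's lemma and the conventions on functorial desingularization.
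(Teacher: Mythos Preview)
Your approach is exactly what the paper has in mind: the paper gives no proof of this proposition beyond the one sentence preceding it, ``Using Kawamata's covering trick \cite[Theorem 17]{Kawamata1981} one can modify the family further to be a strictly semistable family in codimension one,'' and your construction of $\sigma_X$ via Kawamata's lemma followed by functorial desingularization of the main component of the fibre product is precisely the intended argument.

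One caution on your final step. The assertion that ``a functorial desingularization of a toroidal variety is obtained by a combinatorial subdivision'' is not a property of W\l odarczyk's algorithm in general, so that sentence as written is not a valid justification. What actually makes part~(3) work is the following, which you should substitute: over a general point of $D_i'$ the \emph{normalization} of the main component of $Y\times_X\widetilde X$ already has reduced special fibre (this is the elementary direction of semistable reduction, using $m_{ij}\mid N$), and its singularities are toric and lie in the special fibre; any resolution by blowing up smooth centres lying in these singular strata then produces exceptional divisors whose multiplicity in $\widetilde f^\ast D_i'$ is at most the sum of the multiplicities of the components through the centre, and a short local check (or the toric description in Kempf--Knudsen--Mumford--Saint-Donat) shows these remain equal to~$1$. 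Since functorial desingularization factors through the normalization and is an isomorphism over the smooth locus, this suffices in codimension one. With that correction your argument is complete and matches the paper's intended (but unwritten) proof.
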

	\begin{lem}\label{lem_functorial_pushforward_pluricanonical}
		Notations as in Proposition \ref{prop_ssr_cod1}.  Let $R_f$ be the ramified divisor associated to $f$. Let $k>0$.
		Then the pullback of forms induces an injective morphism
        $$\sigma_X^\ast \left(f_\ast(\omega_{Y/X}^{\otimes k})\otimes\sO_X(-kR_f)\otimes I_Z\right)\to \widetilde{f}_\ast(\omega^{\otimes k}_{\widetilde{Y}/\widetilde{X}})$$
        for some ideal sheaf $I_Z$ whose co-support $Z$ lies in $D_f$ and ${\rm codim}_X(Z)\geq 2$.
	\end{lem}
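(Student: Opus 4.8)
\emph{Strategy.} The morphism will be assembled from three elementary operations, all of which are versions of ``pullback of pluricanonical forms''. Write $\varphi\colon\widetilde{Y}\to Y$ for the natural generically finite morphism ($\tau$ followed by the projection $Y\times_X\widetilde{X}\to Y$). First, pullback of holomorphic $n$-forms along the generically finite morphism $\varphi$ between complex manifolds gives an inclusion of line bundles $\varphi^\ast\omega_Y^{\otimes k}\hookrightarrow\omega_{\widetilde{Y}}^{\otimes k}$; dividing by $\widetilde{f}^\ast\sigma_X^\ast\omega_X^{\otimes k}$ and using $\omega_{\widetilde{X}}\simeq\sigma_X^\ast\omega_X\otimes\sO_{\widetilde{X}}(R_{\sigma_X})$ with $R_{\sigma_X}$ the ramification divisor of $\sigma_X$ (supported on $\sigma_X^{-1}(D_f)_{\rm red}$, since $\sigma_X$ is ramified only over $D_f$ and an auxiliary divisor which may be chosen to avoid the generic point of every component of $D_f$), this becomes $\varphi^\ast\omega_{Y/X}^{\otimes k}\hookrightarrow\omega_{\widetilde{Y}/\widetilde{X}}^{\otimes k}\otimes\sO_{\widetilde{Y}}(k\widetilde{f}^\ast R_{\sigma_X})$. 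Second, the base-change comparison map $\sigma_X^\ast f_\ast(-)\to\widetilde{f}_\ast\varphi^\ast(-)$ (adjoint to $\widetilde{f}^\ast\sigma_X^\ast f_\ast(-)=\varphi^\ast f^\ast f_\ast(-)\to\varphi^\ast(-)$). Third, the projection formula. Composing, we obtain a canonical morphism
\begin{align*}
\sigma_X^\ast f_\ast(\omega_{Y/X}^{\otimes k})\longrightarrow\widetilde{f}_\ast(\omega_{\widetilde{Y}/\widetilde{X}}^{\otimes k})\otimes\sO_{\widetilde{X}}(kR_{\sigma_X})
\end{align*}
which restricts to an isomorphism over $\sigma_X^{-1}(X\backslash D_f)$ (there $\widetilde{Y}=Y\times_X\widetilde{X}$, $\widetilde{f}$ is a smooth submersion, $R_{\sigma_X}$ is trivial, and flat base change applies). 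The entire content of the lemma is to replace the twist $kR_{\sigma_X}$ on the right by $\sigma_X^\ast(kR_f)$ on the \emph{left}, and this is a statement in codimension one.

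\emph{Reduction to codimension one.} Let $U\subset X$ be the intersection of: the largest open subset over which $f$ is semistable (its complement has codimension $\geq2$ by hypothesis); the smooth locus of $D_f$; and the open subset over which $\widetilde{f}$ is strictly semistable and $\tau$ is biholomorphic onto the main component of $Y\times_X\widetilde{X}$. Set $Z:=X\backslash U$, so $\operatorname{codim}_XZ\geq2$ and $Z\subset D_f$, and let $I_Z$ be its ideal sheaf. Since $f_\ast(\omega_{Y/X}^{\otimes k})$ is torsion free and $\sigma_X$ is finite flat, $\sigma_X^\ast(f_\ast(\omega_{Y/X}^{\otimes k})\otimes\sO_X(-kR_f))$ is torsion free, and $\widetilde{f}_\ast(\omega_{\widetilde{Y}/\widetilde{X}}^{\otimes k})$ is torsion free as well. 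Hence it suffices to produce the desired morphism over $\sigma_X^{-1}(X\backslash Z)$: tensoring the source with $I_Z$ (and, if one wishes a torsion-free source, replacing it by its image in $\sigma_X^\ast(f_\ast(\omega_{Y/X}^{\otimes k})\otimes\sO_X(-kR_f))$) extends it to all of $\widetilde{X}$, and injectivity is then automatic because its kernel is a torsion-free sheaf supported in codimension $\geq1$, hence zero, as the morphism is an isomorphism over $\sigma_X^{-1}(X\backslash D_f)$.

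\emph{The local computation near a general point of a component $D_i$ of $D_f$.} Pick a local coordinate $t$ on $X$ cutting out $D_i$. By the construction of the strictly semistable reduction in Proposition \ref{prop_ssr_cod1} (Kawamata's covering trick), $\sigma_X$ is, locally over the generic point of $D_i$, of the form $t=s^{e_i}$, with $e_i$ divisible by every multiplicity $m_l$ occurring in a semistable local model $t=u\prod_l x_l^{m_l}$ of $f$ along $D_i$. Normalising $\{s^{e_i}=u\prod_l x_l^{m_l}\}$ and taking the functorial desingularization $\widetilde{Y}$, one computes the order of the pole, along $\sigma_X^{-1}(D_i)_{\rm red}$, of the image in $\widetilde{f}_\ast(\omega_{\widetilde{Y}/\widetilde{X}}^{\otimes k})\otimes\sO_{\widetilde{X}}(kR_{\sigma_X})$ of a local generator of $f_\ast(\omega_{Y/X}^{\otimes k})$. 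If $D_i\subset R_f$ (some $m_l\geq2$), a monomial computation shows this pole order is at most $ke_i=k\cdot\operatorname{ord}_{\sigma_X^{-1}(D_i)_{\rm red}}\sigma_X^\ast R_f$, so after tensoring the source with $\sigma_X^\ast\sO_X(-kR_f)$ the section becomes holomorphic. If $D_i\not\subset R_f$ — so $f$ is strictly semistable over the generic point of $D_i$ — then no pole appears at all: the relative dualizing sheaf of a strictly semistable family coincides with its logarithmic counterpart along $D_i$, and ramified base change followed by functorial desingularization of a strictly semistable family does not alter the pushforward of its relative pluricanonical sheaves in codimension one; equivalently, over the generic point of $D_i$ the morphism above is already an isomorphism, with \emph{no} twist needed. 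Combining the two cases over all components $D_i$, and using Proposition \ref{prop_mild_pushforward} (applied to $\widetilde{f}$, which is strictly semistable in codimension one) to pass freely between $\widetilde{Y}$ and the normalised main component of $Y\times_X\widetilde{X}$, produces the required morphism over $\sigma_X^{-1}(X\backslash Z)$, which together with the reduction step proves the lemma.

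\emph{Main obstacle.} The delicate point is the case $D_i\not\subset R_f$: one must show that ramified base change along the \emph{strictly} semistable locus introduces no new codimension-one poles in $f_\ast(\omega_{Y/X}^{\otimes k})$, even though the naive pullback-of-forms bound only controls the poles by the full ramification divisor $R_{\sigma_X}$ of $\sigma_X$, which has strictly positive coefficients along $\sigma_X^{-1}(D_i)$ whenever $e_i>1$, regardless of whether $D_i\subset R_f$. This is precisely the assertion that semistable reduction preserves pushforwards of relative pluricanonical sheaves; it can be handled either by an explicit monomial/toric resolution of $\{s^{e_i}=u\prod_l x_l^{m_l}\}$ together with a verification that the relevant discrepancies do not contribute along $\sigma_X^{-1}(D_i)$, or by appealing to the theory of weakly semistable (mild) morphisms of Abramovich--Karu already invoked in Proposition \ref{prop_mild_pushforward}. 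The estimate ``pole order $\leq ke_i$'' in the case $D_i\subset R_f$ is then a routine strengthening of the same monomial computation.
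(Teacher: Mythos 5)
Your high-level outline matches the paper's: reduce to a codimension-one statement over the open set where $f$ is semistable and $\widetilde f$ is strictly semistable, then control pole orders along each component $D_i$ of $D_f$. But the paper collapses the codimension-one computation into a single uniform step that avoids your case analysis and, crucially, avoids the part you yourself flag as ``the main obstacle'' and leave unproved. The paper's trick is to pass through the log relative canonical sheaf: for the semistable $f$, $\Omega^n_{Y/X}(\log E)\simeq\omega_{Y}(E_{\rm red})\otimes f^\ast\omega_X(D_f)^{-1}=\omega_{Y/X}\otimes\sO_Y(E_{\rm red}-E)$ where $E=f^\ast D_f$ is the \emph{schematic} preimage, and $E-E_{\rm red}\leq f^\ast R_f$ gives the single inclusion $\omega_{Y/X}(-f^\ast R_f)\subset\Omega^n_{Y/X}(\log E)$. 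Pullback of log forms $\widetilde\tau^\ast\Omega^n_{Y/X}(\log E)\to\Omega^n_{\widetilde Y/\widetilde X}(\log\widetilde E)$, combined with $\Omega^n_{\widetilde Y/\widetilde X}(\log\widetilde E)=\omega_{\widetilde Y/\widetilde X}$ (which holds because $\widetilde E$ is reduced when $\widetilde f$ is strictly semistable), then produces the desired map in one stroke; the ``no pole appears'' case is a degenerate instance rather than a separate argument.

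Your version has a genuine gap precisely at this point. The assertions that the pole order is ``at most $ke_i$'' when $D_i\subset R_f$, and that ``no pole appears at all'' when $D_i\not\subset R_f$, are both stated without proof; your final paragraph concedes that the unramified case is ``the delicate point'' and offers two possible strategies without carrying out either. Since the entire lemma reduces to exactly this codimension-one computation, the proof is incomplete. Two smaller imprecisions: (i) $R_{\sigma_X}$ is \emph{not} supported on $\sigma_X^{-1}(D_f)_{\rm red}$ — Kawamata's covering necessarily ramifies along auxiliary ample divisors, so $R_{\sigma_X}$ has components over $X\setminus D_f$, and the canonical map $\sigma_X^\ast f_\ast(\omega^{\otimes k})\to\widetilde f_\ast(\omega^{\otimes k})\otimes\sO(kR_{\sigma_X})$ is only injective, not an isomorphism, over $\sigma_X^{-1}(X\setminus D_f)$; (ii) the coefficient of $R_{\sigma_X}$ along $\sigma_X^{-1}(D_i)_{\rm red}$ is $e_i-1$, while that of $\sigma_X^\ast R_f$ is $e_i$, so ``replacing $kR_{\sigma_X}$ on the right by $\sigma_X^\ast(kR_f)$ on the left'' is not a bookkeeping swap but is itself the nontrivial content of the lemma. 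I would drop the $R_{\sigma_X}$ framing altogether and run the codimension-one argument through $\Omega^n_{Y/X}(\log E)$ as above.
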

	\begin{proof}
		Let $Z\subset X$ be a closed algebraic subset such that ${\rm codim}(Z)\geq 2$, $f$ is semistable over $X\backslash Z$ and $\widetilde{f}$ is strictly semistable over $\sigma_X^{-1}(X\backslash Z)$.
		The family $\widetilde{f}^{-1}(\sigma^{-1}_X(X\backslash D_f))\to\sigma^{-1}_X(X\backslash D_f)$ is the base change of the smooth family $f^{-1}(X\backslash D_f)\to X\backslash D_f$. One  has therefore a natural isomorphism
		\begin{align}\label{align_pullback_pushforward2}
			\sigma_X^\ast \left(f_\ast(\omega_{f^{-1}(X\backslash D_f)/X\backslash D_f}^{\otimes k})\right)\to \widetilde{f}_\ast(\omega^{\otimes k}_{\widetilde{f}^{-1}(\sigma^{-1}_X(X\backslash D_f))/\sigma^{-1}_X(X\backslash D_f)}).
		\end{align}
		This map extends to a map
		\begin{align}
			\sigma_X^\ast \left(f_\ast(\omega_{Y/X}^{\otimes k})\otimes I_{D_f}\right)\to \widetilde{f}_\ast(\omega^{\otimes k}_{\widetilde{Y}/\widetilde{X}})
		\end{align}
		for some ideal sheaf $I_{D_f}$ with $D_f$ its co-support. To prove the lemma it suffices to show that (\ref{align_pullback_pushforward2}) extends to an injective map
		\begin{align}\label{align_pullback_pushforward3}
			\sigma_X^\ast \left(f_\ast(\omega_{Y/X}^{\otimes k})\otimes\sO_X(-kR_f)\right)|_{X\backslash Z}\to \widetilde{f}_\ast(\omega^{\otimes k}_{\widetilde{Y}/\widetilde{X}})|_{X\backslash Z}.
		\end{align}
		Without loss of generality we may assume that $f$ is semistable and $\widetilde{f}$ is strictly semistable, i.e. $Z=\emptyset$.
		Denote by $E=f^{-1}(D_f)$ (resp. $\widetilde{E}=\widetilde{f}^{-1}(D_{\widetilde{f}})$) the schematic preimage where $D_f$ (resp. $D_{\widetilde{f}}$) is the degenerate (reduced) divisor associated with $f$ (resp. $\widetilde{f}$). Let $\widetilde{\tau}:\widetilde{Y}\to Y\times_X\widetilde{X}\to Y$ be the composition map in (\ref{align_semistablereduction_cod1}). Denote 
		$\Omega_{Y/X}(\log E)$ and $\Omega_{\widetilde{Y}/\widetilde{X}}(\log \widetilde{E})$ to be the relative logarithmic cotangent bundles.
		
		The pullback of logarithmic forms gives the map
		\begin{align*}
			\widetilde{\tau}^\ast\Omega_{Y/X}(\log E)\to \Omega_{\widetilde{Y}/\widetilde{X}}(\log \widetilde{E}).
		\end{align*}
		Taking the top wedges one gets that 
		\begin{align*}
			\widetilde{\tau}^\ast(\omega_{Y}(E_{\rm red})\otimes f^\ast\omega_X(D_f)^{-1})\simeq \widetilde{\tau}^\ast\Omega^n_{Y/X}(\log E)\to\Omega^n_{\widetilde{Y}/\widetilde{X}}(\log \widetilde{E})\simeq \omega_{\widetilde{Y}}(\widetilde{E}_{\rm red})\otimes\widetilde{f}^\ast\omega_{\widetilde{X}}(D_{\widetilde{f}})^{-1}.
		\end{align*}
		Since $\widetilde{E}_{\rm red}=\widetilde{E}$ one knows that 
		\begin{align*}
			\omega_{\widetilde{Y}}(\widetilde{E}_{\rm red})\otimes\widetilde{f}^\ast\omega_{\widetilde{X}}(D_{\widetilde{f}})^{-1}\simeq \omega_{\widetilde{Y}/\widetilde{X}}.
		\end{align*}
		As a consequence, one obtains an injective map
		\begin{align*}
			\widetilde{\tau}^\ast(\omega_{Y/X}(-f^\ast R_f))\subset\widetilde{\tau}^\ast(\omega_{Y/X}(E_{\rm red}-E))\simeq\widetilde{\tau}^\ast(\omega_{Y}(E_{\rm red})\otimes f^\ast\omega_X(D_f)^{-1})\to \omega_{\widetilde{Y}/\widetilde{X}}.
		\end{align*}
		This induces the desired injective map
		\begin{align*}
			\sigma_X^\ast \left(f_\ast(\omega_{Y/X}^{\otimes k})\otimes\sO_X(-kR_f)\right)\simeq\sigma_X^\ast f_\ast(\omega_{Y/X}(-f^\ast R_f)^{\otimes k})\stackrel{\gamma}{\to} \widetilde{f}_\ast\widetilde{\tau}^\ast(\omega_{Y/X}(-f^\ast R_f)^{\otimes k})\to \widetilde{f}_\ast(\omega_{\widetilde{Y}/\widetilde{X}}^{\otimes k}).
		\end{align*}
		where $\gamma$ is the isomorphism defined by the flat base change. This shows (\ref{align_pullback_pushforward3}).
	\end{proof}
	\subsection{Proof of Theorem \ref{thm_main_Arakelov_inequality}}\label{section_proof_mainthm}
	\subsubsection{Hyperbolicity}
	\begin{thm}\label{thm_VHconj_proof}
		Let $f:Y\to X$ be a proper surjective morphism from a complex manifold $Y$ to a smooth projective variety $X$ of relative dimension $n$. Let $D_f\subset X$ be an effective divisor such that $f$ is a K\"ahler submersion over $X\backslash D_f$. Then $\omega_X(D_f)$ is big under either of the following conditions.
		\begin{itemize}
			\item There is a strictly semistable reduction $\widetilde{f}:\widetilde{Y}\to\widetilde{X}$ in codimension one of $f$ such that $\det \widetilde{f}_\ast(\omega_{\widetilde{Y}/\widetilde{X}}^{\otimes k})$ is a big line bundle for some $k>0$.
			\item $f$ is a projective morphism between smooth projective varieties with connected fibers, ${\rm Var}(f)=\dim X$ and the geometric generic fiber of $f$ admits a good minimal model.
		\end{itemize}
	\end{thm}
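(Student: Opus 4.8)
The plan is to derive both cases from the meta Arakelov inequality already established. Theorem~\ref{thm_abs_Arakelov}(1) tells us that once we are placed in the Setting of \S\ref{section_setting} --- a proper morphism $g\colon W\to X'$ of complex manifolds with $X'$ smooth projective, an snc divisor $D_{g}\subset X'$ over whose complement $g$ is a K\"ahler submersion, a torsion free sheaf $L$ on $X'$ that is invertible off $D_{g}$, and a nonzero map $L^{\otimes k}\to g_{\ast}(\omega_{W/X'}^{\otimes k})$ --- with $L^{\vee\vee}\otimes\sO_{X'}(-D_{g})$ big, then $\omega_{X'}(D_{g})$ is big. So the whole task reduces to producing such Setting data with $L^{\vee\vee}(-D_{g})$ big over the base $X'$ of a suitable (strictly) semistable reduction in codimension one, and then transporting the bigness of $\omega_{X'}(D_{g})$ down to $\omega_X(D_f)$. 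The transport should be routine: for the Kawamata cover $\sigma\colon X'\to X$ used in Proposition~\ref{prop_ssr_cod1} one has $K_{X'}+D_{g}=\sigma^{\ast}(K_X+D_f)$ up to effective exceptional divisors (with $D_{g}$ taken to be the reduced total transform, all exceptional divisors included), so bigness of $\omega_{X'}(D_{g})$ implies bigness of $\omega_X(D_f)$ by pushing forward and using the birational invariance of the log Kodaira dimension of the reduced pair $(X,D_f)$ together with finite descent of bigness.

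For the first case I would set $\sF:=\widetilde f_{\ast}(\omega_{\widetilde Y/\widetilde X}^{\otimes k})$ and $r:={\rm rank}(\sF)$, so that $\det\sF$ is big on $\widetilde X$. Writing $\det\sF\sim_{\bQ}A+E$ with $A$ ample and $E\geq0$ and choosing $m\gg0$ with $mA-D_{\widetilde f}$ ample, the line bundle $L:=(\det\sF)^{\otimes m}$ then has $L\otimes\sO_{\widetilde X}(-D_{\widetilde f})$ big, and this stays true after any further log resolution of the base. Next I would take $g\colon W\to\widetilde X$ to be a desingularization of the main component of the $N$-fold fibre power $\widetilde Y^{[N]}_{\widetilde X}$ with $N:=rmk$; Proposition~\ref{prop_mild_pushforward}(3) identifies $g_{\ast}(\omega_{W/\widetilde X}^{\otimes k})$ with $(\sF^{\otimes N})^{\vee\vee}$, which over the locus $U$ where $\sF$ is locally free and $\widetilde f$ is strictly semistable (the complement of a codimension-$\geq2$ set) contains $(\wedge^{r}\sF)^{\otimes mk}=L^{\otimes k}$ as a direct summand via the $GL_r$-equivariant antisymmetrization $\sF^{\otimes N}\to(\wedge^{r}\sF)^{\otimes mk}$; since the target is reflexive, this extends over $\widetilde X$ to the required nonzero $s_L\colon L^{\otimes k}\to g_{\ast}(\omega_{W/\widetilde X}^{\otimes k})$. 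Finally, after one more modification of $\widetilde X$ and $W$ making $g$ a genuine K\"ahler submersion off an snc divisor $D_{g}$ --- the locus where $\widetilde f$, and hence $g$, fails to be a submersion off the boundary is codimension $\geq2$, so only exceptional divisors are added, which are absorbed into $D_{g}$ and the transport step --- we are in the Setting with $L^{\vee\vee}(-D_{g})$ big, and Theorem~\ref{thm_abs_Arakelov}(1) applies.

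The second case I would reduce to the first. Kawamata's theorem \cite{Kawamata1985} on fibre spaces whose general fibre has a good minimal model, applied with ${\rm Var}(f)=\dim X$, gives that $\det f_{\ast}(\omega_{Y/X}^{\otimes k})$ is big for a suitable $k>0$. After a semistable and then a strictly semistable reduction in codimension one (Proposition~\ref{prop_ssr_cod1}), Lemma~\ref{lem_functorial_pushforward_pluricanonical} yields an injection $\sigma_X^{\ast}\big(f_{\ast}(\omega_{Y/X}^{\otimes k})\otimes\sO_X(-kR_f)\otimes I_Z\big)\hookrightarrow\widetilde f_{\ast}(\omega_{\widetilde Y/\widetilde X}^{\otimes k})$ with ${\rm codim}_X(Z)\geq2$, so that $\det\widetilde f_{\ast}(\omega_{\widetilde Y/\widetilde X}^{\otimes k})$ dominates $\sigma_X^{\ast}\big(\det f_{\ast}(\omega_{Y/X}^{\otimes k})-rkR_f\big)$. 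To make this big one first replaces $f$ by its $p$-fold fibre power for $p\gg0$, which multiplies $\det f_{\ast}(\omega^{\otimes k})$ by $pr^{p-1}$ while the ramified divisor of the semistably reduced fibre power stays bounded independently of $p$; the twist $-rkR_f$ is then absorbed, and one is back in the situation of the first case.

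The Hodge-theoretic core having been packaged into Theorem~\ref{thm_abs_Arakelov}, I expect the main obstacle to be exactly the bookkeeping outlined above: arranging $L^{\vee\vee}(-D_{g})$ to be big (which is what forces the passage to high powers of $\det\sF$ and to high fibre powers, and in the second case the absorption of $kR_f$), and verifying that bigness of $\omega(\cdot)$ with reduced boundary is genuinely preserved along the whole chain of desingularizations, Kawamata covers and fibre powers. In particular the estimate that $\det$ of the pushforward of a fibre power grows like $pr^{p-1}$ while its new degeneration/ramification divisor does not, and a clean formulation of the log-birational invariance invoked in the transport step, are the points I would expect to need the most care.
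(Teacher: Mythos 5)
The reduction to Theorem~\ref{thm_abs_Arakelov}(1) is the right framework, and your ideas for producing $L^{\otimes k}\to g_\ast(\omega_{W/\widetilde X}^{\otimes k})$ via fibre powers, Proposition~\ref{prop_mild_pushforward}, and antisymmetrization are workable. The genuine gap is the ``transport'' step, and it is not a bookkeeping issue but a conceptual one. The map $\sigma\colon\widetilde X\to X$ from Proposition~\ref{prop_ssr_cod1} is a \emph{flat finite} morphism (a Kawamata cover), not a birational modification, so there are no exceptional divisors; its ramification divisor $R$ includes branching over general auxiliary hyperplanes that have nothing to do with $D_f$. With $D_g:=\sigma^{-1}(D_f)_{\mathrm{red}}$ the correct relation is $K_{\widetilde X}+D_g=\sigma^{\ast}(K_X+D_f)+R'$, where $R'\geq0$ is the ramification over those extra hyperplanes, and bigness of the left side does \emph{not} imply bigness of $K_X+D_f$: the extra $R'$ can carry the whole weight (take $X$ rational with $K_X+D_f$ not big and $\sigma$ branched over high-degree hypersurfaces, making $\widetilde X$ of general type). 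So ``bigness of $\omega_{X'}(D_g)$ implies bigness of $\omega_X(D_f)$ by finite descent'' is false, and this is precisely why you cannot run Theorem~\ref{thm_abs_Arakelov}(1) upstairs on $\widetilde X$ and come back down.

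The paper's proof avoids this by never establishing bigness of a log canonical bundle upstairs; instead it descends the \emph{sheaf data} to $X$ and applies Theorem~\ref{thm_abs_Arakelov}(1) on $X$ itself. Concretely: fix an ample $M$ on $X$ with $(\sigma_\ast\sO_{\widetilde X})^\vee\otimes M$ globally generated, and $A$ with $A\otimes\sO_X(-D_f)$ ample. Bigness of $L_k:=\det\widetilde f_\ast(\omega_{\widetilde Y/\widetilde X}^{\otimes k})$ gives $\sigma^{\ast}(A^{\otimes k}\otimes M)\subset L_k^{\otimes kr}$; Proposition~\ref{prop_mild_pushforward} feeds this into $\widetilde f^{(klr)}_\ast(\omega^{\otimes k})$; Viehweg's base-change lemma for fibre products (used in the paper's Step~3) gives $\widetilde f^{(klr)}_\ast(\omega^{\otimes k})\subset\sigma^{\ast}f^{(klr)}_\ast(\omega_{Y^{(klr)}/X}^{\otimes k})$; and the global generation of $(\sigma_\ast\sO_{\widetilde X})^\vee\otimes M$ lets one take an adjoint and extract a nonzero map $A^{\otimes k}\otimes I_Z^{\otimes k}\to f^{(klr)}_\ast(\omega_{Y^{(klr)}/X}^{\otimes k})$ defined over $X$. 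Since $(A\otimes I_Z)^{\vee\vee}\otimes\sO_X(-D_f)\simeq A\otimes\sO_X(-D_f)$ is ample, Theorem~\ref{thm_abs_Arakelov}(1) applied to $Y^{(klr)}\to X$ with $L=A\otimes I_Z$ gives the conclusion directly. Your proposal needs to be reorganized along these lines: the bigness input must be converted into a section-level statement on $X$ before the meta-inequality is invoked.
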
	
	\begin{proof}
		The proof is influenced by Popa-Schnell \cite{PS17} (especially the use of the ample line bundle $M$ in Step 3). Since the second condition implies the first one (Kawamata \cite{Kawamata1985}), we only concern the first condition.
		
		{\bf Step 1: Semistable reduction in codimension one.} By resolution of singularities we may assume that $D_f$ is a simple normal crossing divisor without loss of generality. Let $\pi:Y'\to Y$ be a projective bimeromorphic morphism such that $Y'$ is smooth, $\pi$ is biholomorphic over $f^{-1}(X\backslash D_f)$ and $\pi^{-1}f^{-1}(D_f)$ is a simple normal crossing divisor on $Y'$. We may replace $Y$ by $Y'$ and assume without loss of generality that $f:Y\to X$ is semistable in codimension one.
		
		{\bf Step 2: Strictly semistable reduction in codimension one.}
		Denote $X^o=X\backslash D_f$ and $Y^o:=f^{-1}(X^o)$.
		Let $Y^{[klr]}$ be the main component of the $klr$-fiber product $Y\times_X\times\cdots\times_XY$. Let $Y^{(klr)}\to Y^{[klr]}$ be a functorial desingularization. Especially, it is biholomorphic over $Y^{o[klr]}=Y^o\times_{X^o}\times\cdots\times_{X^o}Y^o$. Denote $f^{(klr)}: Y^{(klr)}\to X$ to be the induced morphism. Let
		\begin{align*}
			\xymatrix{
				\widetilde{Y}\ar[d]^{\widetilde{f}} \ar[r]^\tau & Y \ar[d]^f\\
				\widetilde{X} \ar[r]^{\sigma} & X
			}
		\end{align*}
		be a strictly semistable reduction of $f$ in codimension one (Proposition \ref{prop_ssr_cod1}). Let $Z\subset D_f$ be an algebraic closed subset with ${\rm codim}_X(Z)\geq 2$, such that $f$ is semistable over $X\backslash Z$ and $\widetilde{f}$ is strictly semistable over $\widetilde{X}\backslash \sigma^{-1}(Z)$. Denote $\widetilde{Z}:=\sigma^{-1}(Z)$.
		
		Let $\widetilde{Y}^{[klr]}$ be the main component of the $klr$-fiber product $\widetilde{Y}\times_{\widetilde{X}}\times\cdots\times_{\widetilde{X}}\widetilde{Y}$ and $\widetilde{Y}^{(klr)}\to \widetilde{Y}^{[klr]}$ a functorial desingularization. Let  $\widetilde{f}^{(klr)}:\widetilde{Y}^{(klr)}\to \widetilde{X}$ denote the induced morphism.
		By blowing up on $\widetilde{Y}^{(klr)}$ if necessary we may assume that there is a commutative diagram
		\begin{align*}
			\xymatrix{
				\widetilde{Y}^{(klr)}\ar[r]^{\tau^{(klr)}}\ar[d]^{\tilde{f}^{(klr)}} & Y^{(klr)}\ar[d]^{f^{(klr)}}\\
				\widetilde{X} \ar[r]^\sigma& X
			}
		\end{align*}
		where $\tau^{(klr)}$ is a generically finite projective map. 
		
		{\bf Step 3:} Take an ample line bundle $M$ on $X$ so that $(\sigma_{\ast}\sO_{\widetilde{X}})^\vee\otimes M$ is globally generated. Let $A\in{\rm Pic}(X)$ so that $A\otimes\sO_X(-D_f)$ is ample. By assumption we may assume that $L_k:=\det \widetilde{f}_\ast(\omega_{\widetilde{Y}/\widetilde{X}}^{\otimes k})$ is big for some $k\geq 1$. Then there is an inclusion $\sigma^\ast(A^{\otimes k}\otimes M)\subset L_k^{\otimes kr}$ for some $r>0$. Since $f$ is semistable over $X\backslash Z$ and $\widetilde{f}$ is strictly semistable over $\widetilde{X}\backslash\widetilde{Z}$, there is an inclusion
		$$L_k^{\otimes kr}\otimes I_{\widetilde{Z}}\to I_{\widetilde{Z}}\left(\widetilde{f}_\ast(\omega_{\widetilde{Y}/\widetilde{X}}^{\otimes k})^{\otimes klr}\right)^{\vee\vee} \subset \widetilde{f}^{(klr)}_{\ast}(\omega^{\otimes k}_{\widetilde{Y}^{(klr)}/\widetilde{X}})\quad (\textrm{Proposition \ref{prop_mild_pushforward}})$$
		where $l:={\rm rank} \widetilde{f}_\ast(\omega_{\widetilde{Y}/\widetilde{X}}^{\otimes k})$ and $ I_{\widetilde{Z}}\subset\sO_{\widetilde{Z}}$ is some coherent ideal sheaf with $\widetilde{Z}$ its co-support. By \cite[Lemma 3.2]{Viehweg1983} (see also \cite[Lemma 3.1.20]{Fujino2020}), there is an inclusion
		\begin{align}\label{align_pullback_pushforward}
			\widetilde{f}^{(klr)}_\ast(\omega^{\otimes k}_{\widetilde{Y}^{(klr)}/\widetilde{X}})\subset\sigma^\ast f_\ast^{(klr)}(\omega_{Y^{(klr)}/X}^{\otimes k}).
		\end{align}
		Let $I_Z$ be an ideal sheaf with $Z$ its co-support, such that the map $\sigma^\ast(I_Z)\to\sO_{\widetilde{X}}$ factors through $I_{\widetilde{Z}}$.
		Taking the composition of the maps above we get a morphism
		$$\sigma^\ast(A^{\otimes k}\otimes M\otimes I_{Z}^{\otimes k})\to\sigma^\ast f_\ast^{(klr)}(\omega_{Y^{(klr)}/X}^{\otimes k}).$$
		This induces a map
		\begin{align*}
			A^{\otimes k}\otimes I_{Z}^{\otimes k}\to f_\ast^{(klr)}(\omega_{Y^{(klr)}/X}^{\otimes k})\otimes\sigma_\ast\sO_{\widetilde{X}}\otimes M^{-1}\subset \bigoplus f_\ast^{(klr)}(\omega_{Y^{(klr)}/X}^{\otimes k})
		\end{align*}
		since $(\sigma_{\ast}\sO_{\widetilde{X}})^\vee\otimes M$ is globally generated.
		Therefore we obtain a non-zero map
		\begin{align}\label{align_send_A_in1}
			A^{\otimes k}\otimes I_{Z}^{\otimes k}\to f_\ast^{(klr)}(\omega_{Y^{(klr)}/X}^{\otimes k}).
		\end{align}
		By the assumption on $A$ we see that
		$$(A\otimes I_{Z})^{\vee\vee}\otimes\sO_X(-D_f)\simeq A\otimes \sO_X(-D_f)$$ is ample. 
		Applying Theorem \ref{thm_abs_Arakelov}-(1) to the morphism $Y^{(klr)}\to X$ and the torsion free sheaf $A\otimes I_Z$ we obtain the theorem.
	\end{proof}
	\subsubsection{Arakelov inequalities}\label{section_Arakelov_inequality}
	Let $f:Y\to X$ be a proper surjective morphism between complex manifolds. Let $D_f\subset X$ be an effective divisor (does not necessarily have simple normal crossings) such that $f$ is a submersion over $X\backslash D_f$. Let $\pi:Y'\to Y$ be a functorial desingularization of $(Y,f^{-1}(D_f))$. In particular, $\pi$ is biholomorphic over $f^{-1}(X\backslash D_f)$ and $\pi^{-1}f^{-1}(D_f)$ is a simple normal crossing divisor. Then $Y'\to X$ is semistable in codimension one. The {\bf ramified divisor} $R_f$ associated with $\pi$ is defined to be the union of components $E$ of $D_f$ whose general fibers $\pi^{-1}f^{-1}\{x\}$ ($x\in E$) are non-reduced (i.e. $f\pi$ is not strictly semistable along the general points of $E$). 
	$R_f\subset D_f$ is a reduced divisor on $X$.
	 When $f$ is a projective morphism between quasi-projective varieties, it has been shown in \cite{Abramovich2002} that any two desingularizations of $(Y,f^{-1}(D_f))$ can be connected by a sequence of smooth blowups with centers lying over $f^{-1}(D_f)$.  Hence $R_f$ is independent of the choice of the desingularization of $Y$ at least when $f:X\to Y$ is a projective morphism between quasi-projective manifolds. 
	\begin{thm}\label{thm_Arakelov_ineq_proof}
		Let $f:Y\to X$ be a proper surjective morphism from a complex manifold $Y$ to a smooth projective variety $X$ of relative dimension $n$. Let $D_f\subset X$ be an effective divisor such that $f$ is a K\"ahler submersion over $X\backslash D_f$. Let $R_f$ be the ramified divisor associated with some functorial desingularization of $(Y,f^{-1}(D_f))$.
		Let $W\subset f_\ast(\omega_{Y/X}^{\otimes k})^{\otimes r}$ be a coherent subsheaf for some $k,r\geq 1$. Assume that $\omega_{X}(D_f)$ is pseudo-effective. Then the following Arakelov type inequalities hold.
		\begin{align}\label{align_Arakelov_inequality_proof1}
			\frac{c_1(W)A_1A_2\cdots A_{d-1}}{{\rm rank}(W)}\leq rk\left(\frac{n}{2}c_1(\omega_X(D_f))+c_1(\sO_X(R_f))\right)A_1A_2\cdots A_{d-1}
		\end{align}
		holds for any semiample effective divisors $A_1,\dots, A_{d-1}$ ($d=\dim X$) on $X$, and
		\begin{align}\label{align_Arakelov_inequality_proof2}
			\mu_\alpha(W)\leq rk\left(n\mu_\alpha(\omega_X(D_f))+\mu_\alpha(\sO_X(R_f))\right)+\frac{\mu_\alpha(\sO_X(D_f))}{{\rm rank}W}
		\end{align}
		holds for every movable class $\alpha\in N_1(X)$.
	\end{thm}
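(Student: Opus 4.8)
\emph{Plan of proof.} The idea is to reduce to a rank one statement and then invoke the meta Arakelov inequality of Theorem~\ref{thm_abs_Arakelov}. First I would fix a functorial desingularization of $(Y,f^{-1}(D_f))$; as $Y$ is already smooth this leaves $f_\ast(\omega_{Y/X}^{\otimes k})$ unchanged, and afterwards $f$ is semistable in codimension one with a well-defined ramified divisor $R_f$. Set $w:=\mathrm{rank}(W)$. Replacing $W$ by its saturation in $f_\ast(\omega_{Y/X}^{\otimes k})^{\otimes r}$ only increases $c_1(W)$ and keeps $w$, so we may assume the quotient is torsion free; then $L:=\wedge^{w}W/(\mathrm{torsion})$ is a rank one torsion free sheaf with $L^{\vee\vee}=\det W$ a line bundle, $c_1(L)=c_1(W)$, invertible away from $D_f$, and $\wedge^{w}$ together with antisymmetrization (characteristic zero) gives an injection $L\hookrightarrow \big(f_\ast(\omega_{Y/X}^{\otimes k})\big)^{\otimes rw}$; taking the $k$-th tensor power yields a nonzero morphism $L^{\otimes k}\to \big(f_\ast(\omega_{Y/X}^{\otimes k})\big)^{\otimes rwk}$.

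The second step converts this into data of the type required by Theorem~\ref{thm_abs_Arakelov}. The obstacle here is that the $N$-fold fibre product of a merely semistable family need not be normal, so Proposition~\ref{prop_mild_pushforward} does not apply directly; to repair this I would pass to a strictly semistable reduction in codimension one $\widetilde f:\widetilde Y\to\widetilde X$ (Proposition~\ref{prop_ssr_cod1}), choosing the finite flat cover $\sigma:\widetilde X\to X$ unramified over $X\backslash D_f$ (achievable from Kawamata's covering construction). Then $\omega_{\widetilde X}\big(\sigma^{-1}(D_f)_{\mathrm{red}}\big)=\sigma^\ast\omega_X(D_f)$ and $\sO_{\widetilde X}\big(\sigma^{-1}(D_f)_{\mathrm{red}}\big)\le\sigma^\ast\sO_X(D_f)$. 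Put $N:=rwk$. Pulling back the morphism of Step~1 by $\sigma$, applying Lemma~\ref{lem_functorial_pushforward_pluricanonical} to each of the $N$ tensor factors, and then Proposition~\ref{prop_mild_pushforward}(3) to $\widetilde f$, I obtain a nonzero morphism
\[
\widetilde L^{\otimes k}\longrightarrow \widetilde f^{(N)}_\ast\big(\omega_{\widetilde Y^{(N)}/\widetilde X}^{\otimes k}\big),
\]
where $\widetilde f^{(N)}:\widetilde Y^{(N)}\to\widetilde X$ is a desingularization of the main component of the $N$-fold fibre product of $\widetilde f$ (a proper K\"ahler submersion over $\widetilde X\backslash\sigma^{-1}(D_f)$ of relative dimension $Nn=rwkn$) and $\widetilde L=\sigma^\ast\big(L\otimes\sO_X(-rwk\,R_f)\otimes J\big)$ for an ideal sheaf $J$ with co-support in $D_f$ of codimension $\ge 2$; thus $\widetilde L$ is rank one torsion free, invertible away from $\sigma^{-1}(D_f)$, and $c_1(\widetilde L)=\sigma^\ast\big(c_1(W)-rwk\,c_1(\sO_X(R_f))\big)$.

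Now I would apply Theorem~\ref{thm_abs_Arakelov}(2) to the family $\widetilde f^{(N)}$ over the smooth projective variety $\widetilde X$, with degeneration divisor $\sigma^{-1}(D_f)_{\mathrm{red}}$ and the sheaf $\widetilde L$: the pseudo-effectivity hypothesis holds since $\omega_{\widetilde X}\big(\sigma^{-1}(D_f)_{\mathrm{red}}\big)=\sigma^\ast\omega_X(D_f)$ is pseudo-effective. Taking the auxiliary data on $\widetilde X$ to be $\sigma^\ast A_1,\dots,\sigma^\ast A_{d-1}$ (still semiample and effective) and $\sigma^\ast\alpha$ (still movable, by duality with the pseudo-effective cone), and using the two comparisons between $\sigma^{-1}(D_f)_{\mathrm{red}}$ and $\sigma^\ast D_f$ above, the inequalities \eqref{align_Araineq_abs2} and \eqref{align_Araineq_abs1} give
\[
\sigma^\ast\!\big(c_1(W)-rwk\,c_1(\sO_X(R_f))\big)\cdot\sigma^\ast A_1\cdots\sigma^\ast A_{d-1}\le \tfrac{rwkn}{2}\,\sigma^\ast c_1(\omega_X(D_f))\cdot\sigma^\ast A_1\cdots\sigma^\ast A_{d-1}
\]
and the analogous bound for $\sigma^\ast\alpha$ carrying the extra term $\sigma^\ast c_1(\sO_X(D_f))\cdot\sigma^\ast\alpha$. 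Applying the projection formula for $\sigma$ (so every intersection number acquires the factor $\deg\sigma$), dividing by $\deg\sigma$ and then by $w=\mathrm{rank}(W)$, and recalling $c_1(L)=c_1(W)$, produces precisely \eqref{align_Arakelov_inequality_proof1} and \eqref{align_Arakelov_inequality_proof2} (the case $\dim X=1$ being the same computation with the semiample divisors suppressed).

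I expect the genuine difficulty to be concentrated in Step~2: taming the singularities of the iterated fibre product --- which is exactly what forces the passage to the auxiliary cover --- while simultaneously guaranteeing that the cover is unramified away from $D_f$ (so that no spurious effective class pollutes the final bound) and tracking the rank one torsion free sheaves and their first Chern classes modulo closed sets of codimension $\ge 2$. All the positivity is then black-boxed in Theorem~\ref{thm_abs_Arakelov}, whose proof rests on the parabolic semistability of the analytically prolonged Higgs bundle and the positivity theorems for quotients of logarithmic cotangent sheaves.
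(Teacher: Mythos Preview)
Your overall architecture is right --- reduce to a rank one sheaf via $\det W$, pass to a fibre power, and invoke Theorem~\ref{thm_abs_Arakelov} --- but there is a genuine gap in Step~2. Kawamata's covering construction does \emph{not} in general produce a cover $\sigma:\widetilde X\to X$ unramified over $X\backslash D_f$: to take $m_i$-th roots along the components $D_i$ one must introduce auxiliary general hyperplane sections $H'_{i,j}\in|m_iH-D_i|$, and the cyclic covers branch along these as well (the paper explicitly notes in the proof of Theorem~\ref{thm_numbound_polarization} that ``$\sigma$ might be ramified over $S^o$''). Consequently $\omega_{\widetilde X}\big(\sigma^{-1}(D_f)_{\rm red}\big)=\sigma^\ast\omega_X(D_f)+R'$ with $R'$ the part of the ramification divisor lying over the $H'_{i,j}$, which is effective and numerically nontrivial. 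After you apply Theorem~\ref{thm_abs_Arakelov} on $\widetilde X$ and divide through by $\deg\sigma$, the term $\frac{Nn}{2}\cdot\frac{1}{\deg\sigma}\,R'\cdot\sigma^\ast A_1\cdots\sigma^\ast A_{d-1}$ survives (a short computation shows it does not tend to zero as the ramification indices grow), so you obtain only a strictly weaker inequality than \eqref{align_Arakelov_inequality_proof1}.

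The paper avoids this by applying Theorem~\ref{thm_abs_Arakelov} on $X$ itself rather than on $\widetilde X$. The finite cover is used only as a device to manufacture the morphism $(\det W\otimes\sO_X(-klrR_f))^{\otimes k}\otimes I_Z\to f^{(klr)}_\ast(\omega^{\otimes k}_{Y^{(klr)}/X})$ on $X$: one pulls back to $\widetilde X$, uses Lemma~\ref{lem_functorial_pushforward_pluricanonical} and Proposition~\ref{prop_mild_pushforward} (now legitimate since $\widetilde f$ is strictly semistable in codimension one), applies Viehweg's lemma to land in $\sigma^\ast f^{(klr)}_\ast(\omega^{\otimes k}_{Y^{(klr)}/X})$, and then descends via the trace map $\sigma_\ast\sigma^\ast\to{\rm id}$ --- this last step is spelled out in the parallel argument of Theorem~\ref{thm_numbound_polarization}. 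Since the final morphism and the application of Theorem~\ref{thm_abs_Arakelov} both live on $X$, the extraneous ramification of $\sigma$ never enters the inequality. (A minor additional point: you also need the paper's Step~1 reduction making $D_f$ simple normal crossing on $X$ before Proposition~\ref{prop_ssr_cod1} can be invoked; your desingularization of $(Y,f^{-1}(D_f))$ alone does not achieve this.)
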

	\begin{proof}
		{\bf Step 1: Semistable reduction in codimension one.} Consider the diagram
		\begin{align*}
			\xymatrix{
				Y'\ar[d]^{f'} \ar[r]^\tau & Y \ar[d]^f\\
				X' \ar[r]^{\pi} & X
			}
		\end{align*}
		where $\pi$ is a functorial desingularization of the pair $(X,D_f)$ and $Y'$ is the functorial desingularization of the main component of $X'\times_XY$ such that $f'$ is semistable in codimension one. Notice that $D_{f'}:=\pi^{-1}(D_f)$ and the exceptional loci $Ex(\pi)$ are simple normal crossing divisors.
		Let $\omega_{X'}\simeq\pi^{\ast}\omega_X\otimes\sO_{X'}(E)$
		for some exceptional divisor $E$ of $\pi$. We obtain natural morphisms
		\begin{align*}
			\pi^\ast(f_\ast(\omega^{\otimes k}_{Y/X}))\simeq\pi^\ast(f_\ast(\omega^{\otimes k}_Y)\otimes\omega_X^{-k})\to f'_\ast(\omega^{\otimes k}_{Y'})\otimes\omega^{-k}_{X'}\otimes\sO_{X'}(kE)\simeq f'_\ast(\omega^{\otimes k}_{Y'/X'})\otimes\sO_{X'}(kE).
		\end{align*}
		It induces a natural map
		\begin{align*}
			\pi^\ast(W)\otimes\sO_{X'}(-kE)\to f'_\ast(\omega^{\otimes k}_{Y'/X'})
		\end{align*}
		which is injective over $X'\backslash E$. Denote $W'$ to be  its image. Then $$c_1(W)=\pi_\ast(c_1(W')),\quad \pi_\ast(c_1(\sO_{X'}(R_{f'})))=c_1(\sO_X(R_f)),$$ and $$c_1(\omega_X(D_f))=\pi_\ast(c_1(\omega_{X'}(D_{f'})))$$ because ${\rm codim}_X(\pi(E))\geq2$. Thus (\ref{align_Arakelov_inequality_proof1}) is equivalent to 
		\begin{align*}
			\frac{c_1(W')\pi^\ast(A_1)\cdots \pi^\ast(A_{d-1})}{{\rm rank}(W')}\leq rk\left(\frac{n}{2}c_1(\omega_{X'}(D_{f'}))+c_1(\sO_X(R_{f'}))\right)\pi^\ast(A_1)\cdots \pi^\ast(A_{d-1})
		\end{align*}
		and (\ref{align_Arakelov_inequality_proof2}) is equivalent to 
		\begin{align*}
			\mu_{\pi^\ast(\alpha)}(W')\leq rk\left(n\mu_{\pi^\ast(\alpha)}(\omega_{X'}(D_{f'}))+\mu_{\pi^\ast(\alpha)}(\sO_X(R_{f'}))\right)+\frac{\mu_{\pi^\ast(\alpha)}(\sO_{X'}(D_{f'}))}{{\rm rank}W'}.
		\end{align*}
		Hence we may replace $f:Y\to X$ by $f':Y'\to X'$, $W$ by $W'$, and $A_1,\cdots, A_{d-1},\alpha$ by their pullbacks via $\pi$. For this reason we assume that $D_f$ is a simple normal crossing divisor and $f:Y\to X$ is semistable in codimension one in the remainder of the proof.
		
		{\bf Step 2: Strictly semistable reduction in codimension one.}
		Denote $X^o=X\backslash D_f$, $Y^o:=f^{-1}(X^o)$ and $f^o:=f|_{Y^o}$. Denote $l:={\rm rank}(W)$.
		Let $Y^{[klr]}$ be the main component of the $klr$-fiber product $Y\times_X\times\cdots\times_XY$ and let $Y^{(klr)}\to Y^{[klr]}$ be a functorial desingularization which is biholomorphic over $Y^{o[klr]}=Y^o\times_{X^o}\times\cdots\times_{X^o}Y^o$. Let  $f^{(klr)}: Y^{(klr)}\to X$ denote the induced morphism. Let
		\begin{align*}
			\xymatrix{
				\widetilde{Y}\ar[d]^{\widetilde{f}} \ar[r]^\tau & Y \ar[d]^f\\
				\widetilde{X} \ar[r]^{\sigma} & X
			}
		\end{align*}
		be a strictly semistable reduction of $f$ in codimension one (Proposition \ref{prop_ssr_cod1}). Let $Z\subset X$ be an algebraic closed subset of codimension $\geq 2$, such that $f$ is semistable over $X\backslash Z$ and $\widetilde{f}$ is strictly semistable over $\widetilde{X}\backslash \sigma^{-1}(Z)$. Denote $\widetilde{Z}:=\sigma^{-1}(Z)$.
		
		Let $\widetilde{Y}^{[klr]}$ be the the main component of the $klr$-fiber product $\widetilde{Y}\times_{\widetilde{X}}\times\cdots\times_{\widetilde{X}}\widetilde{Y}$ and $\widetilde{Y}^{(klr)}\to \widetilde{Y}^{[klr]}$ a functorial desingularization. Let $\widetilde{f}^{(klr)}:\widetilde{Y}^{(klr)}\to \widetilde{X}$  denote the induced morphism.
		By blowing up on $\widetilde{Y}^{(klr)}$ (if necessary) we may assume that there is a commutative diagram
		\begin{align*}
			\xymatrix{
				\widetilde{Y}^{(klr)}\ar[r]^{\tau^{(klr)}}\ar[d]^{\tilde{f}^{(klr)}} & Y^{(klr)}\ar[d]^{f^{(klr)}}\\
				\widetilde{X} \ar[r]^\sigma& X
			}
		\end{align*}
		where $\tau^{(klr)}$ is a generically finite projective map. 
		
		{\bf Step 3: Proof of the inequalities.} 
		By Lemma \ref{lem_functorial_pushforward_pluricanonical},
		the inclusion $W\subset f_\ast(\omega_{Y/X}^{\otimes k})^{\otimes r}$ induces morphisms
		\begin{align*}
			(\det W\otimes\sO_X(-klrR_f))^{\otimes k}\otimes I_Z\to \left( f_{\ast}(\omega^{\otimes k}_{Y/X})^{\otimes klr}\otimes\sO_X(-k^2lrR_f)\otimes I_{Z'}\right)^{\vee\vee}\to f^{(klr)}_{\ast}(\omega^{\otimes k}_{Y^{(klr)}/X})
		\end{align*}
	    for some ideal sheaf $I_{\widetilde{Z}}$ whose co-support $\widetilde{Z}$ lies in $D_f$ and ${\rm codim}_X(Z)\geq2$.
		The composition of the maps above is nonzero over $X^o$ because its restriction on $X^o$ is equal to the composition of the injective maps
		$$(\det W|_{X^o})^{\otimes k}\to f^o_\ast(\omega^{\otimes k}_{Y^o/X^o})^{\otimes klr}\simeq f^{(klr)}_{\ast}(\omega^{\otimes k}_{Y^{o[klr]}/X^o}).$$
	 Applying Theorem \ref{thm_abs_Arakelov} to the morphism $Y^{(klr)}\to X$ and the torsion free sheaf $\det W\otimes\sO_X(-klrR_f)\otimes I_Z$ one obtains the theorem.
	\end{proof}
	\section{Example: semistable family of elliptic curves}\label{section_exp_elliptic_family}
	
	This section investigates the effect of the Arakelov inequality (\ref{align_main_Arakelov_ineq_H}) on the geometry of semistable families of elliptic curves, drawing inspiration from the work of Viehweg-Zuo \cite{VZ2004}. New phenomena emerge in the case of higher dimensional base spaces.
	
	Let $X$ be a smooth projective $d$-fold and let $f:Y\to X$ be a family of elliptic curves which is strictly semistable in codimension one. Let $D\subset X$ be a simple normal crossing divisor such that $f$ is smooth over $X\backslash D$. For every semiample effective divisor $A_1,\dots,A_{d-1}$, let $C=H_1\cap\cdots\cap H_{d-1}$ be a smooth curve which is an intersection of the general hypersurfaces $H_1\in|k_1A_1|,\dots, H_{d-1}\in|k_{d-1}A_{d-1}|$ for some $k_1,\dots,k_{d-1}\in\bZ^{>0}$. Let $f_C:X_C:=f^{-1}(C)\to C$ be the base change family. We assume that $f_C$ is non-isotrivial. Since $C$ is in a general position, $f_C$ is semistable. Let 
	$$\theta_C:f_{C\ast}(\omega_{X_C/C})\to  R^1f_{C\ast}(\sO_{X_C})\otimes \omega_C(\log C\cap D)$$
	be the logarithmic Kodaira-Spencer map. $\theta_C$ is an injective map since $f_C$ is non-isotrivial.
	Denote $N_{C/X}\simeq\oplus_{i=1}^{d-1}\sO_X(H_i)|_C$ to be the normal bundle.
	\begin{thm}
		Assume that $f_C$ is non-isotrivial and $\omega_S(D)$ is pseudo-effective. Then we have the inequality
		\begin{align}\label{align_ssfamily_ellipt_Arakelov}
			c_1(f_\ast(\omega_{Y/X}))A_1\cdots A_{d-1}\leq\frac{1}{2}c_1(\omega_X(D))A_1\cdots A_{d-1}.
		\end{align}
	    It is an equality if and only if $\deg({\rm coker}(\theta_C))=\deg N_{C/X}$.
	    Moreover we have the following.
	    \begin{itemize}
        \item Assume that $\deg N_{C/X}=0$ and (\ref{align_ssfamily_ellipt_Arakelov}) is an equality. Then $f|_C$ is modular in the sense that $C\backslash D$ is the quotient of the upper half plane $\bH$ by a subgroup of $SL_2(\bZ)$ of finite index, and the morphism
        $C\backslash D\to \bH/SL_2(\bZ)$ is given by the $j$-invariant of the fibres.
        \item If $\deg N_{C/X}\neq0$, then the peroid map $\tau:C\backslash D\to \bH/SL_2(\bZ)$ is a (possibly ramified) covering map with at most $\deg N_{C/X}$ ramified points.
        \end{itemize}
	\end{thm}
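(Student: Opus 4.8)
The plan is to deduce (\ref{align_ssfamily_ellipt_Arakelov}) directly from the Arakelov inequality (\ref{align_main_Arakelov_ineq_H}) already established, and then to read off the three structural consequences by restricting the Viehweg--Zuo Higgs data to the general curve $C$ and comparing $\theta_C$ with the Higgs field of the universal elliptic VHS. For the inequality itself: since $f$ is strictly semistable in codimension one, its ramified divisor $R_f$ is $0$; the relative dimension is $n=1$; and $f_\ast(\omega_{Y/X})$ has rank one because the fibres have genus one. Hence Theorem \ref{thm_main_Arakelov_inequality}(2) (equivalently Theorem \ref{thm_Arakelov_ineq_proof}), applied with $W=f_\ast(\omega_{Y/X})$ and $k=r=1$, is exactly (\ref{align_ssfamily_ellipt_Arakelov}).

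Next I would establish the equality criterion by a bookkeeping on $C$. For $C$ general, $X_C$ is smooth, $f_C$ is semistable, and by adjunction $\omega_{X_C/C}\simeq\omega_{Y/X}|_{X_C}$; cohomology and base change, iterated along the $H_i$, then gives $E:=f_{C\ast}(\omega_{X_C/C})\simeq(f_\ast\omega_{Y/X})|_C$ and $R^1f_{C\ast}(\sO_{X_C})\simeq E^{-1}$, so $\theta_C$ is an injection of line bundles on $C$ (injective as $f_C$ is non-isotrivial) with torsion cokernel of length $\deg\omega_C(\log(C\cap D))-2\deg E$. Using adjunction once more, $\deg\omega_C(\log(C\cap D))=(K_X+D)\cdot[C]+\deg N_{C/X}$, and since $[C]=\big(\prod_i k_i\big)A_1\cdots A_{d-1}$ as a curve class and $\deg E=c_1(f_\ast\omega_{Y/X})\cdot[C]$ this becomes
\begin{align*}
\deg({\rm coker}(\theta_C))=\Big(\prod_i k_i\Big)\big(c_1(\omega_X(D))-2c_1(f_\ast\omega_{Y/X})\big)A_1\cdots A_{d-1}+\deg N_{C/X}.
\end{align*}
The first summand is $\geq0$ by (\ref{align_ssfamily_ellipt_Arakelov}) and vanishes iff (\ref{align_ssfamily_ellipt_Arakelov}) is an equality, which gives the criterion ``equality $\iff\deg({\rm coker}(\theta_C))=\deg N_{C/X}$''.

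For the structural statements assume (\ref{align_ssfamily_ellipt_Arakelov}) is an equality, i.e.\ $\deg({\rm coker}(\theta_C))=\deg N_{C/X}$. If $\deg N_{C/X}=0$ then $\theta_C\colon E\to E^{-1}\otimes\omega_C(\log(C\cap D))$ is an isomorphism, so by Griffiths transversality the derivative of the period map $\varpi$ of the weight one $\bR$-VHS on $C\backslash D$ is everywhere invertible; equivalently the Ahlfors--Schwarz comparison is an equality, $\varpi$ is a local isometry of complete hyperbolic surfaces, hence a biholomorphism onto $\bH$, the (integral, unipotent-at-the-cusps) monodromy group $\Gamma\subset SL_2(\bZ)$ acts freely with $C\backslash D=\bH/\Gamma$, finiteness of the area $2g(C)-2+\#(C\cap D)$ forces $[SL_2(\bZ):\Gamma]<\infty$, and the map to $\bH/SL_2(\bZ)$ is the $j$-invariant; this is the uniformization statement of Viehweg--Zuo \cite{VZ2004} (compare \cite{MVZ2006}). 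If $\deg N_{C/X}\neq0$ the period map still descends to $\tau\colon C\backslash D\to\bH/SL_2(\bZ)$, which extends via $j$ to a finite morphism $C\to\bP^1$ and so is a (possibly ramified) covering; writing $\theta_C=\tau^\ast\theta_{\rm univ}$ for the orbifold Higgs isomorphism of the universal elliptic VHS, orbifold Riemann--Hurwitz (with $\tau$ log-\'etale at the cusps) identifies ${\rm coker}(\theta_C)$ with the ramification divisor of $\tau$ over the interior $\bH/SL_2(\bZ)$, of degree $\deg N_{C/X}$; as each ramification point contributes at least $1$, there are at most $\deg N_{C/X}$ of them.

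The first two steps are essentially formal. The hard part is the last paragraph, and within it the precise matching ``${\rm coker}(\theta_C)\leftrightarrow$ excess ramification of $\tau$'': one must pin down that the weight one VHS on $C\backslash D$ is $\tau^\ast$ of the universal one and carry out the orbifold Riemann--Hurwitz count with the correct weights at $j=0,1728$ and the log structure at the cusps. I would handle this by passing to a torsion-free level structure (so that the universal family and the isomorphism $\theta_{\rm univ}$ become honest objects) and then pushing the computation back down, as in the analysis of semistable elliptic families in \cite{VZ2004,MVZ2006}.
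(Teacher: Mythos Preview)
Your argument is correct. The main differences from the paper's proof are in emphasis and in the final bullet.

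For the inequality, the paper does not invoke Theorem~\ref{thm_main_Arakelov_inequality} but instead gives a self--contained simplified argument (explicitly advertised as ``explaining the spirit'' of the general proof): after passing to a codimension--two locus where $f$ is semistable, one uses the log Higgs bundle $f_\ast(\omega_{Y/X})\oplus R^1f_\ast(\sO_Y)$ on $X$, its Higgs subsheaf generated by $f_\ast(\omega_{Y/X})$, Simpson--Mochizuki semistability, and the Campana--P\u{a}un quotient estimate for $\Omega_X(\log D)$; summing the two resulting inequalities gives (\ref{align_ssfamily_ellipt_Arakelov}). Your route via the main theorem is shorter; the paper's route is illustrative. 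Your explicit degree computation for the equality criterion is in fact cleaner than the paper's rather implicit treatment.

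For the case $\deg N_{C/X}\neq0$ you work harder than necessary. The paper avoids orbifold Riemann--Hurwitz entirely: it simply observes that the map $\tau\colon T_C(-\log C\cap D)\to\mathrm{Hom}(E,E^{-1})$ induced by $\theta_C$ is, after pulling back to the universal cover, the differential of the period map to $\bH$; hence the critical (ramification) points of the period map are contained in $\mathrm{supp}(\mathrm{coker}\,\tau)=\mathrm{supp}(\mathrm{coker}\,\theta_C)$, and a torsion sheaf of degree $\deg N_{C/X}$ has at most $\deg N_{C/X}$ points in its support. This sidesteps the issues you flag at $j=0,1728$ and the need to pass to a level structure; your approach would pin down the ramification divisor exactly, but the theorem only asks for an upper bound on the number of points.
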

    \begin{proof}
    	Although (\ref{align_ssfamily_ellipt_Arakelov}) is a direct consequence of Theorem \ref{thm_main_Arakelov_inequality}, we give a simplified proof which explains the spirit of the proof of Theorem \ref{thm_main_Arakelov_inequality}. We may remove a loci on $X$ of codimension two without changing the first Chern classes of the relevant sheaves. Hence we assume that $D\subset X$ is a smooth divisor and $f$ is semistable.
    	Consider the logarithmic Gauss-Manin connection $$\nabla_{\rm GM}:R^1f_\ast(\Omega_{X/S}^{\bullet}(\log))\to R^1f_\ast(\Omega_{X/S}^{\bullet}(\log))\otimes\Omega_S(\log D)$$
    	and the associated logarithmic Higgs bundle (i.e. the total graded quotient with respect to the Hodge filtration)
    	$$f_\ast(\omega_{X/S})\oplus R^1f_\ast(\sO_X)\stackrel{\theta}{\to}\left(f_\ast(\omega_{X/S})\oplus R^1f_\ast(\sO_X)\right)\otimes\Omega_S(\log D)$$
    	determined by $\theta(R^1f_\ast(\sO_X))=0$ and
    	$$f_\ast(\omega_{X/S})\stackrel{\theta}{\to} R^1f_\ast(\sO_X)\otimes\Omega_S(\log D).$$
    	Denote 
    	$$\rho:f_\ast(\omega_{X/S})\otimes T_S(-\log D)\to R^1f_\ast(\sO_X).$$
    	Then $f_\ast(\omega_{X/S})\oplus{\rm Im}(\rho)$ is a Higgs subsheaf of $f_\ast(\omega_{X/S})\oplus R^1f_\ast(\sO_X)$. According to Simpson \cite{Simpson1990} and Mochizuki \cite{Mochizuki20072}, $f_\ast(\omega_{X/S})\oplus R^1f_\ast(\sO_X)$ is a semistable Higgs bundle with vanishing Chern classes. Hence
    	\begin{align}\label{align_Arakelov_ell1}
    		c_1(f_\ast(\omega_{X/S}))\cdot C+c_1({\rm Im}(\rho))\cdot C\leq 0.
    	\end{align}
        The map $\rho$ induces a morphism
        $$\gamma:f_\ast(\omega_{X/S})\oplus {\rm Im}(\rho)^{\vee}\to\Omega_S(\log D)$$
        which is injective in codimension one. Since $\omega_S(D)$ is pseudo-effective, it follows from \cite[Theorem 1.2]{CP2019} that the first Chern classes of the quotients
        $\Omega_S(\log D)/{\rm Im}(\gamma)$ are pseudo-effective. Hence  we have
        \begin{align}\label{align_Arakelov_ell2}
        	c_1(f_\ast(\omega_{X/S}))\cdot C-c_1({\rm Im}(\rho))\cdot C\leq c_1(\omega_S(\log D))\cdot C.
        \end{align}
        By (\ref{align_Arakelov_ell1}) and (\ref{align_Arakelov_ell2}) one shows the first claim of the theorem:
        \begin{align}\label{align_Arakelov_elliptic}
        	2c_1(f_\ast(\omega_{X/S}))\cdot C\leq c_1(\omega_S(\log D))\cdot C.
        \end{align}
        Next we prove the second claim. Let $\pi:H\to C\backslash D$ be the universal covering map and $\bH\subset\bC$ the upper half plane. Let $\rho:H\to\bH$ be the peroid map defined by the variation of Hodge structure of weight $1$ associated to $f|_{C\backslash D}$. Let
        $$\tau:T_{C}(-\log D\cap C)\to Hom(f_{C\ast}(\omega_{X_C/C}),R^1f_{C\ast}(\sO_{X_C}))$$
        be determined by $\theta_C$. Then $\pi^\ast(\tau)=d\rho:T_H\to T_{\bH}$ is the tangent map of the peroid map.
        
        If $\deg N_{C/X}=0$, one has $c_1(\omega_S(\log D))\cdot C=\deg\omega_C(\log C\cap D)$. According to Viehweg-Zuo \cite{VZ2004}, (\ref{align_Arakelov_elliptic}) is an equality if and only if $f_C:X_C\to C$ is modular.
        
        If $\deg N_{C/X}>0$, then the critical points of the period map $\tau:C\backslash D\to \bH/SL_2(\bZ)$ lie in the support of ${\rm coker}(\tau)$. Hence there are at most $\deg{\rm coker}(\tau)=\deg N_{C/X}$ critical points.
    \end{proof}
	\section{Boundedness of polarized algebraic families}\label{section_boundedness}
	This section is devoted to the proof of the second Arakelov type inequality (Theorem \ref{thm_main_numbound_moduli}, see also Theorem \ref{thm_numbound_polarization}).
	In conjunction with the criterion of Kov\'acs-Lieblich \cite{Kovacs2011},
	it enables us to generalize the boundedness part of the Par\v{s}in-Arakelov-Shafarevich package to an arbitrary Kodaira dimension. The main boundedness results in the present paper are Theorem \ref{thm_bounded_stable_family} and Theorem \ref{thm_bounded_stable_family_strong}.
	\subsection{Stable minimal models and their moduli}\label{section_moduli}
	We review the main results of Birkar \cite{Birkar2022} that will be used in the sequel. A {\bf stable minimal model} is a triple $(X,B),A$ where $X$ is a reduced connected projective scheme of finite type over $\bC$ and $A,B\geq0$ are $\bQ$-divisor such that the following hold.
	\begin{itemize}
		\item $(X,B)$ is a projective connected slc pair,
		\item $K_X+B$ is semi-ample,
		\item $K_X+B+tA$ is ample for some $t>0$, and
		\item $(X,B+tA)$ is slc for some $t>0$.
	\end{itemize}
	Let
	$$d\in\bN,c\in\bQ^{\geq0},\Gamma\subset\bQ^{>0} \textrm{ a finite set, and }\sigma\in\bQ[t].$$
	A $(d,\Phi_c,\Gamma,\sigma)$-stable minimal model is a stable minimal model $(X,B),A$ such that the following hold.
	\begin{itemize}
		\item $\dim X=d,$
		\item the coefficients of $A$ and $B$ are in $c\bZ^{\geq0}$.
		\item ${\rm vol}(A|_F)\in\Gamma$ where $F$ is any general fiber of the fibration $f:X\to Z$ determined by $K_X+B$.
		\item ${\rm vol}(K_X+B+tA)=\sigma(t)$ for $0\leq t\ll 1$.
	\end{itemize}
	Let $S$ be a reduced scheme over $\bC$. A family of $(d,\Phi_c,\Gamma,\sigma)$-stable minimal models over $S$ consists of a projective morphism $X\to S$ of schemes and $\bQ$-divisors $A$ and $B$ on $X$ such that the following hold.
	\begin{itemize}
		\item $(X,B+tA)\to S$ is a locally stable family (i.e. $K_{X/S}+B+tA$ is $\bQ$-Cartier) for every sufficiently small rational number $t\geq0$,
		\item $A=cN$, $B=cD$ where $N, D\geq0$ are relative Mumford divisors,
		\item $(X_s,B_s), A_s$ is a $(d,\Phi_c,\Gamma,\sigma)$-stable minimal model for each point $s\in S$.
	\end{itemize}
	Let ${\rm Sch}_{\bC}^{\rm red}$ denote the category of reduced schemes defined over $\bC$. Define the functor of groupoids over ${\rm Sch}_{\bC}^{\rm red}$:
	$$\sM^{\rm red}_{\rm slc}(d,\Phi_c,\Gamma,\sigma): S\mapsto\{\textrm{family of } (d,\Phi_c,\Gamma,\sigma)-\textrm{stable minimal models over }S\}.$$
	\begin{thm}[Birkar \cite{Birkar2022}]\label{thm_moduli_stable_var}
		There is a proper Deligne-Mumford stack $\sM_{\rm slc}(d,\Phi_c,\Gamma,\sigma)$ over $\bC$ such that the following hold.
		\begin{itemize}
			\item $\sM_{\rm slc}(d,\Phi_c,\Gamma,\sigma)|_{{\rm Sch}_{\bC}^{\rm red}}=\sM^{\rm red}_{\rm slc}(d,\Phi_c,\Gamma,\sigma)$ as functors of groupoids.
			\item $\sM_{\rm slc}(d,\Phi_c,\Gamma,\sigma)$ admits a  projective good coarse moduli space $M_{slc}(d,\Phi_c,\Gamma,\sigma)$.
		\end{itemize}
	\end{thm}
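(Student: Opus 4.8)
The plan is to follow the standard four-step construction of moduli of polarized varieties, adapted to stable minimal models: \textbf{(i)} boundedness of the class of objects, \textbf{(ii)} realizing them inside a locally closed subscheme of a Hilbert scheme with a group action, \textbf{(iii)} verifying the valuative criterion (separatedness and properness), and \textbf{(iv)} descending to a coarse space and proving its projectivity via a Koll\'ar-type polarization. For \textbf{Step 1}, I would first show that, up to isomorphism, the $(d,\Phi_c,\Gamma,\sigma)$-stable minimal models $(X,B),A$ form a bounded family. Choosing uniformly a small $t_0\in\bQ^{>0}$ with $(X,B+t_0A)$ slc, $K_X+B+t_0A$ ample and ${\rm vol}(K_X+B+t_0A)=\sigma(t_0)$, one sees that $(X,B+t_0A)$ is a $d$-dimensional slc stable pair with coefficients in a fixed finite set and bounded volume, hence lies in a bounded family by the boundedness theory of slc stable pairs (Hacon--McKernan--Xu, Birkar, Koll\'ar). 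To bound $A$ and $B$ separately rather than merely $B+t_0A$, I would use adjunction along the fibration $f\colon X\to Z$ determined by the semiample $K_X+B$: the base $Z$ has bounded general-type volume hence is bounded, the general fibre $F$ has ${\rm vol}(A|_F)$ in the finite set $\Gamma$, and a boundedness-of-relative-pairs argument then bounds $(X,B),A$ and furnishes a uniform $m$ with $m(K_X+B)$ base-point-free and $mA,mB$ Cartier.

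For \textbf{Step 2}, with a uniform $m$ and Hilbert polynomial for the embedding by $|m(K_X+B+t_0A)|$, I would parametrize embedded data $(X\hookrightarrow\bP^N,A,B)$ in a suitable relative Hilbert scheme and cut out the locally closed subscheme $H$ imposing: $(X,B)$ slc with $K_{X/S}+B$ semiample; $(X,B+tA)$ slc with $K_{X/S}+B+tA$ ample of volume $\sigma(t)$ for $0\le t\ll1$; ${\rm vol}(A|_F)\in\Gamma$; and $A,B$ relative Mumford divisors with the relevant classes relatively $\bQ$-Cartier. Each of these is a locally closed condition over reduced bases by the moduli theory of stable pairs (semiampleness of $K_{X/S}+B$ being controlled by the relative canonical model, c.f.\ \cite{BCHM2010}), so $H$ is a fine moduli scheme for embedded stable minimal models, acted on by $G=\mathrm{PGL}_{N+1}$. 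Set $\sM_{\rm slc}(d,\Phi_c,\Gamma,\sigma):=[H/G]$; one checks that this is independent of $m,t_0$ and that it restores $\sM^{\rm red}_{\rm slc}(d,\Phi_c,\Gamma,\sigma)$ on reduced test schemes. Since every automorphism of $(X,B),A$ fixes $B+t_0A$ and hence the ample class $K_X+B+t_0A$, the automorphism groups are finite and reduced in characteristic $0$, so $[H/G]$ is Deligne--Mumford.

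For \textbf{Step 3}, separatedness and properness reduce to the valuative criterion over a DVR: given a family of stable minimal models over a punctured curve I would first extend $(X,B+tA)$ to a stable family by taking the canonical model of a semistable reduction (existence of slc stable limits, à la Koll\'ar--Xu, Hacon--Xu), then recover the limits of $(X,B)$ and of $A$ by passing to the lc/canonical models with respect to $K+B$, verifying that semiampleness of $K+B$, the canonical fibration to $Z$, and the fibre volume ${\rm vol}(A|_F)$ all persist in the limit; uniqueness holds after a finite base change, which yields properness of the stack. Then the Keel--Mori theorem produces a proper algebraic space $M_{\rm slc}(d,\Phi_c,\Gamma,\sigma)$ as coarse moduli space, which is moreover a good moduli space since $[H/G]$ is a quotient of a scheme by a reductive group. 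Finally, for projectivity one uses the Koll\'ar-type $\bQ$-line bundle $\lambda_{a,r}=\det f_\ast(r(K_{X/S}+aA))$ with $0<a\ll1$ and $r\gg0$: by the semipositivity and ampleness results of Koll\'ar \cite{Kollar1990}, Kov\'acs--Patakfalvi \cite{Kovacs2017} and Fujino \cite{Fujino2018} it descends to an ample $\bQ$-line bundle on $M_{\rm slc}(d,\Phi_c,\Gamma,\sigma)$, and a proper algebraic space carrying an ample line bundle is a projective scheme.

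The technical heart, and the step I expect to be the main obstacle, is the valuative criterion in Step 3: one must simultaneously control the slc pair $(X,B+tA)$, the semiampleness of $K_X+B$, the canonical fibration structure, and the fibre volume ${\rm vol}(A|_F)$ in a one-parameter degeneration, and keep all of them compatible in the limit. This relies on the full strength of the minimal model program (existence of minimal and canonical models, including in the lc and slc categories), inversion of adjunction, and the semipositivity machinery for fibre spaces. Boundedness (Step 1) is also substantial, but is by now essentially routine given the cited results, and the remaining steps are formal once Steps 1 and 3 are in place.
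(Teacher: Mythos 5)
The paper does not actually reprove this statement: its ``proof'' consists of citing \cite{Birkar2022}*{Theorem 1.14} and recording that the stack admits the presentation $\sM_{\rm slc}(d,\Phi_c,\Gamma,\sigma)=\left[M^e_{\rm slc}(d,\Phi_c,\Gamma,\sigma,a,r,\bP^n)/{\rm PGL}_{n+1}(\bC)\right]$ from \cite{Birkar2022}*{\S 10.7}. Your four-step outline (boundedness, Hilbert-scheme quotient with a ${\rm PGL}$-action, valuative criterion, Koll\'ar-type polarization for projectivity) is indeed the skeleton of Birkar's construction, so at the level of architecture you are following the same route as the source the paper invokes, and your quotient presentation $[H/G]$ and the finiteness-of-automorphisms argument for the Deligne--Mumford property are consistent with it.

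The genuine gap is in what you declare routine. Your Step 1 opens by ``choosing uniformly a small $t_0$'' with $(X,B+t_0A)$ slc, $K_X+B+t_0A$ ample and ${\rm vol}(K_X+B+t_0A)=\sigma(t_0)$; but the definition of a $(d,\Phi_c,\Gamma,\sigma)$-stable minimal model only guarantees such a $t$ for each individual model, and the existence of a \emph{uniform} $t_0$ for the whole class is precisely one of the central theorems of \cite{Birkar2022}, obtained via boundedness of complements/BAB-type results and a delicate analysis of the fibration $f\colon X\to Z$ --- it cannot be taken as an input, so the reduction of boundedness to boundedness of slc stable pairs, which you call ``essentially routine,'' presupposes the hard part rather than proving it. The auxiliary claim that the base $Z$ ``has bounded general-type volume hence is bounded'' is also false as stated: $Z$ is the image of the semiample $K_X+B$ and need not be of general type; in intermediate Kodaira dimension one needs the canonical bundle formula and boundedness of the induced polarized log Calabi--Yau fibrations, again substantive theorems. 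The same uniformity issue reappears in your Step 3, since verifying that one-parameter limits are again $(d,\Phi_c,\Gamma,\sigma)$-stable minimal models requires controlling $t$ and the volumes in the limit. So while the outline faithfully mirrors the strategy, it does not contain the arguments that carry the theorem; as the paper does, the correct justification at this level is the citation of Birkar's Theorem 1.14 together with the stacky-quotient description.
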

	\begin{proof}
		See the proof of \cite[Theorem 1.14]{Birkar2022}. Following the notations in \cite[\S 10.7]{Birkar2022}, we have $$\sM_{\rm slc}(d,\Phi_c,\Gamma,\sigma)=\left[M_{\rm slc}^e(d,\Phi_c,\Gamma,\sigma,a,r,\bP^n)/{\rm PGL}_{n+1}(\bC)\right]$$
		where the right hand side is the stacky quotient.
	\end{proof}
	A stable minimal model $(X,B),A$ is called a lc stable minimal model if $(X,B)$ is a lc pair.
	Let  $\sM_{\rm lc,(0,1)}(d,\Phi_c,\Gamma,\sigma)\subset \sM_{\rm slc}(d,\Phi_c,\Gamma,\sigma)$ denote the open substack consisting of  $(d,\Phi_c,\Gamma,\sigma)$-lc stable minimal models $(X,B),A$ such that the coefficients of $B$ lie in $(0,1)$. Denote $M_{\rm lc,(0,1)}(d,\Phi_c,\Gamma,\sigma)$ to be the quasi-projective coarse moduli spaces of $\sM_{\rm lc,(0,1)}(d,\Phi_c,\Gamma,\sigma)$.
	\subsection{Polarization on $M_{\rm slc}(d,\Phi_c,\Gamma,\sigma)$}\label{section_polarization_moduli}
	In this part we consider some natural ample line bundles on $M_{\rm slc}(d,\Phi_c,\Gamma,\sigma)$. Their constructions are implicit in the proof of \cite[Theorem 1.14]{Birkar2022}, based on the ampleness criterion of Koll\'ar \cite{Kollar1990}. 
	Fix a data $d,\Phi_c,\Gamma,\sigma$. Since $\sM_{\rm slc}(d,\Phi_c,\Gamma,\sigma)$ is of finite type, there are constants $$(a,r,j)\in\bQ^{\geq0}\times(\bZ^{>0})^{2}$$ depending only on $d,\Phi_c,\Gamma,\sigma$ such that the following hold for every $(d,\Phi_c,\Gamma,\sigma)$-stable minimal model $(X,B),A$ (c.f. \cite[Lemma 10.2]{Birkar2022}).
	\begin{itemize}
		\item $X+B+aA$ is slc,
		\item $r(K_X+B+aA)$ is a very ample integral Cartier divisor with 
		$$H^i(X,kr(K_X+B+aA))=0,\quad\forall i>0,\forall k>0,$$
		\item the embedding $X\hookrightarrow\bP(H^0(X,r(K_X+B+aA)))$ is defined by degree $\leq j$ equations,
		\item the multiplication map $$S^j(H^0(X,r(K_X+B+aA)))\to H^0(X,jr(K_X+B+aA))$$ is surjective.
	\end{itemize}
	\begin{defn}
		$(a,r,j)\in\bQ^{\geq0}\times(\bZ^{>0})^{2}$ that satisfies the conditions above is called a {\bf $(d,\Phi_c,\Gamma,\sigma)$-polarization data}.
	\end{defn}
	Let $(a,r,j)\in\bQ^{\geq0}\times(\bZ^{>0})^{2}$ be a $(d,\Phi_c,\Gamma,\sigma)$-polarization data.
	Let $(X,B),A\to S$ be a family of $(d,\Phi_c,\Gamma,\sigma)$-stable minimal models. Then $f_\ast(r(K_{X/S}+B+aA))$ is locally free and commutes with an arbitrary base change. Therefore the assignment
	$$f:(X,B),A\to S\in\sM_{\rm slc}(d,\Phi_c,\Gamma,\sigma)(S)\mapsto f_\ast(r(K_{X/S}+B+aA))$$
	gives a locally free coherent sheaf on the stack $\sM_{\rm slc}(d,\Phi_c,\Gamma,\sigma)$, which is denoted by $\Lambda_{a,r}$. Let $\lambda_{a,r}:=\det(\Lambda_{a,r})$. Since $\sM_{\rm slc}(d,\Phi_c,\Gamma,\sigma)$ is Deligne-Mumford, some power $\lambda_{a,r}^{\otimes k}$ descends to a line bundle on $M_{\rm slc}(d,\Phi_c,\Gamma,\sigma)$. For this reason we regard $\lambda_{a,r}$ as a $\bQ$-line bundle on $M_{\rm slc}(d,\Phi_c,\Gamma,\sigma)$. 
	\begin{prop}\label{prop_ample_line_bundle_moduli}
		Let $(a,r,j)\in\bQ^{\geq0}\times(\bZ^{>0})^{2}$ be a $(d,\Phi_c,\Gamma,\sigma)$-polarization data. Then $\lambda_{a,r}$ is ample on $M_{\rm slc}(d,\Phi_c,\Gamma,\sigma)$.
	\end{prop}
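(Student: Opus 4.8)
The plan is to deduce the proposition from Koll\'ar's ampleness criterion \cite{Kollar1990}, by making explicit the Geometric Invariant Theory presentation that underlies Theorem \ref{thm_moduli_stable_var}, and feeding into it the semipositivity theorems of Kov\'acs--Patakfalvi \cite{Kovacs2017} and Fujino \cite{Fujino2018}. First I would recall the concrete picture behind Birkar's construction: by \cite[\S 10.7]{Birkar2022} one has $\sM_{\rm slc}(d,\Phi_c,\Gamma,\sigma)=[M^e/{\rm PGL}_{n+1}(\bC)]$, where $M^e:=M^e_{\rm slc}(d,\Phi_c,\Gamma,\sigma,a,r,\bP^n)$ is a reduced quasi-projective scheme and $M_{\rm slc}(d,\Phi_c,\Gamma,\sigma)$ is the (projective) GIT quotient $M^e/\!\!/{\rm PGL}_{n+1}(\bC)$. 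On $M^e$ there is a universal family $(\sX,\sB),\sA\to M^e$ together with a ${\rm PGL}_{n+1}(\bC)$-equivariant closed embedding $\sX\hookrightarrow M^e\times\bP^n$ restricting on each fibre to the embedding defined by the complete linear system $|r(K_X+B+aA)|$; thus $n+1={\rm rank}(\Lambda_{a,r})$ and the pullback of $\Lambda_{a,r}$ to $M^e$ is $\pi_\ast(r(K_{\sX/M^e}+\sB+a\sA))$ with its tautological linearization, whose $\det$ descends, up to a power, to $\lambda_{a,r}$ on $M_{\rm slc}(d,\Phi_c,\Gamma,\sigma)$. It is exactly the existence of a $(d,\Phi_c,\Gamma,\sigma)$-polarization data $(a,r,j)$ that makes $M^e$ a locally closed ${\rm PGL}_{n+1}(\bC)$-invariant subscheme of a fixed Hilbert scheme (or Chow variety): the defining equations have bounded degree $\leq j$ and the multiplication maps are surjective.

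Next I would bring in positivity. For an arbitrary family $(X,B),A\to T$ of $(d,\Phi_c,\Gamma,\sigma)$-stable minimal models over a reduced base, the semipositivity theorems of Kov\'acs--Patakfalvi \cite{Kovacs2017} and Fujino \cite{Fujino2018}, applied to the relatively ample $\bQ$-divisor $K+B+aA$ (with $(X,B+aA)$ slc, as guaranteed by the polarization data), show that $f_\ast(r(K_{X/T}+B+aA))$ is weakly positive in the sense of Viehweg. In particular $\Lambda_{a,r}$ is a weakly positive locally free sheaf on the moduli problem, and $\lambda_{a,r}$ is in any case nef on $M_{\rm slc}(d,\Phi_c,\Gamma,\sigma)$. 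The point of the proposition is to upgrade this to ampleness.

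The upgrade is Koll\'ar's Ampleness Lemma \cite[Ampleness Lemma 3.9]{Kollar1990}, in the equivariant form exploited in \cite{Kovacs2017,Fujino2018}: from the ${\rm PGL}_{n+1}(\bC)$-equivariant surjection $H^0(\bP^n,\sO(r))\otimes\sO_{M^e}\twoheadrightarrow \pi_\ast\sO_\sX(r)$ together with the weak positivity of $\Lambda_{a,r}$, one obtains that $\det\pi_\ast(r(K_{\sX/M^e}+\sB+a\sA))$, with its linearization, is ample relative to ${\rm PGL}_{n+1}(\bC)$ --- i.e. a suitable power descends to an ample line bundle on $M^e/\!\!/{\rm PGL}_{n+1}(\bC)=M_{\rm slc}(d,\Phi_c,\Gamma,\sigma)$ --- \emph{provided} the associated classifying map (to the relevant quotient of a Grassmannian, equivalently to the coarse moduli functor) is finite. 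Since the descended $\bQ$-line bundle is by construction $\lambda_{a,r}$, this gives the proposition.

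The main obstacle is precisely the finiteness of that classifying map, which is the conjunction of three facts: boundedness of the family of $(d,\Phi_c,\Gamma,\sigma)$-stable minimal models, separatedness of the moduli problem, and finiteness of automorphism groups --- equivalently, the assertion of Theorem \ref{thm_moduli_stable_var} that $\sM_{\rm slc}(d,\Phi_c,\Gamma,\sigma)$ is a \emph{proper Deligne--Mumford} stack. Granting this (it is Birkar's theorem), Koll\'ar's lemma yields ampleness outright, which is what one needs, since a nef line bundle that is positive on every curve need not be ample in general. I would close by noting that the same argument, restricted to the open substack $\sM_{\rm lc,(0,1)}(d,\Phi_c,\Gamma,\sigma)$, shows that $\lambda_{a,r}$ is ample on the quasi-projective coarse space $M_{\rm lc,(0,1)}(d,\Phi_c,\Gamma,\sigma)$ as well.
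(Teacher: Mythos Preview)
Your proposal is correct and follows essentially the same approach as the paper: invoke Koll\'ar's ampleness criterion \cite{Kollar1990} and feed in the semipositivity results of Kov\'acs--Patakfalvi \cite{Kovacs2017} and Fujino \cite{Fujino2018}. The paper's proof is a two-line version of yours --- it cites \cite[\S 2.9]{Kollar1990} to reduce ampleness of $\lambda_{a,r}$ to nefness of $f_\ast(r(K_{X/S}+B+aA))$ over smooth projective curves, and then quotes \cite{Kovacs2017,Fujino2018} for that nefness --- whereas you unpack the GIT presentation and the Ampleness Lemma explicitly; the substance is the same.
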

	\begin{proof}
		By the same arguments as in \cite[\S 2.9]{Kollar1990}, it suffices to show that $f_\ast(r(K_{X/S}+B+aA))$ is nef when $S$ is a smooth projective curve. This is accomplished by Fujino \cite{Fujino2018} and Kov\'acs-Patakfalvi \cite{Kovacs2017}.
	\end{proof}
	\subsection{Numerical bound of $\lambda_{a,r}$}
	Recall that a family $f:(X,\Delta)\to S$ is log smooth if $X$ is smooth projective over $S$ and $\Delta$ is a simple normal crossing $\bQ$-divisor whose strata are all smooth over $S$.
	\begin{defn}\label{defn_admissible}
		Let $f:(X,B),A\to S$ be a family of stable minimal model. A log smooth birational model of $f$ is a log smooth family $(X',\Delta')\to S$ together with a birational map $g:X'\dashrightarrow X$ over $S$ such that $g$ is defined on a dense Zariski open subset of $\Delta'$ and $g_\ast(\Delta')=A+B$. $f$ is called {\bf admissible} if it admits a log smooth birational model and the coefficients of $B$ lie in $(0,1)$.
	\end{defn}
	The following lemma will be used in the proof of Theorem \ref{thm_numbound_polarization}.
	\begin{lem}[Relative Kawamata's covering]\label{lem_rel_Kawamata_covering}
		Let $X\to S$ be a morphism between smooth projective varieties and $D$ a simple normal crossing divisor whose coefficients lie in $(0,1)$. Let $S^o\subset S$ be a Zariski open subset such that $(f^{-1}(S^o),D\cap f^{-1}(S^o))\to S^o$ is a log smooth family. Then there is a finite ramified covering $h:Y\to X$ such that the following hold.
		\begin{enumerate}
			\item $Y$ is a smooth projective variety,
			\item $f\circ h$ is a smooth morphism over $S^o$,
			\item there is a $\bQ$-divisor $F\geq0$ such that 
			\begin{align}\label{align_canonical_bundle_formula}
			h^\ast(K_X+D)=K_Y-F
			\end{align}
		\end{enumerate}
	\end{lem}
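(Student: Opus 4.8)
The statement to prove is Lemma~\ref{lem_rel_Kawamata_covering} (Relative Kawamata's covering). The plan is to reduce to the classical Kawamata covering trick \cite[Theorem 17]{Kawamata1981} applied fibrewise-compatibly, and then control the behaviour over the log smooth locus $S^o$ and the resulting ramification formula. First I would write $D=\sum_i d_i D_i$ with $d_i\in(0,1)$, choose a common denominator $m$ so that $md_i\in\bZ$ for all $i$, and note that it suffices to produce a finite cover $h:Y\to X$ with $Y$ smooth such that $h^\ast D_i$ is divisible by $m$ (as a Cartier divisor) for every $i$ and such that $h$ is étale — or at worst a cover with controlled branching — over a neighbourhood of the generic points of the fibres $X_s$, $s\in S^o$; the divisor $F$ in \eqref{align_canonical_bundle_formula} is then forced by the Hurwitz formula, $K_Y=h^\ast K_X+R_h$ where $R_h$ is the ramification divisor, so that $h^\ast(K_X+D)=K_Y+(h^\ast D-R_h)$ and one sets $F:=R_h-h^\ast D$; the content is that this is effective, which will follow because $h^\ast D_i = m E_i$ while the ramification of a cyclic $m$-cover along $D_i$ contributes $(m-1)E_i$, and $m-1\geq md_i$ since $d_i<1$ forces $md_i\leq m-1$.

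Concretely, the key steps are: (i) Enlarge $D$ to a simple normal crossing divisor $D'=\sum D_i'$ (an auxiliary very ample snc divisor, general in its linear system, and meeting the fibres over $S^o$ transversally) so that each $D_i$ sits inside the support of $D'$ and the classical covering construction applies; by Bertini over the open set $S^o$ one arranges that $X\to S$ stays smooth along $D'$ over $S^o$ and $D'\cap f^{-1}(S^o)$ remains relatively snc. (ii) Apply Kawamata's construction \cite[Theorem 17]{Kawamata1981}: take repeated cyclic covers of degree $m$ branched along the $D_i'$ (after twisting by suitable multiples of an ample bundle to make the relevant line bundles $m$-divisible), obtaining a normal cover; then take a functorial desingularization $Y$. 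Over $S^o$, since $X\to S$ is smooth and $D'\cap f^{-1}(S^o)$ is relatively snc, the standard local computation (in fibre coordinates $z_1\cdots z_k = 0$ for the branch divisor) shows the cyclic cover is already smooth over each point of $S^o$, hence $f\circ h$ is smooth over $S^o$ — this gives (2). (iii) Running the Hurwitz/ramification computation: on the normal cyclic cover $h_1$ of degree $m$ branched along a smooth divisor $D_i'$, one has $h_1^\ast D_i' = m E_i$ and $K = h_1^\ast K + (m-1)E_i$; composing the tower and pulling back the original $D=\sum d_i D_i$, and absorbing the discrepancies from the final desingularization (which are effective along exceptional divisors), one obtains $h^\ast(K_X+D) = K_Y - F$ with $F\geq 0$ because each coefficient comparison reads $\,(m-1) - m d_i \geq 0$ for components over $D$ and $\geq 0$ trivially for the auxiliary components $D_i'\setminus D$ and the exceptional divisors. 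This gives (3), and (1) is immediate from taking $Y$ a resolution.

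The main obstacle I expect is step (ii)–(iii) done \emph{relatively}: one must verify that the covering trick can be performed so that it does not destroy smoothness of $X\to S$ over $S^o$ and so that the desingularization needed at the end (the cyclic cover of a smooth variety branched along an snc divisor is already smooth only locally near the snc locus; elsewhere the branch divisor of the successively twisted covers need not be snc) can be chosen to introduce only exceptional discrepancies that keep $F$ effective. The cleanest way around this is to arrange, via Bertini and by choosing the auxiliary divisors general, that the total branch divisor is snc over a neighbourhood of $f^{-1}(S^o)$ and to invoke W\l odarczyk's functorial resolution \cite{Wlodarczyk2009} — which is an isomorphism over the snc locus and elsewhere contributes only exceptional divisors $E_j$ with discrepancy $a_j>-1$, hence with $a_j\geq 0$ on a log smooth pair, so these contribute nonnegatively to $-F$. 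A careful bookkeeping of these discrepancies versus the chosen coefficients $d_i<1$ is exactly the point where one must be slightly careful, but it is routine once the setup is fixed; I would not grind through the local coordinate computations here beyond indicating the coefficient inequality $md_i \le m-1$ that makes everything work.
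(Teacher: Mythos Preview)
Your approach is correct and essentially identical to the paper's own proof: both reduce directly to Kawamata's covering trick \cite[Theorem 17]{Kawamata1981}, with the two modifications being (i) choosing the auxiliary very ample divisors general enough (Bertini) so that together with $D$ they remain relatively simple normal crossing over $S^o$, and (ii) taking the cyclic covering degrees large enough (your choice of a common denominator $m$ achieves exactly this, via $md_i\leq m-1$) so that the Hurwitz comparison forces $F=R_h-h^\ast D\geq0$. One small simplification: in Kawamata's construction the total branch divisor is arranged to be simple normal crossing on all of $X$, so the resulting cover $Y$ is already smooth and no desingularization step is needed---this sidesteps your worry about exceptional discrepancies (where, incidentally, the assertion that $a_j\geq0$ for a resolution of a log smooth \emph{pair} is not true in general).
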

	\begin{proof}
		The proof is the same as \cite[Theorem 17]{Kawamata1981} except two modifications. The first is that the general hyperplanes $H_1,\dots,H_d$ in loc. cit.  should satisfy that $$\left(H_{1}\cup\dots\cup H_{d}\cup D_{1}\cup\dots\cup D_{l}\right)\cap f^{-1}(S^o)$$ is a relative simple normal crossing divisor over $S^o$. The second is that one should let $m_i$ be sufficiently large in loc. cit. in order to ensure the validity of (\ref{align_canonical_bundle_formula}).
	\end{proof}
	\begin{thm}[Uniform numerical bound of the polarization]\label{thm_numbound_polarization}
		Let $f^o:(X^o,B^o),A^o\to S^o$ be an admissible family of $(d,\Phi_c,\Gamma,\sigma)$-lc stable minimal models over a smooth quasi-projective variety $S^o$. 
		Let $S$ be a smooth projective variety containing $S^o$ as a Zariski open subset. Assume that $D:=S\backslash S^o$ is a divisor and the morphism $\xi^o:S^o\to M_{\rm lc,(0,1)}(d,\Phi_c,\Gamma,\sigma)$ induced from the family $f^o$ extends to a morphism $\xi:S\to M_{\rm slc}(d,\Phi_c,\Gamma,\sigma)$. Let $(a,r,j)\in\bQ^{\geq0}\times(\bZ^{>0})^{2}$ be a $(d,\Phi_c,\Gamma,\sigma)$-polarization data satisfying that  $B^o+aA^o<(A^o+B^o)_{\rm red}$. Assume that $K_{S}+D$ is pseudo-effective. Then the following inequalities hold.
		\begin{align*}
			c_1(\xi^\ast\lambda_{a,r})A_1A_2\cdots A_{\dim S-1}\leq \frac{rd{\rm rank}(\Lambda_{a,r})}{2}(K_S+D)A_1A_2\cdots A_{\dim S-1}
		\end{align*}
		for any semiample effective divisors $A_1,\dots, A_{\dim S-1}$ on $S$, and
		\begin{align*}
			c_1(\xi^\ast\lambda_{a,r})\cdot\alpha\leq rd{\rm rank}(\Lambda_{a,r})\left(K_S+D\right)\cdot\alpha+D\cdot\alpha
		\end{align*}
		for every movable class $\alpha\in N_1(S)$. If in particular $\dim S=1$, then
		\begin{align}\label{align_polarization_bound3}
			\deg(\xi^\ast\lambda_{a,r})\leq \frac{rd{\rm rank}(\Lambda_{a,r})}{2}\deg(K_S+D).
		\end{align}
	\end{thm}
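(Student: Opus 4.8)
The plan is to deduce Theorem~\ref{thm_numbound_polarization} from the meta Arakelov inequality Theorem~\ref{thm_abs_Arakelov}, applied to a high fibre power of a smooth geometric model of the family, with the rank-one sheaf $L$ of \S\ref{section_setting} taken to be a power of $\xi^\ast\lambda_{a,r}$ and the relative dimension tuned to equal a fixed multiple of $rd\,{\rm rank}(\Lambda_{a,r})$. Throughout we may assume (after a log resolution of $(S,D)$ compatible with $S^o$) that $D$ is a reduced simple normal crossing divisor. Write $N:={\rm rank}(\Lambda_{a,r})$ and let $k_0\ge 1$ be such that $\lambda_{a,r}^{\otimes k_0}$ descends to an honest line bundle on $M_{\rm slc}$, so that $L:=(\xi^\ast\lambda_{a,r})^{\otimes k_0}$ is a line bundle on $S$.

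\emph{Geometric model and the Arakelov datum.} Using admissibility, I would first fix a log smooth birational model $(X',\Delta')\dashrightarrow(X^o,A^o+B^o)$ over $S^o$, refined to a log resolution of $(X^o,B^o+aA^o)$. Because the polarization datum satisfies $B^o+aA^o<(A^o+B^o)_{\rm red}$, the coefficients of $B^o+aA^o$ are $<1$, so one can choose on $X'$ a $\bQ$-divisor $\Gamma'$ with coefficients in $(0,1)$, supported on $\Delta'\cup{\rm Ex}$, dominating the strict transform of $B^o+aA^o$, and with $(X'^o,\Gamma'^o)\to S^o$ log smooth; pulling back $r$-pluricanonical forms — whose only poles lie along the strict transform of $B^o+aA^o$, of order $<r$ — gives an injection $f^o_\ast\big(r(K_{X^o/S^o}+B^o+aA^o)\big)\hookrightarrow f'^o_\ast\big(r(K_{X'^o/S^o}+\Gamma'^o)\big)$. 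After enlarging $r$ (keeping it a polarization datum and making $r\Gamma'$, $rF$ integral), Lemma~\ref{lem_rel_Kawamata_covering} applied to $(X',\Gamma')$ over $S^o$ produces a finite cover $h\colon Y\to X'$ with $Y$ smooth projective, $g:=f'\circ h\colon Y\to S$ smooth (a Kähler submersion) over $S^o$ of relative dimension $d$, and $h^\ast(K_{X'/S}+\Gamma')=K_{Y/S}-F$, $F\ge0$; splitting $\sO_{X'}\hookrightarrow h_\ast\sO_Y$ by the normalised trace and using $rF\ge0$ yields $f'_\ast\big(r(K_{X'/S}+\Gamma')\big)\hookrightarrow g_\ast(\omega_{Y/S}^{\otimes r})$, hence over $S^o$ an injection $\xi^\ast\Lambda_{a,r}|_{S^o}\hookrightarrow g^o_\ast(\omega_{Y^o/S^o}^{\otimes r})$. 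Now put $m:=k_0rN$ and let $g^{(m)}\colon Y^{(m)}\to S$ be a functorial desingularisation of the main component of the $m$-fold fibre product $Y\times_S\cdots\times_SY$: a proper morphism of manifolds, a Kähler submersion over $S^o$, of relative dimension $md=k_0rNd$. Over $S^o$ Künneth gives $g^{(m),o}_\ast(\omega_{Y^{(m),o}/S^o}^{\otimes r})\cong\big(g^o_\ast(\omega_{Y^o/S^o}^{\otimes r})\big)^{\otimes m}$, and composing the antisymmetrisation $L^{\otimes r}|_{S^o}=\big(\wedge^N(\xi^\ast\Lambda_{a,r})\big)^{\otimes k_0r}\hookrightarrow\big(\xi^\ast\Lambda_{a,r}\big)^{\otimes m}$ with the injection just obtained gives a nonzero injection $L^{\otimes r}|_{S^o}\hookrightarrow g^{(m),o}_\ast(\omega_{Y^{(m),o}/S^o}^{\otimes r})$ — nonzero precisely because the $m$ tensor slots live on distinct copies of $Y$, so the antisymmetriser does not collapse (no multiplication of sections is involved). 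Granting the next step, this extends over $S$ to an inclusion $s_L\colon L^{\otimes r}\hookrightarrow g^{(m)}_\ast(\omega_{Y^{(m)}/S}^{\otimes r})$, so $(g^{(m)}\colon Y^{(m)}\to S,\;D,\;L,\;s_L)$ is a datum as in \S\ref{section_setting} with $k=r$, $n=k_0rNd$, $D_f=D$.

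\emph{Extension across $D$ (the main obstacle).} The one nontrivial point is that the map of the previous paragraph has no pole along $D$, i.e. that the moduli-theoretic line bundle $\xi^\ast\lambda_{a,r}$ is, along $D$, dominated by the extension computed on the resolved fibre power $Y^{(m)}$. I would prove this by base-changing over a finite cover $S^\sharp\to S$ along which $\xi$ lifts to $\sM_{\rm slc}$, so that $\xi^{\sharp\ast}\Lambda_{a,r}=F^\sharp_\ast\big(r(K_{\mathcal X^\sharp/S^\sharp}+\mathcal B^\sharp+a\mathcal A^\sharp)\big)$ for an honest family $F^\sharp$ of slc stable minimal models over all of $S^\sharp$ — a sheaf which is locally free and commutes with base change — so a local frame near a point of $D^\sharp$ is a holomorphically varying family of $r$-log-pluricanonical sections of the slc fibres. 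Slc-ness of the limit fibres, together with the coefficient condition, then provides the $L^2$/boundedness estimate needed for such a family to extend holomorphically to a section of $\omega^{\otimes r}$ on the resolved fibre power across $D^\sharp$; this is exactly the pluricanonical (cyclic-cover) upgrade of the $r=1$ comparison Proposition~\ref{prop_prolongation0_vs_geo}. Descending from $S^\sharp$ back to $S$ finishes the step. This is where I expect the real work to lie; everything else is the formal apparatus of fibre products, Künneth, antisymmetrisation, the relative Kawamata covering, and the appeal to Theorem~\ref{thm_abs_Arakelov}.

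\emph{Conclusion.} Since $K_S+D=c_1(\omega_S(D))$ is pseudo-effective, Theorem~\ref{thm_abs_Arakelov}(2) applied to the datum above (with $n=k_0rNd$) gives
\[
k_0\,c_1(\xi^\ast\lambda_{a,r})\,A_1\cdots A_{\dim S-1}=c_1(L)\,A_1\cdots A_{\dim S-1}\le\tfrac{k_0rNd}{2}(K_S+D)\,A_1\cdots A_{\dim S-1}
\]
for semiample effective $A_1,\dots,A_{\dim S-1}$, and $k_0\,\mu_\alpha(\xi^\ast\lambda_{a,r})=\mu_\alpha(L)\le k_0rNd\,\mu_\alpha(\omega_S(D))+\mu_\alpha(\sO_S(D))$ for every movable class $\alpha$. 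Dividing by $k_0$ and using $\mu_\alpha(\sO_S(D))=D\cdot\alpha$ yields the two displayed inequalities of the theorem, and the case $\dim S=1$ is the first inequality with no $A_i$.
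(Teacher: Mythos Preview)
Your outline is essentially the paper's: log smooth birational model $\to$ relative Kawamata cover $\to$ fibre products $\to$ Theorem~\ref{thm_abs_Arakelov}, with a passage to a finite cover of $S$ to handle the boundary. You also correctly isolate the extension of $L^{\otimes r}\hookrightarrow g^{(m)}_\ast(\omega^{\otimes r})$ across $D$ as the only nontrivial point. Where you diverge from the paper is in the \emph{mechanism} for that extension.

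You propose an analytic argument: pull back to $S^\sharp$, use that a local frame of $\xi^{\sharp\ast}\Lambda_{a,r}$ consists of $r$-log-pluricanonical sections on slc fibres, and invoke an ``$L^2$/boundedness estimate'' to extend across $D^\sharp$, calling this a pluricanonical upgrade of Proposition~\ref{prop_prolongation0_vs_geo}. This is vague, and Proposition~\ref{prop_prolongation0_vs_geo} has no straightforward pluricanonical analogue; making it precise would amount to a separate theorem. The paper avoids this entirely by working algebraically on the cover: over $\widetilde{S}$ (where the family compactifies to an honest $(\widetilde{X},\widetilde{B}),\widetilde{A}\to\widetilde{S}$) one first compactifies the \emph{log smooth birational model} to $\widetilde{X}'\to\widetilde{S}$ and then applies Lemma~\ref{lem_rel_Kawamata_covering} to $\widetilde{X}'$ over the whole of $\widetilde{S}$, not just over $S^o$. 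Birational invariance of pluri-log-canonical pushforwards gives the isomorphism $\widetilde{f}_\ast(r(K_{\widetilde{X}/\widetilde{S}}+\widetilde{B}+a\widetilde{A}))\simeq\widetilde{f}'_\ast(r(K_{\widetilde{X}'/\widetilde{S}}+\widetilde{B}'+a\widetilde{A}'))$, and the Kawamata adjunction $\varrho^\ast(K_{\widetilde{X}'}+\widetilde{B}'+a\widetilde{A}')=K_{\widetilde{Y}'}-F$ with $F\ge0$ then yields the inclusion $\widetilde{f}_\ast(r(K+\widetilde{B}+a\widetilde{A}))\subset\widetilde{g}_\ast(rK_{\widetilde{Y}/\widetilde{S}})$ over all of $\widetilde{S}$ directly---no $L^2$ input is needed.

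Two further points you skip but the paper uses. First, before forming the fibre product one arranges (by choosing $\sigma$ sufficiently ramified) that $\widetilde{Y}\to\widetilde{S}$ is strictly semistable in codimension one; this is what makes Proposition~\ref{prop_mild_pushforward} available, so that $\det(W)^{\otimes kr}\otimes I_{\widetilde{Z}}$ maps into $\widetilde{g}^{(klr)}_\ast(rK)$ over the compact base (up to an ideal $I_{\widetilde{Z}}$ of codimension~$\ge2$ cosupport). Second, the descent from $\widetilde{S}$ to $S$ is not automatic: the paper uses Viehweg's inclusion $\widetilde{g}^{(klr)}_\ast(rK)\subset\sigma^\ast g^{(klr)}_\ast(rK)$ together with the trace $\sigma_\ast\sigma^\ast\to{\rm Id}$ to obtain the map $(\xi^\ast\lambda_{a,r}^{\otimes k}\otimes I_Z)^{\otimes r}\to g^{(klr)}_\ast(rK_{Y^{(klr)}/S})$ on $S$ itself. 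Your ``descending from $S^\sharp$ back to $S$ finishes the step'' hides exactly this. With these adjustments your argument becomes the paper's.
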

	\begin{proof}
		{\bf Step 1 (Compactifying the family):}
		By the properness of $\sM_{\rm slc}(d,\Phi_c,\Gamma,\sigma)$ there are data as follows.
		\begin{itemize}
			\item A proper generically finite morphism $\sigma:\widetilde{S}\to S$ from a smooth projective variety $\widetilde{S}$. $\sigma$ is a combination of smooth blowups and a finite flat morphism such that $\sigma^{-1}(D)$ is a simple normal crossing divisor. Note that $\sigma$ might be ramified over $S^o$.
			\item Denote $\widetilde{S}^o:=\sigma^{-1}(S^o)$, $\widetilde{X}^o:=\widetilde{S}^o\times_{S^o}X^o$. Denote $\widetilde{A}^o$ and $\widetilde{B}^o$ to be the divisorial pullbacks of $A^o$ and $B^o$ respectively. There is a compactification $\widetilde{f}:(\widetilde{X},\widetilde{B}),\widetilde{A}\to\widetilde{S}$ of the base change family $(\widetilde{X}^o,\widetilde{B}^o),\widetilde{A}^o\to \widetilde{S}^o$ such that 
			$\widetilde{f}\in\sM_{\rm slc}(d,\Phi_c,\Gamma,\sigma)(\widetilde{S})$.
		\end{itemize}
		{\bf Step 2 (Taking the log smooth birational models):} 
		Consider the following commutative diagram
		\begin{align*}
		\xymatrix{
			X'^o\ar@{-->}[d]^{\rho^o}\ar@/_/[dd]_-{f'^o} &\widetilde{X}'^o \ar@/_/[dd]_-{\widetilde{f}'^o} \ar[l]\ar@{-->}[d]^{\widetilde{\rho}^o} \ar[r]^{\subset} & \widetilde{X}'\ar@{-->}[d]^{\widetilde{\rho}}\ar@/_/[dd]_-{\widetilde{f}'}\\
			X^o\ar[d]^{f^o} &\widetilde{X}^o \ar[l]\ar[d] \ar[r]_{\subset} & \widetilde{X}\ar[d]^{\widetilde{f}}\\
			S^o& \widetilde{S}^o\ar[l]\ar[r]^{\subset} & \widetilde{S}
		}
		\end{align*}
		where the arrows are explained as follows.
		\begin{itemize}
			\item $f'^o:X'^o\to S^o$ is a projective smooth morphism and $\rho^o:X'^o\to X^o$ is a rational map over $S$ whose image contains an dense Zariski open subset of $A^o+B^o$. Denote the $\bQ$-divisor $A'^o$ and $B'^o$ on $X'^o$ to be the birational transforms of $A^o$ and $B^o$ respectively. $(X'^o,A'^o+B'^o)\to S^o$ is a log smooth birational model of $f^o:(X^o,B^o),A^o\to S^o$.
			\item $\widetilde{f}'^o$ and $\widetilde{\rho}^o$ are the base changes of $f'^o$ and $\rho^o$ respectively. Denote  $\widetilde{A}'^o$ and $\widetilde{B}'^o$ to be the birational transforms of $\widetilde{A}^o$ and $\widetilde{B}^o$. Then $\widetilde{f}'^o:(\widetilde{X}'^o,\widetilde{A}'^o+\widetilde{B}'^o)\to \widetilde{S}^o$ is a log smooth birational model of $(\widetilde{X}^o,\widetilde{B}^o),\widetilde{A}^o\to \widetilde{S}^o$.
			\item $\widetilde{f}':\widetilde{X}'\to \widetilde{S}$ is a completion of $\widetilde{f}'^o$. Since $\widetilde{A}$ and $\widetilde{B}$ do not contain any component of a fiber of $\widetilde{f}$, $\widetilde{\rho}^o$ extends naturally to a rational map $\widetilde{\rho}:\widetilde{X}'\to\widetilde{X}$ whose image contains a dense Zariski open subset of $\widetilde{A}+\widetilde{B}$. Hence we can define the birational transforms of $\widetilde{A}$ and $\widetilde{B}$, denoted by $\widetilde{A}'$ and $\widetilde{B}'$ respectively. By blowing up along the 
			centers over $\widetilde{f}'^{-1}\sigma^{-1}(D)$ we may assume the following. 
			\begin{itemize}
				\item $\widetilde{X}'$ is a smooth projective variety,
				\item $\widetilde{f}'^{-1}\sigma^{-1}(D)+ \widetilde{A}'+\widetilde{B}'$ is a simple normal crossing divisor, and
				\item $\widetilde{f}':\widetilde{X}'\to \widetilde{S}$ is smooth over $\widetilde{S}^o$.
			\end{itemize}
		\end{itemize}
		The constructions yields that 
		\begin{align}\label{align_NK}
		\widetilde{f}_{\ast}(r(K_{\widetilde{X}/\widetilde{S}}+\widetilde{B}+a\widetilde{A}))\simeq \widetilde{f}'_{\ast}(r(K_{\widetilde{X}'/\widetilde{S}}+\widetilde{B}'+a\widetilde{A}'))
		\end{align}
		whenever $r(K_{\widetilde{X}/\widetilde{S}}+\widetilde{B}+a\widetilde{A})$ is integral.
		
		{\bf Step 3 (Kawamata's covering):} 
		Let $(a,r,j)\in\bQ^{\geq0}\times(\bZ^{>0})^{2}$ be a $(d,\Phi_c,\Gamma,\sigma)$-polarization data with $0<a\ll 1$ so that $\widetilde{B}'+a\widetilde{A}'$ is a simple normal crossing divisor with coefficients lie in $(0,1)$. By Lemma \ref{lem_rel_Kawamata_covering} there is a finite ramified covering $\varrho:\widetilde{Y}'\to \widetilde{X}'$ such that the following hold.
		\begin{enumerate}
			\item $\widetilde{Y}'$ is a smooth projective variety,
			\item $\widetilde{f}'\varrho$ is smooth over $S^o$,
			\item there is a non-negative $\bQ$-divisor $F$ such that 
			\begin{align}\label{align_adjunction_to_smooth}
			\varrho^\ast(K_{\widetilde{X}'}+\widetilde{B}'+a\widetilde{A}')=K_{\widetilde{Y}'}-F.
			\end{align}
		\end{enumerate}
		The same construction is valid on $X'^o$. One can choose a suitable Kawamata covering of $Y^o\to X'^o$ such that $\widetilde{f}'\varrho:\widetilde{Y}'\to \widetilde{S}$ is a compactification of the base change morphism $Y^o\times_{S^o}\widetilde{S}^o\to\widetilde{S}^o$. 
		Consider the following diagram
		\begin{align}
		\xymatrix{
			\widetilde{Y}'\ar[dr]_{\widetilde{f}'\varrho}&\widetilde{Y}\ar[r]\ar[l] & Y\ar[d]\\
			&\widetilde{S}\ar[r]^{\sigma} & S
		},
		\end{align}
		where $Y\to S$ is a completion of $Y^o\to S^o$ with $Y$ smooth and projective, $\widetilde{Y}\to\widetilde{Y}'$ is some modification which is biholomorphic over $Y^o\times_{S^o}\widetilde{S}^o$ so that there is a morphism $\widetilde{Y}\to Y$ making the diagram commutative. We may assume that $\sigma$ is sufficiently ramified so that $\widetilde{Y}\to\widetilde{S}$ is semistable in codimension one.
		Denote $g:Y\to S$ and $\widetilde{g}:\widetilde{Y}\to\widetilde{S}$ to be the maps we have just constructed. According to (\ref{align_NK}) and (\ref{align_adjunction_to_smooth}) there is an inclusion
		\begin{align}\label{align_put_polarization_to_pushforward1}
		\widetilde{f}_\ast(rK_{\widetilde{X}/\widetilde{S}}+r\widetilde{B}+ra\widetilde{A})\subset \widetilde{g}_\ast(rK_{\widetilde{Y}/\widetilde{S}}).
		\end{align}
		By the same argument we could obtain an inclusion
		\begin{align}\label{align_put_polarization_to_pushforward2}
			f^o_\ast(rK_{X^o/S^o}+rB^o+raA^o)\subset g^o_\ast(rK_{Y^o/S^o})
		\end{align}
		where $g^o:Y^o\to X'^o\to S^o$ is the composition map. The two inclusions (\ref{align_put_polarization_to_pushforward1}) and (\ref{align_put_polarization_to_pushforward2}) are compatible via the base change.
		
		{\bf Last step:} We finish the proof by applying Theorem \ref{thm_abs_Arakelov}. The proof is parallel to the proof of Theorem \ref{thm_Arakelov_ineq_proof}.
		Denote
		$$W:=\widetilde{f}_\ast(rK_{\widetilde{X}/\widetilde{S}}+r\widetilde{B}+ra\widetilde{A})\quad\textrm{and}\quad l:={\rm rank}(W).$$
		By the construction of $\lambda_{a,r}$ one has
		\begin{align*}
		(\xi\circ\sigma)^\ast\lambda_{a,r}\simeq\det(W).
		\end{align*}
		Since $\xi^\ast(\lambda_{a,r})$ is a $\bQ$-line bundle, there is $k\in\bZ^{>0}$ such that $\xi^\ast(\lambda_{a,r})^{\otimes k}$ is a line bundle.
		Let $\widetilde{Y}^{(klr)}$ denote a functorial desingularization of the main component of the $klr$-fiber product $\widetilde{Y}\times_{\widetilde{S}}\times\cdots\times_{\widetilde{S}}\widetilde{Y}$ and denote  $\widetilde{g}^{(klr)}:\widetilde{Y}^{(klr)}\to \widetilde{S}$ to be the projection. Define $g^{(klr)}:Y^{(klr)}\to S$ similarly. We may assume that there is a morphism $\widetilde{Y}^{(klr)}\to Y^{(klr)}$ making the diagram
		\begin{align}
			\xymatrix{
				\widetilde{Y}^{(klr)}\ar[r]\ar[d]^{\widetilde{g}^{(klr)}} & Y^{(klr)} \ar[d]^{g^{(klr)}}\\
				\widetilde{S}\ar[r]^{\sigma}&S
			}
		\end{align}
		commutative.
		According to (\ref{align_put_polarization_to_pushforward1}) there is an inclusion
		$W\subset \widetilde{g}_\ast(rK_{\widetilde{Y}/\widetilde{S}})$.
		Since $\widetilde{Y}\to \widetilde{S}$ is smooth over $\widetilde{S}^o$ and $\widetilde{Y}\to\widetilde{S}$ is semistable in codimension one, by Proposition \ref{prop_mild_pushforward} we get a natural inclusion
		\begin{align*}
		\det(W)^{\otimes kr}\otimes I_{\widetilde{Z}}\to \left(\widetilde{g}_{\ast}(rK_{\widetilde{Y}/\widetilde{S}})^{\otimes klr}\right)^{\vee\vee}\otimes I_{\widetilde{Z}}\subset\widetilde{g}^{(klr)}_{\ast}(rK_{\widetilde{Y}^{(klr)}/\widetilde{S}}),
		\end{align*}
		where $I_{\widetilde{Z}}$ is an ideal sheaf whose co-support $\widetilde{Z}$ lies in $\sigma^{-1}(D)$ and ${\rm codim}_{\widetilde{S}}(\widetilde{Z})\geq 2$. 
		By \cite[Lemma 3.2]{Viehweg1983} (see also \cite[Lemma 3.1.20]{Fujino2020}), there is an inclusion
		\begin{align*}
		\widetilde{g}^{(klr)}_\ast(rK_{\widetilde{Y}^{(klr)}/\widetilde{S}})\subset\sigma^\ast g_\ast^{(klr)}(rK_{Y^{(klr)}/S}).
		\end{align*}
		Let $I_Z$ be an ideal sheaf such that the map $\sigma^\ast(I_Z)\to\sO_{\widetilde{X}}$ factors through $I_{\widetilde{Z}}$.
		Taking the composition of the maps above we get a morphism
		$$\sigma^\ast\left(\xi^\ast(\lambda_{a,r})^{\otimes k}\otimes I_Z\right)^{\otimes r}\to \sigma^\ast g^{(klr)}_\ast(rK_{Y^{(klr)}/S}).$$
		Taking the adjoint we have a morphism
		$$\alpha:\left(\xi^\ast(\lambda_{a,r})^{\otimes k}\otimes I_Z\right)^{\otimes r}\to \sigma_\ast\sigma^\ast g^{(klr)}_\ast(rK_{Y^{(klr)}/S})\stackrel{\rm trace}{\to}g^{(klr)}_\ast(rK_{Y^{(klr)}/S}).$$
		By the constructions, $\alpha|_{S^o}$ is the composition of the maps
		$$\left(\det f^o_\ast(rK_{X^o/S^o}+rB^o+raA^o)\right)^{\otimes kr}\stackrel{(\ref{align_put_polarization_to_pushforward2})}{\subset} g^o_\ast(rK_{Y^o/S^o})^{\otimes klr}\simeq g^{o(klr)}_\ast(rK_{Y^{o(klr)}/S^o}).$$
		Hence $\alpha|_{S^o}$ is injective. Since $g^{(klr)}_\ast(rK_{Y^{(klr)}/S})$ is torsion free, $\alpha$ is an injective map. Notice that $Y^{(klr)}\to S$ is smooth over $S^o$. Applying Theorem \ref{thm_abs_Arakelov} to the morphism $Y^{(klr)}\to S$ (which is smooth over $S^o$) and the torsion free sheaf $\xi^\ast(\lambda_{a,r})^{\otimes k}\otimes I_Z$ we obtain the theorem.
	\end{proof}
	\subsection{Deformation boundedness of family of lc stable minimal models}	
	Throughout this part we fix a data $$d\in\bN,c\in\bQ^{\geq0},\Gamma\subset\bQ^{\geq0}\textrm{ a finite set and }\sigma\in\bQ[t].$$
	\begin{thm}\label{thm_bounded_stable_family}
		Let $S$ be an algebraic variety such that $S_{\rm sing}$ is compact. 
		Then there is a scheme of finite type ${\bf M}$ and a morphism $S\times {\bf M}\to M_{\rm lc,(0,1)}(d,\Phi_c,\Gamma,\sigma)$ that contains all maps $S\to M_{\rm lc,(0,1)}(d,\Phi_c,\Gamma,\sigma)$ which is induced from an admissible family of $(d,\Phi_c,\Gamma,\sigma)$-stable minimal models over $S$.
	\end{thm}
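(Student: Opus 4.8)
The plan is to feed the uniform numerical bound of Theorem~\ref{thm_numbound_polarization} into the deformation boundedness criterion of Kov\'acs--Lieblich \cite{Kovacs2011}. That criterion produces the scheme ${\bf M}$ and the morphism $S\times{\bf M}\to M_{\mathrm{lc},(0,1)}(d,\Phi_c,\Gamma,\sigma)$ as soon as one shows that the moduli maps $\xi\colon S\to M_{\mathrm{lc},(0,1)}(d,\Phi_c,\Gamma,\sigma)$ attached to all admissible families over $S$ form a \emph{bounded} family of morphisms into the quasi-projective coarse space; here the hypothesis that $S_{\mathrm{sing}}$ is compact is precisely what makes the pertinent $\mathrm{Hom}$-scheme of finite type, and boundedness of a collection of morphisms of bounded degree is then automatic. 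So the whole problem reduces to: \emph{after fixing one compactification, give a uniform upper bound for the $\lambda_{a,r}$-degree of all these $\xi$}. Preliminaries: replacing $S$ by its normalization and then by its smooth locus does not shrink the relevant collection, so assume $S$ smooth quasi-projective; fix a smooth projective $\overline S\supseteq S$ with $D:=\overline S\setminus S$ a simple normal crossing divisor and a $(d,\Phi_c,\Gamma,\sigma)$-polarization datum $(a,r,j)$ with $0<a\ll1$. By the properness of the Deligne--Mumford stack $\sM_{\mathrm{slc}}(d,\Phi_c,\Gamma,\sigma)$ (Theorem~\ref{thm_moduli_stable_var}), the valuative criterion, and resolution of indeterminacy in codimension $\geq2$, the induced map $\xi^o$ extends to a morphism $\xi\colon\overline S\to M_{\mathrm{slc}}(d,\Phi_c,\Gamma,\sigma)$ with $\xi|_S=\xi^o$; as $\lambda_{a,r}$ is ample on $M_{\mathrm{slc}}(d,\Phi_c,\Gamma,\sigma)$ (Proposition~\ref{prop_ample_line_bundle_moduli}), $\xi^\ast\lambda_{a,r}$ is a nef $\bQ$-divisor on $\overline S$.

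\textbf{The pseudo-effective case.} If $K_{\overline S}+D$ is pseudo-effective, apply Theorem~\ref{thm_numbound_polarization} to $f^o$ and $(\overline S,D)$ with $A_1=\cdots=A_{\dim S-1}=H$ for a fixed ample divisor $H$:
\begin{align*}
0\;\leq\;c_1(\xi^\ast\lambda_{a,r})\cdot H^{\dim S-1}\;\leq\;\frac{r\,d\,\mathrm{rank}(\Lambda_{a,r})}{2}\,(K_{\overline S}+D)\cdot H^{\dim S-1}.
\end{align*}
The right-hand side depends only on $(\overline S,D,H)$ and the fixed numerical data, so this is the desired uniform bound.

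\textbf{The non-pseudo-effective case.} If $K_{\overline S}+D$ is not pseudo-effective, run a $(K_{\overline S}+D)$-MMP; it ends with a Mori fibre space $g\colon(S',D')\to T$ whose general fibre $F$ is of log Fano type, and $(K_{S'}+D')\cdot\ell<0$ for a general member $\ell\simeq\bP^1$ of a covering family of rational curves inside $F$, with $\ell$ meeting $D'$ in few points. Deleting one or two further general points of such an $\ell$ produces an open rational curve with trivial log canonical divisor; since the restriction of $f^o$ to it is still admissible, the curve inequality \eqref{align_polarization_bound3} together with the ampleness of $\lambda_{a,r}$ forces $\deg(\xi|_\ell^\ast\lambda_{a,r})=0$, i.e.\ $\xi$ is constant on $\ell$. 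As $F$ is rationally connected this makes $\xi$ constant on every general fibre of $g$, so $\xi$ descends to a moduli map on $T$. Equipping a log smooth model of $T$ with the appropriate boundary (via the canonical bundle formula of the fibration) and inducting on $\dim$, one reaches either the pseudo-effective case above or $\dim T=0$, where $\xi$ is constant and the family is isotrivial; in every branch of the recursion the resulting bound for $c_1(\xi^\ast\lambda_{a,r})\cdot H^{\dim S-1}$ is uniform. (Alternatively, the reduction of the non-pseudo-effective case to the pseudo-effective one is already contained in the Kov\'acs--Lieblich machinery once \eqref{align_polarization_bound3} is available.)

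\textbf{Conclusion and main obstacle.} Combining the two cases gives a uniform upper bound on the $\lambda_{a,r}$-degree of every admissible-family moduli map; plugging this into \cite{Kovacs2011} yields the finite-type ${\bf M}$ and the morphism $S\times{\bf M}\to M_{\mathrm{lc},(0,1)}(d,\Phi_c,\Gamma,\sigma)$ through which all such maps factor, proving the theorem. I expect the delicate step to be the non-pseudo-effective case: one must check that every extremal contraction of the MMP (and the contraction $g$ itself) is $\xi$-trivial, and that the descent through the Fano fibration is legitimate, all while tracking the log pair, the admissibility hypothesis, and the presence of lc (not merely klt) pairs and possibly non-reduced degenerate fibres --- equivalently, one must ensure that the hypotheses of Theorem~\ref{thm_numbound_polarization} and of \eqref{align_polarization_bound3} survive each of these reductions.
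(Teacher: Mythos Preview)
Your parenthetical remark is the paper's entire proof, and you should promote it to the main argument and discard the rest. The Kov\'acs criterion the paper invokes (\cite[Corollary~2.23]{Kovacs2009}) is a \emph{curve-based} weak-boundedness statement: to produce the finite-type ${\bf M}$ one only needs, for every smooth projective curve $C$ with reduced divisor $D\subset C$ and every moduli map $\xi^o\colon C\setminus D\to M_{\rm lc,(0,1)}(d,\Phi_c,\Gamma,\sigma)$ coming from an admissible family, a bound on $\deg(\xi^\ast\lambda_{a,r})$ depending only on $g(C)$ and $\#D$. On a curve there is no indeterminacy to resolve, and the non-pseudo-effective case ($g=0$, $\#D\leq 1$) is handled by the trivial trick of adding two extra general points $p_1,p_2\in C\setminus D$ to the boundary: the family over $C\setminus(D\cup\{p_1,p_2\})$ is still admissible, $K_C+D+p_1+p_2$ is now pseudo-effective, and \eqref{align_polarization_bound3} gives
\[
\deg(\xi^\ast\lambda_{a,r})\;\leq\;\frac{rd\,{\rm rank}(\Lambda_{a,r})}{2}\bigl(2g(C)+\#D\bigr).
\]
That inequality, fed into \cite{Kovacs2009}, finishes the proof.

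Your main route---bounding the degree on a fixed higher-dimensional $(\overline S,D)$ and, when $K_{\overline S}+D$ fails to be pseudo-effective, running an MMP and descending through a Mori fibration---is substantially harder and has genuine gaps you yourself flag. The map $\xi^o$ need not extend to your \emph{fixed} $\overline S$ (only to a blow-up depending on $\xi^o$), so the pair against which you bound is not a priori uniform; one can repair this with the projection formula, but it needs to be said. More seriously, in the MMP step you must check that admissibility survives restriction to a general rational curve $\ell$ in a Mori fibre, that $\xi$ is even defined along $\ell$ (the MMP map $\overline S\dashrightarrow S'$ is not a morphism, so $\ell$ need not lift to a curve in $\overline S$ meeting $S^o$), and that the inductive descent via the canonical bundle formula preserves both admissibility and the lc hypothesis while yielding uniform constants. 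None of this work is necessary: the input to Kov\'acs--Lieblich is exactly \eqref{align_polarization_bound3}, and on curves the ``add two points'' trick replaces the whole MMP argument.
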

	\begin{proof}
		Let $C$ be a smooth projective curve of genus $g$ and $D\subset C$ a reduced divisor. Let $\xi^o:C^o:=C\backslash D\to M_{\rm lc,(0,1)}(d,\Phi_c,\Gamma,\sigma)$ be a morphism which is induced from an admissible family of $(d,\Phi_c,\Gamma,\sigma)$-stable minimal models over $C^o$ and let $\xi:C\to M_{\rm slc}(d,\Phi_c,\Gamma,\sigma)$ be its compactification. Let $(a,r,j)\in\bQ^{\geq0}\times(\bZ^{>0})^{2}$ be a $(d,\Phi_c,\Gamma,\sigma)$-polarization data with $0\leq a\ll1$. Then one has
		\begin{align}\label{align_bounded_degree}
			\deg(\xi^\ast\lambda_{a,r})\leq \frac{rd{\rm rank}(\Lambda_{a,r})}{2}(2g+\deg(D)).
		\end{align}
		by Theorem \ref{thm_numbound_polarization}.
		Notice that $K_C+D$ may not be pseudo-effective (e.g. when $g=0$ and $\deg(D)=0,1$). We add two extra points $p_1,p_2\in C\backslash D$ so that $K_C+D'$ is always pseudo-effective for $D'=D\cup\{p_1,p_2\}$. (\ref{align_bounded_degree}) is deduced by applying Theorem \ref{thm_numbound_polarization} to the case when $S=C$ and $S^o=C\backslash (D\cup D')$.
		According to \cite[Corollary 2.23]{Kovacs2009}, the theorem follows from (\ref{align_bounded_degree}).
	\end{proof}
    \subsubsection{Strong boundedness}
	\begin{defn}
		A $(d,\Phi_c,\Gamma,\sigma)$-klt stable minimal model $(X,B),A$ is called {\bf log smooth} if $X$ is smooth and $A+B$ is a simple normal crossing $\bQ$-divisor.
		A family $f:(X,B),A\to S$ of  $(d,\Phi_c,\Gamma,\sigma)$-klt stable minimal models is called {\bf log smooth} if $X\to S$ is smooth and $A+B$ is an $f$-relative simple normal crossing divisor.
	\end{defn}
	The groupoids of log smooth families of $(d,\Phi_c,\Gamma,\sigma)$-klt stable minimal models form an open substack (denoted by $\sM_{\rm sm}(d,\Phi_c,\Gamma,\sigma)$) of $\sM_{\rm slc}(d,\Phi_c,\Gamma,\sigma)$. Its closure is a meaningful compactification of $\sM_{\rm sm}(d,\Phi_c,\Gamma,\sigma)$.
	\begin{thm}\label{thm_bounded_stable_family_strong}
		Let $S$ be an algebraic variety such that $S_{\rm sing}$ is compact. 
		Then there is a scheme of finite type  ${\bf M}$ and an admissible log smooth family $F\in\sM_{\rm sm}(d,\Phi_c,\Gamma,\sigma)(S\times {\bf M})$ of stable minimal models which contains all admissible log smooth families of  $(d,\Phi_c,\Gamma,\sigma)$-stable minimal models over $S$.
	\end{thm}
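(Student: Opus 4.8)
The plan is to treat this as the \emph{family} refinement of Theorem \ref{thm_bounded_stable_family}: one feeds the numerical bound of Theorem \ref{thm_numbound_polarization} into the boundedness machinery of Kov\'acs--Lieblich \cite{Kovacs2011,Kovacs2009}, but keeps track of enough projective data that the resulting bounded set of families is carried by an honest family over $S\times{\bf M}$. The point of the log smooth hypothesis is precisely that a log smooth family $(X,B),A\to S$, together with the integral divisors $N_0,D_0$ (where $A=cN_0$, $B=cD_0$), embeds into a fixed projective space after a single uniform twist, so that the usual ``bounded Hilbert polynomial $\Rightarrow$ bounded Hilbert scheme'' mechanism applies on the nose. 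Using the compactness of $S_{\rm sing}$ and the reductions in the proof of Theorem \ref{thm_bounded_stable_family} (and \cite[Corollary~2.23]{Kovacs2009}), it suffices to exhibit, for each smooth projective variety $S$, a finite type parameter scheme for the admissible log smooth families over $S$; so from now on $S$ is smooth projective. Fix an ample divisor $H$ on $S$, put $n:=\dim S$, fix a $(d,\Phi_c,\Gamma,\sigma)$-polarization data $(a,r,j)$ chosen as in Theorem \ref{thm_numbound_polarization} with $0<a\ll1$ (so that $B+aA<(A+B)_{\rm red}$ for every member of the bounded family), and write $\nu:={\rm rank}(\Lambda_{a,r})$.

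The key numerical input is the following. For a log smooth family $f:(X,B),A\to S$ the sheaf $W:=f_\ast(r(K_{X/S}+B+aA))$ is locally free of rank $\nu$ and is nef by Fujino \cite{Fujino2018} and Kov\'acs--Patakfalvi \cite{Kovacs2017} (the input to Proposition \ref{prop_ample_line_bundle_moduli}), and $\det W\simeq\xi^\ast\lambda_{a,r}$, where $\xi:S\to M_{\rm slc}(d,\Phi_c,\Gamma,\sigma)$ is the classifying morphism (defined on all of $S$, since $f$ is log smooth everywhere). Applying Theorem \ref{thm_numbound_polarization} with $S^o:=S\setminus D$ for any fixed effective $D$ with $K_S+D$ pseudo-effective gives $c_1(\xi^\ast\lambda_{a,r})\cdot H^{n-1}\le\tfrac{rd\nu}{2}(K_S+D)\cdot H^{n-1}$, a constant depending only on $S,H$ and the fixed data. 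Since $W$ is nef of fixed rank with $c_1(W)\cdot H^{n-1}$ bounded, the positivity of Chern classes of nef vector bundles together with the Khovanskii--Teissier log-concavity inequalities for nef classes bound every Chern number $c_{i_1}(W)\cdots c_{i_k}(W)\cdot H^{\,n-i_1-\cdots-i_k}$. Consequently there is a uniform $m\gg0$ for which $W\otimes\sO_S(mH)$ is globally generated with vanishing higher cohomology, the line bundle $L:=r(K_{X/S}+B+aA)+f^\ast(mH)$ is very ample on the total space $X$, and $\chi(X,L^{\otimes t})$ is a polynomial in $t$ with coefficients bounded independently of $f$; correspondingly $N_0$ and $D_0$ have bounded degree under $|L|$.

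With this in hand the Hilbert scheme step is routine. Fix $N$ with $h^0(X,L)\le N+1$ for all members. Inside the product of the appropriate Hilbert scheme of $\bP^N\times S$ (parametrizing $\sX$ together with its projection to $S$) and the relative Hilbert schemes over the universal subscheme (parametrizing $N_0,D_0$), let ${\bf M}$ be the locus where $\sX\to S$ is smooth and projective, $\sX\hookrightarrow\bP^N$ is a closed immersion, and $(\sX,B),A$ restricts on each $S\times\{p\}$ to a log smooth $(d,\Phi_c,\Gamma,\sigma)$-klt stable minimal model; smoothness and the embedding condition are open, and the remaining conditions (klt, semiampleness of $K+B$, the volume identities, ampleness of $K+B+tA$ for $0<t\ll1$, and ${\rm vol}(A|_F)\in\Gamma$) are constructible by the arguments of Birkar \cite{Birkar2022}, so one takes the reduced structure on the resulting constructible set. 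Then ${\bf M}$ is of finite type, the restriction of the universal object is a family $F\in\sM_{\rm sm}(d,\Phi_c,\Gamma,\sigma)(S\times{\bf M})$ by Theorem \ref{thm_moduli_stable_var}, it is log smooth and admissible because each fiber is, and by construction every admissible log smooth family over $S$ is isomorphic to $F|_{S\times\{p\}}$ for some $p\in{\bf M}(\bC)$, recovered by choosing the polarization data and a basis of $H^0(X,L)$.

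The main obstacle will be the passage from Theorem \ref{thm_numbound_polarization}, which controls only the single intersection number $c_1(\xi^\ast\lambda_{a,r})\cdot H^{n-1}$, to a bound on the entire Hilbert polynomial of the \emph{absolute} embedding $|L|$: this genuinely uses nefness of $f_\ast(r(K_{X/S}+B+aA))$ (via Fujino and Kov\'acs--Patakfalvi) together with the log-concavity inequalities, and a uniform effective bound for the twist $m$. A secondary technical point, to be handled by Noetherian induction in the style of Kov\'acs--Lieblich, is that the locus of fiberwise log smooth stable minimal models is genuinely of finite type (after passing to a reduced structure) rather than merely constructible, and that the reduction from a general $S$ with $S_{\rm sing}$ compact to the smooth projective case carries families, and not just classifying maps, along with it.
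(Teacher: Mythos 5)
The paper's proof is very short: it applies Theorem~\ref{thm_numbound_polarization} over \emph{curves} to obtain the uniform degree bound~(\ref{align_bounded_degree2}), and then cites \cite[Theorem~1.7]{Kovacs2011} as a black box; the latter converts exactly such a curve-wise degree bound into a finite-type parameterizing scheme carrying an honest family, handling the Hilbert-scheme machinery, the constructibility questions, and the reduction from a general $S$ with compact singular locus to the smooth projective case, all internally. Your proposal takes a genuinely different route: you reduce to $S$ smooth projective, apply Theorem~\ref{thm_numbound_polarization} over $S$ itself to bound $c_1(\xi^\ast\lambda_{a,r})\cdot H^{n-1}$, try to upgrade this to bounds on all Chern numbers of the nef bundle $W=f_\ast(r(K_{X/S}+B+aA))$, and then run a Hilbert-scheme boundedness argument from scratch. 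If carried out in full this would essentially be a re-derivation of (the relevant case of) Kov\'acs--Lieblich's theorem rather than an application of it.

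However, there is a real gap at the heart of your argument, one you partially flag. The claim that nefness of $W$ plus a bound on $c_1(W)\cdot H^{n-1}$ forces bounds on all Chern monomials $c_{i_1}(W)\cdots c_{i_k}(W)\cdot H^{n-i_1-\cdots-i_k}$ is not a formal consequence of Khovanskii--Teissier log-concavity: those inequalities concern nef \emph{divisor} classes, and once you move to $c_2(W)$ and higher they do not directly apply. What one actually has for nef vector bundles is Fulton--Lazarsfeld positivity of Schur polynomials, and extracting upper bounds for all Chern monomials in terms of $c_1(W)\cdot H^{n-1}$ from this requires a careful argument (this is roughly what is packaged inside \cite[Theorem~1.7]{Kovacs2011}). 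As written, the step from ``$c_1\cdot H^{n-1}$ bounded'' to ``the Hilbert polynomial of the embedding by $L:=r(K_{X/S}+B+aA)+f^\ast(mH)$ is uniformly bounded, for a uniform $m$'' is unjustified. The same applies to the reduction to $S$ smooth projective and to the assertion that the locus of fiberwise log smooth stable minimal models in the Hilbert scheme is of finite type: you acknowledge both, but these too are delicate points that Kov\'acs--Lieblich resolve and the paper simply inherits by citation. In short, the direct Hilbert-scheme strategy is plausible in spirit, but its key quantitative step is not established, whereas the paper sidesteps all of this by feeding the curve-wise bound~(\ref{align_bounded_degree2}) into the already-proved weak-boundedness-implies-boundedness theorem of Kov\'acs--Lieblich.
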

	\begin{proof}
		Let $C$ be a smooth projective curve of genus $g$ and $D\subset C$ a reduced divisor. Let $\xi^o:C^o:=C\backslash D\to M_{\rm lc,(0,1)}(d,\Phi_c,\Gamma,\sigma)$ be a morphism which is induced from a log smooth family of $(d,\Phi_c,\Gamma,\sigma)$-stable minimal models over $C^o$ and $\xi:C\to M_{\rm slc}(d,\Phi_c,\Gamma,\sigma)$ its compactification. Let $(a,r,j)\in\bQ^{\geq0}\times(\bZ^{>0})^{2}$ be a $(d,\Phi_c,\Gamma,\sigma)$-polarization data such that $0\leq a\ll 1$. Then one has
		\begin{align}\label{align_bounded_degree2}
			\deg(\xi^\ast\lambda_{a,r})\leq \frac{rd{\rm rank}(\Lambda_{a,r})}{2}(2g+\deg(D))
		\end{align}
		by Theorem \ref{thm_numbound_polarization}. Here we add two extra points to $D$ so that $K_C+D$ is always pseudo-effective.
		According to \cite[Theorem 1.7]{Kovacs2011}, the theorem follows from (\ref{align_bounded_degree2}).
	\end{proof}
	
	\bibliographystyle{plain}
	\bibliography{CGM_VHC}
	
\end{document}